\documentclass[reqno,a4paper,11pt]{article}

\usepackage{amsmath,amssymb,amsthm}

\usepackage{mathrsfs}

\usepackage{mathtools}
\usepackage{commath}

\usepackage{thmtools}
\usepackage{thm-restate}

\usepackage{cases}
\usepackage{enumitem}
\setlist[enumerate,1]{label=(\roman*)}

\usepackage[pdftex, pdfborderstyle={/S/U/W 0}]{hyperref}
\hypersetup{
    colorlinks=true,
    linkcolor=magenta,
    citecolor=cyan,
}

\usepackage{cleveref}

\usepackage{etoolbox}

\usepackage{comment}

\usepackage[numbers]{natbib}

\numberwithin{equation}{section}

\ifdefined\thmcolor
\declaretheoremstyle[
  shaded={bgcolor=\thmcolor}
]{plain}
\else
\fi

\ifdefined\defcolor
\declaretheoremstyle[
  headfont=\normalfont\bfseries,
  bodyfont=\normalfont,
  shaded={bgcolor=\defcolor}
]{noital}
\else
\declaretheoremstyle[
  headfont=\normalfont\bfseries,
  bodyfont=\normalfont,
]{noital}
\fi

\declaretheorem[style=plain,numberwithin=section,name=Theorem]{theorem}
\declaretheorem[style=plain,sibling=theorem,name=Proposition]{proposition}
\declaretheorem[style=plain,sibling=theorem,name=Lemma]{lemma}
\declaretheorem[style=plain,sibling=theorem,name=Corollary]{corollary}
\declaretheorem[style=plain,sibling=theorem,name=Conjecture]{conjecture}
\declaretheorem[style=plain,sibling=theorem,name=Claim]{claim}
\declaretheorem[style=plain,sibling=theorem,name=Fact]{fact}

\declaretheorem[style=plain,sibling=theorem,name=Question]{question}
\declaretheorem[style=plain,sibling=theorem,name=Observation]{observation}

\declaretheorem[style=plain,numbered=no,name=Theorem]{theorem-n}
\declaretheorem[style=plain,numbered=no,name=Proposition]{proposition-n}
\declaretheorem[style=plain,numbered=no,name=Lemma]{lemma-n}
\declaretheorem[style=plain,numbered=no,name=Corollary]{corollary-n}
\declaretheorem[style=plain,numbered=no,name=Conjecture]{conjecture-n}
\declaretheorem[style=plain,numbered=no,name=Claim]{claim-n}
\declaretheorem[style=plain,numbered=no,name=Fact]{fact-n}
\declaretheorem[style=plain,numbered=no,name=Open Problem]{openproblem-n}
\declaretheorem[style=plain,numbered=no,name=Question]{question-n}
\declaretheorem[style=plain,numbered=no,name=Observation]{observation-n}

\declaretheorem[style=noital,sibling=theorem,name=Remark]{remark}
\declaretheorem[style=noital,sibling=theorem,name=Definition]{definition}

\declaretheorem[style=noital,numbered=no,name=Remark]{remark-n}
\declaretheorem[style=noital,numbered=no,name=Definition]{definition-n}
\declaretheorem[style=noital,numbered=no,name=Construction]{construction-n}
\declaretheorem[style=noital,numbered=no,name=Example]{example-n}

\newcommand{\defined}{\mathrel{\coloneqq}}

\DeclarePairedDelimiter{\p}{\lparen}{\rparen}

\newcommand{\st}{\mathbin{\colon}}

\undef{\set}
\DeclarePairedDelimiter{\set}{\lbrace}{\rbrace}

\undef{\emptyset}
\newcommand{\emptyset}{\varnothing}

\newcommand{\union}{\mathbin{\cup}}
\newcommand{\inter}{\mathbin{\cap}}

\newcommand{\from}{\colon}

\newcommand{\setm}[1]{\setminus\set{#1}}

\undef{\abs}
\DeclarePairedDelimiterX{\abs}[1]
  {\lvert}{\rvert}{\ifblank{#1}{\,\cdot\,}{#1}}

\undef{\norm}
\DeclarePairedDelimiterX{\norm}[1]
  {\lVert}{\rVert}{\ifblank{#1}{\,\cdot\,}{#1}}

\DeclarePairedDelimiterX{\inner}[2]
  {\langle}{\rangle}{\ifblank{#1}{\,\cdot\,}{#1},\ifblank{#2}{\,\cdot\,}{#2}}

\DeclareMathDelimiter{\given}
  {\mathbin}{symbols}{"6A}{largesymbols}{"0C}

\DeclareMathOperator{\Prob}{\mathbb{P}}
\DeclarePairedDelimiterXPP{\prob}[1]
  {\Prob}{\lparen}{\rparen}{}
  {\renewcommand{\given}{\nonscript\;\delimsize\vert\nonscript\;\mathopen{}}#1}

\DeclareMathOperator{\Expec}{\mathbb{E}}
\DeclarePairedDelimiterXPP{\expec}[1]
  {\Expec}{\lparen}{\rparen}{}
  {\renewcommand{\given}{\nonscript\;\delimsize\vert\nonscript\;\mathopen{}}#1}

\DeclareMathOperator{\Var}{Var}
\DeclarePairedDelimiterXPP{\var}[1]
  {\Var}{\lparen}{\rparen}{}
  {\renewcommand{\given}{\nonscript\;\delimsize\vert\nonscript\;\mathopen{}}#1}

\DeclareMathOperator{\Cov}{Cov}
\DeclarePairedDelimiterXPP{\cov}[2]
  {\Cov}{\lparen}{\rparen}{}{#1,#2}

\newcommand{\eps}{\varepsilon}
\newcommand{\sseq}{\subseteq}

\let\l\relax
\newcommand{\l}{\ell}

\newcommand{\NN}{\mathbb{N}}

\newcommand{\QQ}{\mathbb{Q}}
\newcommand{\RR}{\mathbb{R}}

\newcommand{\ZZ}{\mathbb{Z}}

\newcommand{\cB}{\mathcal{B}}

\newcommand{\cE}{\mathcal{E}}
\newcommand{\cF}{\mathcal{F}}

\newcommand{\cI}{\mathcal{I}}

\newcommand{\cP}{\mathcal{P}}
\newcommand{\cQ}{\mathcal{Q}}
\newcommand{\cR}{\mathcal{R}}
\newcommand{\cS}{\mathcal{S}}

\newcommand{\cU}{\mathcal{U}}

\let\aa\relax
\let\gg\relax

\let\ll\relax

\newcommand{\aa}{\alpha}
\newcommand{\bb}{\beta}
\newcommand{\gg}{\gamma}
\newcommand{\dd}{\delta}
\newcommand{\zz}{\zeta}

\newcommand{\kk}{\kappa}
\newcommand{\ll}{\lambda}

\newcommand{\ww}{\omega}

\usepackage{geometry}
\usepackage{centernot}

\usepackage{tikz}
\usetikzlibrary{quotes} 
\usetikzlibrary{calc}

\usepackage{scrextend}

\usepackage[T1]{fontenc}
\usepackage{titlesec}

\titleformat{\section}{\centering\bfseries\scshape\Large}{\thesection}{1em}{}
\titleformat{\subsection}{\bfseries\scshape\large}{\thesubsection}{1em}{}

\newcommand{\lhc}[1]{}

\newcommand{\esmc}{$\eta$-strongly maximal chain}

\newcommand{\wlg}{without loss of generality}

\newcommand{\owf}{out-wellfounded}

\newcommand{\comp}{\sim}
\newcommand{\incomp}{\parallel}

\newcommand{\leqlex}{\leq_{\text{lex}}}
\newcommand{\lextimes}{\times_{\text{lex}}}
\newcommand{\RRP}{\overline{\mathbb{R}}}
\newcommand{\ZZP}{\overline{\mathbb{Z}}}
\newcommand{\quo}[1]{#1\,/\finsim_{#1}}

\makeatletter
\newcommand{\oset}[3][0ex]{%
  \mathrel{\mathop{#3}\limits^{
    \vbox to#1{\kern-2\ex@
    \hbox{$\scriptstyle#2$}\vss}}}}
\makeatother

\newcommand{\bicomp}{\lessgtr}
\newcommand{\consolidationof}{\unrhd}
\newcommand{\consolidatesto}{\unlhd}
\newcommand{\covers}{\gtrdot}
\newcommand{\covered}{\lessdot}

\newcommand{\finsim}{\oset{*}{\sim}}

\newcommand{\etalt}{\preceq_\eta}
\newcommand{\etawitness}[3]{#1 \from #2 \to \cI(#3)}
\DeclareMathOperator{\height}{ht}

\newcommand{\candidate}{\cE_A}
\newcommand{\altequiv}{\oset{\circ}{\sim}}
\newcommand{\altlt}{\oset{\circ}{\preceq}}
\newcommand{\altsimeq}{\oset{\circ}{\simeq}}
\newcommand{\altpart}{\cR_\circ}
\newcommand{\altwitness}[3]{#1 \from \altpart(#2) \to \cI(\altpart(#3))}

\newcommand{\illequiv}{\oset{q}{\sim}}
\newcommand{\illlt}{\oset{q}{\preceq}}

\newcommand{\illpart}{\cR_q}
\newcommand{\illwitness}[3]{#1 \from \illpart(#2) \to \cI(\illpart(#3))}
\newcommand{\ill}{\text{Ill}}

\newcommand{\atomicincr}{N_+^0}
\newcommand{\atomicdecr}{N_-^0}
\newcommand{\atomiceps}{N_\eps^0}

\let\above\relax
\newcommand{\above}[1]{(\,>\!#1)}
\let\below\relax
\newcommand{\below}[1]{(\,<\!#1)}
\newcommand{\compset}[1]{(\,\comp\!#1)}
\newcommand{\incompset}[1]{(\,\incomp\!#1)}

\let\and\relax
\newcommand{\and}{\wedge}

\DeclareMathOperator{\Conv}{Conv}
\DeclareMathOperator{\im}{im}

\begin{document}

\title{\textsc{\bfseries The structure of FAC posets and the Aharoni--Korman conjecture}}

\renewcommand{\thefootnote}{\fnsymbol{footnote}}

\author{\textsc{Lawrence Hollom}\footnotemark[1]}

\footnotetext[1]{\href{mailto:lawrence.hollom@epfl.ch}{lawrence.hollom@epfl.ch}. Institute of Mathematics, EPFL, Lausanne, Switzerland}

\renewcommand{\thefootnote}{\arabic{footnote}}

\maketitle

\date{}



\begin{abstract}
    A poset $P$ is said to satisfy the \emph{finite antichain condition}, or \emph{FAC} for short, if it has no infinite antichain.
    Such posets exhibit rich and complex structure, and it was conjectured by Aharoni and Korman in 1992 that any FAC poset $P$ possesses a chain $C$ and a partition into antichains such that $C$ meets every antichain of the partition.
    While this conjecture is now known to be false, in this paper we prove that the conjecture does hold true for a broad class of posets.
    In particular, we prove that the Aharoni--Korman conjecture holds for countable posets containing no saturated chain $D$ such that either $D$ or its reverse $D^*$ is of the form $\bigoplus_{x\in\omega} D_x$, where each $D_x$ is infinite and co-wellfounded.

	In pursuit of this goal, we prove several structural results, the foremost of which demonstrates how a countable FAC poset may be broken up into a collection of scattered posets which reflect the structure of the poset as a whole.
\end{abstract}
\clearpage

\tableofcontents

\clearpage


\section{Introduction}
\label{sec:intro}
One of the most fundamental combinatorial results on posets is Dilworth's theorem, proved in 1950 \cite{dilworth1950decomposition}, which may be phrased as saying that every finite poset $P$ contains an antichain $A$ and admits a partition into disjoint chains such that every chain in the partition meets $A$.
We recall that a subset $X$ of a poset $P$ is a \emph{chain} if the elements of $X$ are pairwise comparable, and it is an \emph{antichain} if its elements are pairwise incomparable.

One central question is the extent to which the above theorem, and other results on finite posets, generalise to the infinite setting.
Certainly some extra condition is needed to extend the above structural formulation of Dilworth's theorem to the infinite setting, as all antichains in the poset $\ww\times\ww$ are finite, but any partition into chains is infinite.
One possible fix for this issue, due to Abraham \cite{abraham1987note}, is to assume that $P$ has a partition into finitely many chains, in which case Dilworth's theorem does generalise to infinite posets. 
In a different direction, it was proved by Aharoni and Korman \cite{aharoni1992greene} that adding an assumption of no infinite chains also suffices.
The following is a special case of this theorem.

\begin{theorem}[\cite{aharoni1992greene}]
    \label{thm:infinite-greene-kleitman-special}
    Let $P$ be a poset with no infinite chains. There exists then a partition $(C_i\st i\in I)$ of $P$ into disjoint chains and an antichain $A\sseq P$ such that every chain $C_i$ meets $A$.
\end{theorem}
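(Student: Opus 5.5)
The plan is to reduce the statement to the infinite version of König's theorem, as in the classical derivation of Dilworth's theorem from König's. The tool I will invoke is Aharoni's theorem: \emph{every bipartite graph $G$ (of arbitrary cardinality) has a matching $M$ and a vertex cover $K$ such that $K$ consists of exactly one endpoint of each edge of $M$.} Proving this theorem in the uncountable case is the main obstacle. I will treat it as a black box here; it is the deep ingredient and carries essentially all the infinitary difficulty. The hypothesis of no infinite chains will be used only to make the chains produced from $M$ finite.

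\textbf{Construction.} Form the bipartite graph $G$ with vertex classes $P'=\set{x'\st x\in P}$ and $P''=\set{x''\st x\in P}$, and an edge $x'y''$ whenever $x<y$ in $P$. Apply Aharoni's theorem to obtain $M$ and $K$. Each vertex lies in at most one edge of $M$, so each $x\in P$ has at most one $M$-successor $y$ (with $x'y''\in M$) and at most one $M$-predecessor. Following these links partitions $P$ into $M$-paths $x_1<x_2<\dots$, and each such path is a chain by transitivity. Since $P$ has no infinite chains, every path is finite, say $x_1<\dots<x_k$ with $k\ge 1$. Moreover $x_1''$ and $x_k'$ are unmatched, because a path cannot be extended. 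These paths form the chain partition $(C_i\st i\in I)$.

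\textbf{The antichain.} Let $A=\set{x\in P\st x'\notin K \text{ and } x''\notin K}$. If $x<y$ with $x,y\in A$, then the edge $x'y''$ is uncovered by $K$, which is a contradiction; hence $A$ is an antichain. It remains to show that every path $x_1<\dots<x_k$ meets $A$. Since $K$ consists only of endpoints of $M$-edges, unmatched vertices are not in $K$; in particular $x_1''\notin K$ and $x_k'\notin K$. For each $i<k$ exactly one of $x_i'$ and $x_{i+1}''$ lies in $K$. Now scan along the path. We have $x_1''\notin K$; if $x_1'\notin K$ then $x_1\in A$. Otherwise $x_1'\in K$, so $x_2''\notin K$, and we repeat with $x_2$. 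The process cannot pass $x_k$, because $x_k'\notin K$, so some $x_i$ has both copies outside $K$, i.e.\ $x_i\in A$. Thus every chain of the partition meets $A$, which proves the statement.

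The finiteness of the paths is exactly where the hypothesis on chains enters. In a poset with an infinite chain, an $M$-path could be infinite in one or both directions, and the scanning argument would have no unmatched starting or ending vertex to anchor it.
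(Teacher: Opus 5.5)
Your argument is correct, but it does not follow the paper, which gives no proof of this statement at all. The paper quotes it as the $k=1$ case of the Aharoni--Korman infinite Greene--Kleitman theorem (\Cref{thm:infinite-greene-kleitman}). You instead derive it directly from Aharoni's infinite K\"onig duality theorem \cite{aharoni1984konig}, applied to the split graph on $P'\cup P''$.

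The details check out:
\begin{itemize}
\item The $M$-links strictly increase in $P$, so they cannot close into cycles, and each component is a chain.
\item The hypothesis of no infinite chains makes each component finite, with unmatched ends $x_1''$ and $x_k'$.
\item $K$ contains exactly one end of each $M$-edge, so $K\sseq V(M)$ and unmatched vertices lie outside $K$. The forced alternation along each path then produces some $x_i$ with $x_i',x_i''\notin K$.
\end{itemize}

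Note that the strong form of Aharoni's theorem is essential here. For infinite graphs, a cover of merely minimum cardinality carries no information. The ``one vertex per matching edge'' property is what replaces the counting in the finite K\"onig-to-Dilworth derivation.

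As for what each route buys: citing the general theorem yields the stronger statement with $k$ antichains, but hides the mechanism. Your reduction places all the infinitary difficulty inside K\"onig's theorem. It also makes transparent that the assumption of no infinite chains is used only once, to guarantee that each $M$-path has unmatched endpoints from which to anchor the alternation argument.
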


Moreover, Aharoni and Korman asked about the natural dual to this problem, conjecturing that this should also hold.

\begin{conjecture}[\cite{aharoni1992greene}]
    \label{conj:ak}
    If a poset $P$ contains no infinite antichain then there exists a chain $C$ and a partition of $P$ into disjoint antichains $(A_i\st i\in I)$ such that each $A_i$ meets $C$.
\end{conjecture}

This condition of having no infinite antichains is an important property that we will consider extensively; if $P$ has no infinite antichain, then we say that it satisfies the \emph{finite antichain condition}, and for brevity we will refer to $P$ as an \emph{FAC poset}.
In fact, a more general conjecture was posed, concerning $k$ disjoint chains, which will be discussed in due course.

For example, if $P$ is the poset on the set $\NN\times\NN$ with ordering given by setting $(x,y)\leq(u,v)$ if and only if $x\leq u$ and $y\leq v$, then \Cref{conj:ak} holds by taking $C = \set{(0,y)\st y\in\NN}$ and $A_i=\set{(x,y)\in P\st x+y=i}$ for all integers $i\geq 0$.
This conjecture is often referred to as \emph{the Aharoni--Korman conjecture} or \emph{the fishbone conjecture}; we will use the former name.
This conjecture is now known to be false: the author \cite{hollom2024resolution} constructed a counterexample, the key proposition of which has since been formally verified in the Lean4 theorem prover by Mehta \cite{mehta2025formal}.
However, this still leaves open the question of when the conclusion of the Aharoni--Korman conjecture does hold.
Indeed, we show that the conclusion of \Cref{conj:ak} does hold for a broad class of FAC posets: those which have a property which we will call ``vacillating''.
Together with the counterexample in \cite{hollom2024resolution}, this provides a thorough resolution of the conjecture.

Our notation mirrors that of Stanley \cite{stanley2011enumerative} when standard, but we repeat the definitions here for completeness.
Indeed, we say that a chain $C$ in a poset $P$ is \emph{saturated} if there is no $x\in P\setminus C$ and $y,z\in C$ with $y<x<z$ and $C\union\set{x}$ a chain.
The \emph{linear sum} $\bigoplus_{i\in I}P_i$ of a collection $(P_i\st i\in I)$ of posets indexed by a total order $I$ is the ordering on $\bigsqcup_{i\in I}P_i$ wherein all elements of $P_j$ are set to be greater than all elements of $P_i$ whenever $j > i$ in $I$.
A poset is \emph{wellfounded} if it has no infinite decreasing sequence, and \emph{co-wellfounded} if it has no infinite increasing sequence.
We write $x\incomp y$ to mean that $x,y\in P$ are incomparable, and $x \comp y$ to mean that $x$ and $y$ are comparable (i.e.\ $x\leq y$ or $y\leq x$).
Finally, $\ww$ is the order-type of the naturals.

Our first main result shows that \Cref{conj:ak} holds for a large class of posets.

\begin{theorem}
    \label{thm:main-on-first-page}
    Let $P$ be a countable FAC poset such that, for any saturated chain $C\sseq P$, neither $C$ nor $C^*$ can be written as $\bigoplus_{i\in\ww} C_i$ with each $C_i$ infinite and co-wellfounded.
    Then there is a chain $C\sseq P$ and a partition $P=\bigcup_{i\in I} A_i$ into antichains such that $C$ meets every antichain $A_i$.
\end{theorem}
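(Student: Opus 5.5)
The plan is to reduce to the structural decomposition the abstract advertises: a countable FAC poset $P$ splits into a family of scattered pieces whose arrangement reflects the global structure of $P$. We then build the chain $C$ and the antichain partition piece by piece, and glue them.

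\textbf{Step 1 (well-understood pieces).} First we handle the building blocks. If $P$ is wellfounded and FAC, define $A_\alpha$ to be the set of elements of height exactly $\alpha$. Each $A_\alpha$ is an antichain, and by a standard compactness/K\"onig-type argument there is a chain meeting every level. This uses FAC: each level is finite, and the finitely branching tree of predecessors gives the chain. The co-wellfounded case is symmetric, using depth in place of height. More generally, for a poset that is a linear sum $\bigoplus_{i\in I} P_i$ of such blocks indexed by a scattered order $I$, we concatenate the chains and take the union of the level partitions. The work here is to check that consecutive blocks glue: each antichain should live inside a single block, or be assembled so that the chain still meets it.

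\textbf{Step 2 (decomposition).} Next, for general countable FAC $P$, we invoke the structural theorem. Define an equivalence relation ($\finsim$ in the paper's notation) identifying elements that are ``finitely far apart''. The quotient $\quo{P}$ should be essentially a chain, and each class should be a scattered FAC poset. We then fix a maximal, hence saturated, chain $D$ through $P$ that passes through every class. The hypothesis says neither $D$ nor $D^*$ has the form $\bigoplus_{i\in\omega} D_i$ with each $D_i$ infinite and co-wellfounded. We use this to argue that the ``ill-founded'' junctures between blocks are of a controlled type. Concretely, at each limit point of the decomposition, either a wellfounded or a co-wellfounded ordering of levels can be consistently chosen on both sides. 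This lets Step 1's level partitions (heights on some blocks, depths on others) be oriented compatibly.

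\textbf{Step 3 (gluing).} We define the antichain partition by transfinite recursion along $\quo{P}$, using height-type levels relative to the start of each vacillation segment. The chain $C$ is taken to be a union of chains chosen in each class, so that $C$ hits every level. Countability is used to enumerate the elements and perform back-and-forth corrections, so that elements incomparable across classes (which FAC keeps finite in each region) are absorbed into existing antichains.

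\textbf{Main obstacle.} The hardest step is Step 2's use of the forbidden configuration. We must show that whenever the orientation of levels would have to switch infinitely often in an $\omega$-sequence of infinite co-wellfounded pieces, this produces exactly the forbidden saturated chain; this is the mechanism behind the counterexample in \cite{hollom2024resolution}. Our first attempt will be to choose orientations greedily along $\omega$-type stretches of $D$. Wherever that greedy choice fails, we would aim to extract infinitely many infinite co-wellfounded intervals of $D$ and argue, by saturation of $D$, that their sum is an initial or final segment of the required form.
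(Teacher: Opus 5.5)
Your Step~1 is essentially fine: height levels of a wellfounded FAC poset are finite antichains, and a K\"onig/compactness argument threads a chain through all of them. Steps~2 and~3, however, rest on two assumptions that fail, and what is missing is exactly the part the paper's machinery supplies.

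\textbf{First gap: the decomposition.} $\finsim$ is only defined on a chain. Moreover, a countable scattered vacillating FAC poset need not be a linear sum of wellfounded and co-wellfounded blocks; \Cref{thm:structural} only gives convex scattered pieces that are \emph{arbitrary} scattered FAC posets, plus leftover points incomparable to infinite intervals. For example, take a chain $a_0<a_1<\dots<b_1<b_0$ of type $\omega+\omega^*$, and add $x$ with $a_0<x<b_0$ and $x$ incomparable to all other $a_n,b_n$. This poset is countable, of width~$2$, scattered and vacillating. Yet any linear-sum block containing $x$ contains every $a_n,b_n$ with $n\ge 1$, so that block is neither wellfounded nor co-wellfounded and has no height or depth levels to orient. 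A spine does exist here (the $a$'s and $b$'s, with $x$ put in the antichain of $a_1$), but your construction gives no rule that produces it in general.

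\textbf{Second gap: the absorption step.} FAC does not keep incomparability finite: $x$ above is incomparable to infinitely many points. So the absorption in Step~3 has nothing to stand on. The paper turns this around. It reduces to finding a \emph{maximal tube} $T$ (\Cref{prop:maximal-tube-suffices}), and \Cref{thm:zaguia-spine-reformed} gives a spine of $T$. Each point outside $T$ is then absorbed by enumeration precisely \emph{because} it is incomparable to whole antichains $A_y$ for infinitely many spine points $y$ (\Cref{lem:extend-spine-partition}). The real content is then choosing a chain $C$ for which $\{x\in P : x \text{ is incomparable to only finitely many points of } C\}$ is a tube. The paper achieves this through its sequence of Zorn's-lemma replacements: $\eta$-strongly maximal, illfounded maximal, alternating maximal, and finally consolidated. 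Nothing in your plan plays this role.

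\textbf{Third gap: the role of the hypothesis.} Infinite switching between infinite wellfounded and infinite co-wellfounded stretches is \emph{permitted}. For instance, the chain $(\omega+\omega^*)\cdot\omega$ is vacillating. Any infinite co-wellfounded interval of it is, up to finitely many points, an initial segment of a single copy of $\omega^*$. Hence no two such intervals can be adjacent, and the same holds in the reverse order. So wherever your greedy orientation breaks down at such an illfounded limit point, you cannot extract the forbidden chain from the failure. The paper needs a separate reduction (\Cref{prop:quasifounded} and \Cref{lem:quasifounded-suffices}) to cut $P$ at these points and reglue the pieces via bicomparability.

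The forbidden configuration, e.g.\ $\bigoplus_{i\in\omega}\omega^*$, involves no switching at all. Its absence is used for different purposes:
\begin{itemize}
\item to make consolidation antisymmetric (\Cref{lem:strong-bicomp-sometimes-implies-equiv}; \Cref{ex:no-finite-cut} shows this fails otherwise);
\item to ensure that atomic increasing chains have a final segment of type $\omega$, which drives the final tube argument in \Cref{lem:f-finite-incomparability}.
\end{itemize}
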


\Cref{thm:main-on-first-page} will be deduced from a series of structural results concerning countable FAC posets, which we believe to be of independent interest.
Chief among these is the following theorem, which shows that an arbitrary countable FAC poset is either \emph{scattered} -- that is, contains no suborder isomorphic to the rationals -- or is organised around a densely ordered family of scattered pieces, each of which is convex, which captures much of the structure of the whole poset.

\begin{theorem}
\label{thm:structural}
    Let $P$ be a countable FAC poset and let $\RRP$ be the extended reals, that is, $\RR\union\set{-\infty, +\infty}$.
    Then either $P$ is scattered, or there is a collection $\set{A_r\st r \in \RRP}$ of subsets of $P$ satisfying the following properties.
    \begin{itemize}
        \item For all $q\in \QQ$, the set $A_q$ is non-empty,
        \item For all $r,s\in\RRP$ with $r<s$, we have $A_r < A_s$; these sets are in particular disjoint,
        \item For all $x \in P \setminus \bigcup_{r\in\RRP} A_r$, there is an infinite interval $I_x \sseq \RRP$ such that $x$ is incomparable to all of $\,\bigcup_{y\in I_x} A_y$,
        \item For all $r\in \RRP$, $A_r$ is a convex subset of $P$, and 
        \item For all $r\in \RRP$, the poset $A_r$ is scattered.
    \end{itemize}
\end{theorem}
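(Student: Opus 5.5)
The plan is to build the sets $A_r$ from a copy of $\QQ$ sitting in a chain of $P$, and to use the FAC hypothesis repeatedly to show that elements not captured by this copy behave as the third bullet requires.

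\textbf{Step 1: a saturated chain containing $\QQ$.} Suppose $P$ is not scattered, and fix an embedding $\QQ\hookrightarrow P$. Its image is a chain. By Zorn's lemma, extend it to a saturated chain, and then to a maximal chain $D\sseq P$ containing the copy of $\QQ$. Since $D$ is countable, we can choose an order-preserving map $\pi\from D\to\RRP$ such that $\pi$ restricted to the copy of $\QQ$ is the identity onto $\QQ$. To do this, send each $d\in D$ to the Dedekind cut that $d$ determines in the copy of $\QQ$, taking its supremum in $\RRP$.

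\textbf{Step 2: the fibres of $\pi$.} Set $D_r=\pi^{-1}(r)$. These fibres are intervals of $D$, they satisfy $D_r<D_s$ for $r<s$, and $D_q\ne\emptyset$ for $q\in\QQ$. A subtlety is that $D_r$ need not be scattered. To fix this, I would first choose the copy of $\QQ$ to be \emph{maximal}. Concretely, refine by the equivalence relation on $D$ given by: $x\sim y$ if the interval $[x,y]_D$ is scattered. Hausdorff's theorem says that the quotient $D/{\sim}$ is dense (or trivial), and that each class is scattered. We may then take the copy of $\QQ$ to be a set of representatives that is dense in $D/{\sim}$, and let $\pi$ factor through the quotient, so that each $D_r$ is contained in one $\sim$-class.

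\textbf{Step 3: defining $A_r$.} Let $A_r$ be the convex hull in $P$ of $D_r$ together with all $x\in P$ that are \emph{pinned} at $r$. Here $x$ is pinned at $r$ if $x$ is above $D_s$ for all $s<r$ and below $D_s$ for all $s>r$. Convexity and the ordering $A_r<A_s$ then follow from the definitions together with transitivity. What remains is to show that $A_r$ is scattered. Suppose instead that $A_r$ contains a copy $Q'$ of $\QQ$. I would use FAC, via Ramsey's theorem applied to pairs from $Q'\times D_r$, to show that $Q'$ is, up to finitely many exceptions, comparable with $D_r$ in a way that can be interleaved into $D$. Doing so produces a chain that contradicts either the maximality of $D$ or the choice of $\sim$-classes. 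This is the step I expect to be the main obstacle. If it fails, the fallback is to shrink $A_r$ to a suitable core and push the remaining points into the third category.

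\textbf{Step 4: unpinned points.} Take $x\notin\bigcup_r A_r$. The sets $L=\{r: x>D_r\}$ and $U=\{r: x<D_r\}$ are respectively a down-set and an up-set of $\RRP$, and $x$ is incomparable to every $D_r$ with $r\notin L\cup U$. If the gap between $\sup L$ and $\inf U$ is a nondegenerate interval, that interval is the required $I_x$. Otherwise $x$ would be pinned, except in the case where the ordering $D_r\lessgtr x$ alternates on a dense set of $r$. In that case I would exhibit an infinite antichain, using elements of $D$ together with $x$'s comparabilities, or else use maximality of $D$, to rule this configuration out, contradicting FAC. Finally, incomparability with each $D_r$ must be upgraded to incomparability with each $A_r$ for $r\in I_x$. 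For this, shrink $I_x$ using convexity: if $x$ were comparable to $y\in A_r$, then, since $y$ is pinned, transitivity would make $x$ comparable to $D_s$ for all $s$ on one side of $r$.
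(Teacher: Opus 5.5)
\textbf{There is a genuine gap in Step 3, and the approach cannot be repaired as stated.} For a maximal chain $D$ through an arbitrary copy of $\QQ$, the sets $A_r$ need not be scattered. Neither the maximality of $D$ nor the condensation in Step 2 can detect this, because both only see the structure inside $D$.

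Here is a concrete example. Take $P = N\oplus(\{k\}\sqcup Q')\oplus R$ with $N\cong\QQ_{<0}$, $R\cong\QQ_{>0}$, $Q'\cong\QQ$, and $k$ incomparable to every point of $Q'$. This poset has width $2$, so it is FAC.
\begin{itemize}
\item Suppose your copy of $\QQ$ is $N\cup\{k\}\cup R$. This set is already a maximal chain $D$, all $\sim$-classes are singletons, and $D_0=\{k\}$.
\item Then $A_0=\{k\}\cup Q'$, which contains $\QQ$.
\item Every pair in $Q'\times D_0$ is incomparable. FAC does not prevent a single point from being incomparable to infinitely many others, so Ramsey gives you nothing to interleave.
\item You get no contradiction with the maximality of $D$ or with the choice of $\sim$-classes.
\item The fallback fails too. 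Each point of $Q'$ is comparable to every $D_s$ with $s\neq 0$, and every infinite interval of $\RRP$ contains such rational $s$. So these points cannot be moved into the third category.
\end{itemize}

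\textbf{The missing idea is an extremal choice of the chain itself.} You need a chain $C$ of order type $\eta$ (not a maximal chain of $P$) with two properties:
\begin{itemize}
\item $C$ cannot be enlarged to a bigger chain of type $\eta$;
\item no point $q\in C$ can be replaced by a copy of $\eta$ lying in $(\below{q}_C,\above{q}_C)_P$.
\end{itemize}
In the example you must swap $k$ for $Q'$, after which $k$ falls into the third category.

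Such swaps may have to be nested transfinitely, and proving that an optimal chain exists is where the paper actually uses FAC and countability. It orders type-$\eta$ chains by these ``$\eta$-replacements'' and applies Zorn's lemma.
\begin{itemize}
\item At $\omega$-limits, a replaced point is incomparable to every point of the copy replacing it. If it were comparable to some $y$ in that copy, the part of the copy beyond $y$ would extend the chain.
\item This incomparability persists under composition of replacements. Hence an infinite branch of nested replacements would be an infinite antichain, contradicting FAC.
\item So the replacement forest has no infinite branches, and its leaves again form a chain of type $\eta$.
\item Countability rules out uncountable replacement sequences.
\end{itemize}

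With such a $C$ and $A_r=(\below{r}_C,\above{r}_C)_P$, scatteredness is immediate. A copy of $\eta$ in $A_r$ would extend $C$ if $r\notin\QQ$, or give a non-trivial replacement of $r$ if $r\in\QQ$.

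Your Step 4 is essentially the paper's argument and is fine, provided you restrict $L$ and $U$ to indices with non-empty fibre. The ``alternating'' case you worry about cannot occur, since $L$ is a down-set and $U$ an up-set.
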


Loosely speaking, \Cref{thm:structural} allows questions about general countable FAC posets to be reduced to the scattered case, where powerful tools such as Hausdorff's classical classification of scattered linear orders \cite{hausdorff1908classification} become available; this reduction will be a key step in our proof of \Cref{thm:main-on-first-page}.




We now review the existing literature on this topic, and then provide a more thorough overview of our contributions.


\subsection{Known results}
\label{subsec:lit-review}

Returning to Dilworth's theorem, as discussed at the start of the introduction, a standard phrasing of the result is that every finite poset $P$ of width $w$ has a partition into $w$ chains, where we recall that the \emph{width} of a poset $P$ is the supremum over the sizes of antichains in $P$.
As will become a theme throughout this introduction, the generalisation to infinite posets presents some further challenges.
Indeed, Perles \cite{perles1963dilworth} noted that the strongest ``infinite Dilworth's theorem'' one could hope for -- that any poset of width at most $\ll$ can be partitioned into at most $\ll$-many chains -- is false, as, for example, for any infinite cardinal $\kk$, the poset $\kk\times\kk$ has arbitrarily large finite antichains but no infinite antichain, and thus width $\aleph_0$, but cannot be partitioned into fewer than $\kk$-many chains.

In 1976, Greene and Kleitman \cite{greene1976structure} generalised Dilworth's theorem to families of antichains, proving that, for any finite poset $P$ and integer $k$, the maximal size of the union of $k$ antichains of $P$ is equal to the minimal value of $\sum_{i\leq m}\min(\abs{C_i},k)$ over all partitions $\set{C_1,\dotsc,C_m}$ of $P$ into chains.
The Greene--Kleitman theorem was then generalised to the infinite setting by Aharoni and Korman \cite{aharoni1992greene}.
More precisely, they proved the following theorem, which they described as ``the `correct' infinite version of Greene--Kleitman's theorem.''

\begin{theorem}[\cite{aharoni1992greene}, Theorem 3.1]
    \label{thm:infinite-greene-kleitman}
    Let $P$ be a poset with no infinite chains and let $k$ be a positive integer. There exists then a partition $(C_i\st i\in I)$ of $P$ into disjoint chains and $k$ disjoint antichains $A_j$ $(1 \leq j \leq k)$ such that every chain $C_i$ meets $\min(k, \abs{C_i})$ antichains $A_j$.
\end{theorem}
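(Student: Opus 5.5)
The plan is to reduce the statement to an infinite duality theorem for graphs, in the way Dilworth's theorem reduces to König's theorem. The natural tool is Aharoni's infinite König theorem: every bipartite graph has a matching $M$ and a vertex cover consisting of exactly one endpoint from each edge of $M$. I would first carry out the case $k=1$, which is \Cref{thm:infinite-greene-kleitman-special}, as a model. Form the bipartite graph with sides $P$ and a disjoint copy $P'$, joining $x$ to $y'$ whenever $x<y$. Take a matching $M$ and a cover $K$ as above. Following $M$-edges $x\to y$ gives a family of disjoint paths, and since $P$ has no infinite chains these paths are finite chains, and they partition $P$. Let $A$ be the set of $x$ such that neither $x$ nor $x'$ lies in $K$; this is an antichain, because $K$ covers every comparability edge. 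The requirement that $K$ contains exactly one vertex of each $M$-edge then forces every chain to meet $A$: walking along a chain, the cover must pass from the $P'$-side to the $P$-side somewhere, and the element at which it does so lies in $A$. Finiteness of chains is exactly what makes this walking argument terminate.

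For general $k$ I would imitate a network proof of the finite Greene--Kleitman theorem (the min-cost-flow proof of Frank type) rather than apply compactness. Compactness is unsuitable because the condition ``$C_i$ meets $\min(k,\abs{C_i})$ antichains'' is not closed in the product topology. Concretely, build a digraph $D_k$ with vertices $x^{\mathrm{in}}, x^{\mathrm{out}}$ for each $x\in P$, a source $s$ and a sink $t$. It has arcs $x^{\mathrm{out}}\to y^{\mathrm{in}}$ for $x<y$ and ``skip'' arcs $x^{\mathrm{in}}\to x^{\mathrm{out}}$. It also has $k$ distinguished parallel ``layers'' encoding which of $A_1,\dots,A_k$ an element is charged to. The aim is a combinatorial equivalence in which chain partitions of $P$ with the charges $\min(k,\abs{C})$ correspond to families of disjoint $s$--$t$ paths, and $k$-families of disjoint antichains correspond to separators. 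Orthogonality of the two then becomes the statement that the separator consists of exactly one vertex on each path, which is the Erdős--Menger form of duality. I would then invoke the infinite Menger-type statement in the form available in Aharoni's framework, namely a linkage together with a separator made of one vertex on each path. The hypothesis of no infinite chains is again used to ensure that each path in the linkage is finite and so projects to an honest finite chain $C_i$.

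The main obstacle is this middle step. The finite Greene--Kleitman duality is usually obtained through a weighted (min-cost) flow or an LP-duality argument, and weights do not transfer to infinite settings. The gadget therefore has to be designed so that the quantity $\sum_i\min(\abs{C_i},k)$ is expressed purely by vertex-disjointness and cover membership, with no counting. My first attempt would be to split each element into $k+1$ copies: $k$ ``antichain slots'' and one ``bypass''. A chain would be forced to route through at most $k$ slots, and through at least $\min(k,\abs{C})$ of them because bypass arcs go only upward in $P$. The remaining task is to check that the separator yields $k$ genuinely disjoint antichains, which requires verifying that any two separator vertices in the same slot layer are incomparable. That verification, together with ensuring that the infinite duality theorem is applicable to the resulting (possibly uncountable) digraph, is where I expect the real work to lie.
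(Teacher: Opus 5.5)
The paper quotes this theorem from Aharoni and Korman without proof, so I am judging your argument on its own terms. Your $k=1$ argument is correct. Because $K$ consists of exactly one endpoint of each $M$-edge, unmatched vertices lie outside $K$. So along an $M$-path $c_1<\dots<c_m$ one has $c_1'\notin K$ and $c_m\notin K$. If no $c_i$ had both copies outside $K$, the forced alternation $c_1\in K$, $c_2'\notin K$, $c_2\in K,\dots$ would end with $c_m\in K$. This, however, only gives \Cref{thm:infinite-greene-kleitman-special}. For $k\ge 2$ what you have written is a programme, not a proof. The digraph $D_k$ is never specified, and neither direction of the claimed correspondence is established: linkages versus chain partitions weighted by $\min(k,\abs{C})$, and separators versus families of $k$ disjoint antichains. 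That correspondence is exactly the content of Greene--Kleitman beyond Dilworth, so for $k\ge 2$ the gap is essentially the whole theorem.

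Moreover, the sketch cannot work in the form described. Erd\H{o}s--Menger duality yields a separator with \emph{exactly one} vertex on each path. In your $(k+1)$-copy gadget a chain of $P$ is traversed by a single path, so it meets the separator once, however many slot vertices it passes through. That reproduces the $k=1$ orthogonality, not $\min(k,\abs{C})$ hits. Likewise, elements missed by every path of the linkage become singleton chains that must lie in $A_1\cup\dots\cup A_k$, and nothing about a separator forces this.

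To get $\min(k,\abs{C})$ hits, a chain must be carried by $\min(k,\abs{C})$ disjoint paths. This is what Saks's product $Q\defined P\times\set{1,\dots,k}$ achieves. $Q$ has no infinite chains, so your $k=1$ argument applies to it. The levels $A_j\defined\set{x\st (x,j)\in B}$ of the resulting antichain $B\sseq Q$ are then automatically $k$ disjoint antichains of $P$, since $(x,i)<(x,j)$ for $i<j$. But the chain partition of $Q$ (equivalently, the linkage) produced by the duality theorem is arbitrary. Nothing makes its chains bundle into a chain partition of $P$ in which each chain $C$ meets $A_1\cup\dots\cup A_k$ in $\min(k,\abs{C})$ points. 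That uncrossing step is the missing idea. In finite proofs it is supplied by integrality of min-cost flows (Frank) or by an additional combinatorial argument on top of Dilworth for the product (Saks). An infinite version would have to use the absence of infinite chains in an essential way.
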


Having proved \Cref{thm:infinite-greene-kleitman}, Aharoni and Korman then asked about the dual version, interchanging the roles of chains and antichains, which led them to \Cref{conj:ak}, and the more general version, which we state here.

\begin{conjecture}[\cite{aharoni1992greene}, Conjecture 4.1]
	\label{conj:ak-general}
    If a poset $P$ contains no infinite antichain then, for every positive integer $k$, there exist $k$ chains $C_1,\dotsc,C_k$ and a partition of $P$ into disjoint antichains $(A_i\st i\in I)$ such that each $A_i$ meets $\min(\abs{A_i},k)$ chains $C_j$.
\end{conjecture}

\Cref{conj:ak} was later referred to by Aharoni as the ``fishbone conjecture'' (see for example \cite{aharoni2022strongly}, apparently due to how a schematic diagram of a chain intersecting a family of antichains resembles the skeleton of a fish), but is often called ``the Aharoni--Korman conjecture''.
The inspiration for the conjecture in fact came from the consideration of strongly minimal covers and strongly maximal matchings in hypergraphs (see for example \cite{aharoni1991infinite}), and has close links to infinite graph theory as a whole.
One such related area is the generalisation of Menger's theorem to infinite graphs.
This problem has a rich history, and led to many partial results \cite{aharoni1987menger,aharoni1994menger,diestel2003countable} before the seminal paper of Aharoni and Berger \cite{aharoni2009menger} proved the full generalisation.
Indeed, in \cite{aharoni2009menger}, \Cref{conj:ak} was described as ``one of the most attractive'' conjectures in the area of infinite matching theory.

Aharoni and Korman also posed several other, more general conjectures in \cite{aharoni1992greene}.
In 1995, Aharoni and Loebl \cite{aharoni1995strongly} resolved one of these conjectures, concerning perfect graphs, in some special cases.
The strongest of the conjectures from \cite{aharoni1992greene}, concerning strongly minimal covers of hypergraphs, has been disproved by van der Zypen \cite{van2022counterexample}.
More recently, the author and Randall Shaw \cite{HR26} constructed counterexamples to several further conjectures in this area, concerning strongly maximal and strongly minimal structures.

Returning to \Cref{conj:ak}, before the conjecture was resolved in the negative \cite{hollom2024resolution}, there was a line of results proving that the conjecture does hold for some classes of poset.
The first of these was due to Aharoni and Korman \cite{aharoni1992greene} in the same paper which introduced the problem, in which they proved that it holds for posets of width at most $2$ by application of an infinite version of K\"{o}nig's theorem due to Aharoni \cite{aharoni1984konig}.

The next result after this was due to Duffus and Goddard \cite{duffus2002intervals} who proved that \Cref{conj:ak} holds in two different cases.
Firstly, they proved the conjecture for posets of the form $C\times P$, where $C$ is a chain and $P$ is finite (this result in fact first appeared in Goddard's doctoral thesis \cite{goddard1996ordered}).
Secondly, they proved the conjecture for FAC posets with no infinite intervals.
We will use the second of these results as a black box in our proofs, so we now state it formally.

\begin{theorem}[\cite{duffus2002intervals}, Theorem 4.1]
    \label{thm:no-infinite-interval}
    If $P$ is an ordered set with no infinite intervals and no infinite antichains then there is a partition of $P$ into antichains and there is a chain of $P$ that intersects every member of the partition.
\end{theorem}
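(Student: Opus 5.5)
The plan is to reproduce the Duffus--Goddard argument. The idea is to build the antichain partition by layering along a maximal chain, using the absence of infinite intervals to keep every layer an antichain and every element inside some layer.

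First I reduce to the case where $P$ is connected in the comparability sense, or at least handle components separately. An FAC poset has only finitely many comparability components, since picking one element from each component gives an antichain. Suppose for each component $Q_j$ we have a chain $C_j$ and an antichain partition of $Q_j$ met by $C_j$. A single chain cannot meet all these partitions at once. Instead, we merge: index the antichains of each $Q_j$ along $C_j$ and take unions of corresponding antichains across components, which are still antichains since components are mutually incomparable. This needs the antichains of each component to be indexed compatibly with the order type of one chosen chain, say the longest. The indexing set of each component embeds into, or is refined by, the main one. If that is awkward, I will instead arrange that the construction below directly produces antichains indexed by the chain elements.

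The core construction is as follows. Fix a maximal chain $C$ of $P$. For each $c\in C$, define $A_c$ to be the set of $x\in P$ such that $x$ is incomparable with $c$, or $x=c$, and $x$ is "positioned at $c$". Concretely, write $D(x)=\set{c\in C\st c<x}$ and $U(x)=\set{c\in C\st c>x}$. Maximality of $C$ gives $x\notin C\Rightarrow$ some element of $C$ is incomparable to $x$. For $x\notin C$, the set $I(x)=C\setminus(D(x)\cup U(x))$ is a nonempty convex subset of $C$. The key use of the hypothesis is that $I(x)$ is finite. If $a,b\in I(x)$ with $a<b$, then every element of $C$ between $a$ and $b$ is incomparable to $x$. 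So if $I(x)$ were infinite, it would contain an infinite interval $[a,b]_C\subseteq[a,b]_P$, contradicting the assumption of no infinite intervals. Note that this requires only infinitely many elements of the chain between two points, which is guaranteed when a convex infinite subset of a chain contains endpoints. A convex infinite subchain without both endpoints is still infinite, and any infinite convex subset of a chain contains an infinite closed interval unless it is like $\omega$ with no top, and even then a bounded infinite segment exists only if it has a top. So I will argue more carefully: if $I(x)$ is infinite, pick $a<b$ in it with infinitely many points between. That fails only for order types like $\omega$ or $\omega^*$, where every bounded segment is finite. In that case I instead use the FAC condition, as follows.

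With $I(x)$ finite and nonempty, assign $x$ to $A_{\min I(x)}$, and put each $c\in C$ into $A_c$. To check that $A_c$ is an antichain, take $x<y$ both assigned to $c$. Then $c\in I(y)$, so $c$ is incomparable with $y$, and likewise $c$ is incomparable with $x$. Now I have to derive a contradiction from the positions of $D$ and $U$; for instance, $D(x)\subseteq D(y)$ and $U(y)\subseteq U(x)$. Here the choice of minimum may fail, and the real fix is a finer ordering of elements by the pair of cuts $(D(x),U(x))$. This verification, together with the $\omega$-type ends of $I(x)$, is the main obstacle. There I will use the FAC condition: infinitely many chain elements incomparable to $x$ forming an $\omega$-sequence, combined with finiteness of intervals, should force an infinite antichain among elements lying between consecutive chain points. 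If the min-assignment fails, I will fall back on assigning via a greedy transfinite height function relative to $C$, namely the rank of $x$ in the finite-interval poset. I will then check that the levels are antichains and that the chosen chain, taken to be a chain hitting every level (by K\"onig-type compactness using finiteness of intervals), meets each of them.
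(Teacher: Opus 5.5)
\textbf{Genuine gaps in the proposal.}
The paper does not prove this statement. It quotes it from Duffus and Goddard and uses it as a black box (through Zaguia's theorem) inside the proof of \Cref{thm:main}, so its remark that \Cref{thm:main} implies it is not an independent proof. Your argument therefore has to stand on its own, and its core construction fails in two ways.

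First, the step you call the key use of the hypothesis does no work. Since $P$ has no infinite intervals, every bounded segment of $C$ is already finite, so $C$ embeds in $\ZZ$ and you will never find $a<b$ in $I(x)$ with infinitely many points between them. Meanwhile $I(x)$ genuinely can be infinite, with or without a minimum. In $P=\omega\sqcup\set{x}$ or $P=\ZZ\sqcup\set{x}$ (width $2$, no infinite intervals), taking $C$ to be the long chain gives $I(x)=C$. So no appeal to FAC can make $I(x)$ finite.

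Second, and more seriously, no rule for distributing elements along an \emph{arbitrary} maximal chain can succeed, because an arbitrary maximal chain need not be a spine. Take $P=\set{c,x,y}$ with $x<y$, $c\incomp x$ and $c\incomp y$. The chain $\set{c}$ is maximal, your rule gives $A_c=P$, and indeed any partition met by $\set{c}$ has a single block. Placing a copy of $\omega$ above these three points gives the same failure for the infinite maximal chain $\set{c}\union\omega$, since $c$ is its only element incomparable to $x$ or to $y$. So no refinement by the cuts $(D(x),U(x))$ can repair the construction. The missing idea is that the chain has to be chosen globally rather than fixed in advance.

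Your fallback points in the right direction but only handles the easy case. A rank function requires $P$ to be wellfounded. When it is, every down-set is finite, because it is covered by the intervals $[m,x]$ with $m$ ranging over the finite antichain of minimal elements. Hence ranks are finite and the rank levels are antichains, finite by FAC. K\"onig's lemma applied to chains $x_0<x_1<\dotsb$ with $x_i$ of rank $i$ then produces a chain meeting every level. This settles the wellfounded case, and dually the co-wellfounded one, with no need to split into components.

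In general, however, $P$ may contain chains of type $\omega^*$ and $\ZZ$; already $\ZZ\times 2$ with the product order does. Then there is no rank and no bottom level at which to root a K\"onig tree. One needs a genuinely two-sided construction, for instance a strictly order-preserving $h\from P\to\ZZ$ together with a chain $C$ with $h[C]=h[P]$, whose fibres then give the required partition. Producing such a structure when $P$ is neither wellfounded nor co-wellfounded is where the substance of the Duffus--Goddard theorem lies, and your proposal offers no argument for it.

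The merging of components is also only sketched, but it is easily fixed. $P$ is countable, so if some component contains an infinite chain, any spine of that component is infinite. The other components can then be added one element at a time to distinct blocks of its partition. Otherwise $P$ is finite and Mirsky's theorem applies.
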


Since then, Zaguia has recently proved a series of results, now collected into a single paper \cite{zaguia2024progress}, showing that \Cref{conj:ak} is true when the FAC poset $P$ is $N$-free (that is, if $w,x,y,z\in P$ have $w\leq y$, $x\leq y$, and $x\leq z$, then some further comparison must hold between these four points), or has locally finite incomparability graph.
The \emph{incomparability graph} of a poset $P$ is a graph $G$ with vertex set $P$, and $xy$ is an edge of $G$ if and only if $x\incomp y$ in $P$.
We will also find use for the latter of these two results, which Zaguia deduced from \Cref{thm:no-infinite-interval}.
For ease of referencing the theorem, we now state this result here as well.

\begin{theorem}[\cite{zaguia2024progress}, Theorem 6]
    \label{thm:zaguia-spine}
    If $P$ is a poset whose incomparability graph is locally finite, then there is a partition of $P$ into antichains and there is a chain of $P$ that intersects every member of the partition.
\end{theorem}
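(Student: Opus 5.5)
The plan is to split $P$ along the connected components of its incomparability graph $G$, apply \Cref{thm:no-infinite-interval} to each component, and glue the pieces together. Throughout, local finiteness of $G$ means that every $x\in P$ is incomparable to only finitely many elements.

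First, each component satisfies the hypotheses of \Cref{thm:no-infinite-interval}.
\begin{itemize}
\item \emph{FAC.} An antichain of $P$ is a clique in $G$. A clique containing $x$ has at most $\deg_G(x)+1$ vertices, so every antichain is finite and $P$ is FAC. Every subposet of $P$ is then FAC as well.
\item \emph{A basic propagation fact.} If $u<z$ and $u\incomp v$, then $v<z$. Indeed, $v$ must be comparable to $z$ whenever $v\neq z$, and $z<v$ would give $u<v$, a contradiction. The dual statement, with $z<u$, holds in the same way.
\end{itemize}

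Second, $P$ is a linear sum of the components. Let $(K_i)$ be the connected components of $G$. Any two elements in different components are comparable. Take $x\in K_i$ and $y\in K_j$ with $i\neq j$ and $x<y$, and let $x'\in K_i$ be a $G$-neighbour of $x$. Since $x'\notin K_j$, it is comparable to $y$, and the propagation fact gives $x'<y$. Walking along incomparability paths in both components, every element of $K_i$ lies below every element of $K_j$. Hence the relation ``$K_i<K_j$'' is a well-defined total order on the components, and $P=\bigoplus_i K_i$.

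Third, no $K_i$ contains an infinite interval. Let $a<b$ lie in the same component $K$, and fix a $G$-path $a=v_0,v_1,\dotsc,v_n=b$. Let $S$ be the set of path vertices together with all their $G$-neighbours; $S$ is finite by local finiteness. Suppose $a<z<b$ with $z\notin S$, so $z$ is comparable to every $v_k$.
\begin{itemize}
\item We have $v_0=a<z$.
\item Inductively, if $v_k<z$, then since $v_k\incomp v_{k+1}$ the propagation fact gives $v_{k+1}<z$.
\item Hence $b=v_n<z$, contradicting $z<b$.
\end{itemize}
So the interval $[a,b]$ is contained in $S\cup\set{a,b}$ and is finite. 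Intervals of $K$ as a subposet are intervals of $P$ contained in $K$, because $K$ is convex in the linear sum. By \Cref{thm:no-infinite-interval}, each $K_i$ therefore has a chain $C_i$ and a partition into antichains $\cA_i$ such that $C_i$ meets every member of $\cA_i$.

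Finally, set $C=\bigcup_i C_i$ and $\cA=\bigcup_i\cA_i$. The set $C$ is a chain because the $K_i$ are linearly summed. The family $\cA$ partitions $P$ into antichains, each of which lies inside a single $K_i$ and so meets $C_i\subseteq C$.

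The only step that is not routine is the second one, the linear-sum decomposition; everything rests on the propagation fact established first. The third step reuses the same fact along a finite path, which is exactly where local finiteness is needed.
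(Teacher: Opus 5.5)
Your argument is correct and is essentially the deduction from \Cref{thm:no-infinite-interval} that the paper attributes to Zaguia: cut $P$ along the components of the incomparability graph, show each component is FAC with finite intervals, and recombine. Your linear-sum step is in fact dispensable, since elements of distinct components are comparable by definition and $[a,b]_K\subseteq[a,b]_P$.

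The one slip is that your propagation fact is false as literally stated (take $u<z$ with $v$ incomparable to both). It needs the extra hypothesis $v\comp z$. You verify that hypothesis independently at each use (distinct components, or $z\notin S$), so nothing breaks.
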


Taking a more broad view, beyond just partial results on \Cref{conj:ak}, and as mentioned at the start of this introduction, the rich structure of general FAC posets has received only sparse attention.
One topic of particular interest has concerned completions of these posets, that is, extensions of the posets for which certain families of subsets are guaranteed to have suprema and infima.
Duffus, Pouzet, and Rival \cite{duffus1981complete} studied several different extensions of posets, and gave an example in which the MacNeille completion of a poset $P$ (the minimal complete poset containing $P$) contains an infinite antichain even when $P$ has no antichain of order $3$.
Some further results on this topic were obtained by Lawson, Mislove, and Priestley \cite{lawson1987ordered}.


\subsection{Our contributions}
\label{subsec:our-results}

The contributions of this paper are twofold.
First, we prove that the Aharoni--Korman conjecture holds for a class of countable posets far broader than any previously known.
Second, in the course of doing so, we establish several general structural results on countable FAC posets -- most notably \Cref{thm:structural} -- which we believe to be of interest in their own right, independently of their applications to the Aharoni--Korman conjecture.
As is the case in all work on this conjecture to date, we focus on the special case \Cref{conj:ak}, where the value of $k$ in the more general \Cref{conj:ak-general} is taken to be 1.
To ease the discussion of these results, we make the following definition.

\begin{definition}
    \label{def:spine}
    Given a poset $P$, we say that a subset $C\sseq P$ is a \emph{spine} of $P$ if $C$ is a chain and there is a partition of $P$ into antichains $(A_i\st i\in I)$ such that $C$ meets every antichain $A_i$.
\end{definition}

In this language, the Aharoni--Korman conjecture states that every FAC poset has a spine.
While work of the author \cite{hollom2024resolution} has demonstrated that this is false in general, the counterexample constructed there is rather specific, which suggests that failures of the conjecture may be the exception rather than the rule.
Our main result confirms this suggestion for countable posets: we identify a single structure whose absence implies that the conjecture holds.
We call posets avoiding this structure ``vacillating''. 
The intuition for this name is that (as we will prove) scattered vacillating posets can, in a certain sense, be broken up into wellfounded and co-wellfounded pieces.
In particular, any chain through the poset must then alternate -- or vacillate -- between these wellfounded and co-wellfounded pieces: an infinite scattered vacillating chain which is not wellfounded must contain an infinite co-wellfounded saturated subchain.
Formally, the definition is as follows.

\begin{definition}
    \label{def:vacillating}
    We say that a poset $P$ is \emph{vacillating} if there is no saturated chain $C$ in $P$ for which either $C$ or its reverse $C^*$ can be written as $\bigoplus_{i\in \ww} C_i$, where each $C_i$ is infinite and co-wellfounded.
\end{definition}

For example, the Cartesian product $\ww\times \ww^*$ is vacillating, as it contains no copy of either $\ww + 1$ or $(\ww + 1)^*$, and so we cannot find an infinite linear sum of infinite chains.
However, the lexicographic product of $\ww$ and $\ww^*$ is not vacillating, as it can by definition be expressed as $\bigoplus_{i\in \ww} \ww^*$.

With \Cref{def:vacillating} in hand, we can restate \Cref{thm:main-on-first-page} as follows.

\begin{theorem}
    \label{thm:main}
    If $P$ is a countable vacillating FAC poset, then $P$ contains a spine.
\end{theorem}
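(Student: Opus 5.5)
The proof splits according to the dichotomy of \Cref{thm:structural}: either $P$ is scattered, or $P$ contains a family $\set{A_r\st r\in\RRP}$ of convex scattered pieces arranged densely along $\RRP$.

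\textbf{The non-scattered case.} Here I would reduce to the scattered case. The spine will be built as a union $C=\bigcup_{r\in\RRP} C_r$, where each $C_r$ is a spine of the convex scattered poset $A_r$; any sub-poset of $P$ is still countable, FAC and vacillating. Since $A_r<A_s$ for $r<s$, this union is automatically a chain. The antichain partition is assembled by gluing the partitions of the $A_r$ together with the points $x\notin\bigcup_r A_r$. Each such $x$ is incomparable to all of $\bigcup_{y\in I_x}A_y$ for an infinite interval $I_x$, and I would use this to place $x$ into an antichain of the partition of some $A_r$ with $r\in I_x$. This requires care: $x$ must avoid being comparable to any other element of that antichain. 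So I would choose $r$ inside $I_x$ and enumerate the countably many exterior points, inserting each one into a fresh antichain. This may force the pieces' partitions to be chosen with some flexibility, for instance by refining a pieces' antichains or by working with rational $r$, where $A_r\ne\emptyset$. Convexity of the $A_r$ guarantees that comparabilities with $x$ behave uniformly along intervals.

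\textbf{The scattered case.} This is the heart of the argument. I would use Hausdorff's classification to analyse a maximal chain $C$ of $P$ by rank, arguing by induction on the Hausdorff rank of maximal chains. For the base case, when every chain is wellfounded or co-wellfounded, or more generally when intervals are finite, I would invoke \Cref{thm:no-infinite-interval} and \Cref{thm:zaguia-spine} directly. In the inductive step, I would write a saturated chain as a sum over $\ww$, $\ww^*$ or $\ZZ$ of chains of lower rank. Vacillation then says that in any $\ww$-indexed sum only finitely many summands can be infinite co-wellfounded, and dually for $\ww^*$. This lets me decompose $P$ into convex "levels" indexed by a wellfounded or co-wellfounded order. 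Each level is then either a poset whose chains are wellfounded, where an antichain partition by height works using FAC and the level's chain meets every height level, or a poset of lower rank handled by induction. Finally, the spines of consecutive levels are concatenated along the outer index, and each antichain of $P$ is assigned to one level.

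\textbf{Main obstacle.} I expect the hardest step to be making the concatenation work. An element of $P$ incomparable to the chosen spine segment in one level may need to lie in an antichain meeting a different level's spine. Choosing the levels so that antichains never straddle two levels uses FAC essentially: only finitely many elements can be incomparable across a "gap" of the chain. I would first try to prove a lemma that for a maximal chain $C$ and a cut of $C$, the set of elements incomparable to both sides of the cut is finite. This lemma would let the boundary elements be absorbed into finitely many antichains. Vacillation is exactly what prevents infinitely many such cuts accumulating in a way that defeats this absorption, since in the counterexample of \cite{hollom2024resolution} the $\ww$-sum of co-wellfounded pieces is what creates an unavoidable mismatch.
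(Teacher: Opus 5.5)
Your non-scattered case is correct, and it even works directly with spines. Each exterior point $x$ is incomparable to all of $A_q$ for the infinitely many rationals $q\in I_x$. So if you enumerate the exterior points and put each into a not-yet-used antichain of some such $A_q$, the spines of the pieces glue into a spine of $P$. (The paper does the same gluing with maximal tubes.)

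The gap is the scattered case, which is where essentially all of the paper's work lies. Your plan for it rests on two claims that are false.

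\textbf{First claim.} Vacillation does not imply that only finitely many summands of an $\omega$-indexed sum are infinite and co-wellfounded. The chain $\bigoplus_{n\in\omega}(\omega\oplus\omega^*)$ is vacillating: its saturated subchains are just its convex subsets, and none of them is an $\omega$-sum of infinite co-wellfounded blocks, nor the reverse of one. Yet it has infinitely many $\omega^*$ summands. Vacillation only forbids infinite co-wellfounded blocks stacked directly one after another. Infinite alternation of wellfounded and co-wellfounded pieces (the paper's illfounded limit points) is allowed, and taming it is exactly why the paper needs the quasifounded reduction and alternating-maximal chains. So you do not get convex levels indexed by a wellfounded or co-wellfounded order, each wellfounded or of lower rank. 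In any case, decomposing one chain does not decompose $P$, because points off the chain can be incomparable to parts of many summands.

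\textbf{Second claim.} Your gluing lemma fails. Take $P=\omega\sqcup\omega$ and let $C$ be one copy, which is a maximal chain. Every point of the other copy is incomparable to everything on both sides of every cut of $C$. In the non-scattered case such points can be absorbed only because \Cref{thm:structural} makes them incomparable to whole pieces along an infinite interval. Nothing of the kind is available for a scattered decomposition, so spines of consecutive levels do not simply concatenate.

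\textbf{What is missing.} You need a robust object to glue, and a carefully chosen chain that produces it. The paper proceeds as follows.
\begin{enumerate}
\item It replaces spines by maximal tubes. A spine of a maximal tube $T$, which exists by Zaguia's theorem, is a spine of $P$, because each point outside $T$ is incomparable to entire antichains of $T$'s partition for infinitely many spine points.
\item It shows that maximal tubes glue across the $A_r$, and across illfounded limit points under bicomparability conditions.
\item In the scattered quasifounded case, it runs successive Zorn arguments (illfounded-maximal, then alternating-maximal, then consolidated chains) to get a chain $C$ for which $\{x\in P : \incompset{x}\cap C \text{ is finite}\}$ is a maximal tube.
\end{enumerate}
Vacillation is used precisely to make consolidation antisymmetric (\Cref{ex:no-finite-cut} shows this fails without it) and to force atomic increasing chains to have type $\omega$. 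Your proposal has no counterpart to this choice of chain.

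\textbf{Base case.} The theorems you cite do not cover it either. $(\omega+1)\sqcup\omega$ has only wellfounded chains, but it has an infinite interval and its incomparability graph is not locally finite. The purely wellfounded case can be handled by height levels plus a K\"onig-type compactness argument, since FAC makes each level finite, but you would have to supply that. It also does not cover posets that mix wellfounded and co-wellfounded chains.
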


We note before continuing that a poset which fails to be vacillating must contain an infinite interval, and that any FAC poset with no infinite interval is countable (as noted in \cite{duffus2002intervals}), and so \Cref{thm:main} implies \Cref{thm:no-infinite-interval} of Duffus and Goddard.

We now give a high-level overview of our proof strategy for proving \Cref{thm:main}; in \Cref{subsec:outline} we give a more thorough section-by-section breakdown of how the proof proceeds.

At a high level, the proof works by repeatedly cutting the poset $P$ down to a simpler shape, without ever losing the structure we need to build a spine. 
We first show that it suffices to find a certain, more tractable object inside $P$: a maximal tube.
This has the property that any spine of a maximal tube extends to a spine of the whole poset. 
Finding a maximal tube is then achieved by a sequence of reductions, each stripping away one source of complexity in $P$: we reduce to scattered posets via \Cref{thm:structural}, then further split up $P$ to avoid intervals which infinitely alternate between wellfounded and co-wellfounded regions.
What then remains is simple enough that a single, carefully chosen chain can be shown to induce a maximal tube directly. 
Each reduction is proved by the same basic device: we consider a natural notion of one chain $C$ being ``improved'' by another $C'$, and then ``replace'' $C$ with $C'$.
In all cases, we show this ``replacement'' forms a partial order on chains of $P$, and use Zorn's lemma to find a chain that cannot be improved any further.
This ``optimal'' chain is then shown to already have the structure we were looking for, and we eventually find a maximal tube in the poset $P$ after we have suitably reduced.
Going back to our original poset $P$, which we know can be broken down into many reduced pieces, we can find a maximal tube in each reduced piece and then carefully recombine these tubes to deduce that the original poset also contained a maximal tube.
This tells us that all countable vacillating posets contain maximal tubes, as required by our theorem.


\subsection{Proof outline}
\label{subsec:outline}

We now give a more thorough, section-by-section outline of the proof.

Our proof of \Cref{thm:main} proceeds in several stages.
First, in \Cref{sec:completions}, we develop the basic machinery underpinning much of the paper: a novel extension $H(P)$ of a poset $P$, similar to a chain completion, in which the limits of chains of $P$ are realised as points.
This structure is carefully constructed to allow the arguments of the rest of the paper to go through, and we again believe that this object may be of independent interest.

Next, in \Cref{sec:tubes}, we present a relatively simple argument which reduces the problem of finding a spine to that of finding a different object, which we call a \emph{maximal tube}.
We define a \emph{tube} in a poset to be a set $T\sseq P$ such that each $x\in T$ is incomparable with only finitely many other elements of $T$. 
A maximal tube is then a tube $T$ such that, for every $x\in P\setminus T$, $T\union\set{x}$ is not a tube.
Note that, in this language, \Cref{thm:zaguia-spine} states that if $P$ is itself a tube, then $P$ has a spine.
The key result of this section, \Cref{prop:maximal-tube-suffices}, shows that a spine of a maximal tube of $P$ is in fact a spine of the whole of $P$, and so it suffices to find a maximal tube in our poset $P$.

We then move on to consider \Cref{thm:structural}, and how this allows us to reduce to the case of scattered posets.
\Cref{sec:structural} is devoted to the proof of this result, by means of a tool which we call a \emph{replacement}, which we use when we want to find a chain in $P$ which is in some sense ``maximal''.
In this section, we seek a chain where we cannot remove a single element and insert an entire copy of the rationals in its place.
Intuitively, as $P$ is an FAC poset, if we iteratively apply the above operation, removing a single point and adding a copy of the rationals, we expect to exhaust the poset $P$.
We formalise this with an \emph{$\eta$-replacement}, which can be thought of as applying the above operation many times in sequence. 
We prove using Zorn's lemma that there must be a chain admitting no $\eta$-replacement, and from this construct the partition posited by \Cref{thm:structural}.
Using this structural result, we can then reduce the problem of finding a maximal tube in a countable vacillating FAC poset to the case where the poset is additionally scattered.

In \Cref{sec:illfounded} we then prove another structural result, using a method similar to, albeit somewhat more intricate than, that employed in the previous section.
The goal this time is to reduce to a type of poset which we call \emph{quasifounded}.
This can be thought of as a poset which avoids any proper interval which alternates infinitely between wellfounded and co-wellfounded parts.
The proof also makes use of replacements, constructing a particular chain which can be broken up into quasifounded sections.
This allows us, at the cost of carrying some mild extra restrictions, to limit our attention to quasifounded posets.

Once we know that a poset is quasifounded, one can then ask for a chain which alternates between wellfounded and co-wellfounded parts as much as possible.
While it does not seem at all obvious that such an object must exist, we can prove in \Cref{sec:alternating}, again making use of the technique of replacements, that such an object does exist, and we call it an \emph{alternating maximal chain}.
It is natural to hope that this could allow us to again restrict our attention to an even more specific class of posets, but the structure at this point is too delicate to allow this, and so we simply carry the alternating maximal chain forward, as a starting point for the next section.

With the alternating maximal chain in hand, in \Cref{sec:consolidation} we then define our final replacement operation, that of \emph{consolidation}. 
This is again more particular than the operations considered in previous sections, and requires a significant diversion into considerations of the poset extension defined in \Cref{sec:completions}.
We also note here that consolidation is only a partial order for vacillating posets, as demonstrated by an example in \Cref{subsec:consolidation-non-vacillating} of a non-vacillating poset on which consolidation fails to be antisymmetric.
However, with the machinery developed, we can again prove that, if $P$ is vacillating, then it contains a \emph{consolidated chain}.

The proof culminates in \Cref{sec:finding-tubes}, where we can show that the consolidated chain produced through the sequence of previous sections naturally gives rise to a maximal tube in $P$.
All that then remains is the simple matter of piecing together the results from the previous sections to a complete proof of \Cref{thm:main}, that countable vacillating FAC posets have spines.

Finally, we conclude in \Cref{sec:conclusion} with a discussion of the many open problems in the area, both those which remain open, and those which have been brought to light by the work presented here.

Returning to the statement of \Cref{conj:ak}, while we believe that our methods may well extend to larger values of $k$, resolving the $k=1$ case of the conjecture seems to be by far the most important point, and so we do not pursue such extensions here.

Before proceeding further, we say a few general words about the proof technique of ``replacements'' discussed above.
This technique forms the primary engine of \Cref{sec:structural,sec:illfounded,sec:alternating,sec:consolidation}, and is the main novelty in this work.
While the proofs themselves are essentially appeals to Zorn's lemma, it is, in the author's opinion, the definitions of the partial orders on which Zorn is applied where the real work is happening.
Indeed, in each case, we will define a particular relation on a family of chains of a poset $P$ (e.g.\ on maximal chains), and we will in general say that one chain $C$ can be ``replaced'' by $D$ if there is a function $f$ from intervals of $C$ to intervals of $D$ which will be in some sense order-preserving and injective.
We will prove that these relations are (with suitable assumptions on $P$) partial orders, which we remark here will generally be non-trivial.
The other conditions on the function $f$ will be set up so that this replacement operation will be a partial order and infinite chains will have upper bounds, allowing us to apply Zorn's lemma to conclude that there is a chain which admits no further replacement.


\section{Preliminaries}
\label{subsec:basics}
We now recall some fundamental facts and definitions that we will make use of throughout the rest of this paper.
We are concerned here with posets $P$ containing no infinite antichain, although their \emph{width} -- the supremum of the sizes of antichains in $P$ -- may be infinite.

For any order $X$, we will write $X^*$ to denote the \emph{reverse ordering}.
If chains $C,D\sseq P$ have $C\sseq D$, then we will say that $C$ is \emph{cofinal} in $D$ if for all $y\in D$ there is $x\in C$ such that $y\leq x$.
We will say that $C$ is a \emph{final segment} of $D$ if $C$ is saturated in $D$ and if, whenever $x,y\in D$ have $x\leq y$ and $x\in C$, then we have $y\in C$.
The terms \emph{coinitial} and \emph{initial segment} are then the dual statements, with the ordering reversed.
If $X,Y\sseq P$ are arbitrary subsets of $P$, then by $X<Y$, we mean that for all $x\in X$ and $y\in Y$, we have $x<y$.

We will denote the \emph{up-set} of $x$ -- the set of all elements in $P$ above $x$ -- as $\above{x}_P$ and the \emph{down-set} as $\below{x}_P$.
Formally, for any subset $Q\sseq P$, we have
\begin{equation*}
	\above{x}_Q\defined \set{y\in Q\st y > x} \quad \text{ and } \quad \below{x}_Q \defined \set{y\in Q\st y < x}.
\end{equation*}
We write $x\comp y$ to mean that $x$ and $y$ are comparable, and $x\incomp y$ to mean that $x$ and $y$ are incomparable, and the sets $\compset{x}_P$ and $\incompset{x}_P$ are the subsets of $P$ of all elements comparable and incomparable to $x$ respectively.

One final piece of notation that we will use concerns the limit of a sequence of sets: if $X_1,X_2,\dotsc$ are subsets of $P$, then
\begin{align*}
    \liminf_{n\to\infty} X_n\defined \bigcup_{m=1}^\infty \bigcap_{n=m}^\infty X_n = \set{y\in P\st \exists N\; \forall n\geq N\; y\in X_n}.
\end{align*}
In words, the set of those points that are in every $X_n$ for sufficiently large $n$.

We now discuss a property of posets to which we will give particular attention.

\begin{definition}
    A poset $P$ is said to be \emph{scattered} if it does not contain a poset order-isomorphic to $\QQ$ as a sub-order.
\end{definition}

Scattered posets have been studied extensively.
A useful property of scattered posets is that we can find \emph{covers}.

\begin{definition}
    For elements $x,y\in P$, we say that $x$ \emph{covers} $y$, and write $x\covers y$, if $x>y$ and there is no $z\in P$ with $x>z>y$.
\end{definition}

\begin{fact}
    \label{fact:covers}
    If $P$ is scattered, then for any $x,y\in P$ with $x<y$, there are $u,v\in P$ with $x\leq u < v\leq y$ such that $v\covers u$.
\end{fact}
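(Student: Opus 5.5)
We prove the contrapositive: if $x<y$ and the interval $[x,y]=\set{z\in P\st x\leq z\leq y}$ contains no covering pair $v\covers u$, then $P$ contains a copy of $\QQ$. The idea is to use density to build a dense countable chain, which must be isomorphic to $\QQ$.

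Assume no $u,v\in[x,y]$ satisfy $x\leq u<v\leq y$ and $v\covers u$. Then for any $u<v$ in $[x,y]$ we have $v\not\covers u$, so there is some $z\in P$ with $u<z<v$. Transitivity gives $x\leq u<z<v\leq y$, so $z\in[x,y]$ and $z$ is again an admissible choice. In short, $[x,y]$ is densely ordered along any comparable pair.

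Next we build the chain by recursion. Start with $C_0=\set{x,y}$. Given a finite chain $C_n\sseq[x,y]$, for each pair of consecutive elements $u<v$ of $C_n$ choose some $z$ with $u<z<v$, and let $C_{n+1}$ be $C_n$ together with all these chosen points. Each new point lies strictly between two comparable points of a chain, so $C_{n+1}$ is still a chain. Its consecutive pairs are again comparable pairs in $[x,y]$, so the recursion can continue. Set $C=\bigcup_n C_n$.

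The set $C$ is a countable chain. It is dense: any $a<b$ in $C$ both lie in some $C_n$, and at the next stage a point is inserted strictly between each consecutive pair of $C_n$ lying between $a$ and $b$, so some point of $C$ lies strictly between $a$ and $b$.

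Finally, let $C'=C\setminus\set{x,y}$. It is nonempty, since it contains the point inserted at stage $1$. It has no least or greatest element: any $c\in C'$ has a point of $C$ strictly between $x$ and $c$, and that point is not $x$ or $y$. It is countable and dense. By Cantor's theorem, a countable dense linear order without endpoints is isomorphic to $\QQ$, so $C'\cong\QQ$. This contradicts $P$ being scattered.

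The only delicate point is making sure the recursion stays inside a single chain, so that the result is a suborder of $P$ and not merely a dense set of elements; building $C_{n+1}$ only by inserting between consecutive elements of $C_n$ ensures this. The countable choices involved are harmless.
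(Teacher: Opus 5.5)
Your proof is correct. Under the contrapositive hypothesis, every comparable pair in $[x,y]$ has an intermediate point that again lies in $[x,y]$. Inserting one point into each gap between consecutive elements keeps every $C_n$ a finite chain, and then $C\setminus\set{x,y}$ is a nonempty countable dense chain without endpoints, hence a copy of $\QQ$ by Cantor's theorem.

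The paper states this as a standard fact and gives no proof. Your argument is the same routine ``inductive construction of a copy of the dyadic rationals'' that the paper invokes elsewhere, in the proofs of \Cref{lem:h-scattered} and \Cref{lem:illlt-leaves}, so you have supplied the omitted standard argument.
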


Another fact that we will use extensively is the following.

\begin{fact}
    \label{fact:infinite-chain}
    If $P$ is an infinite FAC poset, then $P$ has an infinite chain.
\end{fact}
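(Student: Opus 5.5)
This follows from Ramsey's theorem for pairs. Since $P$ is infinite, choose a countably infinite subset $\set{x_1,x_2,x_3,\dotsc}\sseq P$ of distinct elements. Colour each pair $\set{i,j}$ with $i<j$ in $\NN$ by one of three colours:
\begin{itemize}
    \item red if $x_i<x_j$;
    \item blue if $x_i>x_j$;
    \item green if $x_i\incomp x_j$.
\end{itemize}
Because the $x_i$ are distinct, exactly one of these holds for each pair, so this is a well-defined $3$-colouring of the pairs of $\NN$.

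By the infinite Ramsey theorem there is an infinite set $S\sseq\NN$ all of whose pairs receive the same colour. We then read off the conclusion according to that colour.
\begin{itemize}
    \item If the colour is green, then $\set{x_i\st i\in S}$ is an infinite antichain. This contradicts the FAC assumption, so this case cannot occur.
    \item If the colour is red, then $\set{x_i\st i\in S}$ is an infinite increasing sequence, and in particular an infinite chain.
    \item If the colour is blue, then it is an infinite decreasing sequence, again an infinite chain.
\end{itemize}

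The only real content is the appeal to Ramsey's theorem. One could avoid it by a direct argument, for instance by iteratively choosing a point whose comparability set within the remaining infinite set is infinite. Such a point exists: otherwise the set would contain arbitrarily long antichains, which could then be extended to an infinite antichain. Fixing the details of that alternative requires more care than simply citing Ramsey, so I would use the Ramsey argument.

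In fact the proof gives slightly more, which may be useful later: $P$ contains either a copy of $\ww$ or a copy of $\ww^*$.
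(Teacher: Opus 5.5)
Your proof is correct and is essentially the paper's argument. The paper also applies infinite Ramsey theory to an arbitrary sequence to extract an increasing or decreasing subsequence, ruling out the pairwise-incomparable case by the FAC assumption; your explicit three-colouring simply spells out that step.
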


This follows easily from basic infinite Ramsey theory.
In particular, if $x_1,x_2,\dotsc$ is an infinite sequence in an FAC poset $P$, then we can then find a sequence $n_1 < n_2 <\cdots$ such that either $x_{n_1}\geq x_{n_2}\geq\cdots$ or $x_{n_1}\leq x_{n_2}\leq\cdots$, noting that an infinite sequence of pairwise incomparable elements would by definition give an infinite antichain.

Throughout, we will write $\ww$ for the order type of the naturals and the first infinite ordinal, $\zz$ for the order type of the integers, and $\eta$ for the order type of the rationals.

Given two posets $P$ and $Q$ with orders $\leq_P$ and $\leq_Q$ respectively, the \emph{Cartesian product} poset $P\times Q$ is the poset on the Cartesian product set $P\times Q$, with order $\leq$ given by 
\begin{equation*}
	(p,q)\leq (p',q') \quad\text{ if and only if }\quad p\leq_P p' \text{ and } q\leq_Q q'.
\end{equation*}
The \emph{lexicographic product} $P\lextimes Q$ is also a poset on the Cartesian product set $P\times Q$, except now the ordering $\leqlex$ is given by
\begin{equation*}
	(p,q)\leqlex (p',q') \quad\text{ if and only if }\quad p<_P p' \;\text{ or }\; \p[\big]{\,p=p' \text{ and } q\leq_Q q'\,}.
\end{equation*}
$P\lextimes Q$ can also be written as the linear sum $\bigoplus_{p\in P} Q$.

For an arbitrary poset $P$, a subset $X\sseq P$ is said to be \emph{convex}, or an \emph{interval} of $P$ if for all $x,y\in X$, if $z\in P$ has $x<z<y$, then $z\in X$.
We will use the notation $\cI(P)$ to represent the set of all non-empty intervals of $P$.

There are several particular types of intervals that we will consider here, each defined for elements $x,y\in P$ with $x<y$ and a poset $Q$ with $x,y \in P\inter Q$ (often $Q$ will be a chain in $P$ or an extension of $P$).
The first is the \emph{open interval}
\begin{align}
    \label{eq:def-open}
    (x,y)_Q\defined\set{z\in Q\st x<z<y},
\end{align}
and the second is the \emph{closed interval}
\begin{align}
    \label{eq:def-closed}
    [x,y]_Q\defined\set{z\in Q\st x\leq z\leq y}.
\end{align}
We will also take intervals where the limits are sets rather than elements:
\begin{equation*}
	(X,Y)_Q \defined\set{z\in Q \st \forall x\in X, y\in Y, \; x < z < y}
\end{equation*}
We can also take the closed interval of a set without endpoints, resulting in the \emph{convex hull} of the set:
\begin{align}
    \label{eq:def-conv}
    \Conv_Q(X)\defined\set{x\in Q \st \exists y,z\in X,\; y\leq x\leq z}.
\end{align}
The subscripts may be omitted if it is clear from context what poset we are working in.
We will say that an interval $I$ of $P$ is \emph{proper} if there are $x,y\in P$ with $x<I<y$.

\section{Poset extensions and chain completeness}
\label{sec:completions}

In this section, we introduce the tools and machinery that we will be using to prove our main results.
The key object introduced here, described in \Cref{subsec:completion}, is $H(P)$, which can be thought of as similar to a chain completion of $P$; while $H(P)$ is not in fact necessarily a chain complete poset, if $C\sseq P$ is a chain then $H(C)$ is complete, and naturally embeds into $H(P)$.
We will in particular be concerned with chains which are saturated with no maximal element (and, similarly, with no minimal element), which we will refer to as \emph{increasing chains} (and \emph{decreasing chains} respectively).
We now provide an overview of this somewhat technical section before proceeding to the technicalities themselves.

We first lay some groundwork in \Cref{subsec:equivalence}, defining an equivalence relation between increasing chains.
Two chains will be considered equivalent if, in essence, elements of one chain are incomparable with only finitely many elements of the other, and vice-versa.
With this equivalence relation in hand, in \Cref{subsec:completion} we will then consider the quotient of the set of increasing chains by this equivalence relation; we will use this set to augment $P$ and thus form our chain extension $H(P)$.
We also demonstrate how $H(P)$ has a natural order (the \emph{domination} order) which agrees with the order on $P$.

\subsection{Chain equivalence}
\label{subsec:equivalence}

When considering an increasing chain $C$ in a poset $P$, we will in general only be interested in properties which hold when passing to an arbitrary final segment of $C$; any finite region of the chain which is in some sense ``unusual'' can be skipped over.
We will also be interested not just in chains, but in classes of chains which behave in a similar way.
For example, in the poset on $\NN\times 2$ where $(n,i) > (m,j)$ if and only if $n > m$, there are uncountably many chains of the form $\set{(n, i_n) \st n\in\NN}$, but in practise they all behave in much the same way.

To this end, we define a very strong sense of similarity -- \emph{mutually finite incomparability} -- between two chains, and then define our equivalence between chains $C$ and $D$ to hold when there are final segments of these chains with mutually finite incomparability.

\begin{definition}
    \label{def:mutually-finite-incomp}
    Chains $C$ and $D$ in a poset $P$ are said to have \emph{mutually finite incomparability} if they are both maximal chains in the poset $\Conv(C\union D)\sseq P$ and, moreover,
    \begin{align*}
        \forall y\in D, \; \abs{\set{x\in C\st x\incomp y}} < \aleph_0 \quad\text{ and }\quad \forall x\in C, \; \abs{\set{y\in D\st x\incomp y}} < \aleph_0.
    \end{align*}
\end{definition}

Note in particular that if $C$ and $D$ have mutually finite incomparability and $x\in C\setminus D$, then there is some $y\in D$ which is incomparable to $x$, as otherwise $D$ would not be a maximal chain in the poset $\Conv(C\union D)$.
\Cref{def:mutually-finite-incomp} intuitively says that $C$ and $D$ can only deviate from each other on small scales; the differences are local.
As indicated above, we can now use the notion of mutually finite incomparability to define our notion of equivalence of chains.

\begin{definition}
    \label{def:equivalence}
    Chains $C$ and $D$ in a poset $P$ are said to be \emph{equivalent as increasing chains}, and we write $C\simeq_+ D$, if either $C$ and $D$ have maximal elements which are equal, or neither $C$ nor $D$ has a maximal element, and there are final segments $C'\sseq C$ and $D'\sseq D$ with mutually finite incomparability.
    Similarly, $C$ and $D$ are \emph{equivalent as decreasing chains}, written $C\simeq_- D$, if they have either minimal elements which are equal or initial segments with mutually finite incomparability.
\end{definition}

Given the name `equivalent', it should come as no surprise that $\simeq_+$ and $\simeq_-$ are equivalence relations on the set of chains of $P$.
To prove this fact, we first need to establish some properties of chains with mutually finite incomparability.
We will show that if $C$ and $D$ have mutually finite incomparability, then the `large scale' structures of $C$ and $D$ are identical.
To this end, we define the following equivalence relation, which allows us to consider the structure of a chain.

\begin{definition}
    \label{def:finite-distance}
    Given a chain $C$ in a poset $P$, define the symmetric relation $\finsim_C$ on $C$ by setting $x\finsim_C y$ and $y \finsim_C x$ if and only if $x \leq y$ and the interval $(x,y)\inter C$ is finite.
\end{definition}

It is immediate from definition that, for any chain $C$, the relation $\finsim_C$ is in fact an equivalence relation on elements of $C$, and that each equivalence class of $C$ is either finite, or isomorphic to one of $\ww$, $\ww^*$ and $\zz$.
The following lemma shows that mutually finite incomparability `respects $\finsim_C$-equivalence classes'.

For $x\in C$, we denote the $\finsim_C$-equivalence class of $x$ by $[x]_C$.
Note that $C\,/\finsim_C$ naturally inherits the structure of a linear order from $C$.

\begin{lemma}
    \label{lem:sim-equivalence-bijection}
    If $C$ and $D$ have mutually finite incomparability, then there is an isomorphism $f\from C\,/\finsim_C \;\to D\,/\finsim_D$ such that, for any $x\in C$ and $y\in D$ with $x\incomp y$ or $x=y$, $f$ sends $[x]_C$ to $[y]_D$.
    Moreover, a $\finsim_C$-equivalence class $X\in C\,/\finsim_C$ has a maximal element if and only if the $\finsim_D$-equivalence class $f(X)$ has a maximal element, and similarly for minimal elements.
\end{lemma}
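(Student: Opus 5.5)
The idea is to build $f$ from the relation $R\sseq C\times D$ given by $x\mathrel{R}y$ iff $x\incomp y$ or $x=y$, and to define $f([x]_C)\defined[y]_D$ for any $y$ with $x\mathrel{R}y$. There are three things to check: this is defined, it is well defined on classes, and it is an order-isomorphism. The same construction with $C$ and $D$ swapped gives a map $g$, and $g$ will be the inverse of $f$ because $R$ is symmetric.

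\emph{Existence of related points.} If $x\in C\inter D$, take $y=x$. If $x\in C\setminus D$, then by the remark after \Cref{def:mutually-finite-incomp} (maximality of $D$ in $\Conv(C\union D)$) some $y\in D$ has $y\incomp x$.

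\emph{Independence of the choice of $y$.} Suppose $x\mathrel{R}y$, $x\mathrel{R}y'$ and $y<y'$. Let $z\in(y,y')_D$.
\begin{itemize}
\item If $z<x$, then $y<z<x$, contradicting $x\mathrel{R}y$.
\item If $x\leq z$, then $x\leq z<y'$, contradicting $x\mathrel{R}y'$ (when $x=y$ this gives $x<y'$).
\end{itemize}
So every point of $(y,y')_D$ is incomparable to $x$. By mutually finite incomparability there are only finitely many such points, so $y\finsim_D y'$.

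\emph{Key lemma.} Let $x\leq x'$ in $C$, $y\mathrel{R}x$ and $y'\mathrel{R}x'$ with $y<y'$, and suppose $[y]_D\neq[y']_D$. Then $[x]_C\neq[x']_C$.

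Suppose instead that $[x,x']_C$ is finite. Each of its points is incomparable to only finitely many points of $D$, and $(y,y')_D$ is infinite. Hence infinitely many $z\in(y,y')_D$ are comparable to every point of $[x,x']_C$. For such a $z$:
\begin{itemize}
\item $z<x$ would give $y<z<x$, which is impossible, so $z\geq x$;
\item symmetrically, $z\leq x'$.
\end{itemize}
So $z$ is comparable to every point of $C$: the points below $x$ lie below $z$, and the points above $x'$ lie above $z$. Also $z\in\Conv(C\union D)$. By maximality of $C$ in $\Conv(C\union D)$, we get $z\in C$. This puts infinitely many points of $C$ into $[x,x']_C$, a contradiction.

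\emph{Monotonicity.} Suppose $x<x'$ in $C$ with $[x]_C\neq[x']_C$, and $y\mathrel{R}x$, $y'\mathrel{R}x'$. We show $y<y'$. Suppose $y'\leq y$. Infinitely many $w\in(x,x')_C$ are comparable to both $y$ and $y'$. For such a $w$:
\begin{itemize}
\item $w<y$ would give $x<w<y$, contradicting $x\mathrel{R}y$;
\item so $w>y\geq y'$, and then $y'<w<x'$, contradicting $x'\mathrel{R}y'$.
\end{itemize}

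\emph{Assembling $f$.}
\begin{itemize}
\item Well defined on classes: take $x\finsim_C x'$ with related $y,y'$. If the classes of $y,y'$ differed, the key lemma (applied after ordering $y,y'$, and using monotonicity for $g$ to get $x$, $x'$ in the matching order) would give $[x]_C\neq[x']_C$, a contradiction.
\item Order-preserving and injective: this is exactly monotonicity.
\item Surjective with inverse $g$: by symmetry of $R$.
\end{itemize}
By construction, $f([x]_C)=[y]_D$ whenever $x\incomp y$ or $x=y$.

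\emph{Maximal elements.} This is the step I expect to be the main obstacle. Suppose $X$ has a maximum $m$, let $y\mathrel{R}m$, and suppose $f(X)$ has no maximum. Then infinitely many $z\in f(X)$ lie above $y$. Discarding finitely many, each such $z$ is comparable to $m$, and $z<m$ is impossible (it would give $y<z<m$), so $z>m$. To finish, I would show that each such $z$ lies below every point of every later class $X'>X$. Using monotonicity, the points of $X'$ are related to points of $f(X')>f(X)$, which sit above all of $f(X)$. I would then use the same finitely-many-exceptions trick as in the key lemma to show that $z$ is comparable to, and hence (after discarding finitely many) below, the points of $X'$. That places $z$ in $\Conv(C\union D)$, comparable to all of $C$, so $z\in C$ by maximality. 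Then $z$ lies strictly above $m$ yet in no class above $X$, which contradicts maximality of $m$ in $X$. The converse is symmetric via $g$, and the statement for minimal elements is dual.
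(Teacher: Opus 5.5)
Up to the construction of $f$ your argument is correct and follows essentially the paper's route. The paper uses a canonical representative ($x$ itself if $x\in D$, otherwise the least $z\in D$ with $z\incomp x$) where you use the symmetric relation $R$; your choice gives you the inverse $g$ for free. Your key lemma is a cleaner form of the paper's counting argument for well-definedness. One small correction: monotonicity alone only yields $y<y'$, i.e.\ $[y]_D\leq[y']_D$, so injectivity, and hence strict order-preservation, comes from $g\circ f=\mathrm{id}$, which you do have.

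The gap is in the maximal-element step, where you expected it. The finitely-many-exceptions trick cannot produce a single $z$ lying below \emph{every} point of every later class. For each such point $w$ it only lets you discard finitely many $z$, but the later classes may contain infinitely many points, so these discards can exhaust all candidates. Discarding finitely many $w$ instead does not work either, since maximality of $C$ needs $z$ comparable to all of $C$. The missing observation is that incomparability with a fixed $w$ propagates upwards through $f(X)$:
\begin{itemize}
\item Take $w$ in a class $X'>X$ and $v$ with $w\mathrel{R}v$, so $v\in f(X')$ lies above all of $f(X)$.
\item No $z'\in f(X)$ satisfies $z'>w$, since then $w<z'<v$.
\item No $z'\in f(X)$ equals $w$, since then $f(X')=[w]_D=f(X)$.
\item Hence if some $z\in f(X)$ had $z\incomp w$, every $z'\geq z$ in $f(X)$ would also satisfy $z'\incomp w$, because $z'<w$ would give $z<w$.
\item As $f(X)$ has no maximum, $w$ would then be incomparable to infinitely many points of $D$, a contradiction.
\end{itemize}
So every point of $f(X)$ lies below every point of every later class. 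Your conclusion then goes through: a $z\in f(X)$ with $z>m$ is comparable to all of $C$, hence lies in $C$, contradicting $m=\max X$. The paper asserts this comparability in a single phrase (``totally comparable to every other class''), so this is exactly where an actual argument is needed.
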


\begin{proof}
    Firstly, note that, for all $x\in C$, either $x\in D$ or there is some $y\in D$ with $x\incomp y$, as otherwise $D\union \set{x}$ would be a chain in $\Conv(C\union D)$ and strictly larger than $D$, contradicting the definition of mutually finite incomparability.
    Note further that, if $x\in C$ and $y,z\in D$ with $x\incomp y$ and $x\incomp z$, then we have $y\finsim_D z$.
    Define the function $g$ as follows.
    \begin{align*}
        g(x)\defined\begin{cases}
            x &\text{if }x\in D,\\
            \min\set{z\in D\st x\incomp z} &\text{if }x\notin D.
        \end{cases}
    \end{align*}
    Note that $\set{z\in D\st x\incomp z}$ is a finite subset (in fact, a finite interval) of the chain $D$, so does indeed have a minimum.
    We may now define $f([x]_C)\defined [g(x)]_D$; we claim that this is well-defined.
    We must show that, if $x,y\in C$ satisfy $x\finsim_C y$, then $g(x)\finsim_D g(y)$.
    
    Indeed, define the intervals
	\begin{equation*}
		I\defined (x,y)_C \quad \text{ and } \quad J\defined (g(x),g(y))_D,
	\end{equation*}
	and assume for contradiction that $I$ is finite but $J$ is infinite.
    Every element of the infinite set $J\setminus I$ is incomparable to a finite number of elements of $C$, and thus of $I$.
	If $z\in J\setminus I$ were comparable to all of the closed interval $[x,y]_C$, then we know from the definition of $J$ that we also have $x<z<y$, and so $C\union\set{z}$ is a chain, contradicting the maximality of $C$.

    Therefore, all elements of $J\setminus I$ are incomparable with a finite and non-zero number of elements of the finite set $[x,y]_C$.
	Thus some element of $I$ is incomparable with infinitely many elements of $J$, contradicting the mutually finite incomparability of $C$ and $D$.
    We deduce that $f$ is a well-defined function.

    We now show that $f$ is injective and surjective.
    Injectivity follows from an argument similar to the one showing that $f$ is well-defined.
    For surjectivity, take some $y\in D$; we show that $f$ hits $[y]_D$.
    Indeed, either $y\in C$, in which case $f([y]_C) = [y]_D$, or there is some $x\in C$ such that $x\incomp y$, in which case $g(x)\finsim_D y$, and so $f([x]_C) = [y]_D$.
    Thus $g(x) \finsim_D g(y)$ and so we again have $f([x]_C) = [y]_D$.
	Thus $f$ is a bijection.

	To show that $f$ is order-preserving, let $x,y\in C$ have $x < y$; we prove that $f([x]_C) < f([y]_C)$.
	Note that it in fact suffices to prove that $g(x) < g(y)$, so assume for contradiction that $g(x) > g(y)$, noting that these values must be distinct, else $f([x]_C) = f([y]_C)$.
	
	If $g(x) = x$ then $g(y) < g(x) = x < y$ and so $g(y) < y$, a contradiction.
	We may thus assume that $g(x)\neq x$ and $g(y)\neq y$.
	As $g(x)$ is the minimal element of $D$ which is incomparable with $x$ and $g(y) < g(x)$, we must have that $x\comp g(y)$.
	If $x < g(y)$ then we find that $x < g(y) < g(x)$, a contradiction, and if $x > g(y)$ then we find that $y > x > g(y)$, also a contradiction.
	Thus $f$ is indeed order-preserving.

    It therefore remains to prove the final part of the lemma. 
    By symmetry, it suffices to take some $\finsim_C$-equivalence class $X$ with a maximal element $x$, and show that $Y\defined f[X]$ also has a maximal element.
    Assume for contradiction that $Y$ does not have a maximal element.
    The point $x$ is incomparable with only finitely many points of $Y$, and so there is some $y\in Y$ with $y>x$ and so $y\notin C$.
    But then $y$ is totally comparable to $X$, and we know that $y$ is also totally comparable to every other $\finsim_C$-equivalence class.
    This contradicts the maximality of $C$ in $\Conv(C\union D)$ and so proves the lemma.
\end{proof}

With \Cref{lem:sim-equivalence-bijection} in hand, it is now straightforward to prove that $\simeq_+$ and $\simeq_-$ are equivalence relations.

\begin{lemma}
    \label{lem:equivalence}
    The relations $\simeq_+$ and $\simeq_-$ are equivalence relations.
\end{lemma}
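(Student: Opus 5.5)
By the symmetry $C\mapsto C^*$, I will treat only $\simeq_+$; the argument for $\simeq_-$ is the same with the order reversed.

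\emph{Reflexivity.} If $C$ has a maximal element, then $C\simeq_+ C$ trivially. Otherwise, $C$ has mutually finite incomparability with itself: $C$ is a maximal chain in $\Conv(C)$, since any $z\in\Conv(C)$ comparable to all of $C$ would lie between two points of $C$. Moreover, no two points of $C$ are incomparable.

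\emph{Symmetry.} This is immediate, because Definition~\ref{def:mutually-finite-incomp} is symmetric in $C$ and $D$.

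\emph{Transitivity.} This is the substantive part. Suppose $C\simeq_+D$ and $D\simeq_+E$. If maximal elements are involved, the case is trivial: equal maxima chain together, and mixed cases cannot occur since the definition requires both chains to have or both to lack a maximum.

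So assume none of the three has a maximum. Take final segments $C'\sseq C$, $D'\sseq D$ with mutually finite incomparability, and $D''\sseq D$, $E''\sseq E$ likewise. First I will show that mutually finite incomparability passes to suitable final segments. If $C',D'$ have mutually finite incomparability and $D_0\sseq D'$ is a final segment, then, using the map $f$ of Lemma~\ref{lem:sim-equivalence-bijection}, I choose the corresponding final segment $C_0$ of $C'$, namely the union of classes $f^{-1}$ of classes meeting $D_0$ (trimmed at one class boundary). The finiteness conditions are inherited by subsets. Maximality of each chain in $\Conv(C_0\union D_0)$ also holds: a point of $\Conv(C_0\union D_0)$ comparable to all of $C_0$ lies in $\Conv(C'\union D')$. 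Its comparability with the part of $C'$ below $C_0$ follows from its being above some element of $C_0\union D_0$, after possibly shrinking $D_0$ by one $\finsim$-class to absorb boundary effects. It would then contradict maximality of $C'$.

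Given this, I take $D_0=D'\inter D''$, which is a final segment of $D$, and obtain final segments $C_0$ and $E_0$ with $C_0,D_0$ and $D_0,E_0$ each having mutually finite incomparability.

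It remains to show that $C_0$ and $E_0$, possibly after further trimming, have mutually finite incomparability.

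\emph{Finiteness.} For $x\in C_0$, the points of $D_0$ incomparable to $x$, together with $x$ itself if $x\in D_0$, lie in a single $\finsim_{D_0}$-class near $g(x)$. Pick $d\in D_0$ with $d\comp x$, say $d<x$, well below that class; by the bijection, such a $d$ lies in a class $f$-corresponding to an earlier class. Pick $d'>x$ similarly, if needed passing to a later element of $D_0$. Any $e\in E_0$ incomparable to $x$ must then be comparable to all of $D_0$ except finitely many points. Hence $e$ lies near $d$ or $d'$ or between them, and the elements of $E_0$ between $d$ and $d'$ come from finitely many $\finsim_{E_0}$-classes. The risk is that such a class is infinite, so this argument alone is not enough.

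I expect the main obstacle to be exactly here, together with the maximality of $C_0$ in $\Conv(C_0\union E_0)$. My plan is to use Lemma~\ref{lem:sim-equivalence-bijection} for both pairs to get an isomorphism $C_0/\finsim\to E_0/\finsim$ that preserves which classes have maxima and minima. Then I will argue class-by-class. Incomparable $x\in C_0$ and $e\in E_0$ must lie in corresponding classes, via some $y\in D_0$ comparable to neither, or else be separated. Within corresponding classes, if $x$ were incomparable to infinitely many $e$, those $e$ form a $\ww$ or $\ww^*$ tail of the class. Passing through $D_0$, whose corresponding class has the same end behaviour, produces infinitely many $y\in D_0$ incomparable to $x$ or some $e$ incomparable to infinitely many $y$, a contradiction.

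\emph{Maximality.} For maximality, a point $z\in\Conv(C_0\union E_0)$ comparable to all of $E_0$ is comparable to cofinitely much of $D_0$. Using maximality of $D_0$, $z$ must then be a point of $D_0$ or incomparable to something of $D_0$. Chasing through $C_0$ and using maximality of $C_0$ in $\Conv(C_0\union D_0)$ should yield $z\in E_0$.
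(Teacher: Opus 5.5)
Your reflexivity and symmetry arguments are fine. Reflexivity does need $C$ to be saturated, which is the setting of $F(P)$: ``lies between two points of $C$'' only forces $z\in C$ for saturated chains, and for $C=2\NN\sseq\NN$ no final segment is maximal in its convex hull.

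\textbf{The gap is in transitivity.} What you give there is a plan rather than a proof. You leave the finiteness step open yourself, and the maximality step is only asserted (``should yield''). Worse, the maximality step is false as sketched. Your trimming happens only at $\finsim$-class boundaries, and nothing aligns the bottom ends of $C_0$ and $E_0$.

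Here is a counterexample. Let $X=\set{x_1<x_2<\cdots}$ and $P=\set{c,c_1,d,e}\union X$, with $c,c_1,d,e<X$, $c<c_1$, $c<e$, and no further relations. So $c_1\incomp e$, and $d$ is incomparable to each of $c$, $c_1$, $e$. Consider the chains $C_0=\set{c,c_1}\union X$, $D_0=\set{d}\union X$ and $E_0=\set{e}\union X$.
\begin{itemize}
\item All three chains are saturated.
\item The pairs $(C_0,D_0)$ and $(D_0,E_0)$ both have mutually finite incomparability.
\item Each chain is a single $\finsim$-class, so your restriction step leaves them unchanged.
\item Yet $c\in\Conv(C_0\union E_0)$ is comparable to all of $E_0$ and $c\notin E_0$.
\end{itemize}
Your chase gets through its first two steps: $c$ is comparable to all of $D_0$ except $d$, and $c\incomp d$. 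It then has nowhere to go, since $c$ is a point of $C_0$ and maximality of $C_0$ gives nothing. So $C_0$ and $E_0$ do not have mutually finite incomparability, and no argument of the shape you outline can show that they do.

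\textbf{The missing idea is the paper's normalisation of the bottom ends.}
\begin{enumerate}
\item Transport classes via $h=g\circ f$, using \Cref{lem:sim-equivalence-bijection} for both pairs, and pass to corresponding final segments.
\item Discard a finite least $\finsim$-class if there is one.
\item If the two minima are then comparable, delete from the chain with the smaller minimum the finitely many elements lying below the other chain's minimum. This gives $\min C^*=\min E^*$ or $\min C^*\incomp\min E^*$, which is \eqref{eq:incomp-mins}. In the example it deletes $c$.
\end{enumerate}

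Maximality of $C^*$ in $\Conv(C^*\union E^*)$ then goes case by case.
\begin{itemize}
\item A point strictly between two points of $C^*$ contradicts maximality in $\Conv(C^*\union D^*)$.
\item A point above all of $C^*$ is ruled out because every element of $E^*$ lies below some element of $C^*$ (obtained by passing through $D^*$). The same argument handles a point below all of $C^*$ when $C^*$ has no least element.
\item A point below $\min C^*$ but above some element of $E^*$ would force $\min E^*<\min C^*$, contradicting the normalisation.
\end{itemize}

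For finiteness, the paper does not chase individual points. It proves that $X<h(Y)$ and $h(X)<Y$ for classes $X<Y$ of $C^*$, and that $X$ and $h(X)$ have mutually finite incomparability. Together these control every incomparability between $C^*$ and $E^*$.
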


\begin{proof}
    By symmetry, it suffices to consider $\simeq_+$.
    Reflexivity and symmetry are clear, so it suffices to prove transitivity.
    Let $C$, $D$, and $E$ be chains in $P$, each with no maximal element, such that $C\simeq_+ D$ and $D\simeq_+ E$ (the case in which they have maximal elements is trivial).
    We thus know that there are final segments $C'\sseq C$, $D',D''\sseq D$, and $E''\sseq E$ such that $C'$ and $D'$ have mutually finite incomparability, as do $D''$ and $E''$.
	Assume that $D' \sseq D''$, as the other case is entirely similar.
    We may therefore apply \Cref{lem:sim-equivalence-bijection} to find order isomorphisms
    \begin{align*}
        f\from\quo{C'} \;\to\,\quo{D'} \quad\text{and}\quad g\from D'' \,/ \finsim_{D''} \;\to\, E'' \,/ \finsim_{E''}
    \end{align*}
    satisfying the conditions of \Cref{lem:sim-equivalence-bijection}.
    We consider the function
    \begin{align*}
        h\defined g\circ f \from\quo{C'} \;\to\, E' \,/ \finsim_{E'},
    \end{align*}
    where $E'\sseq E''$ is chosen so that $h$ is also an isomorphism.
    Note that $E'$ is a final segment of $E''$ and that, for all $X\in \quo{C'}$, $h(X)$ has a maximal/minimal element if and only if $X$ does.

	Define $C^*$, $D^*$ and $E^*$ to be final segments of $C'$, $D'$ and $E'$ respectively as follows.
	If $\quo{C'}$ has a minimal element $X$, and $X$ is finite, then set $C''' = C'\setminus X$, and otherwise set $C''' = C'$.
	Note then that the minimal element of $\quo{C'''}$ is infinite, and we may define $D'''$ and $E'''$ similarly, and the corresponding minimal elements of their quotients are also infinite.

	Then, if $C'''$ has no minimal element, set $C^* = C'''$, and note that in this case $D^* = D'''$ and $E^* = E'''$ likewise have no minimal element.
	Otherwise, we want to ensure that the minimal elements of $C^*$ and $E^*$ are incomparable. Let $X \defined \min \quo{C^*}$.
	If $x \defined \min C''' > \min E'''$, say (the other case is similar), then note that by mutually finite incomparability and maximality of the chains, there must be $x' \in f(X)$ with $x' > x$, and $x'' \in g(f(X))$ with $x'' > x'$, and so $x$ is above only a finite number of elements of $E'''$.
	Let $I$ be the initial segment of $E'''$ which is below $x$, and let $E^* = E'''\setminus I$ and $C^* = C'''$ and $D^* = D'''$.
	We hence know that the minimal elements of $C^*$ and $E^*$ are either incomparable or equal:
	\begin{equation}
		\label{eq:incomp-mins}
		\text{either } \quad \min C^* = \min E^* \quad \text{ or } \quad \min C^* \incomp \min E^*.
	\end{equation}

	Note moreover that we still have $C^* \simeq_+ D^* \simeq_+ E^*$, and the functions $f$, $g$, and $h$ all naturally restrict to isomorphisms in terms of these smaller chains too.
	We may, at the end of the proof, remove a finite number of elements from the bottom of one of $C^*$ or $E^*$, but note that this has no effect on any of the other properties considered and in particular does not change the structure of their quotients.

	We will prove, for elements $X,Y\in \quo{C^*}$ with $X<Y$, we have that $X < h(Y)$ and $h(X) < Y$; that $X$ and $h(X)$ have mutually finite incomparability; and that $C^*$ and $E^*$ are maximal chains in $\Conv(C^*\union E^*)$. 
	Together, these will imply that $C\simeq_+ E$, as required.

	First, take $X,Y\in \quo{C^*}$ with $X<Y$.
	By symmetry, it suffices to prove that $X < h(Y)$; we thus take $x\in X$ and $y\in h(Y)$ and will show that $x < y$.
	If $X$ has no maximal element, then as $X$ and $f(X)$ have mutually finite incomparability, there is $x'\in f(X)$ with $x' > x$.
	Then, as $f(X)$ and $g(f(X)) = h(X)$ have mutually finite incomparability, there is $x'' \in h(X)$ with $x'' > x'$.
	But as $h$ is an isomorphism, we have $h(X) < h(Y)$, and so $x'' < y$, and hence $x < y$, as required.

	We may thus assume that $X$ has a maximal element $z \geq x$.
	This implies that $\above{X}_{C^*}$ has no minimal element, as these elements are in a different $\finsim_{C^*}$-equivalence class to $z$.
	Thus, by mutually finite incomparability, we have
	\begin{equation*}
		\incompset{z}_{P} \inter \above{f(X)}_{D^*} = \emptyset,
	\end{equation*}
	as $z$ is either in $f(X)$ or incomparable to some element of $f(X)$, and all non-empty initial segments of $\above{f(X)}_{D^*}$ are infinite.
	A similar argument gives $\incompset{z}_{P} \inter \above{h(X)}_{E^*} = \emptyset$, and so, as $y\in \above{h(X)}_{E^*}$, we have $x < z < y$, as required.

	Second, take $X\in\quo{C^*}$.
	We show that $X$ and $h(X)$ have mutually finite incomparability.
    If $X$ and $h(X)$ are both finite, then this is immediate, so we may assume that $X$ and $h(X)$ are both infinite, and thus we know from \Cref{lem:sim-equivalence-bijection} that $X$ and $h(X)$ are order-isomorphic.

    It hence suffices to show that, if $X$ and $h(X)$ each have no maximal element and $x\in X$, then there is $y\in h(X)$ with $y>x$.
    Indeed, take $z\in f(X)$ minimal so that $z>x$, which exists as $\incompset{x}_{D^*}$ is finite and $f(X)$ has no maximum element.
    Then take $y\in h(X)$ minimal so that $y > z$, which exists for the same reasons as before.
    We find that $y>x$, as required.

	Finally, we show that $C^*$ and $E^*$ are maximal chains in $\Conv(C^* \union E^*)$; we prove this statement for $C^*$, as the argument for $E^*$ is similar.
	Assume for contradiction that there is an element $x\in \Conv(C^*\union E^*)$ with $x\notin C^*$ but $x\comp C^*$.
	If there are $y,z\in C^*$ with $y<x<z$, then $x\in \Conv(C^*\union D^*)$, which yields a contradiction as we know that $C^*$ is a maximal chain in $\Conv(C^*\union D^*)$ .
	If instead $x > C^*$, then we must have some $y\in E^*$ such that $x < y$.
	But then mutually finite incomparability tells us that there is $z\in C^*$ such that $y < z$, and so $x < z$, a contradiction.

	Finally, assume that $x < C^*$.
	If $C^*$ has no minimal element, then the argument proceeds as in the previous case, so assume that $C^*$ has a minimal element $y\geq x$.
	Then we know that $x$ is above some non-empty initial segment $I\sseq E^*$, but this contradicts condition \eqref{eq:incomp-mins} and completes the proof.
\end{proof}

Now that we know that these relations are in fact equivalence relations, we are ready to define our chain completion $H(P)$, which we do in the following section.

\subsection{Chain extension and the domination order}
\label{subsec:completion}

The intuition behind the poset $H(P)$ is that it is simply the set of chains of $P$ quotiented by the relation of equivalence as defined in \Cref{subsec:equivalence}.
However, this intuition runs into a problem in that we have not one, but two equivalence relations: $\simeq_+$ and $\simeq_-$.

To get around this issue, we define two quotient spaces, one quotienting out by $\simeq_+$ and the other by $\simeq_-$.

Once we have defined the two quotient sets, we will have to carefully glue the two resulting structures together along the `principal' elements, and define an order on the whole structure taking into account both the increasing and decreasing chains.

\begin{definition}
    \label{def:increasing-and-decreasing}
    Given a poset $P$, let $F(P)$ be the set of all saturated chains of $P$.
    We then define the sets of \emph{increasing} and \emph{decreasing} chains as
    \begin{align*}
        G_+(P)\defined F(P)/\simeq_+ \quad \text{ and } \quad G_-(P)\defined F(P)/\simeq_-
    \end{align*}
    respectively. Elements of $G_+(P)$ and $G_-(P)$ are thus equivalence classes of saturated chains of $P$.
\end{definition}

Note that if an equivalence class $X\in G_+(P)$ contains a chain $C$ with a maximal element $x$, then it is immediate from \Cref{def:equivalence} that all chains in $X$ have $x$ as a maximal element.
In particular, the chain $\set{x}$ is an element of the equivalence class $X$.
For this reason, we refer to such elements of $G_+(P)$ (and the similar elements of $G_-(P)$) as \emph{principal elements}, and the other elements as \emph{nonprincipal elements}.

There is now a natural equivalence between principal elements of $G_+(P)$ and principal elements of $G_-(P)$; for $X\in G_+(P)$ and $Y\in G_-(P)$, write $X\simeq_\text{prin} Y$ if there is some $x\in P$ such that $\set{x}\in X$ and $\set{x}\in Y$.
It is clear that $\simeq_\text{prin}$ is an equivalence relation.

\begin{definition}
    \label{def:h-p}
    Let $G(P)\defined G_-(P)\union G_+(P)$, and let $\bot$ and $\top$ be symbols representing elements not in $P$. 
	Define the set 
    \begin{align*}
        H(P)\defined \p[\big]{G(P)/\simeq_{\text{prin}}} \union \set{\bot, \top}.
    \end{align*}
    Furthermore, define $N(P)$ to be the set of \emph{nonprincipal elements} of $H(P)$: the elements $\bot$ and $\top$ along with those equivalence classes of chains in $G_+(P)$ with no maximal element and those in $G_-(P)$ with no minimal element.
    We will write $N_+(P)$ and $N_-(P)$ for the nonprincipal increasing and decreasing chains in $N(P)$ respectively, so $N(P)=N_+(P)\union N_-(P)$.
	We set $\top \in N_+(P)$ and $\bot \in N_-(P)$.
\end{definition}

We note that $\top$ and $\bot$ are added merely for convenience: it will make some later statements cleaner to not need to deal with sets at the top or bottom of $P$ any differently from the general case.

While $H(P)$ is technically a set of equivalence classes of equivalence classes of chains, we will abuse notation and identify the principal elements of $H(P)$ with the corresponding elements of $P$ (they are in natural bijection). 
Following this abuse of notation, we may write the following.
\begin{align}
    \label{eq:h-partition}
    H(P) = P \union N_+(P) \union N_-(P).
\end{align}

We will furthermore generally refer to nonprincipal elements of $H(P)$ only by a representative chain.
In particular, we will write, for example, ``$C\in N_+(P)$'' to mean that $C$ is a representative chain of an equivalence class $X\in N_+(P)$ such that $\set{X}\in H(P)$.
If we wish to emphasise that we are considering an equivalence class rather than a chain, then we will use square brackets: $C$ will be a chain and $[C]_+$ and $[C]_-$ will be equivalence classes in $N_+(P)$ and $N_-(P)$ respectively.

Now that we have defined $H(P)$, our next task is to give it the structure of a poset.
We will define an ordering $\geq$ for all elements of $H(P)$, and then show that it agrees with the ordering on $P$ on the principal elements.

\begin{definition}
    \label{def:domination}
    For $X,Y\in H(P)$, we say that $X$ \emph{dominates} $Y$, and write $X > Y$, if there are non-empty final/initial segments $X'\sseq X$ and $Y'\sseq Y$ (final or initial depending on whether $X$ and $Y$ are increasing or decreasing chains respectively) satisfying $\forall x\in X' \; \forall y\in Y' \; (x > y)$.
	Moreover, $\top > X$ for all $X\in H(P)\setm{\top}$ and $\bot < X$ for all $X\in H(P)\setm{\bot}$.

    By $C\geq D$, we mean that $C > D$ or $C \simeq D$ (i.e.\ $C\simeq_- D$ or $C\simeq_+ D$).
\end{definition}

Note first that it is clear that domination is indeed a partial order.
Secondly, if $C$ and $D$ are both principal, and equivalent to $\set{x}$ and $\set{y}$ respectively, then we can take $\set{x}$ and $\set{y}$ as their respective final/initial segments, and see that the ordering in \Cref{def:domination} does agree with the order from $P$.

Domination will be the standard order that we use on $H(P)$: if we refer to an ordering of elements of $H(P)$ without further comment, the domination order is implied.

Now that $H(P)$ has been given the structure of a poset, we can prove the following two important lemmas.

\begin{lemma}
    \label{lem:h-scattered}
    If $P$ is a scattered poset, then so too is $H(P)$.
\end{lemma}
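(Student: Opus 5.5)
We argue by contradiction: suppose $P$ is scattered but $H(P)$ contains a subset $\set{X_q\st q\in\QQ}$ order-isomorphic to $\QQ$ under the domination order. The plan is to pull this copy of the rationals back into $P$, contradicting the assumption that $P$ is scattered. We may discard $\top$ and $\bot$, since removing at most two points from a copy of $\QQ$ leaves a set that still contains a copy of $\QQ$. So every $X_q$ is either an element of $P$ or a nonprincipal element of $N_+(P)\union N_-(P)$, represented by a chain.

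The key step is a separation claim: if $X<Y$ in $H(P)$, then there is a point $p\in P$ with $X\leq p\leq Y$ in the domination order, and $p$ can be chosen inside a representative of $X$ or of $Y$. Indeed, by \Cref{def:domination} there are nonempty final or initial segments $X'\sseq X$ and $Y'\sseq Y$ with $X'<Y'$ pointwise. We then take any $p\in Y'$ (or $p\in X'$). We check directly against the definition that a point of a representative chain is comparable in the expected way with the class it belongs to. For instance, if $Y\in N_+(P)$ and $p\in Y$, then $p<Y$, witnessed by the final segment of $Y$ above $p$, which is nonempty since $Y$ has no maximum. Similarly $X<p$, witnessed by $X'$ and $\set{p}$. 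The cases where $Y$ is decreasing are messier: a point of an initial segment lies above $Y$, not below it. This is why I would actually choose $p$ from whichever of $X'$, $Y'$ makes the inequalities $X\leq p\leq Y$ hold. If both choices fail, for example when $X$ is decreasing and $Y$ is increasing, I would instead use a midpoint between them.

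The cleaner route around the decreasing/increasing case issue is to use density. Given $q<r$ in $\QQ$, pick $s,t$ with $q<s<t<r$. Apply the separation to the pairs $(X_q,X_s)$ and $(X_t,X_r)$, and more generally use three intermediate rationals. This yields points $p_{q,r}\in P$ with $X_q<p_{q,r}<X_r$ strictly, where strictness comes from inserting extra $X$'s in between and using transitivity of domination. Domination is a partial order that agrees with the order on $P$ for principal elements, as noted after \Cref{def:domination}.

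With strict separation in hand, we build an embedding $\QQ\to P$ by a back-and-forth style induction. Enumerate the dyadic rationals in $(0,1)$, which have order type $\eta$. Choose strictly increasing reals, in fact rationals $a_d$, and assign to each dyadic $d$ a point $p_d\in P$ with $X_{a_d^-}<p_d<X_{a_d^+}$, where $a_d^-<a_d^+$ are rationals chosen around $a_d$ so that the intervals $(a_d^-,a_d^+)$ for distinct $d$ are disjoint and ordered as the $d$ are. Then $d<e$ implies $p_d<X_{a_d^+}\leq X_{a_e^-}<p_e$, so $p_d<p_e$ in $P$ by transitivity within $H(P)$ and agreement of the orders. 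Thus $\set{p_d}$ is a copy of $\QQ$ in $P$, a contradiction.

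The main obstacle is the strict separation step: verifying, case by case over principal, increasing and decreasing elements, that $X<Y$ with something strictly between them forces some $p\in P$ strictly between. I expect this to reduce to taking $p$ from a segment witnessing $X<Z$ for an intermediate $Z$, and checking that elements of segments relate to their own class correctly.
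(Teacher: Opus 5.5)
Your route is correct in outline but differs from the paper's. The paper uses the indivisibility of $\QQ$: in any finite colouring of $\QQ$, some colour class contains a copy of $\QQ$. Combined with $H(P)=P\cup N_+(P)\cup N_-(P)$, this reduces to a copy lying entirely in $N_+(P)$; the case of $P$ is immediate and $N_-(P)$ is symmetric. For two increasing elements $X<Y$, any point of a final segment of $Y$ witnessing $X<Y$ lies strictly between them, and a dyadic construction finishes. You instead use density of the copy to supply an intermediate element. This avoids the partition fact, which the paper only sketches, and is fully elementary, at the cost of a short case analysis on the types of elements. The final embedding step is essentially the same in both arguments.

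However, your ``key step'' is false as stated, and you have misidentified the bad case. If $X\in N_+(P)$, $Y\in N_-(P)$ and $X<Y$, there need be no $p\in P$ with $X\leq p\leq Y$ at all. Take $P=\omega\oplus\omega^*$, with $X$ the class of $\omega$ and $Y$ the class of $\omega^*$: every point of $\omega$ lies below $X$, and every point of $\omega^*$ lies above $Y$. By contrast, when $X$ is decreasing and $Y$ is increasing, any point of either witnessing segment works. So the density route cannot be assembled from that claim applied to pairs.

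The statement you actually need, which is true, is: if $X<Z<Y$ in $H(P)$, then some $p\in P$ satisfies $X<p<Y$. Prove it by splitting on the type of $Z$.
\begin{itemize}
\item If $Z\in P$, take $p=Z$.
\item If $Z\in N_+(P)$, take $p$ in a final segment of $Z$ witnessing $X<Z$. Then $X<p$, and $p<Z$ because $Z$ has no maximum, so $X<p<Z<Y$.
\item If $Z\in N_-(P)$, take $p$ in an initial segment of $Z$ witnessing $Z<Y$. Then $p<Y$, and $Z<p$ because $Z$ has no minimum, so $X<Z<p<Y$.
\end{itemize}
Your closing recipe, ``take $p$ from a segment witnessing $X<Z$'', fails when $Z$ is decreasing: it gives $Z<p$, not $p<Y$. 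One intermediate rational suffices; three are not needed. With this lemma in place, the rest of your argument goes through, including the choice of ordered disjoint rational intervals for the dyadics.
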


\begin{proof}
    Assume for contradiction that $H(P)$ contains a copy of the rationals.
    It is a standard result that if the rationals are finitely coloured, then one colour class must itself contain a copy of the rationals.
    This can be seen to follow from \Cref{fact:covers} (if there was no monochrome copy of $\QQ$, one finds $x_1\covers y_1$ in the poset restricted to the first colour class, and then $x_2\covers y_2$ with $x_1 > x_2 > y_2 > y_1$ in a second colour class, and so on.)

    Note that, due to equality \eqref{eq:h-partition}, we may assume \wlg\ that $N_+(P)$ contains a copy of the rationals: let $f\from \QQ\to N_+(P)$ be an order-preserving injection. 
    Now take $X,Y\in N_+(P)$ with $X < Y$.
    It is then immediate from the definition of the domination order that there is $x\in P$ with $X < x < Y$.
    Thus for any $p,q\in \QQ$ with $p<q$, there is some $x_{p,q}\in P$ such that $f(p) < x_{p,q} < f(q)$.
    A simple inductive construction then produces a copy of the dyadic rationals, and thus a copy of $\QQ$, inside $P$.
    This contradicts the fact that $P$ is scattered, therefore proving that $H(P)$ is scattered.
\end{proof}

\begin{lemma}
	\label{lem:h-chain-is-chain}
	If $C$ is a chain then the poset $H(C)$ is also a chain.
	Moreover, if $X,Y\in N_+(C)$ have $X < Y < \top$, then there is $x\in C$ with $X < x < Y$.
\end{lemma}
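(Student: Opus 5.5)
First I make the structure of $H(C)$ explicit when $C$ is a chain. In a chain every two elements are comparable, so two saturated subchains $D,E\sseq C$ have mutually finite incomparability exactly when they are both maximal chains in $\Conv(D\union E)$. Inside a chain, the only maximal chain of a convex set is the set itself, so this forces $D=\Conv(D\union E)=E$. Consequently $D\simeq_+ E$ holds precisely when $D$ and $E$ share a maximal element, or when they have a common final segment. Since saturated subchains of $C$ are convex, this means they have the same downward closure from some point on, i.e.\ they determine the same ``cut'' from below. So each element of $N_+(C)$ corresponds to a cofinal type of an initial-segment cut: an increasing convex subchain with no maximum, determined up to its tail, and dually for $N_-(C)$.

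For the first claim I take two elements $X,Y\in H(C)$ and show that they are comparable under domination. The cases involving $\top$ or $\bot$ are immediate, and two principal elements are comparable because $C$ is a chain. In the remaining cases I pick representatives: $X$ is either a point or a saturated chain with no maximum (respectively no minimum), and I use the convention that the relevant segments are final segments for $N_+$ and initial segments for $N_-$. I compare the downward closures $\downarrow X'$ and $\downarrow Y'$ of tails $X'$, $Y'$ within $C$. These are initial segments of the chain $C$, so one contains the other.

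\begin{itemize}
\item If the closures differ, say some $c\in C$ lies in $\downarrow Y'$ but above all of $X'$, then I shrink $Y'$ to its part above $c$. That part is nonempty and still a final or initial segment of the appropriate kind, since $c<$ some element of $Y'$. This gives $X'<Y'$ pointwise, so $X<Y$.
\item If the closures coincide for every choice of tails, then either the two elements are equivalent, or one is increasing and the other decreasing. An increasing $X$ and a decreasing $Y$ with the same cut should give $X<Y$: I take $X'$ to be all of $X$ and $Y'$ to be all of $Y$, and since $X$ lies below the cut and $Y$ above it, every element of $X'$ is below every element of $Y'$.
\end{itemize}

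The main obstacle is handling this last mixed case carefully, including the boundary configurations. The cleanest way through is to phrase everything via Dedekind-type cuts of $C$, and to check that a decreasing chain with no minimum can never share its ``cut'' with an increasing chain in a way that makes them incomparable.

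For the second claim, suppose $X,Y\in N_+(C)$ with $X<Y<\top$. By \Cref{def:domination} there are final segments $X'\sseq X$ and $Y'\sseq Y$ with $X'<Y'$ pointwise. Since $Y$ has no maximum, $Y'$ has no maximum either. I take any $y\in Y'$ and then some $y'\in Y'$ with $y'>y$, and set $x\defined y\in C$.
\begin{itemize}
\item $X<x$: the singleton $\set{y}$ is the principal representative of $x$, and $X'<y$ pointwise.
\item $x<Y$: the final segment of $Y'$ above $y$ is nonempty, since it contains $y'$, and it lies entirely above $y$.
\end{itemize}
Thus $X<x<Y$. This part is routine once domination is unwound; the only care needed is that $y$ itself is not the top of $Y'$, which holds because $Y$ is nonprincipal.
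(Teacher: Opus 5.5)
Your description of $\simeq_\pm$ inside a chain is correct, and so are the increasing--increasing and principal--increasing comparisons. The whole second claim is also correct, and your choice of $y\in Y'$ is more direct than the paper's down-set argument.

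\textbf{The gap.} The first claim breaks whenever an element of $N_-(C)$ is involved. For $Y\in N_-(C)$ the only admissible witnesses in the definition of domination are nonempty \emph{initial} segments of $Y$, and these are coinitial in $Y$. Shrinking $Y'$ to ``its part above $c$'' produces a final piece of $Y'$. That piece is not an initial segment of $Y$ unless $c$ lies below all of $Y'$.

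This is not cosmetic: the recipe can output false comparisons. Take $C=\omega^*=\{e_0>e_1>e_2>\dotsb\}$, $Y=[C]_-$, the principal element $e_1$, and tails $\{e_1\}$ and $Y'=C$.
\begin{itemize}
\item We have $\downarrow e_1\subsetneq\downarrow C$, and $c=e_0$ lies in $\downarrow Y'$ above $e_1$.
\item So your Case 1 declares $e_1<Y$, with witness $\{e_0\}$, which is not an initial segment of $C$. Note also that $c<$ ``some element of $Y'$'' fails here.
\item In fact $Y<e_1$, witnessed by the initial segment $\{e_2,e_3,\dotsc\}$.
\end{itemize}

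The root cause is that $\downarrow Y'$ is the wrong invariant for a decreasing element. It depends on the tail chosen and always contains $Y'$ itself. Consequently your Case 2 hypothesis (``closures coincide for every choice of tails'') does not describe the mixed configuration, and the decreasing--decreasing case is also misrouted into the invalid Case 1. The mixed case, which you flag as the main obstacle, is only asserted (``should give $X<Y$''), not proved.

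\textbf{Repair.} Attach to $X=[D]_+$ the initial segment $L_X=\downarrow D$ of $C$, and to $Y=[E]_-$ the final segment $U_Y=\uparrow E$. Treat decreasing and principal--decreasing comparisons exactly dually to the increasing ones, using $U$ in place of $L$. In the mixed case $X\in N_+(C)$, $Y\in N_-(C)$:
\begin{itemize}
\item If $L_X\cap U_Y=\emptyset$, then $L_X<U_Y$ pointwise, since one is down-closed and the other up-closed in the chain $C$. Hence $D<E$ and $X<Y$.
\item Otherwise pick $c\in L_X\cap U_Y$. The sets $\{e\in E: e<c\}$ and $\{d\in D: d>c\}$ are a nonempty initial segment of $E$ and a nonempty final segment of $D$, using that $E$ has no minimum and $D$ no maximum. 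Hence $Y<c<X$.
\end{itemize}
This is essentially the paper's route. It first observes that every nonprincipal element is comparable with every point of $C$. Then it notes that two incomparable elements would have to induce the same down-set of $C$, which forces $\simeq_+$ in the pure case and $X<Y$ in the mixed case.
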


\begin{proof}
	Note first that if $X\in N_+(C)$, then $X$ is comparable with every element of $C$: one element of the equivalence class $[X]_+$ is an initial segment of $C$, and these are exactly the elements of $C$ which are below $X$.
	All other elements of $C$ are in no member of $[X]_+$, and so are above $X$.
	Thus if $X,Y\in N_+(C)$ are incomparable, we must have that they both have the same down-set in $C$:
	\begin{equation*}
		\below{X} \inter C = \below{Y} \inter C.
	\end{equation*}
	But then $\below{X} \inter C \in [X]_+$ and $\below{Y} \inter C \in [Y]_+$, and so $X \simeq_+ Y$.
	Thus $N_+(C) \union C$ is a chain.

	Next, if $X\in N_+(C)$ and $Y\in N_-(C)$ are incomparable, then they must again define the same down set.
	But then we have $\below{X} \inter C \in [X]_+$ and $\above{Y} \inter C \in [Y]_-$.
	As $\below{X} < \above{Y}$, we find that $X < Y$.
	Thus $H(C)$ is a chain.

	Finally, take $X,Y\in N_+(C)$ with $X < Y < \top$. 
	We know that $\below{X} \inter C \neq \below{Y} \inter C$, and so we may assume that $\below{X} \inter C \subset \below{Y} \inter C$.
	But then there is some element
	\begin{equation*}
		x \in \p[\big]{\below{Y} \inter C} \setminus \p[\big]{\below{X} \inter C},
	\end{equation*}
	and so we have $X < x < Y$, as required.
\end{proof}

We now give one further definition concerning nonprincipal elements before we continue.

\begin{definition}
    \label{def:accumulation-and-isolated-points}
    A nonprincipal increasing chain $C\in N_+(P)$ is an \emph{accumulation point} of $N(P)$ if, for any $D\in N(P)$ with $D < C$, there is some $E\in N(P)$ with $D < E < C$, and similarly (with orders reversed) for elements of $N_-(P)$.
    An element of $N(P)$ which is not an accumulation point is called an \emph{isolated point}.
\end{definition}

We now prove that, if $C$ is a chain, then $H(C)$ is complete. 
It is worth noting that, for a general poset $P$, $H(P)$ need not be chain-complete; if $C$ and $D$ are non-equivalent saturated chains in $P$ with $C$ cofinal in $D$ (e.g.\ $C$ has order type $\ww$ but $D$ has order type $\ww^2$), then $[C]_+$ and $[D]_+$ are both minimal upper bounds for $C$, but they are incomparable.

It is easy to see, however, that for any chain $C$, $[C]_+$ is an upper bound for $C$, and there are no strictly smaller upper bounds for $C$.
It is for this reason that we refer to $H(P)$ as a chain extension, rather than a chain completion.

\begin{lemma}
	\label{lem:h-complete}
    For any poset $P$ and chain $C\sseq P$, the poset $H(C)$ is a complete linear order.
    Moreover, if a subset $D\sseq H(C)$ has no maximum in $D$, then $\sup_{H(C)}(D)\in N_+(C)$.
\end{lemma}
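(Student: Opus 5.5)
By \Cref{lem:h-chain-is-chain}, $H(C)$ is a linear order, and it has least element $\bot$ and greatest element $\top$. It therefore suffices to show that every subset $D\sseq H(C)$ has a supremum; infima then follow by the symmetric argument (or because in a bounded linear order the infimum of $D$ is the supremum of its set of lower bounds). If $D$ has a maximum, that maximum is the supremum. So assume from now on that $D$ is nonempty and has no maximum, since $\sup\emptyset=\bot$. The plan is to identify the supremum as the increasing class of a specific initial segment of $C$.

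\textbf{The candidate.} Define
\begin{equation*}
	E\defined \set{x\in C\st \exists Y\in D,\; x < Y}.
\end{equation*}
This is an initial segment of $C$, so it is saturated in $C$, and it has no maximal element. Indeed, suppose $x\in E$ with $x<Y\in D$. Since $D$ has no maximum, pick $Y'\in D$ with $Y'>Y$. I claim some $x'\in C$ satisfies $x<Y\le x'<Y'$ or $x<x'<Y'$ with $x'>x$. This should come from the definition of domination on $H(C)$: two comparable elements of $H(C)$ are separated by witnesses in $C$, as in the last part of \Cref{lem:h-chain-is-chain} and its analogues for the mixed and principal cases. A short case check over increasing, decreasing and principal $Y,Y'$ gives such an $x'\in C$ strictly between $Y$ and $Y'$, so $x'\in E$ and $x'>x$.

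If $E=C$ and $C$ has no maximum, set $S\defined[C]_+$. If $E=C$ and $C$ has a maximum, that maximum would lie below some $Y\in D$, forcing $Y=\top$; but then $\top$ would be the maximum of $D$, contrary to assumption. Also $E\neq\emptyset$, because some $Y\in D$ lies above $\bot$ and is separated from it by an element of $C$. In all remaining cases, set $S\defined[E]_+\in N_+(C)$. In every case $S$ is nonprincipal.

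\textbf{$S$ is the least upper bound.} First, $S$ is an upper bound for $D$. Suppose some $Y\in D$ had $Y>S$. By the separation property there is $x\in C$ with $S<x<Y$, so $x\in E$, contradicting $S>E$; here the down-set of $S=[E]_+$ in $C$ is exactly $E$, as in the proof of \Cref{lem:h-chain-is-chain}. Since $H(C)$ is linear, we get $Y\le S$. Equality $Y=S$ would make $S$ a maximum of $D$, so in fact $Y<S$.

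Second, $S$ is least. Let $U$ be an upper bound of $D$ with $U<S$. Separating $U$ and $S$ gives $x\in C$ with $U<x<S$, so $x\in E$. Hence $x<Y$ for some $Y\in D$, and then $U<x<Y$, contradicting that $U$ is an upper bound.

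\textbf{The main obstacle.} The delicate step is the separation claim: whenever $A<B$ in $H(C)$, some $x\in C$ has $A\le x\le B$, with strictness in the right places. This must be checked across all combinations of principal, increasing and decreasing elements, together with $\top$ and $\bot$. It also requires verifying that $[E]_+$ sits exactly above $E$ and below $C\setminus E$, which relies on saturated chains in $C$ being identified with their down-sets, as in \Cref{lem:h-chain-is-chain}. I would do this case analysis carefully first. The "moreover" clause then follows, because the supremum constructed for a set without maximum always lies in $N_+(C)$.
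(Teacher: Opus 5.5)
Your strategy is the paper's: the supremum is $[E]_+$ for the initial segment $E$ of points of $C$ lying below some element of $D$, and you check that $E$ has no maximum, that $[E]_+$ is an upper bound, and that it is least. Two of these checks are sound. The no-maximum step works through your disjunctive claim (take $x'=Y$ if $Y\in C$, a point below $Y$ if $Y\in N_+(C)$, a domination witness between $Y$ and $Y'$ if $Y\in N_-(C)$). The leastness step works because anything below $S=[E]_+\in N_+(C)$ can be strictly separated from $S$ by a point of $E$.

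\textbf{The gap} is in the upper-bound step. It rests on the general separation property from your last paragraph, and that property is false; no case check will establish it. There are pairs $A<B$ in $H(C)$ with no point of $C$ between them at all. For example, take $C=\omega+\omega^*$, $A=[\omega]_+$ and $B=[\omega^*]_-$. Likewise, a principal $Y=\min(C\setminus E)$ sits immediately above $S$. So from a hypothetical $Y\in D$ with $Y>S$ you cannot always produce $x\in C$ with $S<x<Y$. This happens exactly when $Y$ is such a point of $C$, or an element of $N_-(C)$ determining the same cut as $S$. The same problem affects your justification that $E\neq\emptyset$: $Y=[C]_-$, for $C$ without minimum, has no point of $C$ below it.

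What excludes these cases is the hypothesis that $D$ has no maximum, which you do not use in this step. Pick $Y'\in D$ with $Y'>Y$ and find $x\in C$ with $x>S$ and $x<Y'$:
\begin{itemize}
\item $x=Y$ if $Y\in C$;
\item a domination witness between $Y$ and $Y'$ if $Y\in N_-(C)$;
\item a point between $S$ and $Y$ from \Cref{lem:h-chain-is-chain} if $Y\in N_+(C)$, which is legitimate since $\top\notin D$.
\end{itemize}
Then $x\in E$, contradicting $E<S$.

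This is precisely the paper's route. It notes that infinitely many elements of $D$ lie above $y$, so either one of them is principal or two lie in the same $N_\epsilon(C)$, and \Cref{lem:h-chain-is-chain} (or its dual) then supplies a point of $E$ above $[X]_+$. With this repair, and with your separation property replaced by the one-sided facts that are actually true, your proof goes through. Those facts are: strict separation by a point of $C$ holds below any element of $N_+(C)\setminus\{\top\}$, and above any element of $N_-(C)\setminus\{\bot\}$.
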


\begin{proof}
    It suffices to prove the second statement, as then by symmetry every subset of $H(C)$ has an infimum and a supremum, as required for $H(C)$ to be a complete linear order, as we already know from \Cref{lem:h-chain-is-chain}. that $H(C)$ is a linear order.
    Given a subset $D\sseq H(C)$ containing no maximal element, let 
	\begin{equation*}
		X\defined \set{x\in C\st \exists y\in D,\, y > x}.
	\end{equation*}
	We claim that $[X]_+\in N_+(C)$ (i.e.\ $X$ has no maximal element), and $[X]_+$ is a least upper bound for $D$.

    First, $X$ is a saturated chain with no maximum: if $x\in X$ and $y< x$, then $y\in X$, and so $X$ is in fact an initial segment of $C$, which is a saturated chain.
	Now assume for contradiction that $X$ has a maximal element $x$.
	Then there is some element $y\in D$ with $y > x$, and as $D$ has no maximum there are infinitely many elements of $D$ above $y$.
	If any element of $\above{y}_D$ is in $C$, then this element is also in $X$, contradicting the assumption that $x = \max X$, and so all elements of $\above{y}_D$ are nonprincipal.
	We must either have that at least one of $\above{y}_D \inter N_-(C)$ and $\above{y}_D \inter N_+(C)$ is infinite; in either case we can find elements $X,Y\in N_\eps (C)$ for some $\eps\in\set{-,+}$ with which we can apply \Cref{lem:h-chain-is-chain} to find an element $z\in C$ between them. But then $z \in X$ and $z > x$, contradicting maximality of $x$.

	A similar argument to the above also shows that $[X]_+$ is an upper bound for $D$: if some element $y\in D$ had $y\geq [X]_+$ (noting that $[X]_+ \in N_+(C) \sseq H(C)$, which is a linear order), then either we can find an element of $C$ in $D$ above $[X]_+$, or we can find two elements of $N_\eps(C)$ for some $\eps\in\set{-,+}$, between which \Cref{lem:h-chain-is-chain} tells us there is an element of $C$.

	Finally, assume for contradiction that $Y\in H(C)$ is also an upper bound for $D$ and $Y < [X]_+$.
	We know that either $Y\in N_+(C)$, $Y\in N_-(C)$, or $Y\in C$.
	But \Cref{lem:h-chain-is-chain} and the definition of the domination order tells us that, in all cases, we can find an element $y\in C$ with $Y \leq y < [X]_+$, and so $y$ is also an upper bound for $D$.
	However, as $y < [X]_+$, we know that $y \in X$, and so by definition there is some $z\in D$ with $z > y$, contradicting the fact that $y$ is an upper bound for $D$.
	Thus $[X]_+$ is the (unique) least upper bound for $D$, and the lemma is proved.
\end{proof}

\begin{remark}
	Unlike a completion of a poset, it is often the case that
	\begin{equation*}
		H(H(P)) \neq H(P).
	\end{equation*}
	Indeed, this can already be seen for chains: if $C = \ww$ is an infinite increasing chain, then $H(C)$ contains a nonprincipal upper bound for this chain; call this element $x$.
	But, upon considering $H(H(C))$, the element $x$ is principal, and so a new nonprincipal upper bound for $\ww$ will be added.
\end{remark}

Note also that $H(P)$ interacts well with passing to sub-posets.

\begin{observation}
	\label{obs:embedding}
	If $P$ is a poset and $Q\sseq P$ has the property that if $C\sseq Q$ is a saturated chain, then $C$ is also saturated in $P$, then $H(Q)$ naturally embeds into $H(P)$ by sending $[X]_+ \in N_+(Q)$ to the equivalence class in $N_+(P)$ containing $X$.
\end{observation}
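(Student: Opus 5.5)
The plan is to check directly that the map is well defined, injective and order-preserving, working throughout with representative chains. Define $\iota\from H(Q)\to H(P)$ by sending each principal element $x\in Q$ to $x\in P$, sending $\top,\bot$ to $\top,\bot$, and sending $[X]_+\in N_+(Q)$ (respectively $[X]_-\in N_-(Q)$) to the class of $X$ under $\simeq_+$ (respectively $\simeq_-$) in $P$.

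First I check that $\iota$ lands in the right place. A representative $X$ is a saturated chain of $Q$, so by hypothesis it is saturated in $P$, and hence $X\in F(P)$. Having no maximal element is a property of the chain $X$ alone, so nonprincipal classes go to nonprincipal classes and principal classes go to principal ones. This is compatible with the gluing $\simeq_{\text{prin}}$.

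The main step is showing that $\iota$ is well defined, that is, that $X\simeq_+ Y$ in $Q$ implies $X\simeq_+ Y$ in $P$. I take final segments $X',Y'$ with mutually finite incomparability in $Q$. The finiteness conditions on incomparable pairs only involve elements of $X'\union Y'$, so they hold verbatim in $P$. What needs proof is that $X'$ (and symmetrically $Y'$) is still a maximal chain in the possibly larger set $\Conv_P(X'\union Y')$. Suppose $z\in\Conv_P(X'\union Y')\setminus X'$ is comparable with all of $X'$.
\begin{itemize}
\item If $z$ lies strictly between two elements of $X'$, then $X'\union\set{z}$ contradicts the fact that $X'$, being a final segment of $X$, is saturated in $P$.
\item Since $X'$ has no maximum, $z$ cannot lie above all of $X'$ while also lying below some element of $X'$. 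So the remaining case is that $z$ lies above a lower part of $X'$ and below some $y\in Y'$.
\end{itemize}
For the remaining case I use the argument already used in the proof of \Cref{lem:sim-equivalence-bijection}. Because $X'$ is maximal in $\Conv_Q(X'\union Y')$ and each $y$ is incomparable with only finitely many elements of $X'$, every $y\in Y'$ lies below some $x\in X'$. Then $z<y<x$ puts $z$ strictly between two elements of $X'$, which returns us to the first case. The downward direction is handled symmetrically.

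I expect this maximality transfer to be the only real obstacle. The point to watch is that the hypothesis only gives saturation, so I must first push $z$ into the region strictly between two elements of $X'$ before saturation can be applied. I also need to handle the boundary at the bottom of a final segment carefully; if necessary I will shrink $X',Y'$ by a finite amount, as is done in the proof of \Cref{lem:equivalence}. The decreasing case is dual.

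For injectivity, I show the converse: $X\simeq_+ Y$ in $P$ implies $X\simeq_+ Y$ in $Q$. This is easy because $\Conv_Q(X'\union Y')=\Conv_P(X'\union Y')\inter Q\sseq\Conv_P(X'\union Y')$, so maximality in $P$ restricts to maximality in $Q$, and the finiteness conditions are unchanged.

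Finally, the domination order of \Cref{def:domination} is defined purely by comparisons between elements of final or initial segments of the representatives. These comparisons are the same in $Q$ and in $P$, and the rules for $\top,\bot$ are preserved. Therefore $\iota$ both preserves and reflects the order, so it is an order embedding.
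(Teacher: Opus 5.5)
Your argument is correct in substance. It carries out the check the paper leaves to the reader (the paper only remarks that the map is ``easy to check'' to be well defined). The one real point is that $X'$ and $Y'$ stay maximal chains in $\Conv_P(X'\union Y')$, with saturation of $X'$ in $P$ ruling out points strictly inside $X'$. Your injectivity and order-embedding steps are fine.

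One step fails as written: the downward direction is not a literal mirror of the upward one. The mirrored claim ``every $y\in Y'$ lies above some $x\in X'$'' is false when $X'$ has a least element. For example, take $X'=\set{x_0<x_1<\cdots}$ and $Y'=\set{y_0<x_1<x_2<\cdots}$ with $y_0\incomp x_0$. These have mutually finite incomparability, yet $y_0$ lies above no element of $X'$.

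The fix is short, also simplifies the upward case, and makes any shrinking of $X',Y'$ unnecessary. Pick $u,v\in X'\union Y'$ with $u\le z\le v$.
\begin{itemize}
\item If $z$ lies above all of $X'$, then so does $v$. Hence $v\in Y'\setminus X'$ is comparable with every element of $X'$, contradicting the maximality of $X'$ in $\Conv_Q(X'\union Y')$.
\item If $z$ lies below all of $X'$, the same argument applies to $u$.
\item The only remaining case is $z$ strictly between two elements of $X'$, which saturation in $P$ excludes.
\end{itemize}
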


It is easy to check that the map given in \Cref{obs:embedding} is well defined.
Following this observation, we will abuse notation and consider $H(Q)$ as a subset of $H(P)$ when $Q\sseq P$ satisfies the condition of \Cref{obs:embedding}.
The usual case in which this situation will be encountered is when $Q$ is itself a saturated chain in $P$.

We conclude this section with the following definition, which will be useful in later sections.

\begin{definition}
    \label{def:bicomparable}
    Chains $C,D\in N_+(P)$ are said to be \emph{bicomparable} if we have
    \begin{align*}
        \forall x \in C \; \exists y \in D \; (x < y) \quad\text{and}\quad \forall y\in D \; \exists x\in C \; (y < x),
    \end{align*}
    i.e.\ $C$ is cofinal in $D$ and $D$ is cofinal in $C$. 
    If this is the case, then we write $C\bicomp D$.
    The definition for $C,D\in N_-(P)$ is as expected; $C$ is coinitial in $D$ and $D$ is coinitial in $C$.
\end{definition}

\section{Maximal tubes and the Aharoni--Korman conjecture}
\label{sec:tubes}

In this section, we show that the Aharoni--Korman conjecture, \Cref{conj:ak}, is equivalent to finding a certain structure in a poset, which we call a \emph{maximal tube}.

\begin{definition}
    A \emph{tube} in a poset $P$ is a set $T\sseq P$ such that every element of $T$ is comparable to all but finitely many other elements of $T$. Formally,
	\begin{equation*}
		\forall x\in T, \; \incompset{x}_P \inter T \;\text{ is finite.}
	\end{equation*}
\end{definition}

In particular, we can see that all chains are tubes.
The name ``tube'' comes from how we will find chains within tubes, and so think of tubes as the containers holding the chains.
We are in particular interested in tubes which cannot be extended, as follows, due to properties which we will investigate throughout the rest of this section.

\begin{definition}
    A tube $T$ in poset $P$ is called a \emph{maximal tube} if there is no element $x\in P\setminus T$ for which $T\union\set{x}$ is a tube.
    That is, every element of $x\in P\setminus T$ is incomparable to infinitely many elements of $T$.
\end{definition}

The aim of this section is to prove the following result, which reduces \Cref{conj:ak} to the problem of finding maximal tubes in posets.

\begin{proposition}
	\label{prop:maximal-tube-suffices}
    Let $P$ be a countable FAC poset, and let $T\sseq P$ be a maximal tube.
    Then there is a chain $C\sseq T$ which is a spine of $P$.
\end{proposition}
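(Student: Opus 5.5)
The plan is to apply \Cref{thm:zaguia-spine} to the tube $T$ itself and then extend the resulting antichain partition of $T$ to all of $P$. Extending requires distributing the points of $P\setminus T$ among the antichains without breaking the antichain property or creating new parts that miss the chain.

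\textbf{Step 1.} By the definition of a tube, every $x\in T$ has $\incompset{x}_P\inter T$ finite. Hence the incomparability graph of the subposet $T$ is locally finite, and \Cref{thm:zaguia-spine} gives a chain $C\sseq T$ and a partition $T=\bigcup_{i\in I}A_i$ into antichains with each $A_i$ meeting $C$, say $A_i\inter C=\set{c_i}$. Note that each $A_i$ is finite, since $P$ is FAC. Moreover, every element of $A_i$ is incomparable to or equal to $c_i$, so $A_i\sseq\set{c_i}\union\p{\incompset{c_i}_P\inter T}$. In particular, every element of $T$ is incomparable to only finitely many $A_i$.

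\textbf{Step 2.} This is the key lemma and the step I expect to be the main obstacle. I want to show that for every $x\in P\setminus T$ there are infinitely many indices $i$ with $x\incomp A_i$, meaning $x$ is incomparable to every element of $A_i$. What we know is that $x$ is incomparable to infinitely many $t\in T$. Since each $A_i$ is finite, these $t$ lie in infinitely many distinct $A_i$, and by Ramsey (\Cref{fact:infinite-chain}) we may take them to form an infinite monotone chain $t_1,t_2,\dotsc$, say increasing. The difficulty is that an $A_i$ containing some $t_k$ may also contain an element comparable to $x$.

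To handle this, I would use the tube structure. Each $a\in A_i$ is incomparable to $t_k$ or equal to it, so $a$ lies in the finite set $\incompset{t_k}_T\union\set{t_k}$. Suppose infinitely many of these $A_i$ contained an element comparable to $x$. I would then extract, again by Ramsey, a monotone sequence $a_k>x$ (or $a_k<x$) with $a_k\incomp t_k$, and play this off against $x\incomp t_k$ and the monotonicity of $(t_k)$. The aim is to find some element of $T$, or $x$ itself, that is incomparable to infinitely many elements of $T$. For elements of $T$ this contradicts the tube property; for $x$, the point is rather that we can show $x$ would then have been addable to $T$, contradicting maximality.

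If this direct argument fails, I would instead refine the partition: split off from $A_i$ the elements comparable to $x$. This is only permissible when each new piece still meets $C$, so I would try to choose the partition in Step 1 (via the proof of \Cref{thm:zaguia-spine}) so that antichains are ``thin'' around $C$.

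\textbf{Step 3.} Given the lemma, I use countability to enumerate $P\setminus T=\set{x_1,x_2,\dotsc}$ and assign the $x_n$ greedily. At stage $n$, only finitely many antichains have already received new points, and infinitely many $A_i$ satisfy $x_n\incomp A_i$. So I can choose such an $A_i$ not used before and set $A_i\defined A_i\union\set{x_n}$. Each augmented set receives exactly one new point, which is incomparable to all of the original $A_i$, so it remains an antichain. The partition still consists of the same index set, each part still contains $c_i\in C$, and every point of $P$ is covered. Hence $C\sseq T$ is a spine of $P$.
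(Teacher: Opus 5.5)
Your architecture is exactly the paper's: apply \Cref{thm:zaguia-spine} to $T$, prove that every $x\in P\setminus T$ is incomparable to \emph{all} of $A_i$ for infinitely many $i$ (this is \Cref{lem:extend-spine-partition}), and then assign the points of $P\setminus T$ greedily using countability. Your Steps 1 and 3 are correct as written.

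The gap is Step 2. It is the only non-trivial part of the proof, and you do not prove it. Your stated target is also misdirected. Exhibiting $x$ as incomparable to infinitely many elements of $T$ contradicts nothing, since maximality of $T$ already tells you exactly this about $x\notin T$. The fallback does not work either. Splitting off part of $A_i$ produces a piece that misses $C$, because $A_i\inter C=\set{c_i}$. And you cannot control the shape of the partition coming from a black-box use of Zaguia's theorem.

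The missing argument uses only the tube property of $T$ to force comparabilities, and it ends in a direct order contradiction. It shows that only finitely many $A_i$ can contain both an element incomparable to $x$ and an element comparable to $x$.

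Suppose instead that infinitely many $A_i$ are of this mixed kind. For infinitely many of them pick $t,a\in A_i$ with $x\incomp t$ and $x\comp a$. By Ramsey, index them so that $t_1<t_2<\dotsb$; the decreasing case is symmetric.

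Then $a_k\incomp t_k$, since they are distinct elements of one antichain. The $a_k$ are pairwise distinct, since the $A_i$ are disjoint.

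\begin{itemize}
\item As $a_k\in T$, it is comparable to $t_j$ for all large $j$. The case $a_k>t_j>t_k$ is impossible, so $a_k<t_j$.
\item If $x<a_k$, we would get $x<t_j$. Hence $a_k<x$ for every $k$.
\item Apply the tube property to $t_1$: it is comparable to some $a_j$ with $j>1$. The case $a_j<t_1<t_j$ is impossible, so $t_1<a_j<x$. This contradicts $x\incomp t_1$.
\end{itemize}

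No monotonicity of the $a_k$ is needed. Each $A_i$ is finite, so $x$ is incomparable to some element of infinitely many $A_i$. All but finitely many of these must therefore satisfy $x\incomp A_i$, and your Step 3 then finishes the proof.
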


\begin{figure}[ht]
	\begin{center}
		\begin{tikzpicture}[
  scale=0.9,
  every node/.style={circle, fill, inner sep=1.5pt},
  label/.style={rectangle, fill=none, inner sep=1pt, font=\small}
]


\def\dy{1.0}

\node (bot) at (0, -0.5*\dy)   {};
\node (top) at (0, 6.5*\dy) {};

\node (a1) at (-3, 1*\dy) {};
\node (a2) at (-3, 2*\dy) {};
\node (a3) at (-3, 3*\dy) {};
\node (a4) at (-3, 4*\dy) {};
\node[fill=none] (a5) at (-3, 5*\dy) {};

\node (b1) at (0, 1*\dy) {};
\node (b2) at (0, 2*\dy) {};
\node (b3) at (0, 3*\dy) {};
\node (b4) at (0, 4*\dy) {};
\node[fill=none] (b5) at (0, 5*\dy) {};

\node (c1) at (3, 1*\dy) {};
\node (c2) at (3, 2*\dy) {};
\node (c3) at (3, 3*\dy) {};
\node (c4) at (3, 4*\dy) {};
\node[fill=none] (c5) at (3, 5*\dy) {};

\draw (bot) -- (a1);
\draw (bot) -- (b1);
\draw (bot) -- (c1);

\draw (a5) -- (top);
\draw (b5) -- (top);
\draw (c5) -- (top);

\draw (a1) -- (a2) -- (a3) -- (a4);
\draw[dotted] (a4) -- (a5);

\draw (b1) -- (b2) -- (b3) -- (b4);
\draw[dotted] (b4) -- (b5);

\draw (c1) -- (c2) -- (c3) -- (c4);
\draw[dotted] (c4) -- (c5);

\draw[thin] (a2) -- (b1);
\draw[thin] (a3) -- (b2);
\draw[thin] (a4) -- (b3);
\draw[thin,dotted] (a5) -- (b4);

\draw[thin] (b2) -- (c1);
\draw[thin] (b3) -- (c2);
\draw[thin] (b4) -- (c3);
\draw[thin,dotted] (b5) -- (c4);
\draw[thin] (c2) -- (b1);
\draw[thin] (c3) -- (b2);
\draw[thin] (c4) -- (b3);
\draw[thin,dotted] (c5) -- (b4);

\node[label] at (0, 6.5*\dy+0.4) [fill=none] {$\top$};
\node[label] at (0,-0.5*\dy-0.4) [fill=none] {$\bot$};

\node[label] at (-3.6, 0.3) [fill=none] {$A$};
\node[label] at (-0.6, 0.3) [fill=none] {$B$};
\node[label] at ( 2.4, 0.3) [fill=none] {$C$};

\node[label] at ( 4.6, 0.3) [fill=none] {$T$};

\draw[rounded corners=10pt, thick, draw=lightgray]
  (-3.6, 0.6*\dy) rectangle (-2.4, 5.4*\dy);
\draw[rounded corners=10pt, thick, draw=lightgray]
  (-0.6, 0.6*\dy) rectangle ( 0.6, 5.4*\dy);
\draw[rounded corners=10pt, thick, draw=lightgray]
  ( 2.4, 0.6*\dy) rectangle ( 3.6, 5.4*\dy);
\draw[rounded corners=10pt, thick, draw=lightgray]
  (-1.2,-1.4*\dy) rectangle ( 4.2, 7.4*\dy);

\end{tikzpicture}		
	\end{center}
	\caption{An example of a poset $P$ with distinguished elements $\bot$ and $\top$ and infinite chains (in the rounded rectangles, each of order type $\ww$) $A,B,C$ labelled. Here, $T\defined\set{\bot,\top}\union B\union C$ (also shown in a grey rectangle) is the unique maximal tube, but $\set{\bot, \top} \union X$ is a spine of $P$ for all $X\in\set{A,B,C}$.}
	\label{fig:maximal-tube}
\end{figure}

\Cref{fig:maximal-tube} shows an example of a maximal tube in a poset, along with an example of a spine which is not contained in any maximal tube, showing that the converse of \Cref{prop:maximal-tube-suffices} does not hold.
One result we will use in working with tubes is \Cref{thm:zaguia-spine} which is due to Zaguia \cite{zaguia2024progress}; we first restate this theorem in our terminology.

\begin{theorem}[\cite{zaguia2024progress}, Theorem 6, rephrased]
    \label{thm:zaguia-spine-reformed}
    If $P$ is itself a tube, then $P$ has a spine.
\end{theorem}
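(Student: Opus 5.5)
The plan is to show that this is simply \Cref{thm:zaguia-spine} written in the language of tubes. The work is to check that the hypothesis ``$P$ is a tube'' is exactly the hypothesis ``the incomparability graph of $P$ is locally finite'', and that the two conclusions agree.

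\textbf{Hypotheses.} Apply the definition of a tube with $T = P$. It says that for every $x\in P$ the set $\incompset{x}_P \inter P = \incompset{x}_P$ is finite. Now let $G$ be the incomparability graph of $P$, with vertex set $P$ and $xy$ an edge exactly when $x\incomp y$. The neighbourhood of $x$ in $G$ is precisely $\incompset{x}_P$. So every vertex of $G$ has finite degree, which is to say $G$ is locally finite. The translation works equally well in the other direction, although we only need this one.

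\textbf{Conclusion.} Apply \Cref{thm:zaguia-spine} to $P$. It gives a partition of $P$ into antichains together with a chain of $P$ meeting every member of that partition. By \Cref{def:spine}, that chain is a spine of $P$, which is what \Cref{thm:zaguia-spine-reformed} asserts.

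There is no real obstacle here. The only point to watch is that no further hypotheses are silently needed. \Cref{thm:zaguia-spine} as quoted assumes nothing beyond local finiteness of the incomparability graph: it requires neither FAC nor countability. So the argument is purely definitional. In passing, a tube is automatically FAC: in an infinite antichain, any one element would be incomparable to infinitely many others, contradicting the tube condition. This fact is not needed for the argument.
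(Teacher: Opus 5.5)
Your proposal is correct and matches the paper. The paper gives no separate proof and simply restates Zaguia's theorem, relying on the same observation you make: $P$ being a tube means exactly that each $x\in P$ is incomparable to only finitely many elements, i.e.\ the incomparability graph of $P$ is locally finite.
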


We now show that, given any spine of $T$ and corresponding partition of $T$ into antichains, we can extend it into a partition of $P$ into antichains with the same spine.
This is where the condition of being a maximal tube comes in, as every element outside of $T$ is incomparable to infinitely many elements of $T$, and we leverage this fact to extend the antichains inside $T$ to all of $P$.

\begin{lemma}
    \label{lem:extend-spine-partition}
    Let $T$ be a maximal tube in a countable FAC poset $P$, let $C$ be a spine of $T$, and let $T = \bigcup_{x\in C} A_x$ be any partition into antichains with $x\in A_x$ for all $x\in C$.
    Then for all $y\in P\setminus T$, there are infinitely many $x\in C$ such that $y$ is incomparable to all of $A_x$.
\end{lemma}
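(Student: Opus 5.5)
The plan is to argue by contradiction, combining the maximality of $T$ with the finite antichain condition through the Ramsey argument behind \Cref{fact:infinite-chain}. First, every $A_x$ is an antichain of the FAC poset $P$, so it is finite. Second, since $T$ is maximal and $y\notin T$, the set $S\defined\incompset{y}_P\inter T$ is infinite. Each $A_x$ is finite and the $A_x$ partition $T$, so $S$ meets infinitely many of the antichains $A_x$.

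Suppose the conclusion fails. Then for all but finitely many $x\in C$, the antichain $A_x$ contains an element comparable to $y$. Combining this with the previous paragraph, I would pick a sequence of distinct $x_1,x_2,\dotsc\in C$ together with elements $s_i,u_i\in A_{x_i}$ such that $s_i\incomp y$ and $u_i\comp y$. Since $s_i$ and $u_i$ lie in the same antichain and one is comparable to $y$ while the other is not, they are distinct, and hence $s_i\incomp u_i$. Repeated use of Ramsey's theorem then lets me pass to a subsequence along which $(s_i)$ is monotone, say increasing, $(u_i)$ is monotone, all $u_i$ lie on the same side of $y$, say $u_i>y$, and $(x_i)$ is monotone in $C$. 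The other cases are symmetric.

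Next I would exploit the tube property: each $s_i$ and each $u_i$ is incomparable with only finitely many elements of $T$. Thinning the sequence again, I can assume that $s_i\comp u_j$, $s_i\comp x_j$ and $u_i\comp x_j$ for all $i\neq j$. The relations with $y$ then pin down the orientations. If $u_j<s_i$, then $y<u_j<s_i$, contradicting $s_i\incomp y$; so $u_j>s_i$ for all $i\neq j$. Hence every $s_j$ lies below every $u_i$ with $i\neq j$, while $s_i\incomp u_i$ and $u_i>y\incomp s_i$. The aim is to derive a contradiction from this rigid ``crossing'' pattern together with the positions of the $x_i$ on the chain $C$. Each $x_i$ lies in $A_{x_i}$, so it is incomparable to both $s_i$ and $u_i$ unless it equals one of them, yet it is comparable to all the other $s_j$ and $u_j$. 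I would try to show that either infinitely many of the $x_i$ are themselves incomparable to $y$, or this pattern places $y$ comparable to some $s_i$, which is a contradiction.

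I expect this last step to be the main obstacle. The orientation information pins down $y$ only against the $u_i$, and it is not obvious that the crossing configuration is impossible. If the direct contradiction fails, my fallback is to modify the chosen sequence. I would pick each $A_{x_i}$ so that the elements of $A_{x_i}$ comparable to $y$ lie on a fixed side of $y$. I would then compare $y$ with the $x_i$ directly: $x_i$ must be comparable to all but finitely many elements of the chain $C\inter S$, if that set is infinite, and this would force $y$ into $T\union\set{y}$ as an element comparable with cofinitely many elements of $T$, contradicting maximality.
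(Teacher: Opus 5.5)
\textbf{There is a genuine gap: you never reach the contradiction.} Your setup matches the paper's: incomparable elements $s_i$ and comparable elements $u_i$ from the same antichains, a monotone subsequence of the $s_i$ via Ramsey, and the tube property to make $s_i\comp u_j$ for $i\neq j$. But you stop at the ``crossing pattern'' and say it is not obvious that it is impossible.

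The fallback does not supply an argument. Since $C$ is a chain, $x_i$ is trivially comparable to every element of $C\inter S$, so that observation carries no information. Nothing you describe forces $y$ to be comparable with cofinitely many elements of $T$. That is exactly what maximality of $T$ forbids, so a concrete mechanism would be needed.

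Your claim that ``the other cases are symmetric'' is also wrong. Reversing the order of $P$ turns ``$(s_i)$ increasing, $u_i>y$'' into ``$(s_i)$ decreasing, $u_i<y$''. So ``$(s_i)$ increasing, $u_i<y$'' is a separate case you have not treated, and it is the case the paper's argument is built around.

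\textbf{The missing step is one line, and you already have every ingredient.}
\begin{itemize}
\item In your case $u_i>y$, you showed $s_j<u_i$ for all $i\neq j$. Since $(s_i)$ is increasing, $s_i<s_{i+1}<u_i$. This gives $s_i<u_i$, contradicting $s_i\incomp u_i$ (they are distinct elements of the antichain $A_{x_i}$).
\item In the remaining case $u_i<y$, the same orientation argument gives $u_j<s_i$ for all $i\neq j$, since $s_i<u_j$ would give $s_i<y$. Then $u_2<s_1<s_2$ contradicts $s_2\incomp u_2$.
\end{itemize}
None of the extra thinning is needed: the positions of the $x_i$ on $C$, the comparabilities with the $x_j$, and the monotonicity of $(u_i)$ all play no role.

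The paper's proof is this argument organised slightly differently. In your notation, it takes $(s_i)$ increasing and uses the tube property to get $u_i<s_j$ for all large $j$. This forces $u_i<y$ for every $i$, which rules out your case outright. It then picks $j$ with $s_1\comp u_j$, which forces $s_1<u_j<y$, contradicting $s_1\incomp y$.
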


\begin{proof}
	Assume for contradiction that there is an $x\in P\setminus T$ which is incomparable to only finitely many sets $A_y$ for $y\in C$.
	As $x\notin T$ and $T$ is a maximal tube and as $P$ is an FAC poset, the set $A_y$ is finite, there are infinitely many $y\in C$ such that $x$ is incomparable to some element $z_y \in A_y$.
	We can therefore find an infinite set $D\sseq C$ such that, for all $i \in D$, there are elements $y_i,z_i \in A_i$ such that $x\comp y_i$ and $x\incomp z_i$.
	Moreover, \Cref{fact:infinite-chain} tells us that  there must be an infinite chain amongst the elements $\set{z_i \st i \in D}$, and so by passing to a subset and suitably labelling the elements of $D$ with $\NN$, we can assume without loss of generality that $z_1 < z_2 < z_3 < \cdots$ (the case of $z_1 > z_2 > z_3 > \cdots$ is similar).

	As $T$ is a tube, for all $i$, the element $y_i$ is incomparable with only finitely many of the elements $z_j$, and so for all sufficiently large $j$ (in terms of $i$), we have $z_j \comp y_i$. 
	We may in particular assume that $j > i$, and so $z_i < z_j$.
	As $y_i$ and $z_i$ are incomparable (both being members of the antichain $A_i$), we cannot have that $y_i > z_j$, and so we must have $y_i < z_j$ for all sufficiently large $j$ (in terms of $i$).
	Moreover, as $x$ is incomparable to $z_j$, we must have that $x > y_i$ for all $i$.

	Finally, we know that $z_1$ is incomparable to only finitely many of $y_1,y_2,\dotsc$ (as $T$ is a tube), and so there is some $j$ such that $z_1 \comp y_j$.
	But $z_1 < z_j$, and so we cannot have $y_j < z_1$, as then we would have $y_j < z_j$, contradicting the fact that $y_j$ and $z_j$ are in the same antichain, and so are incomparable.
	We thus have $z_1 < y_j$, but $y_j < x$, and so $z_1 < x$, contradicting that $x$ is incomparable to $z_1$.

	Thus for all $x\in P\setminus T$, there are infinitely many $y\in C$ such that $x$ is incomparable to all of $A_y$, as required.
\end{proof}

With \Cref{lem:extend-spine-partition} in hand, we can prove \Cref{prop:maximal-tube-suffices}.

\begin{proof}[Proof of \Cref{prop:maximal-tube-suffices}]
	We know from \Cref{thm:zaguia-spine-reformed} that $T$ has a spine $C$, and so there is a partition $T = \bigcup_{x\in C} A_x$ of $T$ into antichains, with $x\in A_x$ for all $x\in C$.
	We may then apply \Cref{lem:extend-spine-partition} to learn that for all $y\in P\setminus T$, there are infinitely many $x\in C$ such that $y$ is incomparable to all of $A_x$.
	Indeed, define
	\begin{equation*}
		Y_y \defined \set{x\in C \st y \incomp A_x}.
	\end{equation*}

	We claim that there is a partition $P = \bigcup_{x\in C} B_x$ of $P$ into antichains with $A_x\sseq B_x$ for all $x\in C$, and so $C$ is a spine of $P$.
	Arbitrarily enumerate $P\setminus T$ as $y_1,y_2,y_3,\dotsc$.
	For each $i$ in turn, we may then select an element
	\begin{equation*}
		x_i \in Y_{y_i} \setminus\set{x_1,x_2,\dotsc,x_{i-1}},
	\end{equation*}
	where we note that this is always possible as $Y_{y_i}$ is infinite.
	We may then define $B_x$ to be $A_x$ together with all $y_i$ such that $x=x_i$, noting that there is at most one such $y_i$. 
	Then $\set{B_y \st y\in C}$ is a partition of $P$ into antichains, and so $C$ is a spine of $P$, as required.
\end{proof}

We have therefore reduced \Cref{thm:main} to the problem of finding maximal tubes in posets.
The remainder of the paper consists of a series of a reductions towards this goal.

\section{Reducing to scattered posets}
\label{sec:structural}

In this section, we present a proof of \Cref{thm:structural}, a structural result which, as well as being a general result on FAC posets, allows us to reduce the problem of finding a spine in a general FAC poset to that of finding a spine in a scattered FAC poset.

Recall that $\eta$ is the order type of $\QQ$ and that $\cI(X)$ is the set of all non-empty intervals of $X$.
We begin with the key definitions of this section.

\begin{definition}
    \label{def:esmc}
    A chain $C\sseq P$ is said to be \emph{$\eta$-maximal} if $C$ has order type $\eta$, and the only chain $D\sseq P$ with order type $\eta$ and $C\sseq D$ is $C$ itself.
    An \emph{$\eta$-replacement} from one $\eta$-maximal chain $C$ to another, $C'$, consists of a function $f\from C\to \cI(C')$ satisfying all of the following.
    \begin{itemize}
        \item For all $x\in C$, the interval $f(x)\sseq C'$ is either the singleton $\set{x}$, or has order type $\eta$.
        \item For all $x,y\in C$ and $z\in f(y)$, if $x<y$ then $x < z$, and if $y <x$ then $z<x$.
    \end{itemize}
    We will say in this case that $f$ \emph{witnesses} the replacement, and we write $C\etalt C'$.
    Moreover, say that the replacement is \emph{trivial} if $f(x) = \set{x}$ for all $x\in C$.
	Finally, say that $C$ is an \emph{\esmc} if it admits no non-trivial $\eta$-replacement.
\end{definition}

We remark here that we do not necessarily have $\bigcup_{x\in C} f(x) = C'$, as can be seen by the following example.
Identify $C$ with $\QQ$ and assume that $f$ is the identity everywhere except for at $0$, where $f(0) = X$ has order type $\eta$. 
Assume that the only other element of $P$ except for $C$ and $X$ is a single element $a$ with $a > X$, $a > 0$ and $a < q$ for all $q\in C$ with $q > 0$.
Then $C$ is $\eta$-maximal, as every element of $X$ inside incomparable to an element of $C$, and $C\union \set{a}$ has $a \covers 0$, so is not isomorphic to the rationals.
Moreover, $(C\setm{0})\union X$ is not $\eta$-maximal, as we can add $a$ to this chain and it does still have order-type $\eta$.
The only option is thus to have $a\notin\im(f)$.

We now move on to prove some results about these replacements and chains.
The key result here is that \esmc s exist.

\begin{proposition}
    \label{prop:esmc-exists}
    A countable FAC poset $P$ is either scattered or has an \esmc.
\end{proposition}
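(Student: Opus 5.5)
We prove \Cref{prop:esmc-exists} with Zorn's lemma applied to the relation $\etalt$ on the set $\cE$ of $\eta$-maximal chains of $P$. Assume $P$ is not scattered. Then $P$ contains a chain of order type $\eta$. Since $P$ is countable, a standard argument shows every chain of type $\eta$ extends to an $\eta$-maximal chain: enumerate $P$ and greedily add each element whenever the enlarged chain still has type $\eta$. The countable union of the resulting increasing sequence of countable dense chains without endpoints is again of type $\eta$, and it is $\eta$-maximal because every element rejected at some stage stays rejected at all later stages. Hence $\cE\neq\emptyset$.

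\textbf{Partial order.} We first show that $\etalt$ is a partial order on $\cE$.
\begin{itemize}
\item Reflexivity comes from the trivial replacement.
\item For transitivity, given witnesses $f\from C\to\cI(C')$ and $g\from C'\to\cI(C'')$, define $h(x)=\{x\}$ if $g(y)=\{y\}$ for all $y\in f(x)$ with $f(x)=\{x\}$. Otherwise set $h(x)=\bigcup_{y\in f(x)}g(y)$. This is an interval of $C''$ of type $\eta$, being an $\eta$-indexed sum of singletons and $\eta$-chains. The betweenness condition follows by composing the two conditions.
\item For antisymmetry, suppose $C\etalt C'\etalt C$ with witnesses $f,g$. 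The composite is a self-replacement $h$ of $C$, whose second condition forces $h(x)$ to lie strictly between $\{y<x\}$ and $\{y>x\}$ inside $C$. Hence $h(x)=\{x\}$, so $f(x)=\{x\}$ for all $x$, giving $C\sseq C'$. By $\eta$-maximality of $C'$ and symmetry, $C=C'$.
\end{itemize}

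\textbf{Chains have upper bounds.} Let $(C_\alpha)_{\alpha<\lambda}$ be a $\etalt$-chain with witnesses $f_{\alpha\beta}$ compatible under composition. The main issue is bounding $\lambda$ using FAC. Each non-trivial step replaces some point $x$ by an $\eta$-interval in which every new point $z$ satisfies $z\ne x$ and is comparable with all of $C_\alpha\setminus\{x\}$. Since $C_\alpha$ is $\eta$-maximal, $C_\alpha\cup f(x)$ is not of type $\eta$, so $x$ is incomparable to some point of $f(x)$, and in fact to infinitely many, since the betweenness forces $f(x)\cup\{x\}$ into a single gap. The plan is to define a potential measuring the \emph{removed} points $R_\alpha=C_0\setminus C_\alpha$ together with their ``descendants''. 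Removed points are never reinserted, because a point once removed has incomparable partners inside the replacement. Since $P$ is countable, any strictly increasing sequence of such subsets has countable length, so it suffices to handle chains of countable cofinality, i.e.\ $\omega$-sequences $C_0\etalt C_1\etalt\cdots$.

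For these, set $D=\liminf_n C_n$ (in the sense of the preliminaries), together with the induced maps $f_\infty(x)=\bigcup_n f_{0n}(x)\cap D$. The main obstacle is proving that $D$ has type $\eta$ and then extending it to an $\eta$-maximal $D'$. Density and the absence of endpoints of $D$ can fail if some point is replaced at infinitely many stages, where a nested sequence of intervals could shrink away. Here I would use FAC: each such infinitely-refined point gives, through the intervals $f_{0n}(x)$ that are eventually stable in the limit, an infinite antichain built by choosing at each stage a point of the new interval incomparable to the previously removed point. This contradiction shows every point is eventually stable. Then $D$ is a union of stabilised $\eta$-intervals indexed by $C_0$, which is of type $\eta$. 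Extending $D$ to an $\eta$-maximal $D'$ as above, and checking that the $f_{n\infty}$ still witness $C_n\etalt D'$, gives the upper bound.

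Zorn's lemma now yields a $\etalt$-maximal $C\in\cE$. If $C$ admitted a non-trivial replacement $C\etalt C'$, then $C'\ne C$ by antisymmetry, contradicting maximality. So $C$ is an \esmc.
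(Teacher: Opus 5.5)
Your overall architecture matches the paper's. You apply Zorn's lemma to $\etalt$, use countability to exclude uncountable $\etalt$-chains, take the $\liminf$ of an $\omega$-sequence, and use FAC to exclude nested refinement. Your antisymmetry argument via the composite self-replacement of $C$ is a valid, slightly simpler alternative to \Cref{lem:etalt-antisymmetric}. You should add that $\abs{f(x)}\geq 2$ forces $\bigcup_{y\in f(x)}g(y)$ to have at least two points, by density of $f(x)$ and betweenness.

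The genuine gap is in the limit step you flag as the main obstacle. FAC does \emph{not} make every point eventually stable. Let $x\in C_0$ be replaced at stage $1$ by an $\eta$-interval $Y$. Then let distinct points $y_1,y_2,\dotsc$ of $Y$ be replaced at stages $2,3,\dotsc$ by $\eta$-intervals $Z_i$ that are incomparable to $y_i$ and $x$ and comparable to everything else. All $C_n$ are maximal chains and every antichain has at most three elements, yet $f_{0n}(x)$ never stabilises. Your antichain construction also fails here: betweenness for the composite witness from $C_1$ makes every point of $Z_2$ comparable to $y_1$.

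What FAC does exclude is an infinite \emph{descending branch} $x_1,x_2,\dotsc$ of the replacement forest, where each $x_{i+1}$ lies in the interval replacing $x_i$. In that case $x_s$ lies in the composite image of $x_r$, and \Cref{lem:etalt-incomparability} gives $x_r\incomp x_s$. So the replacement trees are merely well-founded and may have infinite height, and your claim that $D$ ``is a union of stabilised $\eta$-intervals'' is unsupported.

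You need an argument that converts well-foundedness into order type $\eta$. The paper assigns countable ordinal heights and proves by transfinite induction, via \Cref{lem:eta-nesting}, that the leaves below a replaced point form a set of type $\eta$. Alternatively, well-foundedness means every node has a leaf below it. Then, for $u<v$ in $D=\liminf_n C_n$, pick $m$ with $u,v\in C_m$ and some $w\in C_m$ with $u<w<v$. Any leaf below $w$ lies strictly between $u$ and $v$ by betweenness of the composite witness, and the same argument rules out endpoints. This also handles points that enter at later stages outside the image of the witnesses (cf.\ the remark after \Cref{def:esmc}), which your description of $D$ as indexed by $C_0$ ignores.

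Three smaller repairs are also needed.
\begin{enumerate}
\item Your greedy one-point construction of $\eta$-maximal extensions fails. $\eta$-maximality forbids all type-$\eta$ superchains, and a point may only be addable together with infinitely many others. For example, add to $\QQ$ a copy $X$ of $\QQ$ lying above $0$ and below all positive rationals: no single point of $X$ can be added, yet $\QQ\union X$ has type $\eta$. Use Zorn's lemma instead, since the union of a $\sseq$-chain of type-$\eta$ chains in the countable $P$ has type $\eta$.
\item In transitivity, $\bigcup_{y\in f(x)}g(y)$ need not be an interval of $C''$, because $C''$ may contain points outside $\im(g)$ lying between $g(y_1)$ and $g(y_2)$. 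Take $\Conv_{C''}(\bigcup_{y\in f(x)}g(y))$, as the paper does, and likewise take convex hulls in $D'$ for the limit witnesses.
\item ``A point once removed has incomparable partners inside the replacement'' does not by itself prevent reinsertion, since those partners may later be removed. Instead, apply \Cref{lem:etalt-incomparability} to the composite witness, as in \Cref{lem:etalt-antisymmetric}.
\end{enumerate}
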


We need to prove that there is a chain which is maximal under $\etalt$ amongst $\eta$-maximal chains; for this, we will use Zorn's lemma.
It thus suffices to prove that $\etalt$ is a partial order on the set of $\eta$-maximal chains, and any $\etalt$-chain of $\eta$-maximal chains of $P$ has an upper bound.
We will show that $\etalt$ is a partial order, and then move on to show that Zorn's lemma applies.
However, we first prove the following two simple lemmas, which give some useful properties of $\eta$-replacements.

\begin{lemma}
	\label{lem:etalt-comparability}
	If $C,D\sseq P$ are $\eta$-maximal chains with $C \etalt D$ witnessed by $\etawitness{f}{C}{D}$, then if $x,y\in C$ have $x < y$, then $f(x) < f(y)$.
	In particular, $f(x) \inter f(y) = \emptyset$.
\end{lemma}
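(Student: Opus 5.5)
The plan is to interpolate an element of $C$ strictly between $x$ and $y$, and then use the second condition in the definition of an $\eta$-replacement to push every element of $f(x)$ below it and every element of $f(y)$ above it. Since $C$ is $\eta$-maximal, it has order type $\eta$ and so is densely ordered. Given $x<y$ in $C$, I will therefore fix some $w\in C$ with $x<w<y$.

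Now let $a\in f(x)$ and $b\in f(y)$ be arbitrary. I apply the second bullet of the definition of an $\eta$-replacement to the pair $(w,x)$, with $w$ in the role of the first element and $a\in f(x)$ in the role of $z$. Since $x<w$, the clause ``if $y<x$ then $z<x$'' (with the letters renamed) gives $a<w$. Symmetrically, I apply it to the pair $(w,y)$ with $b\in f(y)$. Since $w<y$, the clause ``if $x<y$ then $x<z$'' gives $w<b$. Chaining these, $a<w<b$, and hence $a<b$ in $P$. As $a$ and $b$ were arbitrary, this gives $f(x)<f(y)$.

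The disjointness $f(x)\inter f(y)=\emptyset$ is then immediate, because any common element $a$ would satisfy $a<a$. I note that the argument does not need to distinguish whether $f(x)$ or $f(y)$ is a singleton or has order type $\eta$: the second bullet covers both cases uniformly. In particular it still applies when $f(x)=\set{x}$, since then $a=x<w$ trivially.

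There is no real obstacle here. The only point requiring care is to notice that one cannot compare $f(x)$ and $f(y)$ directly via $x$ and $y$ themselves, since the condition only compares elements of $C$ with images. The density of $C$, which comes from its order type being $\eta$, is exactly what supplies the intermediate witness $w$ that makes the comparison go through.
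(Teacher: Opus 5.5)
Your proof is correct and is essentially the paper's own argument: you interpolate $w\in C$ with $x<w<y$ using density of $C$, apply the second condition of an $\eta$-replacement to get $a<w<b$ for all $a\in f(x)$ and $b\in f(y)$, and read off disjointness. Nothing is missing.
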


\begin{proof}
	Take $x,y\in C$ with $x < y$, as in the statement.
	Then, as $C$ is $\eta$-maximal, there is a point $z\in C$ with $x < z < y$.
	Take $u \in f(x)$ and $v \in f(y)$ and note that \Cref{def:esmc} tells us that $z < y$ implies $z < v$ and $z > x$ implies $z > u$.
	Therefore $u < v$ and so $f(x) < f(y)$; it is then immediate that $f(x) \inter f(y) = \emptyset$.
\end{proof}

\begin{lemma}
	\label{lem:etalt-incomparability}
	Assume that $C,D\sseq P$ are $\eta$-maximal chains with $C\etalt D$ witnessed by $\etawitness{f}{C}{D}$.
	Then if $x\in C$ and $y\in f(x)$ has $y\neq x$, then $y\incomp x$.
\end{lemma}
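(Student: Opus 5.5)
The plan is a short contradiction argument. I will use three ingredients: the betweenness condition in the definition of an $\eta$-replacement, the fact that $f(x)$ is a chain of order type $\eta$, and the $\eta$-maximality of $C$.

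Fix $x\in C$ and $y\in f(x)$ with $y\neq x$. Since $f(x)\neq\set{x}$, the interval $f(x)$ has order type $\eta$. Suppose for contradiction that $y\comp x$, say $y>x$; the case $y<x$ is symmetric with the order reversed.

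\textbf{Step 1: $y$ lies strictly between $x$ and the part of $C$ above $x$.} For every $z\in C$ with $z>x$, the second condition of the replacement (applied with $z$ in the role of the larger element and $y\in f(x)$) gives $y<z$. For every $z\in C$ with $z<x$, it gives $z<y$; moreover $z<x<y$ holds anyway. So $y$ is comparable to every element of $C$. In particular $C\union\set{y}$ is a chain in which $y$ sits above $x$ and below every element of $C$ above $x$.

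\textbf{Step 2: deduce $y\in C$.} The element $y$ is not in $C\setm{x}$: if it were, with $y>x$, Step 1 would give $y<y$. Now I need to show that $C\union\set{y}$ still has order type $\eta$. The danger is that $y$ might cover $x$, or be covered by something. So consider $f(x)$, which has type $\eta$, and choose $w\in f(x)$ with $w<y$.

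If $w>x$ as well, then Step 1 applies to $w$ too. So every $w\in f(x)$ that is comparable to $x$ gets slotted into the "cut just above $x$" in $C$. The set $\set{w\in f(x)\st w>x}$ is a final-type segment of a chain of type $\eta$, since elements of $f(x)$ above $y$ are also above $x$. Hence it contains a copy of $\eta$ with no least element. Then
\begin{equation*}
	C\union\set{w\in f(x)\st w>x}
\end{equation*}
is a chain, obtained by inserting a dense, endpoint-free countable set into the cut of $C$ immediately above $x$. This sum is again a countable dense order with no endpoints, so it has order type $\eta$. It strictly contains $C$ because $y$ is in it and $y\notin C$, which contradicts the $\eta$-maximality of $C$.

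\textbf{Main obstacle.} The delicate step is making sure the inserted set is dense with no endpoints and sits in a single cut of $C$ without creating jumps. Concretely, I need that $\set{w\in f(x)\st w>x}$ has no least element, because otherwise $x$ would be covered. I would handle this by taking a smaller set: $\set{w\in f(x)\st x<w\le y}$, or more cleanly the elements of $f(x)$ strictly between $x$ and $y$ together with those above $y$. This interval has no least element. If instead nothing in $f(x)$ lies strictly between $x$ and $y$, I would take the set $\set{w\in f(x)\st w\ge y}$, note that it is comparable to $x$ by transitivity, and observe that $x$ is then covered only if this set has a minimum $y$. To avoid that, I would use the elements of $f(x)$ below $y$, which exist because $f(x)$ has no endpoints; each is either above $x$ (the good case) or $\le x$. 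Since $f(x)$ is an interval of $D$ and $x\in D$ would follow by convexity, the remaining subcase should reduce to showing $x\in D$ is impossible, or else to using density of $f(x)$ below $y$ directly. I expect this case bookkeeping to be the only real work.
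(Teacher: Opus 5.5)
Your strategy matches the paper's. You assume $y>x$ and use the replacement condition to place every element of $f(x)$ in the cut of $C$ immediately above $x$, comparable to all of $C\setm{x}$. You then enlarge $C$ by part of $f(x)$ to contradict $\eta$-maximality. However, the step that produces the contradiction is not established.

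The set you insert, $S\defined\set{w\in f(x)\st w>x}$, is a final segment of $f(x)$. A final segment of an order of type $\eta$ can have a least element $w_0$. If it does, nothing in $C\union S$ lies strictly between $x$ and $w_0$, because the elements of $C$ above $x$ lie above all of $S$. So $w_0\covers x$ and $C\union S$ is not of type $\eta$. That $S$ \emph{contains} a copy of $\eta$ says nothing about the order type of $C\union S$.

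Your proposed repairs do not resolve this. The set $S\setm{y}$ still has least element $w_0$ whenever $w_0\neq y$, so the claim that it ``has no least element'' is false in general. The remaining case analysis (elements of $f(x)$ below $y$, ruling out $x\in D$) is left open, and it looks in the wrong direction.

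The missing step is to insert only the part of $f(x)$ strictly above $y$, namely $Y\defined\set{w\in f(x)\st w>y}$. This is exactly the copy of $\eta$ without least element that you noted sits inside $S$, so you were one step away. Since $f(x)$ has type $\eta$, $Y$ is nonempty, dense and without endpoints. In particular, a least element $w_0$ of $Y$ would leave no element of $f(x)$ strictly between $y$ and $w_0$, contradicting density of $f(x)$. So $Y$ has type $\eta$.

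Each $w\in Y$ satisfies $w>y>x$, and by the replacement condition $w$ is comparable to every element of $C\setm{x}$, with $x<Y<\above{x}_C$. Hence $Y\inter C=\emptyset$. Then $C\union Y$ is a countable dense chain without endpoints that strictly contains $C$, contradicting $\eta$-maximality. This is exactly the paper's proof, and it needs no case analysis.
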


\begin{proof}
	Assume for contradiction that $x\in C$ and $y\in D$ are as in the statement of the lemma with $x\leq y$ (the case $x \geq y$ is entirely similar).
	As $f(x) \neq \set{x}$, we know that $f(x)$ has order type $\eta$.
	In particular, the set $Y \defined \above{y}_{f(x)}$  has order type $\eta$.
	But we know from \Cref{def:esmc} that $Y \comp C\setm{x}$ and so, as $Y > y > x$, we have $Y \comp C$.
	But then $C\union Y$ has order type $\eta$, contradicting the $\eta$-maximality of $C$ and proving the lemma.
\end{proof}

We also need the following simple property of the order $\eta$.

\begin{lemma}
	\label{lem:eta-nesting}
	For each $q\in \QQ$, let $L_q$ be a linear order of order type either 1 or $\eta$. Then the sum
	\begin{equation*}
		L \defined \bigoplus_{q\in \QQ}L_q
	\end{equation*}
	has order type $\eta$.
\end{lemma}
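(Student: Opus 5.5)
We will use Cantor's characterisation of $\eta$: every countable, dense linear order with no minimum and no maximum is isomorphic to $\QQ$. The plan is to check each of these four properties for $L$ in turn. Linearity is automatic from the definition of the linear sum. Countability holds because $L$ is a countable union, indexed by $\QQ$, of countable sets.

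For the absence of endpoints, take any $x\in L$, say $x\in L_q$. Since $\QQ$ has no maximum, there is $q'>q$. Then every element of $L_{q'}$ (which is non-empty, having order type $1$ or $\eta$) lies above $x$, so $x$ is not maximal. The symmetric argument shows there is no minimum.

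Density is the only step needing a case split, and it is the main point. Take $x<y$ in $L$, with $x\in L_q$ and $y\in L_{q'}$.
\begin{itemize}
    \item If $q<q'$, density of $\QQ$ gives some $r$ with $q<r<q'$. Any element of $L_r$ then lies strictly between $x$ and $y$.
    \item If $q=q'$, then $L_q$ contains two distinct elements, so it cannot have order type $1$ and must have order type $\eta$. Density of $L_q$ then provides an element between $x$ and $y$.
\end{itemize}

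This covers every case, so $L$ is a countable dense linear order without endpoints. By Cantor's theorem it has order type $\eta$. No step here is a real obstacle; the only thing to take care over is the observation that two distinct points in the same summand force that summand to be of type $\eta$.
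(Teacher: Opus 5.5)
Your proof is correct and follows essentially the same route as the paper: both reduce, via Cantor's characterisation of $\eta$, to checking that $L$ has no endpoints and is dense, using the same case split on whether $x$ and $y$ lie in the same summand. You are slightly more careful than the paper in stating countability explicitly, which the paper leaves implicit.
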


\begin{proof}
	It is immediate that $L$ contains a copy of the rationals, as $L_q$ is non-empty for all $q$, and so it suffices to prove that for all $x,y\in L$ with $x < y$ there are elements $u,v,w\in L$ with
	\begin{equation*}
		u < x < v < y < w.
	\end{equation*}
	Indeed, it is immediate that $L$ does not have maximal and minimal elements, and so it suffices to show that between any two elements of $L$ there is a third.

	If there are $p,q\in \QQ$ with $x \in L_p$ and $y \in L_q$, then if $x < y$ we must have $p < q$, and so there is an $r\in \QQ$ with $p < r < q$.
	But there is some $z\in L_r$, and so $x < z < y$, as required.

	Finally, if $x,y\in L_q$ for some $q\in \QQ$, then as $L_q$ has order type $\eta$ (as it contains two elements so cannot have order type 1), there must be $z\in L_q$ with $x < z < y$, and so the lemma is proved.
\end{proof}

We next show that $\etalt$ is transitive, and also deduce the form of a witness function.

\begin{lemma}
	\label{lem:etalt-transitive}
	$\etalt$ is a transitive relation on the set of $\eta$-maximal chains of a poset $P$.
	Moreover, if $C$, $D$, and $E$ are $\eta$-maximal chains in $P$ with $C\etalt D$ and $D\etalt E$ witnessed by $\etawitness{f}{C}{D}$ and $\etawitness{g}{D}{E}$ respectively, then $C\etalt E$ is witnessed by the function $\etawitness{h}{C}{E}$ sending $x$ to $\Conv_E(g[f(x)])$.
\end{lemma}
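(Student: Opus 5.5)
We verify directly that $h$, defined by $h(x)\defined\Conv_E(g[f(x)])$, satisfies the two conditions of \Cref{def:esmc}. Here $g[f(x)]$ means $\bigcup_{y\in f(x)} g(y)$. This is a non-empty subset of $E$, so its convex hull in $E$ is a non-empty interval of $E$, and hence $h\from C\to\cI(E)$ is well defined. Transitivity of $\etalt$ then follows at once.

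\textbf{The order condition.} Take $x,y\in C$ with $x<y$ and $z\in h(y)$; the case $y<x$ is symmetric. By definition of the convex hull, there are $u,u'\in f(y)$ and $w\in g(u)$, $w'\in g(u')$ with $w\le z\le w'$, so it suffices to show $x<w$. Since $x<y$ and $u\in f(y)$, the second condition on $f$ gives $x<u$. If $x\in D$, then $x\neq u$ and the second condition on $g$ gives $x<w$ for every $w\in g(u)$, which is what we want. If $x\notin D$, we use the $\eta$-maximality of $C$ to pick $x'\in C$ with $x<x'<y$. By \Cref{lem:etalt-comparability}, $f(x')<f(y)$. Pick any $v\in f(x')$; then $x<v$ by the condition on $f$, since $x<x'$. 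Also $v<u$, and $v,u\in D$, so the condition on $g$ gives $v<w$. Hence $x<v<w$.

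\textbf{The order-type condition.} We show each $h(x)$ is either $\set{x}$ or of order type $\eta$.
\begin{itemize}
  \item If $f(x)=\set{x}$, then $h(x)=g(x)$, which is either $\set{x}$ or of type $\eta$.
  \item Otherwise $f(x)$ has type $\eta$. By \Cref{lem:etalt-comparability} applied to $g$, the sets $g(u)$ for $u\in f(x)$ are pairwise disjoint and ordered as $f(x)$ is. So $g[f(x)]$ is $\bigoplus_{u\in f(x)} g(u)$, a sum over a copy of $\QQ$ of summands each of type $1$ or $\eta$, and by \Cref{lem:eta-nesting} it has order type $\eta$.
\end{itemize}

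The main obstacle is the second case: $\Conv_E$ might add points of $E$ lying between consecutive blocks $g(u)$ that are not in any block. We show any such point is already in the union, so $h(x)=g[f(x)]$ and $h(x)$ has type $\eta$. Let $e\in E$ with $w_1<e<w_2$, where $w_i\in g(u_i)$ and $u_1<u_2$ in $f(x)$, and suppose $e$ lies in no $g(u)$ with $u\in f(x)$. Consider the rest of $D$. For $d\in D\setminus f(x)$, we have $d\comp x$ by \Cref{lem:etalt-incomparability}, and by the condition on $f$ applied to the point of $C$ whose image contains $d$, $d$ is comparable to every element of $f(x)$ and lies entirely above or entirely below it. Using the condition on $g$, $e$ is comparable to all such $d$ in the correct way. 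For $u\in f(x)$ with $e\notin g(u)$, the condition on $g$ compares $u$ with every point of $g(u_1)\cup g(u_2)$. Take $u$ strictly between $u_1$ and $u_2$, which exists since $f(x)$ has type $\eta$; then $u$ lies between $w_1$ and $w_2$. We expect to show that $D\cup\set{e}$, or a suitable $\eta$-subchain of it containing $e$, is a chain of type $\eta$ strictly extending $D$, contradicting the $\eta$-maximality of $D$. Making this comparability argument airtight is the delicate step.
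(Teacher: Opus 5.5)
Your verification of the order condition is correct and is essentially the paper's argument: interpose $x'\in C$ with $x<x'<y$ and use $f(x')<f(y)$.

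The gap is in the order-type condition when $f(x)$ has type $\eta$. You try to prove $h(x)=g[f(x)]$, by showing that a point $e$ of $\Conv_E(g[f(x)])$ lying in no block $g(u)$ could be adjoined to $D$. This equality is false in general, so the step you call delicate cannot be completed.

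For example, let $P$ consist of the following points:
\begin{itemize}
\item $D=\QQ$;
\item a copy $X$ of $\QQ$ with $q<X$ for $q<0$, $X<q$ for $q>0$, and $X\incomp 0$;
\item a point $e$ with $X<e$, $q<e$ for $q<0$, $e<q$ for $q>0$, and $e\incomp 0$;
\item a point $c$ with $q<c$ for $q\le -1$, $c<q$ for $q\ge 1$, and $c$ incomparable to all remaining points.
\end{itemize}
Then $C=(\QQ\setminus[-1,1])\union\set{c}$, $D$, and $E=(\QQ\setminus\set{0})\union X\union\set{e}$ are $\eta$-maximal chains. The maps with $f(c)=(-1,1)_\QQ$, $g(0)=X$, and $f(q)=g(q)=\set{q}$ otherwise witness $C\etalt D\etalt E$. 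Yet $e\in h(c)\setminus g[f(c)]$, and since $e\incomp 0\in D$, no contradiction with the $\eta$-maximality of $D$ is available. (Compare the point $a$ in the remark after \Cref{def:esmc}.)

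The missing observation is that you do not need $h(x)=g[f(x)]$ at all.
\begin{itemize}
\item The set $h(x)$ is a convex subset of the chain $E$, which has type $\eta$, and it has at least two points. Hence $h(x)$ is countable and dense.
\item Every point of $h(x)$ lies between two points of $g[f(x)]$. Moreover, $g[f(x)]$ has neither a greatest nor a least element: given $u\in f(x)$, pick $u'>u$ in $f(x)$, and then $g(u)<g(u')$ by \Cref{lem:etalt-comparability} (similarly downwards). So $h(x)$ has no endpoints.
\end{itemize}
By Cantor's theorem, $h(x)$ therefore has type $\eta$. This is precisely how the paper finishes. With this replacement your argument is complete, and the appeal to \Cref{lem:eta-nesting} becomes unnecessary.
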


\begin{proof}
	Assume that chains $C,D,E$ and functions $f,g,h$ are as in the statement of the lemma.
    We must show that the function $h$ witnesses that $C\etalt E$.

	Indeed, fix $x\in C$. 
	If $\abs{f(x)} \geq 2$, then we know that $f(x)$ has order type $\eta$ and \Cref{lem:etalt-comparability} tells us that $g$ maps distinct elements of $f(x)$ to disjoint sets, and so $\Conv_E(g[f(x)])$ is an infinite interval of $\eta$, and it is immediate from its construction that it does not have endpoints, and so it must itself be an interval of $E$ of order type $\eta$, as required.

	Otherwise, $\abs{f(x)} = 1$ and so $f(x) = \set{x}$, but then $h(x) = g(x)$ which is either $\set{x}$ or is an interval of order type $\eta$, again as required.

	Finally, if $x, y\in C$ have $x < y$ and $z \in h(y)$, we must show that $x < z$.
	First note that as $C$ has order type $\eta$, there is $w\in C$ with $x < w < y$.
	We claim that $x < f(w) < h(y)$.
	Indeed, $x < f(w)$ follows immediately from the definition of $\eta$-replacement.
	We know from \Cref{lem:etalt-comparability} that $f(w) < f(y)$, and so applying the definition of $\eta$-replacement again, we find that for all $w'\in f(w)$ and all $y' \in f(y)$, we have $w' < g(y')$, and thus $f(w) < h(y)$, completing the proof of this lemma.
\end{proof}

The property proved in the next lemma demonstrates that $\etalt$ is antisymmetric, but will be used directly on several occasions.

\begin{lemma}
	\label{lem:etalt-antisymmetric}
	Let $C,D,E\sseq P$ be $\eta$-maximal chains with $C\etalt D \etalt E$.
	If there is a point $x$ with $x\in C$ and $x\notin D$, then $x\notin E$.
\end{lemma}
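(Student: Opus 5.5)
The plan is to compose the two replacements using \Cref{lem:etalt-transitive}. This gives a single witness $h$ for $C\etalt E$ that is non-trivial at $x$. Then \Cref{lem:etalt-incomparability} applied to $h$ will clash with $E$ being a chain if $x\in E$.

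Let $\etawitness{f}{C}{D}$ and $\etawitness{g}{D}{E}$ witness the two replacements. First I will show that $f(x)$ is infinite. Since $x\in C$ but $x\notin D$, we cannot have $f(x)=\set{x}$, because $f(x)\sseq D$. By \Cref{def:esmc}, $f(x)$ is therefore an interval of $D$ of order type $\eta$, and in particular it is infinite.

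Next I take the witness $h(x)\defined\Conv_E(g[f(x)])$ for $C\etalt E$ supplied by \Cref{lem:etalt-transitive}, and check that $h(x)\neq\set{x}$. By \Cref{lem:etalt-comparability} applied to $g$, distinct elements of $f(x)$ are sent to disjoint non-empty intervals of $E$. Hence $g[f(x)]$ has infinitely many points, so $h(x)$ is infinite. (This is also part of the argument in \Cref{lem:etalt-transitive}, where $h(x)$ is shown to have order type $\eta$.)

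Now suppose for contradiction that $x\in E$. Apply \Cref{lem:etalt-incomparability} to the replacement $C\etalt E$ witnessed by $h$: every $z\in h(x)$ with $z\neq x$ satisfies $z\incomp x$. But $h(x)\sseq E$, and $E$ is a chain containing $x$, so every $z\in h(x)$ is comparable to $x$. Therefore $h(x)\sseq\set{x}$, which contradicts $h(x)$ being infinite. Hence $x\notin E$.

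The only point needing care is that \Cref{lem:etalt-incomparability} is applied to the composite witness $h$ rather than to $f$ or $g$ individually, so I need $h$ to be a genuine witness of an $\eta$-replacement. That is exactly what \Cref{lem:etalt-transitive} provides. I do not expect a real obstacle beyond this, together with noting that $g$ maps $f(x)$ onto infinitely many distinct points.
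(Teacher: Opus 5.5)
Your proof is correct and follows essentially the same route as the paper: you compose the two witnesses via \Cref{lem:etalt-transitive} to get a non-trivial $h(x)=\Conv_E(g[f(x)])$, and if $x\in E$ you derive a contradiction with the $\eta$-maximality of $C$. The only difference is in packaging. The paper reproves the $\eta$-maximality contradiction directly in two cases, $x\notin h(x)$ and $x\in h(x)$, whereas your appeal to \Cref{lem:etalt-incomparability} for the composite witness $h$ handles both cases at once.
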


\begin{proof}
	Assume for contradiction that $C\etalt D$ is witnessed by a function $f$ with $x\notin\text{im}(f)$, and so in particular $f(x)\neq\set{x}$, and that $D\etalt E$ is witnessed by $g$, with $x\in\im(g)$.
	Note that \Cref{lem:etalt-transitive} tells us that $C \etalt E$ is witnessed by the function sending $x$ to $\Conv_E(g[f(x)])$.
	
	If we let $F \defined \Conv_E(g[f(x)])$ and we take $x\in E\setminus F$, then $F$ is comparable with $x$, and we know by definition of $\etalt$ that $F$ is comparable with $C\setm{x}$, and thus is comparable with $C$, but this contradicts $\eta$-maximality of $C$.
	Thus we must have $x\in F$.

	In this case, either $X\defined F\inter (>x)$ or $Y\defined F\inter (<x)$ must contain a set of order type $\eta$; \wlg\ say it is $X$.
	But then $X$ is comparable to all of $C$, and so we again contradict $\eta$-maximality of $C$, and the lemma is proved.
\end{proof}

We are now ready to prove that $\etalt$ is a partial order; with the results that we have already proved, this is not difficult.

\begin{lemma}
	\label{lem:etalt-partial-order}
	$\etalt$ is a partial order on the set of $\eta$-maximal chains of a poset $P$.
\end{lemma}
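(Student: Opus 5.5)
We need reflexivity, transitivity and antisymmetry of $\etalt$ on the set of $\eta$-maximal chains.

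\emph{Reflexivity.} For any $\eta$-maximal chain $C$, the function $x\mapsto\set{x}$ is a trivial $\eta$-replacement from $C$ to $C$. Each value is a singleton, and the second condition of \Cref{def:esmc} reduces to the order on $C$ itself.

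\emph{Transitivity.} This is exactly \Cref{lem:etalt-transitive}, which also supplies the witness $x\mapsto \Conv_E(g[f(x)])$.

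\emph{Antisymmetry.} Suppose $C\etalt D$ and $D\etalt C$, witnessed by $f$ and $g$ respectively; we show $C=D$. Apply \Cref{lem:etalt-antisymmetric} to the triple $C\etalt D\etalt C$. If some $x\in C$ had $x\notin D$, the lemma would give $x\notin C$, a contradiction; hence $C\sseq D$. Symmetrically, applying the lemma to $D\etalt C\etalt D$ gives $D\sseq C$. Therefore $C=D$ as sets of $P$, and hence as chains, since both carry the order induced from $P$.

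Two technical points need care.

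\begin{itemize}
\item \Cref{lem:etalt-antisymmetric} is stated for a point of $C$ not in $D$, and its proof assumes this means $x\notin \im(f)$, i.e.\ $f(x)\neq\set{x}$. Strictly, $x\notin D$ already forces $f(x)\neq\set{x}$, since $f(x)\sseq D$. So the hypothesis "$x\in C\setminus D$" is exactly what the lemma needs, and I will verify this matches its use.
\item Antisymmetry should be read as equality of chains, not of witness functions. Witnesses need not be unique, but the relation $\etalt$ is on chains, so this suffices.
\end{itemize}

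The only potential obstacle is confirming that \Cref{lem:etalt-antisymmetric} applies with $E=C$. The lemma places no distinctness hypotheses on $C$, $D$, $E$, so I expect this step to go through directly. If not, the fallback is to argue as in its proof. Take $x\in C\setminus D$. Then $f(x)$ has order type $\eta$, and the composite witness $h(x)=\Conv_C(g[f(x)])$ is an $\eta$-interval of $C$ containing $x$. Moreover, $f(x)$ consists of points incomparable to $x$ by \Cref{lem:etalt-incomparability}. Pushing these points through $g$ back into $C$ would then produce points of $C$ incomparable to $x$, contradicting that $C$ is a chain.
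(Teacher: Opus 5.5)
Your proof is correct and matches the paper's: reflexivity via the trivial witness $x\mapsto\set{x}$, transitivity from \Cref{lem:etalt-transitive}, and antisymmetry from \Cref{lem:etalt-antisymmetric} applied to $C\etalt D\etalt C$. The only difference is cosmetic: you apply that lemma in both directions to get $C\sseq D$ and $D\sseq C$, whereas the paper uses $\eta$-maximality to rule out $C\subsetneq D$ and so needs only one application. Your fallback sketch is not needed, and its ``pushing through $g$'' step is not justified as written.
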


\begin{proof}
	We must show that $\etalt$ is reflexive, antisymmetric, and transitive.
	\Cref{lem:etalt-transitive} already tells us that $\etalt$ is transitive, and antisymmetry follows quickly from \Cref{lem:etalt-antisymmetric}: we need to show that $C\etalt D\etalt C$ implies that $C=D$.
	If $C\neq D$ then, as both these chains are $\eta$-maximal, there must be some $x\in C$ such that $x\notin D$.
	But then as $D\etalt C$, \Cref{lem:etalt-antisymmetric} tells us that $x\notin C$, a contradiction.

	It remains to show that $\etalt$ is reflexive, but it is clear that $C\etalt C$ for any $\eta$-maximal chain $C$: this is witnessed by the trivial function $f$ sending $x$ to $\set{x}$.
	Thus $\etalt$ is a partial order, as required.
\end{proof}

Our next task in proving \Cref{prop:esmc-exists} is to prove that any $\etalt$-chain has an upper bound. 
We will first show that any countable $\etalt$-chain has an upper bound, and then use this result to prove that there can be no uncountable $\etalt$-chain.

\begin{lemma}
	\label{lem:countable-etalt-chain-upper-bound}
	Let $C_1,C_2,\dotsc \sseq P$ be $\eta$-maximal chains satisfying
    $$C_1\etalt C_2\etalt C_3\etalt\cdots.$$
	Then there is an $\eta$-maximal chain $C\sseq P$ such that $C_n\etalt C$ for all $n$.
\end{lemma}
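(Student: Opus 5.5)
We take as candidate the liminf chain
\begin{equation*}
	C_\infty \defined \liminf_{n\to\infty} C_n,
\end{equation*}
the set of points that lie in $C_n$ for all large $n$. We then enlarge it to an $\eta$-maximal chain $C$ and check that $C_n\etalt C$ for every $n$. Two preliminary observations come first.

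The first is that by \Cref{lem:etalt-antisymmetric}, once a point leaves some $C_n$ it never returns. So a point of $C_m$ is in $C_\infty$ exactly when it survives into every later chain. In particular $C_\infty$ is a chain, since any two of its points lie in a common $C_n$.

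The second is that, by \Cref{lem:etalt-transitive}, for $m<n$ the replacement $C_m\etalt C_n$ is witnessed by the composite $f_{m,n}(x)=\Conv_{C_n}(f_{n-1}[\cdots f_m(x)])$. By \Cref{lem:etalt-comparability} these intervals are disjoint and increase in $x$.

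For fixed $m$ and $x\in C_m$, the intervals $f_{m,n}(x)$ form a nested-refinement system. We set
\begin{equation*}
	F_m(x)\defined \liminf_n f_{m,n}(x).
\end{equation*}
We claim that $F_m(x)$ is nonempty and has order type $1$ or $\eta$. If $x$ survives forever then $F_m(x)=\set{x}$. Otherwise $f_{m,n}(x)$ is eventually of type $\eta$, and each of its points either survives or is replaced by a further $\eta$-interval.

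The main obstacle is to show that infinitely many points, in fact a dense set of them, eventually stabilise, and do not all get replaced at some finite stage. This is where the hypotheses that $P$ is countable and FAC must enter. My first attempt is a counting argument. The points ever removed from some $C_n$ are distinct for distinct replacement events, and a point $x$ removed at stage $n$ satisfies $x\incomp y$ for every $y\in f_n(x)\setminus\set{x}$, by \Cref{lem:etalt-incomparability}. Now suppose some interval $I$ were entirely replaced at every stage. Then we could extract, along a nested sequence of intervals, an infinite family of removed points that are pairwise incomparable, because each later point lies inside the replacement interval of an earlier one. This would be an infinite antichain, contradicting FAC. Density of surviving points inside $f_{m,n}(x)$ should follow by the same argument applied to every subinterval. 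Combined with \Cref{lem:eta-nesting}, this shows that $F_m(x)$, and indeed $C_\infty$ itself, has order type $\eta$ once it is written as an appropriate $\QQ$-indexed sum of singletons and $\eta$-blocks.

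Next we extend $C_\infty$ to an $\eta$-maximal chain $C$. Since $P$ is countable, we enumerate the points of $P$ and greedily add any point keeping the chain of type $\eta$. Taking the union over this countable process gives an $\eta$-maximal chain $C\supseteq C_\infty$. Here one has to check that additions never destroy density; a union of an increasing sequence of dense countable chains without endpoints is again dense without endpoints.

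Finally, we define $g_m(x)\defined\Conv_C(F_m(x))$ for $x\in C_m$ and verify that it witnesses $C_m\etalt C$. The interval $g_m(x)$ is either $\set{x}$ or of type $\eta$; for the latter we use that a convex hull in $C$ of an $\eta$-set with no endpoints, inside a type-$\eta$ chain, is again of type $\eta$. For the order condition, take $x<y$ in $C_m$ and $z\in g_m(y)$. We get $x<z$ because $z$ lies between points of $F_m(y)\sseq f_{m,n}(y)$ for large $n$, and those points are above $x$ by the witness property of $f_{m,n}$.
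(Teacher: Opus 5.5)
\textbf{The gap: extending $C_\infty$ to an $\eta$-maximal chain.} The step that fails as written is this extension. Greedily adding \emph{single} points that keep the chain of type $\eta$ does produce a chain of type $\eta$; density is never the problem. It does not, however, produce an $\eta$-maximal chain. $\eta$-maximality forbids every proper superchain of type $\eta$, and such a superchain can exist even when no one-point extension has type $\eta$.

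For instance, in $P=\QQ$ the chain $D=\set{q\in\QQ\st q\le 0}\union\set{q\in\QQ\st q>1}$ has type $\eta$. For every $z\in(0,1]$, the point $z$ covers $0$ in $D\union\set{z}$. So your procedure started from $D$ never adds anything, although $D\subsetneq\QQ$ is not $\eta$-maximal.

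The repair is routine. Apply Zorn's lemma to the type-$\eta$ chains containing $C_\infty$, ordered by inclusion; this is what the paper does. A union of a $\sseq$-chain of dense chains without endpoints is again dense without endpoints, and it is countable because $P$ is. Alternatively, at step $i$ of your enumeration, pass to any type-$\eta$ chain containing the current chain and $p_i$, whenever one exists.

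\textbf{Comparison with the paper.} With that fixed, your argument is essentially the paper's. Both use the liminf chain, the composite witnesses of \Cref{lem:etalt-transitive}, and the observation that an infinite sequence of successive replacements is an infinite antichain by \Cref{lem:etalt-incomparability}.

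Your ``main obstacle'' paragraph is best stated precisely: every point of every $C_n$ has a descendant lying in $C_\infty$. Otherwise, since each removed point is replaced by an $\eta$-interval, you can follow an infinite branch of points none of which has a surviving descendant.

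From this, density and the absence of endpoints, both for $F_m(x)$ and for $C_\infty$, follow directly from the witness property of the $f_{m,n}$. So you need neither \Cref{lem:eta-nesting} nor the ``$\QQ$-indexed sum''. That sum is not accurate for $C_\infty$ in any case: $C_\infty$ may contain points descended from no point of $C_1$, like $a$ in the remark after \Cref{def:esmc}. This direct argument replaces the paper's ordinal heights and transfinite induction. It also proves that $C_\infty$ itself has type $\eta$, which the Zorn step needs.

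Finally, taking $g_m(x)=\Conv_C(F_m(x))$ is the right choice, and your verification of the order condition via some $u\in F_m(y)$ with $u\le z$ is correct. The bare set of leaves below $x$, which is what the paper's $\liminf_n Z_n$ gives, need not be convex in $C$. Points outside the image of a later witness can fall inside its hull.
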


\begin{proof}
	Assume that, for all $n$, $C_n\etalt C_{n+1}$ is witnessed by $f_n$.
    We claim that any $\eta$-maximal chain $C$ containing $\liminf_{n\to\infty} C_n$ has $C_n\etalt C$ for all $n$; we prove this result for $n=1$, as for larger values of $n$ we may simply forget the first few terms of the sequence without changing the limit set $C$.
	Note first that such a chain $C$ exists due to Zorn's lemma and the fact that $P$ is countable.
    One of the main difficulties here is simply proving that the limit set is non-empty.

    Our key tool here is the directed graph $F$ (which we will prove in due course is in fact a forest) defined as follows.
	The vertex set of $F$ is
	\begin{equation*}
		V(F) \defined \bigcup_{n\geq 1} C_n,
	\end{equation*}
	and for distinct $x,y\in V(F)$ there is a directed edge from $x$ to $y$ if there is some $n$ such that $x\in C_n$, $y\in C_{n+1}$, and $y\in f_n(x)$, i.e.\ $x$ is removed from $C_n$ and $y$ is one of the vertices added in its place.
	Say that a vertex $x$ is a \emph{root} if it has in-degree zero.

	We now prove that $F$ is indeed a forest.

	\begin{claim}
		\label{claim:eta-no-infinite-branch}
		The underlying undirected graph of $F$ is a forest, and every vertex $x\in F$ has in-degree at most 1 and out-degree either zero or infinite. 
		Furthermore, $F$ contains no infinite directed path.
	\end{claim}

	\begin{proof}
		Due to the fact that $\etalt$ is transitive from \Cref{lem:etalt-partial-order} along with \Cref{lem:etalt-antisymmetric}, we see that, for any point $x\in V(F)$, the set 
		\begin{equation*}
			I_x \defined \set{n\in \NN \st x\in C_n}
		\end{equation*}
		is an interval of integers. 
		In particular, there is at most a single value of $n$ for which there is $y\in C_n$ such that $x\neq y$ and $x\in f_n(y)$ (this value is $n = \min I_x - 1$ if this value is positive).
		Thus the in-degree of $x$ is at most 1.
		
		Moreover, if there is $n$ such that $x\in C_n$ and $x\notin C_{n+1}$ then $f_n(x)$ is infinite, and so $x$ has infinite out-degree.
		If there is no such $n$ (i.e.\ $I_x$ has no maximal element), then $x$ is a leaf of $F$, and so out-degrees are either zero or infinite, as required.

		As all edges of $F$ go from $C_n$ to $C_{n+1}$, if there was a cycle in the undirected underlying graph of $F$, it would require some vertex to have in-degree at least 2, which we know is impossible.
		Thus $F$ is indeed an orientation of a forest.

		It thus remains to show that $F$ has no infinite branch; assume for contradiction that $F$ has an infinite branch.

        Translating back into the language of $\eta$-replacements, this says that there are sequences $x_1,x_2,\dotsc$ of points and $n_1,n_2,\dotsc$ of indices such that, for each $i$, we have 
        \begin{align*}
            x_i\in C_{n_i}, \; x_i\notin C_{n_i + 1}, \; x_{i+1}\notin C_{n_i}, \text{ and } \, x_{i+1}\in f_{n_i}(x_i),
        \end{align*}
        i.e.\ at step $n_i$, $x_i$ is replaced by a set containing $x_{i+1}$.
        Moreover, we know that both $\above{x_{i+1}} \inter f_{n_i}(x_i)$ and $\below{x_{i+1}} \inter f_{n_i}(x_i)$ contain a dense linear order.
        We claim that, in this situation, $A \defined \set{x_1,x_2,\dotsc}$ is an infinite antichain of $P$.
        
		Fix $r < s$.
		\Cref{lem:etalt-partial-order} tells us that $C_{n_r} \etalt C_{n_s}$ , and that this replacement is witnessed by the function $g$ consisting of the composition of $f_{n_s-1}\circ f_{n_s - 2} \circ\dotsb\circ f_{n_r + 1}\circ f_{n_r}$, and we know by assumption that $x_s \in g(x_r)$.
		\Cref{lem:etalt-incomparability} tells us that $x_s \incomp x_r$, and so $A$ is an infinite antichain of $P$, contradicting the fact that $P$ is an FAC poset and proving \Cref{claim:eta-no-infinite-branch}.
    \end{proof}
    
    We now prove that $C_1\etalt C$.
    Define the function $\etawitness{f}{C_1}{C}$ of a point $x\in C_1$ by first inductively defining the sets $Z_n$ by setting $Z_1 = \set{x}$ and then, for all $n\geq 1$, defining
	\begin{equation*}
		Z_{n+1} \defined f_n[Z_n] \quad \text{ and } \quad f(x) \defined \liminf_{n\to\infty} Z_n.
	\end{equation*}
    We claim that $f$ witnesses $C_1\etalt C$.

	To work with the function $f$, we may let $T_x$ be the connected component of the forest $F$ rooted at $x$ (noting that as $F$ is a forest, $T_x$ is a tree), and note that $f(x)$ consists precisely of the leaves of $T_x$.
	To aid with considering the leaves of $T_x$, we in fact define an ordinal-valued height function $\height\from T_x\to\ww_1$, where $\ww_1$ is the first uncountable ordinal.
	This definition aligns with the standard definition of height on an infinite tree, but we nevertheless give the definition here in full for completeness.

	First, for all leaves $y\in T_x$, set $\height(y) = 0$.
	Then recursively define for $z\in T_x$,
	\begin{equation*}
		\height(z) = \sup\set{\height(y) + 1 \st y\text{ is a child of }z},
	\end{equation*}
	and set $\height(z) = \infty$ if the above recursive definition fails to ever give a countable height to $z$.
	Say that $\infty > \aa$ for all ordinals $\aa$.

	\begin{claim}
		\label{claim:eta-height-defined}
		For all $z\in T_x$, we have $\height(z) < \ww_1$.
	\end{claim}

	\begin{proof}
		Assume for contradiction that some $z\in T_x$ has $\height(z) = \infty$, and note that this implies that $\height(x) = \infty$.
		We will construct an infinite path in $T_x$.

		First, let $y_1 = x$.
		Then, for each $n\geq 1$, assume that we have defined $y_n$ with $\height(y_n) = \infty$.
		This implies that $y_n$ is not a leaf of $T_x$, and moreover some child of $y_n$ has height $\infty$, as otherwise the height of $y_n$ would be some countable ordinal.
		Let this child of $y_n$ be $y_{n+1}$.
		Note that $y_1,y_2,\dotsc$ forms an infinite path through $T_x$, contradicting \Cref{claim:eta-no-infinite-branch}, and so all heights are countable ordinals.
	\end{proof}

	It remains to show that $f(x)$ in fact has order type $\eta$ as long as $\height(x) \geq 1$, which we do by transfinite induction on height in $T_x$.

	Indeed, the base case of $y\in T_x$ with $\height(y) = 1$ is easy, as we know that the children of $y$ in $T_x$ form a set with order type $\eta$, and they form an interval in $f(x)$.
	Finally, the inductive step follows immediately from \Cref{lem:eta-nesting}.
	Thus $f(x)$ has order type $\eta$.

	Finally, we must show the second part of \Cref{def:esmc} holds.
	Indeed, if $x,y\in C_1$ have $x < y$ and $z\in f(y)$, then $z\in C_n$ for some finite $n$, and we know that
	\begin{equation*}
		z \in f_{n-1}[f_{n-2}[\dotsb f_2[f_1(y)]\dotsb ]],
	\end{equation*}
	so by \Cref{lem:etalt-partial-order} we see that $x < z$, as required, completing the proof of \Cref{lem:countable-etalt-chain-upper-bound}.
\end{proof}

The final tool we need to prove \Cref{prop:esmc-exists} is that there are no uncountable chains of $\eta$-replacements.

\begin{lemma}
	\label{lem:no-uncountable-etalt-chain}
	If $(C_\aa\st \aa < \ll)$ is a family of $\eta$-maximal chains of $P$ such that $C_\aa\etalt C_\bb$ for all $\aa < \bb < \ll$ for some ordinal $\ll$, then $\ll$ is countable.
\end{lemma}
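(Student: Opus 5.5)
Since $P$ is countable, the natural approach is to show that each strict step in the $\etalt$-chain permanently removes an element of $P$, and that a removed element never returns. Then an uncountable strictly increasing family would need uncountably many distinct points of $P$, which is impossible.

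We may assume that the family is strictly increasing, so that $C_\aa\neq C_\bb$ for $\aa<\bb$; otherwise we pass to the distinct members, and antisymmetry (\Cref{lem:etalt-partial-order}) shows that repeats form intervals, so nothing is lost. Suppose for contradiction that $\ll\geq\ww_1$; by truncating we may take $\ll=\ww_1$.

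For each $\aa<\ww_1$, the chains $C_\aa$ and $C_{\aa+1}$ are distinct $\eta$-maximal chains. Neither can contain the other: $\eta$-maximality of $C_\aa$ rules out $C_\aa\subsetneq C_{\aa+1}$, and $\eta$-maximality of $C_{\aa+1}$ rules out the reverse inclusion. So we can choose a point $x_\aa\in C_\aa\setminus C_{\aa+1}$.

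The key step is the claim that $x_\aa\notin C_\bb$ for every $\bb>\aa$. For $\bb=\aa+1$ this holds by choice. For $\bb>\aa+1$ we have $C_\aa\etalt C_{\aa+1}\etalt C_\bb$ by the hypothesis on the family, so \Cref{lem:etalt-antisymmetric} applies directly: $x_\aa\in C_\aa$ and $x_\aa\notin C_{\aa+1}$ give $x_\aa\notin C_\bb$. Note that this does not need transitivity through limit stages, since the hypothesis supplies $C_\aa\etalt C_\bb$ for all pairs $\aa<\bb$.

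Injectivity of $\aa\mapsto x_\aa$ now follows. If $\aa<\bb$, then $x_\bb\in C_\bb$, while $x_\aa\notin C_\bb$ by the claim, so $x_\aa\neq x_\bb$. This gives an injection of $\ww_1$ into the countable set $P$, a contradiction, and hence $\ll$ is countable.

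The argument is short, and I expect the only delicate point to be justifying that consecutive distinct $\eta$-maximal chains are not nested, which is what allows each step to exhibit a removed point. The direction $C_{\aa+1}\subsetneq C_\aa$ is excluded by $\eta$-maximality of $C_{\aa+1}$, since $C_\aa$ has order type $\eta$. In particular, the argument does not need the countable upper-bound result \Cref{lem:countable-etalt-chain-upper-bound}; that lemma instead combines with the present one in the Zorn's lemma argument for \Cref{prop:esmc-exists}.
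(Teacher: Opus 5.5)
Your argument is correct and is essentially the paper's proof: pick $x_\aa\in C_\aa\setminus C_{\aa+1}$, use \Cref{lem:etalt-antisymmetric} to see that $x_\aa$ never reappears in any later $C_\bb$, and obtain uncountably many distinct points of the countable set $P$. One small caveat is that the lemma genuinely needs the chains to be pairwise distinct, as the paper's proof assumes, since a constant family of length $\ww_1$ satisfies the hypothesis. So your reduction ``pass to the distinct members'' shows only that the set of distinct chains is countable, not that $\ll$ itself is---though that is exactly what the Zorn's lemma argument uses.
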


\begin{proof}
	Assume for contradiction that there was a family of pairwise non-equal chains $(C_\aa\st \aa < \ll)$ which formed a $\etalt$-chain for some uncountable ordinal $\ll$.
    We know that, for all $\aa < \ll$, there is an $x_\aa\in C_\aa$ with $x_\aa\notin C_{\aa+1}$, and thus by \Cref{lem:etalt-antisymmetric}, $x_\aa\notin C_\bb$ for any $\bb > \aa$.
    But then the family $(x_\aa\st \aa<\ll)$ would be an uncountable family of distinct elements of $P$, contradicting the countability of $P$.
\end{proof}

We may now deduce \Cref{prop:esmc-exists}.

\begin{proof}[Proof of \Cref{prop:esmc-exists}]
	We know from \Cref{lem:etalt-partial-order} that $\etalt$ is a partial order, and we claim that every $\etalt$-chain of chains has an $\eta$-maximal chain as an upper bound.
	Indeed, such a chain either has a maximal element (which is the required upper bound), or it does not.
	In the latter case, the cofinality of the chain is a regular cardinal, which is either $\ww$ or an uncountable ordinal. 
	In the former case, \Cref{lem:countable-etalt-chain-upper-bound} gives us an upper bound, and \Cref{lem:no-uncountable-etalt-chain} tells us that the latter case is impossible.
	Thus all $\etalt$-chains of chains have upper bounds and we may apply Zorn's lemma to find an $\eta$-maximal chain $C$ which is maximal under $\etalt$.
	
	This chain $C$ is then by definition an \esmc, as required.
\end{proof}

With these results in hand, we may now deduce the structural result \Cref{thm:structural}.

\begin{proof}[Proof of \Cref{thm:structural}]
	Let $P$ be a countable FAC poset.
	If $P$ is scattered then we are done, so assume that $P$ admits an embedding of $\QQ$.

	We know from \Cref{prop:esmc-exists} that $P$ has an \esmc\ $C$.
	For notational convenience, identify $C$ with $\QQ$.
	For all $r\in \RRP$, define the set $A_r$ as follows.
	\begin{align}
		A_r &\defined \p[\big]{\;\below{r}_C, \, \above{r}_C\,}_P\label{eq:define-a}\\
		&= \set[\big]{x \in P \st \forall q \in C \text{ with } q > r, \; q > x \text{ and } \forall q \in C \text{ with } q < r, \; q < x}.\nonumber
	\end{align}
	We now prove the required properties of the sets $A_r$ in turn.

	First, for all $q\in \QQ$, we have that $q \in A_q$, and so $A_q$ is non-empty for all $q\in \QQ$.

	Second, for $r, s \in \RRP$ with $r < s$, we have that $A_r < A_s$; indeed, if $x\in A_r$ and $y\in A_s$, then there is some $q\in \QQ$ with $r < q < s$, and for this $q$ by \eqref{eq:define-a} we have $x < q < y$, and so $x < y$.

	Third, take some $x \in P\setminus \bigcup_{r\in \RRP} A_r$. We must show that there is some infinite interval $I_x \sseq \RRP$ such that $x$ is incomparable to $A_r$ for all $r\in I_x$.
	Indeed, we may define
	\begin{equation*}
		I_x \defined \Conv_{\RRP} \set{r \in \RRP \st x \incomp A_r},
	\end{equation*}
    where by $x\incomp A_r$ we mean that $x$ is incomparable to all of $A_r$.
	$I_x$ is certainly an interval, and moreover $I_x$ is not empty, as otherwise $x$ would be comparable to all of $C$, and so we would have $x\in A_r$ for some $r$.
	Finally, if $I_x$ were finite, then we would have $I_x = \set{r}$ for some $r\in \RRP$.
	In fact, we must have $r\in \QQ$, as otherwise $x$ would again be comparable to all of $C$.
	As $x\notin A_r$, we thus know that there is some $q \in\QQ$ with either $q > r$ and $q \not > x$, or $q < r$ and $q \not < x$.
	But then $x$ must be incomparable with the entire interval between $r$ and $q$, contradicting the assumption that $I_x$ is finite.

    Fourth, $A_r$ is convex, as if $x,y\in A_r$ and $z\in P$ satisfy $x<z<y$, then for all $q\in C$, if $q > r$, then $q > y > z$, and if $q < r$, then $q < x < z$, and so $z \in A_r$.

	Finally, $A_r$ is scattered for all $r\in\RRP$, as, if $r\notin \QQ$ and $A_r$ admitted an embedding of $\eta$, then the chain $C$ is not $\eta$-maximal, and if $r\in\QQ$, then we could perform a non-trivial $\eta$-replacement on $C$ by replacing $r$ with the copy of $\eta$ in $A_r$, contradicting the fact that $C$ is an \esmc.

	This completes the proof of \Cref{thm:structural}.
\end{proof}

Finally, we can deduce that the existence of a maximal tube in a countable vacillating FAC poset $P$ is implied by the existence of a maximal tube in an arbitrary countable vacillating scattered FAC poset.

\begin{corollary}
	\label{cor:scattered-reduction}
	Assume that every countable vacillating scattered FAC poset has a maximal tube. Then every countable vacillating FAC poset has a maximal tube.
\end{corollary}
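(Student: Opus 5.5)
If $P$ is scattered, the hypothesis applies to $P$ directly and there is nothing to prove. So assume $P$ is not scattered, and apply \Cref{thm:structural} to obtain the family $\set{A_r\st r\in\RRP}$. The plan is to find a maximal tube $T_r$ inside each piece $A_r$ using the hypothesis, and then show that $T\defined\bigcup_{r\in\RRP}T_r$ is a maximal tube of $P$.

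\textbf{Step 1: each $A_r$ satisfies the hypothesis.} Each $A_r$ is scattered by \Cref{thm:structural}. It is countable and FAC because it is a subposet of $P$. The one property needing an argument is that $A_r$ is vacillating, and this uses convexity of $A_r$. Let $D\sseq A_r$ be a saturated chain of $A_r$, and suppose $x\in P\setminus D$ and $y,z\in D$ satisfy $y<x<z$ with $D\union\set{x}$ a chain. Convexity of $A_r$ puts $x\in A_r$, which contradicts saturation of $D$ in $A_r$. Hence $D$ is saturated in $P$. So a saturated chain of $A_r$ witnessing non-vacillation would also witness it in $P$, and therefore $A_r$ is vacillating. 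The hypothesis then gives a maximal tube $T_r\sseq A_r$ for each $r$ (taking $T_r=\emptyset$ if $A_r=\emptyset$).

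\textbf{Step 2: $T$ is a tube.} Take $x\in T_r$. For $s\neq r$, the sets satisfy $A_r<A_s$ or $A_s<A_r$, so $x$ is comparable to every element of $T_s$. Hence $\incompset{x}_P\inter T=\incompset{x}_P\inter T_r$, which is finite because $T_r$ is a tube.

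\textbf{Step 3: $T$ is maximal.} Let $y\in P\setminus T$. There are two cases.
\begin{enumerate}
\item If $y\in A_r$ for some $r$, then maximality of $T_r$ in $A_r$ shows that $y$ is incomparable to infinitely many elements of $T_r\sseq T$.
\item If $y\notin\bigcup_r A_r$, then \Cref{thm:structural} provides an infinite interval $I_y\sseq\RRP$ such that $y$ is incomparable to every $A_s$ with $s\in I_y$. An infinite interval of $\RRP$ contains infinitely many rationals $q$, and each such $A_q$ is non-empty. A maximal tube of a non-empty poset is non-empty, since any singleton is a tube. So each $T_q$ with $q\in I_y\inter\QQ$ is non-empty. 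Choosing one element from each gives infinitely many distinct elements of $T$ incomparable to $y$ (distinct because the $A_q$ are disjoint).
\end{enumerate}
Thus $T\union\set{y}$ is never a tube, and $T$ is a maximal tube of $P$.

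The main point needing care is Step 1, namely that being vacillating passes to the pieces $A_r$. This is exactly where the convexity clause of \Cref{thm:structural} is used, by transferring saturation of chains from $A_r$ to $P$. The remaining steps are direct checks against the other clauses of \Cref{thm:structural}.
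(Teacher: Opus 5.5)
Your proposal is correct and follows the same route as the paper: you decompose $P$ via Theorem~\ref{thm:structural}, take a maximal tube $T_r$ in each $A_r$, and check that $\bigcup_r T_r$ is a tube (using $A_r<A_s$) and is maximal (by splitting on whether $y$ lies in some $A_r$ or is incomparable to an infinite interval of pieces). You also supply two details the paper leaves implicit: vacillation passes to $A_r$ because convexity makes saturated chains of $A_r$ saturated in $P$, and the $T_q$ for rational $q\in I_y$ are non-empty, which is what makes $y$ incomparable to infinitely many elements of $T$.
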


\begin{proof}
	Let $P$ be a countable vacillating FAC poset. If $P$ is scattered then we are done, so assume that $P$ admits an embedding of $\QQ$ and apply \Cref{thm:structural} to find a family of scattered sets $(A_r\st r\in \RRP)$ satisfying the conclusion of the theorem.
	
	For all $r\in \RRP$, note that $A_r$ inherits the property of vacillation from $P$, and let $T_r$ be a maximal tube of $A_r$.
	We claim that
	\begin{equation*}
		T\defined\bigcup_{r\in \RRP} T_r
	\end{equation*}
	is a maximal tube of $P$.

	There are two properties that we must show: that every element $x\in T$ is comparable to all but finitely many elements of $T$, and that every element $y \in P \setminus T$ is incomparable to infinitely many elements of $T$.
	Indeed, take $x\in T_r$ for some $r\in\RRP$.
	Then $x$ is comparable to all but finitely many elements of $T_r$, and is comparable to $A_s$ for all $s\neq r$, as required.

	Finally, if $y\in P\setminus T$, then if $y\in A_r$ for some $r\in\RRP$, then $y$ is incomparable to infinitely many elements of $T_r$, and if $y\notin A_r$ for all $r\in\RRP$, then there is some infinite interval $I_y$ such that $y$ is incomparable to $A_r$ for all $r\in I_y$, and thus is incomparable to infinitely many elements of $\bigcup_{r\in I_y} T_r$, as required.
\end{proof}

\section{Removing illfounded limit points}
\label{sec:illfounded}

In this section we reduce further, much like how we reduced from general posets to scattered posets.
Indeed, we will pass to posets which do not contain a structure which we call an \emph{illfounded limit point}.

\begin{definition}
    A nonprincipal element $X \in H(P)$ is said to be an \emph{illfounded limit point} if, for every final segment $C$ of a chain in $P$ witnessing $X$, we can find disjoint intervals $I_1, I_2, I_3, \dotsc \sseq C$ with $I_1 < I_2 < \dotsb$, all $I_n$ infinite, and $I_{2n-1}$ is wellfounded and $I_{2n}$ is co-wellfounded for all $n$.
\end{definition}

We aim to remove these points, and so make the following definition for sets where we have successfully made this reduction.

\begin{definition}
	\label{def:quasifounded}
	A convex set $X \sseq P$ is called \emph{quasifounded} if there are no points $x,y \in X$ such that the interval $[x,y]_{H(P)}$ contains an illfounded limit point of $N(P)$.
\end{definition}

The following lemma highlights a very useful property of quasifounded vacillating posets, which will be used in later sections.

\begin{lemma}
	\label{lem:increasing-decreasing-cofinal}
	Let $P$ be a countable scattered vacillating quasifounded FAC poset, and let the elements $X_1,X_2, \dotsc \in N_-(P)$ satisfy $X_1 < X_2< \dotsb$.
	Then this sequence is cofinal in $P$, i.e.\ there is no $x\in P$ with $x > X_n$ for all $n$. 
	Similarly, the only infinite decreasing sequences in $N_+(P)$ are coinitial in $P$.
\end{lemma}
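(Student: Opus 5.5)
We argue by contradiction, and we treat only the first statement; the second is its order-dual, obtained by passing to $P^*$, since the hypotheses are self-dual and $N_\pm(P^*)=N_\mp(P)$. So suppose $x\in P$ satisfies $x>X_n$ for all $n$. The plan is to build inside $P$ a saturated chain $S$ that climbs towards $x$ through infinitely many non-wellfounded pieces. We then show that the supremum of these pieces either is an illfounded limit point lying in some $[y,x]_{H(P)}$, contradicting quasifoundedness, or forces a non-vacillating saturated chain.

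\emph{Building $S$.} Fix representative decreasing chains $D_n$ of $X_n$, each with no minimum. By \Cref{def:domination}, $X_n<X_{n+1}$ and $X_{n+1}<x$ give points $z_n\in P$ with $X_n<z_n<X_{n+1}$ for $n\geq 1$, and we may arrange $z_1<z_2<\dotsb<x$. Again by domination, $D_{n+1}$ has a non-empty initial segment $D'_{n+1}$ with $z_n<D'_{n+1}<z_{n+1}$. Since $D_{n+1}$ has no minimum, $D'_{n+1}$ is infinite with no least element. Set $S_0\defined\set{z_n\st n\geq 1}\union\bigcup_n D'_{n+1}\union\set{x}$; this is a chain. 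Extend it, by Zorn's lemma (or directly, as $P$ is countable), to a saturated chain $S$ of $[z_1,x]_P$; it is then saturated in $P$, since $[z_1,x]_P$ is convex. Write $W_n\defined(z_n,z_{n+1})_S$. Each $W_n$ is infinite, and not wellfounded, because it contains $D'_{n+1}$. By \Cref{lem:h-complete}, $L\defined\sup_{H(S)}\set{z_n}$ lies in $N_+(S)$; via \Cref{obs:embedding} we view it as an element of $N_+(P)$, and it satisfies $L\leq x$. Hence $L\in[z_1,x]_{H(P)}$ with $z_1,x\in P$. If we can show that $L$ is an illfounded limit point, quasifoundedness of $P$ is contradicted.

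\emph{A chain lemma, then a case split.} First I would prove a small lemma: every infinite scattered chain $W$ has an infinite interval that is wellfounded or co-wellfounded. Consider the $\finsim_W$-classes. If some class is infinite, it has type $\ww$, $\ww^*$ or $\zz$, and then contains an infinite interval of type $\ww$ or $\ww^*$. If every class is finite, the quotient $W\,/\finsim_W$ is an infinite chain in which consecutive classes are never covers, so it is dense, and it would embed $\QQ$ in the scattered chain $W$ (cf.\ \Cref{fact:covers}), a contradiction. Applying this inside every final segment of $S$ below $L$ then gives three cases.
\begin{enumerate}
\item Infinite wellfounded and infinite co-wellfounded intervals both occur cofinally below $L$. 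Then they can be interleaved as $I_1<I_2<\dotsb$, and $L$ is an illfounded limit point.
\item Eventually only co-wellfounded infinite intervals occur. Then, after refining the blocks $W_n$ (merging finitely many consecutive blocks where needed), a final segment of $S$ below $L$ is of the form $\bigoplus_{i\in\ww}C_i$ with each $C_i$ infinite and co-wellfounded. Since $S$ and its convex subsets are saturated in $P$, this contradicts that $P$ is vacillating.
\item Eventually only wellfounded infinite intervals occur, even though each $W_n$ is non-wellfounded.
\end{enumerate}

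\emph{Main obstacle.} I expect the difficulty to lie in two places. The first is case (iii), where the pieces look like $\ww\cdot\ww^*$: non-wellfounded, yet every infinite interval is wellfounded. The second is that the definition of an illfounded limit point quantifies over \emph{every} representative chain of $L$, not just $S$. For case (iii), I would first exploit the freedom in choosing the saturated extension of $S_0$, trying to keep a co-wellfounded copy of $D'_{n+1}$ as an interval. Failing that, I would apply the dichotomy in the reversed order to the decreasing pieces, since there the structure becomes $\bigoplus$ of co-wellfounded pieces read in $S^*$. For the representative-independence issue, I would use \Cref{lem:sim-equivalence-bijection}: two equivalent representatives have final segments with isomorphic $\finsim$-quotients, and the lemma preserves whether the corresponding classes have maximal and minimal elements. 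This should transfer the alternating pattern from one representative to every representative of $L$.
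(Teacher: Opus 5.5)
Your construction of $S$ and of $L\in[z_1,x]_{H(P)}$ matches the paper's, and reducing to ``$L$ is an illfounded limit point or vacillation fails'' is the right reduction. The gap is in the case analysis.

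\textbf{Case (iii) is not proved.} You identify it as the main obstacle and offer two possible strategies, but carry out neither, and this is where the content of the lemma lies.

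\textbf{Case (ii) has an unjustified step.} From ``no infinite wellfounded interval below $L$'' you assert that, after merging blocks, a final segment below $L$ is a sum of infinite co-wellfounded chains. Nothing you have said makes the late blocks $W_n$ co-wellfounded. A block could a priori contain a convex copy of $\bigoplus_{i\in\ww}\ww^*$, which is not co-wellfounded yet has no infinite wellfounded interval. Merging finitely many blocks does not help with that.

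\textbf{The missing fact.} In a scattered vacillating poset, every saturated chain $W$ that is not wellfounded contains an interval of type $\ww^*$. Dually, every saturated chain that is not co-wellfounded contains an interval of type $\ww$.
\begin{enumerate}
\item By the dual of \Cref{lem:exists-atomic}, $\atomicdecr(W)\neq\emptyset$. Represent such an element by a convex subchain $A\sseq W$ with no least element and no copy of $(\ww+1)^*$.
\item Take a coinitial sequence $a_1>a_2>\dotsb$ in $A$. Each piece $(a_{k+1},a_k]_A$ is wellfounded, since together with $a_{k+1}$ an infinite decreasing sequence in it would give a copy of $(\ww+1)^*$.
\item If infinitely many pieces are infinite, group consecutive pieces. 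This writes an initial segment of $A$ as $\bigoplus_{j\in\ww^*}G_j$ with each $G_j$ infinite and wellfounded. Its reverse is a saturated chain of the form $\bigoplus_{j\in\ww}G_j^*$ with each $G_j^*$ infinite co-wellfounded, which violates vacillation.
\item Otherwise, cofinitely many pieces are finite, and $A$ has an initial segment of type $\ww^*$.
\end{enumerate}

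\textbf{How this closes both cases.} Case (iii) cannot occur: every $W_n\supseteq D'_{n+1}$ is non-wellfounded, so it contains an infinite co-wellfounded interval. In case (ii), every late $W_n$ must be co-wellfounded, since otherwise it contains an interval of type $\ww$. Then $\bigoplus_{n\geq N}[z_n,z_{n+1})_S$ contradicts vacillation directly.

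\textbf{Comparison with the paper.} The paper avoids the three-way split. It asks only whether each gap $(X_n,X_{n+1})$ in $H(S)$ contains a point of $N_+(P)$. If cofinitely many gaps do not, those gaps are infinite and co-wellfounded, and their sum contradicts vacillation. Otherwise these $N_+$ points interleave cofinally with the $N_-$ points $X_n$, so the supremum is an illfounded limit point below $x$.
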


\begin{proof}
	The cases for $N_-(P)$ and $N_+(P)$ are equivalent, and so we consider only the former.
	Assume for contradiction that there are $X_1, X_2,\dotsc \in N_-(P)$ and $x\in P$ with
	\begin{equation*}
		X_1 < X_2< \dotsb \quad \text{and}\quad X_n < x \text{ for all } n.
	\end{equation*}
	Indeed, for all $n \geq 1$, we can find a chain $C_n \sseq P$ witnessing $X_n$, and we moreover know that there is a non-empty initial segment $C_n'\sseq C_n$ with $C_n' < X_{n+1}$.
	As we also have $C_n' > X_n$ by definition, we find that $C' \defined \bigcup_{n\geq 1} C_n'$ is a chain with $C' < x$ (as $C_n' < X_{n+1} < x$).
	We may thus define the chain $C$ to be $C'$ extended arbitrarily inside $\Conv_P(C')$ to be saturated.

	We now claim that there are an infinite number of values $n$ such that $(X_n,X_{n+1})_C$ contains an element of $N_+(P)$.
	Indeed, assume for contradiction that there is some $N$ such that, for all $n\geq N$, the interval $D_n\defined (X_n,X_{n+1})_{H(C)}$ contains no element of $N_+(P)$.
	We may note that $D_n$ is infinite for all $n$, and so for all $n\geq N$, the chain $D_n$ is infinite and co-wellfounded.
	But then the chain $\bigoplus_{n\geq N} D_n$ is saturated and violates the definition of vacillation, so we know that there are an infinite number of values $n$ such that $D_n$ contains an element of $N_+(P)$.
	
	However, this then implies that $Y \defined \sup_P C \in N_+(P)$ is an illfounded limit of $P$ with $Y < x$, contradicting the fact that $P$ is quasifounded.
	Hence \Cref{lem:increasing-decreasing-cofinal} is proved.
\end{proof}

We give the key definitions of this section.

\begin{definition}
    For a maximal chain $C\sseq P$ and elements $x,y \in C$, write $x \illequiv_C y$ if the interval $[x,y]_C$ is quasifounded. 
\end{definition}

It is easy to prove that $\illequiv_C$ is an equivalence relation.
With this definition in hand, we can now define the partition of a chain $C$ into quasifounded sections.

\begin{definition}
    Let the \emph{quasifounded partition} of a maximal chain $C\sseq P$ be the set
    \begin{equation*}
        \illpart(C) \defined C \, / \illequiv_C.
    \end{equation*}
    An element $X \in\illpart(C)$ can induce up to two illfounded limit points, one at each end.
    Let $\ill(X)\in \set{0, 1, 2}$ be this number. 
\end{definition}

For notational convenience, given a chain $C$, let $\cI(C)$ be the set of all non-empty intervals of $C$.

We are now ready for the key definition of this section: the definition of an illfounded replacement.
Note that this definition has many similarities to \Cref{def:esmc}.

\begin{definition}
    \label{def:illfounded-replacement}
    If $P$ is a scattered poset and $C,D \sseq P$ are maximal chains, then we say that $D$ is an \emph{illfounded replacement} of $C$ and write $C \illlt D$ if there is a function
    \begin{equation*}
        \illwitness{f}{C}{D},
    \end{equation*}
    such that $C$, $D$, and $f$ satisfy both of the following conditions.
    \begin{enumerate}
        \item \label{item:illlt-comp} If $X \in \illpart(C)$ and $Y \in \illpart(D)$ and $z \in H(P)$ satisfy $Y \in f(X)$ and $z > X$, then $z > Y$. Similarly, if $z < X$ then $z < Y$.
        \item If $X \in \illpart(C)$ and $f(X)= \set{Y}$ is a singleton then either $\ill(Y) > \ill(X)$ or $X = Y$.
    \end{enumerate}
    If these conditions all hold, then we say that $C \illlt D$ is \emph{witnessed} by $f$.
    Say that $C \illlt D$ is \emph{trivial} if, for all $X \in \illpart(C)$, there is $Y\in\illpart(D)$ with $f(X) = \set{Y}$ and $\ill(X) = \ill(Y)$.
    We may note that if the above replacement is trivial, then in fact we have tha $C = D$
	If the only illfounded replacements admitted by $C$ are trivial, then we say that $C$ is \emph{illfounded maximal}.
\end{definition}

The key result allowing us to prove a reduction to quasifounded posets is the following.

\begin{proposition}
    \label{prop:quasifounded}
    If $P$ is a scattered FAC poset, then there is an illfounded maximal chain $C\sseq P$.
	Moreover, if $K$ is the set of illfounded limit points of $C$ then for all initial segments $L\sseq K$, we have that the interval $\p{L, K \setminus L}$ is quasifounded.
\end{proposition}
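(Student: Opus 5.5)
The plan mirrors the proof of \Cref{prop:esmc-exists}: show that $\illlt$ is a partial order on the maximal chains of $P$, show that every $\illlt$-chain has an upper bound, and apply Zorn's lemma. The ``moreover'' part is then derived from illfounded maximality.

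\textbf{Partial order.} Reflexivity comes from the trivial witness $X\mapsto\set{X}$. For transitivity, given witnesses $f$ for $C\illlt D$ and $g$ for $D\illlt E$, define $h(X)\defined\bigcup_{Y\in f(X)} g(Y)$, or rather its convex hull in $\illpart(E)$.
\begin{itemize}
\item Condition \ref{item:illlt-comp} composes directly.
\item For the singleton condition, $h(X)=\set{Z}$ forces $f(X)=\set{Y}$ and $g(Y)=\set{Z}$, because the images of distinct $Y$ are disjoint; this is the analogue of \Cref{lem:etalt-comparability}, proved using condition \ref{item:illlt-comp}. Hence $\ill$ weakly increases along the composition, and the required strict increase or equality follows.
\end{itemize}
For antisymmetry I would prove an analogue of \Cref{lem:etalt-antisymmetric}. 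Each nontrivial step either splits a class into several classes or strictly raises $\ill$, and $\ill\le 2$. So along $C\illlt D\illlt C$ every class must be sent to a single class with the same $\ill$, which means both replacements are trivial and $C=D$.

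\textbf{Upper bounds.} Take an $\omega$-sequence $C_1\illlt C_2\illlt\cdots$ and let $C$ be a maximal chain containing $\liminf C_n$. As in \Cref{lem:countable-etalt-chain-upper-bound}, I build a forest on the quasifounded classes, with an edge from $X\in\illpart(C_n)$ to each $Y\in f_n(X)$ with $Y\ne X$. I then need to rule out infinite branches.
\begin{itemize}
\item Along a branch, $\ill$ can rise at most twice, so after some point every step is a genuine splitting into at least two classes.
\item Condition \ref{item:illlt-comp} says that each new class is incomparable with the limit points bounding the old class only if it sits strictly inside. Choosing representatives in successive split-off pieces should then produce either an infinite antichain or a saturated $\bigoplus_{\omega}$ of co-wellfounded pieces. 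In the scattered FAC setting, the latter forces an illfounded limit point inside what was supposed to be a quasifounded class, which is a contradiction.
\end{itemize}
I expect this step to be the main obstacle. Countable length suffices for the chain argument, exactly as in \Cref{lem:no-uncountable-etalt-chain}: each nontrivial step permanently removes some point of $P$ (by the antisymmetry lemma), provided $P$ is countable; otherwise I would instead bound the number of $\illequiv$-classes. The limit witness $f(X)$ is read off from the leaves of the well-founded tree rooted at $X$, using an ordinal height induction.

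\textbf{The ``moreover'' part.} Let $C$ be illfounded maximal, $K$ its illfounded limit points, and $L\sseq K$ an initial segment. Suppose $(L,K\setminus L)$ is not quasifounded. Then some $x<y$ in it have an illfounded limit point $Z\in[x,y]_{H(P)}$. Take a saturated chain through $x$, $Z$ and $y$ inside $(L,K\setminus L)$ and splice it into $C$ in place of the corresponding segment, giving a chain $D$. The quasifounded class of $C$ lying between $L$ and $K\setminus L$ either splits at $Z$ or gains an illfounded end. In both cases this is a nontrivial illfounded replacement $C\illlt D$, since condition \ref{item:illlt-comp} holds because $D$ agrees with $C$ outside the interval. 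This contradicts the maximality of $C$.
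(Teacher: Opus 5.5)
\textbf{Where the proposal breaks.} Your partial-order step and your proof of the ``moreover'' part are essentially the paper's. One repair is needed for antisymmetry: counting with $\ill\le 2$ is not enough on its own. You need condition \ref{item:illlt-comp} to squeeze every class of $g[f(X)]$ between all classes of $C$ other than $X$, which forces $g[f(X)]=\set{X}$.

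The genuine gap is the upper-bound step, where you try to show that the replacement forest has no infinite branch. That statement is false. Here is a counterexample.
\begin{itemize}
\item Let $W_k$ be copies of $\big(\bigoplus_{j\in\omega}(\omega\oplus\omega^*)\big)\oplus\set{w_k}$, and let $P\defined\bigoplus_{k\geq 1}\big(\set{s_k}\sqcup W_k\big)\oplus\set{t}$, where $s_k$ is incomparable to all of $W_k$. This $P$ is countable, scattered, of width $2$, and even vacillating.
\item Let $C_n\defined W_1\cup\dots\cup W_{n-1}\cup\set{s_n,s_{n+1},\dots,t}$. Its classes are the intervals between consecutive illfounded limit points of $C_n$, and its top class is $X_n=\set{w_{n-1},s_n,s_{n+1},\dots,t}$ (all of $C_1$ when $n=1$).
\item $C_n\illlt C_{n+1}$ is witnessed by the identity on the other classes and $X_n\mapsto\set{M_n,X_{n+1}}$. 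Here $M_n$ is the part of $W_n$ below its illfounded limit point, together with $w_{n-1}$ if $n\geq 2$.
\item Condition \ref{item:illlt-comp} holds: only $\top$ lies above $X_n$, and anything below $X_n$ is $\bot$ or lies below $w_{n-1}$, hence below $M_n$ and $X_{n+1}$.
\end{itemize}
So $X_1\to X_2\to\cdots$ is an infinite branch of genuine two-way splits. Meanwhile the split-off pieces $M_1<M_2<\cdots$ all lie in one chain.

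This shows why your mechanism fails. Unlike the $\eta$ setting, there is no analogue of \Cref{lem:etalt-incomparability}: children need not be incomparable to their parent, so FAC gives no antichain. Nor is any quasifounded class contradicted. Splits happen in the gap between neighbouring classes, which is exactly where condition \ref{item:illlt-comp} allows new illfounded limit points. Vacillation is not a hypothesis of this proposition anyway. Also, a $\bigoplus_\omega$ of co-wellfounded pieces is not an illfounded limit point, since that notion requires alternation. Your height induction therefore has nothing to stand on.

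\textbf{The missing idea.} Well-foundedness is unnecessary. It suffices that every vertex has a leaf below it, and this follows from scatteredness rather than FAC (\Cref{lem:illlt-leaves}).
\begin{itemize}
\item Suppose some vertex had no leaf below it. Then every descendant has a child.
\item Each singleton edge strictly raises $\ill$, which is at most $2$. Hence every descendant has a descendant splitting into at least two classes.
\item Choosing points in a lower and an upper child recursively gives $b\colon 2^{<\omega}\to P$. By condition \ref{item:illlt-comp}, $b(s)<b(t)$ whenever $s$ precedes $t$ lexicographically without being a prefix of it. Such an image contains a copy of $\QQ$, contradicting scatteredness.
\end{itemize}
The limit chain is then the union of the leaf classes, extended to a maximal chain, with $f(X)$ the convex hull of the leaves below $X$ (\Cref{def:illlt-limit}). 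In the example above this yields $\bigcup_k W_k\cup\set{t}$.

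The paper runs this construction for $\illlt$-chains of arbitrary ordinal length (\Cref{lem:illlt-ubs}), so no countability reduction is needed. This matters for two reasons. The proposition does not assume $P$ countable. And your claim that each nontrivial step permanently deletes a point would need an analogue of \Cref{lem:etalt-antisymmetric} for $\illlt$, which you have not established.
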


Similarly to in other sections, we will prove \Cref{prop:quasifounded} by means of a replacement. 
The intuition behind this replacement is that we will start with some chain, and repeatedly adjust the chain to insert more illfounded limit points where possible, without removing any illfounded limit points that we have previously inserted.
We will prove that this process must eventually terminate, which will leave us with a chain satisfying \Cref{prop:quasifounded}.

We now prove a series of lemmas concerning $\illlt$ and \Cref{def:illfounded-replacement}, which will culminate in us being able to apply Zorn's lemma to $\illlt$ and deduce that there is a chain which admits no illfounded replacement, from which we can conclude \Cref{prop:quasifounded}.

While the following lemma is not strictly required, we nevertheless include it and give a short proof for the sake of intuition.

\begin{lemma}
	If $C\illlt D$ is an illfounded replacement witnessed by $\illwitness{f}{C}{D}$ and $X,Y\in\illpart(C)$ have $X < Y$, then $\bigcup f(X) < \bigcup f(Y)$. That is, $f$ is strictly order-preserving.
\end{lemma}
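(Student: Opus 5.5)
The plan is to show that every element of $\bigcup f(X)$ lies strictly below every element of $\bigcup f(Y)$. The idea is to find a point of $P$ strictly between $X$ and $Y$ and push it through condition \ref{item:illlt-comp} of \Cref{def:illfounded-replacement}, in the same way \Cref{lem:etalt-comparability} was deduced from the second clause of \Cref{def:esmc}.

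First I want a $z\in H(P)$ with $X<z<Y$, where $X<z$ means $z$ lies above every element of $X$. Since $X<Y$ are distinct $\illequiv_C$-classes of the maximal chain $C$, there is an illfounded limit point in $H(C)\sseq H(P)$ at or between the ends of $X$ and $Y$. Otherwise, for $x\in X$ and $y\in Y$ the interval $[x,y]_C$ would be quasifounded, and we would have $x\illequiv_C y$. Call this limit point $z$. By \Cref{lem:h-complete} it can be taken to be $\sup_{H(C)} X$ when $X$ has no maximum, or the relevant nonprincipal element between the two classes. Then $z$ is comparable to every element of $C$, and we have $X\le z\le Y$ in the sense that $z\ge x$ for all $x\in X$ and $z\le y$ for all $y\in Y$.

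I expect the main obstacle to be obtaining \emph{strict} inequalities, since $z$ could be the supremum of $X$ or the infimum of $Y$. I would handle this in one of two ways.
\begin{enumerate}
\item Pick an element $w\in C$ strictly between $X$ and $Y$, if one exists, and apply condition \ref{item:illlt-comp} with $w$ itself.
\item Failing that, apply condition \ref{item:illlt-comp} twice: once with $z=\sup X$ (a nonprincipal element, which in particular is not in $X$), giving $z>X$ and hence $z>\bigcup f(X)$; and once with $z\le$ every element of $Y$. An element of $Y$ equal to $z$ is impossible because $z$ is nonprincipal, so $z<Y$ and hence $z<\bigcup f(Y)$.
\end{enumerate}
The remaining case is where $X$ has a maximum $x$ and $Y$ has a minimum $y$ with $y\covers x$. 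This case cannot occur, because $[x,y]_C=\{x,y\}$ is trivially quasifounded, which would give $X=Y$. So a nonprincipal illfounded limit point always separates $X$ from $Y$.

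Putting this together, for $u\in\bigcup f(X)$ and $v\in\bigcup f(Y)$, condition \ref{item:illlt-comp} gives $u<z<v$, and transitivity of the domination order gives $u<v$. This proves $\bigcup f(X)<\bigcup f(Y)$. In particular the sets $f(X)$ are pairwise disjoint and $f$ is strictly order-preserving.
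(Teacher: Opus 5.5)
Your argument is correct, but it takes a different route from the paper. You look for a single strict separator $z\in H(P)$ with $X<z<Y$ and apply condition \ref{item:illlt-comp} to it from both sides. This forces a case analysis:
\begin{enumerate}
\item an element of $C$ between the two classes;
\item otherwise $\sup_{H(C)}X$ or, symmetrically, $\inf_{H(C)}Y$, which are nonprincipal by \Cref{lem:h-complete};
\item the covering case $y\covers x$, which is impossible. Maximality of $C$ gives $(x,y)_P=\emptyset$, so no element of $N(P)$ lies in $[x,y]_{H(P)}$, so $x\illequiv_C y$. This is the justification behind your word ``trivially''.
\end{enumerate}

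The paper needs no separator. Every $y\in Y$ satisfies $y>X$, so condition \ref{item:illlt-comp} gives $Y>Z$ for each $Z\in f(X)$. Then each $z\in Z$ satisfies $z<Y$, and a second application gives $z<W$ for each $W\in f(Y)$. That argument is two lines and uses nothing about $\illequiv_C$, maximality of $C$, or completeness of $H(C)$. It works verbatim for any disjoint intervals $X<Y$; it is the same proof as \Cref{lem:alt-order-preserving}.

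Your version mirrors \Cref{lem:etalt-comparability} instead. In return it gives the marginally stronger picture that one element of $H(P)$ sits between $\bigcup f(X)$ and $\bigcup f(Y)$.

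One side claim in your write-up is inaccurate, though nothing relies on it. The separator need not be an illfounded limit point. Also, the illfounded limit point witnessing that $x,y$ are not $\illequiv_C$-equivalent lies in $[x,y]_{H(P)}$ and need not belong to $H(C)$. Your proof only needs \emph{some} strict separator, and your cases supply one without that claim.
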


\begin{proof}
	Take $Z\in f(X)$ and $W\in f(Y)$ arbitrary. 
	It suffices to prove that $W > Z$.

	First, for all $y\in Y$ we have $y > X$, and so condition \ref{item:illlt-comp} of the definition of illfounded replacements tells us that $y > Z$, and so $Y > Z$.
	But then, for all $z\in Z$, we have $Y > z$, and so $W > z$.
	Thus $W > Z$ and $f$ is order-preserving, as required.
\end{proof}

We next show that $\illlt$ is a partial order, which is already non-trivial.


\begin{lemma}
    \label{lem:illlt-partial-order}
    The relation $\illlt$ is a partial order on the set of maximal chains of a scattered poset $P$.
\end{lemma}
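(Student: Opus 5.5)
We need to show $\illlt$ is reflexive, transitive and antisymmetric on maximal chains of a scattered poset $P$, following the pattern of \Cref{lem:etalt-partial-order}.

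\textbf{Reflexivity.} Take $f(X)=\set{X}$ for every $X\in\illpart(C)$. Condition \ref{item:illlt-comp} holds trivially, and the second condition holds because $X=Y$.

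\textbf{Transitivity.} Suppose $C\illlt D$ is witnessed by $f$ and $D\illlt E$ by $g$. Define
\begin{equation*}
h(X)\defined\bigcup_{Y\in f(X)} g(Y).
\end{equation*}
We need three things.
\begin{itemize}
\item $h(X)$ is an interval of $\illpart(E)$. The preceding lemma shows that $f$ and $g$ are strictly order-preserving with interval values, so a union of $g$-images over an interval of $\illpart(D)$ is again an interval.
\item Condition \ref{item:illlt-comp} holds for $h$. If $z>X$ in $H(P)$, then $z>Y$ for each $Y\in f(X)$ by the property for $f$, and then $z>Z$ for each $Z\in g(Y)$ by the property for $g$.
\item The second condition holds for $h$. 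If $h(X)=\set{Z}$, then, since every interval in the image of a witness is non-empty, $f(X)$ is a singleton $\set{Y}$ and $g(Y)=\set{Z}$. This gives $\ill(Z)\geq\ill(Y)\geq\ill(X)$, with equality throughout only if $Y=X$ and $Z=Y$. So either $\ill(Z)>\ill(X)$ or $Z=X$.
\end{itemize}

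\textbf{Antisymmetry.} This is the main step. Suppose $C\illlt D\illlt C$. By transitivity, the composite $h$ witnesses $C\illlt C$. The plan is to show that $h$ must be the identity map $X\mapsto\set{X}$, and then deduce $f$ is trivial.

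First show every $h(X)$ is a singleton. Since $h$ is strictly order-preserving from $\illpart(C)$ to non-empty intervals of $\illpart(C)$, the pieces $h(X)$ are pairwise disjoint and ordered. Suppose some $h(X)$ contained two classes $Z_1<Z_2$. Then a boundary between $Z_1$ and $Z_2$ is a point of $H(P)$ lying strictly inside the convex hull of $X$'s image. Condition \ref{item:illlt-comp}, applied to points of $C$ just outside $X$, forces $X$ to sit within the same gap. I would then argue that $Z_1,Z_2\sseq C$ together with $X$ produce an element of $C$ both inside and outside $X$, which is a contradiction. This uses that the classes of $\illpart(C)$ partition the maximal chain $C$, and that distinct classes are separated by an illfounded limit point.

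Next show $h(X)=\set{X}$. If $h(X)=\set{Z}$ with $Z\neq X$, then $\ill(Z)>\ill(X)$. Condition \ref{item:illlt-comp} places $Z$ in the same cut of $C$ that $X$ occupies, and since both are classes of the same partition of $C$, this forces $Z=X$, a contradiction.

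Finally, since $h$ is the identity, each $f(X)$ is a singleton $\set{Y}$ with $g(Y)=\set{X}$ and $\ill(X)\leq\ill(Y)\leq\ill(X)$. So $f$ is trivial, which by the remark in \Cref{def:illfounded-replacement} gives $C=D$.

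The obstacle I expect is exactly this last part. It requires using condition \ref{item:illlt-comp} in $H(P)$, together with maximality of $C$ and $D$, to show that a replacement of $C$ by itself cannot shuffle or split classes. It may need an analogue of \Cref{lem:etalt-antisymmetric}, namely that a point of $C$ absent from $D$ cannot reappear further along the $\illlt$-chain. I would try to prove such a lemma first, via comparability of the removed interval with the rest of $C$ and maximality of $C$.
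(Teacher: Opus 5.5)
Your proposal follows the paper's proof. Reflexivity uses the identity witness, transitivity uses the composite witness, and antisymmetry shows that the composite $h$ witnessing $C\illlt C$ is $X\mapsto\set{X}$. The second condition of \Cref{def:illfounded-replacement}, applied to $f$ and $g$, then gives $f(X)=\set{X}$, so $C\sseq D$ and $C=D$ by maximality.

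\textbf{Antisymmetry: your ``next'' step already does all the work.} That step does not need $h(X)$ to be a singleton. Take any $W\in h(X)$ and any class $Y\in\illpart(C)$ with $Y>X$. Choosing $z\in Y$ in condition \ref{item:illlt-comp} gives $W<Y$, so $W\neq Y$. Symmetrically, $W\neq Y$ for every class $Y<X$. Since $W$ is itself a class of the linearly ordered partition $\illpart(C)$, it follows that $W=X$.

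This is exactly the paper's argument, and it gives $h(X)=\set{X}$ directly. Your boundary/gap argument for the singleton claim is therefore unnecessary, which is just as well, since as written it is not precise. You also do not need the anticipated analogue of \Cref{lem:etalt-antisymmetric}. As you say, the sets $g(Y)$ are non-empty and pairwise disjoint, so $f(X)$ is then a singleton.

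\textbf{Transitivity: a minor fix.} Strict order-preservation does not make $\bigcup_{Y\in f(X)}g(Y)$ an interval of $\illpart(E)$. A class of $\illpart(E)$ outside $\im g$ may lie between $g(Y_1)$ and $g(Y_2)$ for consecutive $Y_1<Y_2$ in $f(X)$.

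Take instead $h(X)\defined\Conv(g[f(X)])$, as in \Cref{lem:etalt-transitive}. Both conditions still hold:
\begin{itemize}
\item Condition \ref{item:illlt-comp} holds because every class in the hull lies between two classes of the union.
\item The singleton condition holds because the hull is a singleton only when the union is.
\end{itemize}
The paper's own transitivity argument glosses over the same point.
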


\begin{proof}
    We must show that $\illlt$ is reflexive, antisymmetric, and transitive.

    \vspace{0.5em}

    \textbf{Reflexivity.} Let $f \from \illpart(C) \to \cI(\illpart(C))$ be given by $f(X) = \set{X}$ for all $X \in \illpart(C)$. 
    The conditions of \Cref{def:illfounded-replacement} then all follow immediately from the fact that $C$ is a chain.
    Thus $C \illlt C$ for all maximal chains $C$.

    \vspace{0.5em}

    \textbf{Antisymmetry.} Assume that maximal chains $C,D \sseq P$ have $C \illlt D \illlt C$, and let the functions $\illwitness{f}{C}{D}$ and $\illwitness{g}{D}{C}$ witness these facts.

    Define the function $\illwitness{h}{C}{C}$ by $h(X) = g[f(X)]$ for all $X \in \illpart(C)$.
    Let $X, Y \in \illpart(C)$ satisfy $X < Y$.
    Then, for all $Z \in f(X)$, we have that $Z < Y$ due to condition \ref{item:illlt-comp} of \Cref{def:illfounded-replacement}, and thus $Z \inter Y = \emptyset$.
    
	Let $W \in g(Z)$. Again due to condition \ref{item:illlt-comp}, we find that $W < Y$.
    As a similar result holds if we instead assume that $ X> Y$, we find that $W$ is comparable to all of $\illpart(C)$ except for $X$. 
    But, as $W \in \illpart(C)$, we thus see that $W = X$, and so $\abs{h(X)} = 1$ and $\ill(h(X)) = \ill(X)$.

    This implies that $\abs{f(X)} = 1$ and $\ill(f(X)) = \ill(X)$, and so $f(X) = \set{X}$ and $X \in \illpart(D)$.
    This in turn implies that $g(X) = \set{X}$.
    We thus find that $C = D$, as required.

    \vspace{0.5em}

    \textbf{Transitivity.} Assume that distinct maximal chains $C, D, E \sseq P$ have $C \illlt D \illlt E$.
    We must prove that $C \illlt E$. 
    In particular, letting $\illwitness{f}{C}{D}$ and $\illwitness{g}{D}{E}$ witness $C \illlt D$ and $D \illlt E$ respectively, if we define $\illwitness{h}{C}{E}$ by $h(X) = g[f(X)]$, then we will show that $h$ witnesses $C \illlt E$.
    We show that $h$ satisfies both conditions of \Cref{def:illfounded-replacement}.

    First, let $X \in \illpart(C)$ and $z \in H(P)$ have $z > X$, and let $Z \in h(X)$.
    Then there is some $Y \in f(X)$ such that either $Z \in g(Y)$ or $Z < g(Y)$. But $Y \in f(X)$ implies that $z > Y$, and so $z > Z$, as required.

    Finally, if $h(X) = \set{Z}$ is a singleton, then we immediately know that there is some $Y\in\illpart(D)$ such that $f(X) = \set{Y}$ and $g(Y) = \set{Z}$.
    We know that $\ill(Z) \geq \ill(Y) \geq \ill(X)$, and so if $\ill(Z) = \ill(X)$, then we must have $Z = Y = X$, as required.
    \vspace{0.5em}

    We thus conclude that $\illlt$ is indeed a partial order, as required for \Cref{lem:illlt-partial-order}.
\end{proof}


Our next task in working towards an application of Zorn's lemma for $\illlt$ is to show that chains must have upper bounds, as follows. 

\begin{lemma}
    \label{lem:illlt-ubs}
    If $\set{C_\bb \st \bb < \aa}$ is a set of maximal chains of $P$ indexed by the set of ordinals less than a given ordinal $\aa$ (not necessarily countable), and for all $\bb < \gg < \aa$ we have $C_\bb \illlt C_\gg$, then there is a maximal chain $C\sseq P$ such that, for all $\bb < \aa$, we have that $C_\bb \illlt C$.
\end{lemma}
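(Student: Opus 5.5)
We follow the pattern of \Cref{lem:countable-etalt-chain-upper-bound} and \Cref{lem:no-uncountable-etalt-chain}. We split on the cofinality of $\aa$. If $\aa$ is a successor, the last chain $C_{\aa-1}$ is the required upper bound. The remaining case is a limit $\aa$.

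\textbf{Step 1: reduce to countable cofinality.} First we show that no strictly increasing $\illlt$-sequence can have uncountable length. Suppose $C_\bb \illlt C_{\bb+1}$ is non-trivial. Then some class $X\in\illpart(C_\bb)$ is either split (so $f(X)$ has at least two elements) or has $\ill$ raised. In either case the chain $C_{\bb+1}$ acquires a new illfounded limit point that $C_\bb$ lacked, or else some point of $C_\bb$ is removed.

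Following \Cref{lem:etalt-antisymmetric}, we would prove a persistence statement. Condition \ref{item:illlt-comp} forces every element of $f(X)$ to lie between the neighbours of $X$. From this, an illfounded limit point of $C_\bb$ should remain a limit point of every later $C_\gg$, and a point removed from $C_\bb$ should never return. Since $P$ is countable, and $H(P)$ restricted to the relevant chains is also countably supported, this would bound $\aa$'s cofinality by $\ww$. This step is delicate if $P$ is uncountable. In that case we would instead take directed unions of the witnesses over a cofinal subsequence, so that the problem reduces to an $\ww$-sequence anyway.

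\textbf{Step 2: the $\ww$-sequence.} Given $C_1\illlt C_2\illlt\cdots$ with witnesses $f_n$, we compose them via \Cref{lem:illlt-partial-order} to get $h_{m,n}\colon \illpart(C_m)\to\cI(\illpart(C_n))$. We then define the limit chain $C$ to be any maximal chain containing $\liminf_n C_n$, extended by Zorn inside the convex hull. For $X\in\illpart(C_1)$ we set $f(X)$ to be the set of classes of $C$ meeting the region $\bigcap_n \Conv_{H(P)}\bigl(\bigcup h_{1,n}(X)\bigr)$. This region is nested decreasing by condition \ref{item:illlt-comp}.

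We must then check the two conditions of \Cref{def:illfounded-replacement}. Condition \ref{item:illlt-comp} is inherited directly, since each nested region lies strictly between the bounds of $X$. For the singleton condition, suppose $f(X)=\{Y\}$. Either all $h_{1,n}(X)$ are eventually singletons, in which case $\ill$ is monotone and eventually constant and the sequence stabilises, so $Y$ is the stable value. Or $\ill$ increased at some stage, and we must show this increase survives in the limit.

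\textbf{Main obstacle.} The hard part is non-emptiness and correctness of the limit, which mirrors the forest argument of \Cref{claim:eta-no-infinite-branch}. We must show that illfounded limit points created at stage $n$ persist in $C$. We also must show that the classes of $C$ assigned to $X$ are non-empty, that is, that the region is not "lost" in the limit. The plan is to build the tree of classes under the $f_n$'s and show that an infinite branch of genuine splits would produce an infinite antichain, using the FAC property as in \Cref{lem:etalt-incomparability}. Once such branches are ruled out, each branch stabilises and its endpoint class lies in $\liminf C_n$. The persistence of $\ill$ then follows because an illfounded limit point at an end of a stable class is witnessed by a final segment already contained in the limit chain.
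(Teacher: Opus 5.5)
\textbf{The central step fails.} You plan to rule out infinite branches of genuine splits by extracting an infinite antichain, as in \Cref{lem:etalt-incomparability}. That lemma rests on $\eta$-maximality, and nothing like it holds for illfounded replacements. A class can be split into pieces that share most of their points with it, and infinite branches of splits really do occur in scattered FAC posets.

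Here is an example. Take $W=\sum_{k\in\omega}(\omega+\omega^*)$ and $P=\sum_{n\in\omega^*}Q_n$, so that $Q_{n+1}<Q_n$. Each $Q_n$ consists of a point $a_n$ incomparable to a chain $W_n\cong W+1$. This $P$ is scattered and has width $2$. Let $C_m$ use $W_n$ for $n<m$ and $a_n$ for $n\ge m$. Then $C_m\illlt C_{m+1}$: the bottom class of $C_m$ splits into two classes, separated by the new illfounded limit point $\sup W$ inside $W_m$, and every other class is fixed. The bottom classes therefore form an infinite branch of splits that never stabilises, yet $P$ has no antichain of size $3$.

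So your plan that ``each branch stabilises and its endpoint class lies in $\liminf C_n$'' cannot be carried out. With it go your arguments that the classes assigned to $X$ are non-empty and that an increase survives in the limit. Your case analysis for condition (ii) also omits exactly the case that needs this. If $X$ is split at some stage, you must show that $f(X)$ still has at least two elements in the limit; ``eventually singletons'' versus ``$\ill$ increased'' does not cover it.

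\textbf{What the paper uses instead.} It proves a weaker statement that is true, and it comes from scatteredness rather than FAC. In the forest of \Cref{def:illlt-forest}, every vertex has a leaf below it (\Cref{lem:illlt-leaves}). If some vertex had no leaf below it, every descendant would eventually split, giving a binary tree of classes $Z_{s0}<Z_{s1}$. Choosing a point from each class gives a map $2^{<\omega}\to P$ that respects the lexicographic order on prefix-incomparable strings, and hence a copy of $\QQ$.

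The limit chain is then the union of all leaves, and $f(X)$ is the interval spanned by the leaves below $X$. Condition (ii) follows because a split $X\to Y_0<Y_1$ has a leaf below each $Y_i$, so both contribute to $f(X)$. The forest is built by transfinite recursion with limit stages, so no reduction to countable cofinality is needed.

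\textbf{Your Step 1 is unsupported in any case.} The persistence claim that a removed point never returns is the analogue of \Cref{lem:etalt-antisymmetric}, which again used $\eta$-maximality. Condition \ref{item:illlt-comp} only constrains the new classes relative to points outside $X$. It does not prevent a dropped point of $X$ from reappearing in a later class descending from $X$. Moreover, when $\aa$ has uncountable cofinality there is no cofinal $\omega$-subsequence to pass to, so your fallback for uncountable $P$ cannot work.
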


The key to proving \Cref{lem:illlt-ubs} is the following definition.

\begin{definition}
    \label{def:illlt-forest}
    Given an ordinal-indexed sequence $\set{C_\bb \st \bb < \aa}$ of maximal chains of $P$ with $C_\bb \illlt C_\gg$ witnessed by $f_{\bb,\gg}$ for all $\bb < \gg< \aa$, the corresponding rooted forests $F_\bb$ are defined inductively as follows, with all leaves of $F_\bb$ being elements of $\illpart(C_\bb)$.

    First, $F_1 = \illpart(C_1)$ has an isolated root vertex for each element of $\illpart(C_1)$.
    Then, for a successor ordinal $\bb + 1$, the forest $F_{\bb + 1}$ consists of $F_\bb$ and some additional vertices: for every leaf $L$ of $F_\bb$, which we know inductively is in $\illpart(C_\bb)$, if $\abs{f_{\bb, \bb+1}(L)} = 1$, then add no children of $L$, so that $L$ is also a leaf of $F_{\bb + 1}$. 
    Otherwise, if $\abs{f_{\bb, \bb+1}(L)} \geq 2$, then add all elements of $f_{\bb, \bb+1}(L)$ to $F_{\bb + 1}$ as children of $L$.
    Finally, add all elements of $\illpart(C_{\bb+1}) \setminus f_{\bb, \bb+1}[C_\bb]$ as isolated root vertices.
    
    If $\ll < \aa$ is a limit, then define \begin{equation*}
        F_\ll \defined \p[\Big]{\bigcup_{\bb < \ll} F_\bb} \union \p[\Big]{\bigcap_{\bb< \ll}\p[\big]{\illpart(C_\ll) \setminus f_{\bb,\ll}(F_\bb)}},
    \end{equation*}
    \lhc{mbic we need to explain the above definition at least a little bit -- what is going on here? Why does it necc make any sense?}
    where all elements of $\illpart(C_\ll)$ not appearing in $f_{\bb,\ll}(F_\bb)$ terms are added as isolated root vertices.

    Finally, define the rooted forest $F = \bigcup_{\bb<\aa} F_\bb$.
    We consider the root to be the top of the tree, and refer to vertices as ``below'' their parent vertices.
\end{definition}

We now prove that the forest $F$ has many leaves, in the following sense.

\begin{lemma}
    \label{lem:illlt-leaves}
    Given a sequence $\set{C_\bb \st \bb < \aa}$ and the corresponding rooted forest $F$ as in \Cref{def:illlt-forest}, for every element $X\in F$, there is a leaf of $F$ below $X$.
\end{lemma}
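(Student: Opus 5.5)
The plan is to argue by contradiction, in the same spirit as \Cref{claim:eta-no-infinite-branch}. Suppose some $X\in F$ has no leaf of $F$ below it. Then $X$ is not a leaf, so it has children, and none of its children is a leaf either, since such a child would be a leaf below $X$. Starting from $X_1\defined X$ and repeatedly choosing a child, we obtain an infinite descending branch $X_1,X_2,X_3,\dotsc$ in $F$. Each $X_{i+1}$ is an element of $f_{\bb_i,\bb_i+1}(X_i)$ with $\abs{f_{\bb_i,\bb_i+1}(X_i)}\geq 2$, where $\bb_1<\bb_2<\dotsb$ are the stages at which the successive vertices are split. The key point is that children are only ever attached at successor stages, and only when the image has at least two elements. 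The limit stages only add isolated roots, so every edge of the branch comes from a genuine non-singleton replacement step.

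Next I would extract an infinite antichain from this branch, contradicting FAC. Fix $r<s$. Composing the witnesses (transitivity in \Cref{lem:illlt-partial-order}, or directly $f_{\bb_r,\bb_s}$), we see that $X_s$ lies in the image of $X_r$ under a replacement, and that $X_s$ is a proper piece of that image. Since $f_{\bb_r,\bb_r+1}(X_r)$ has at least two elements and $X_{r+1}$ is one of them, there is a sibling $Y\neq X_{r+1}$ in $f_{\bb_r,\bb_r+1}(X_r)$. Condition \ref{item:illlt-comp} says that anything above (or below) $X_r$ in $H(P)$ is above (or below) every element of the image. I would use this to show that elements of $X_s$ cannot be comparable to all of $X_r$, in analogy with \Cref{lem:etalt-incomparability}. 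If some $z\in X_s$ were comparable to all of $X_r$, say $z>X_r$, then condition \ref{item:illlt-comp} would give $z>X_{r+1}$. This contradicts the fact that $X_s$ is descended from $X_{r+1}$ and so is not entirely above it, given that $X_s\sseq$ the interval spanned by the image of $X_{r+1}$. Hence for each $s>r$ we can find a point of $X_s$ incomparable to some point of $X_r$.

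To get an honest antichain, I would pick points $x_i\in X_i$ carefully, or apply Ramsey's theorem to the sequence of sets. The plan is as follows. Assume, passing to a subsequence by \Cref{fact:infinite-chain}, that the chosen representatives form a chain, say $x_1<x_2<\cdots$. Then use the nesting to show that $X_{i+1}$ sits inside $\Conv$ of the image of $X_i$ while being disjoint from $X_i$'s position. This forces $x_{i+1}$ to be both above and below elements that pin it inside $X_i$'s range, contradicting maximality of $C_{\bb_i}$ as a chain. Equivalently, if $x_{i+1}$ were comparable with all of $X_i$, then $C_{\bb_i}\union\set{x_{i+1}}$ would be a chain extending the maximal chain $C_{\bb_i}$.

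The main obstacle is this last step: turning set-level incomparability between the pieces $X_r$ and $X_s$ into a single infinite antichain of points. The pieces are infinite intervals, not points, so I expect to need the observation that each $X_{s}$ with $s>r$ lies in $\Conv_{H(P)}(X_r)$ but shares no point with $C_{\bb_r}$. This gives a point of $X_s$ incomparable to a point of $X_r$ for all $s>r$. A diagonal choice combined with Ramsey's theorem (colouring pairs by comparability) should then yield an infinite antichain, contradicting that $P$ is FAC.
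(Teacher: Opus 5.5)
There is a genuine gap, and it sits exactly at the step you flag as the main obstacle. The analogue of \Cref{lem:etalt-incomparability} that your antichain extraction needs is false for illfounded replacements. Condition~\ref{item:illlt-comp} only rules out a point $z\in X_s$ lying entirely above or entirely below $X_r$. It does not rule out $z\in X_r$, and a child of a split class can genuinely retain points of its parent.

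For example, let $P=\set{a,b}\union E$ with $a<b$, $a<E$ and $b\incomp E$. Here $E=E_1\oplus E_2$ with $E_1=\bigoplus_{n\in\ww}(\ww\oplus\ww^*)$ and $E_2=E_1^*$, so the cut of $E$ is an illfounded limit point. Then $C=\set{a,b}$ is a maximal chain with a single class, and $D=\set{a}\union E$ has the two classes $\set{a}\union E_1$ and $E_2$. The map $\set{a,b}\mapsto\set{\set{a}\union E_1,E_2}$ witnesses a non-trivial $C\illlt D$, yet the child $\set{a}\union E_1$ contains $a\in C$.

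So your claim that $X_s$ shares no point with $C_{\bb_r}$ fails, and with it the set-level incomparability you build on. Even granting that for each $r<s$ some point of $X_s$ is incomparable to some point of $X_r$, the witnessing points depend on the pair $(r,s)$. Your Ramsey step then gives nothing in the chain case: $x_i<x_{i+1}$ for chosen representatives says nothing about $x_{i+1}$ being comparable with all of $X_i$. So maximality of $C_{\bb_i}$ is never contradicted.

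The missing idea is that you should not pass to a single branch at all. The lemma does not assert that $F$ has no infinite branches. Your argument also never uses that $P$ is scattered, which is precisely the hypothesis the paper's proof rests on.

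Under the contradiction hypothesis, every vertex below $X$ has at least two children, and the paper exploits this whole binary subtree. It builds $b\from 2^{(<\ww)}\to P$ by putting $b(s0)$ and $b(s1)$ into two distinct children $Z_0<Z_1$ of the class containing $b(s)$. Every point of $Z_1$ lies above $Z_0$, so condition~\ref{item:illlt-comp} places it above every descendant of $Z_0$. Every point of such a descendant is then below $Z_1$, hence below every descendant of $Z_1$. Thus $b(s)<b(t)$ whenever $s$ precedes $t$ lexicographically and is not a prefix of $t$. This yields a copy of $\QQ$ inside $P$, contradicting scatteredness.
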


\begin{proof}
    Assume for contradiction that there is a vertex $X\in F$ with no leaf below $X$.
    This means that, for every $Y\in F$ below $X$ in $F$, there exists $\bb < \aa$ such that $Y \in \illpart(C_\bb)$ and $\abs{f_{\bb,\bb+1}(Y)} \geq 2$.
    We will use the above fact to construct a copy of $\QQ$ in $P$, contradicting our assumption $P$ is scattered.

    Indeed, we may inductively produce a map $b \from 2^{(< \ww)} \to P$.
    Let $b(\emptyset) \in X$ be an arbitrary element of the vertex of $F$ with no leaf below it.
    Then, if $s \in 2^{(< \ww)}$, assume that we have defined $b(s) \in C_\bb$ for some $\bb < \aa$.
    Then there is some $Y \in \illpart(C_\bb)$ such that $b(s) \in Y$.
    We know by assumption that there is some $\gg$ with $\bb \leq \gg < \aa$ such that $Y \in C_\gg$ and $\abs{f_{\gg,\gg+1}(Y)} \geq 2$.
    Let $Z_0,Z_1 \in f_{\gg,\gg+1}(Y)$ satisfy $Z_0 < Z_1$, and take $b(s0) \in Z_0$ and $b(s1) \in Z_1$.

    Note that, by construction, we have that $b(s0) < b(s1)$ and thus, from property \ref{item:illlt-comp} of \Cref{def:illfounded-replacement}, if $s,t \in 2^{(<\ww)}$ have $s < t$ in the lexicographic ordering and $s$ is not a prefix of $t$, then $b(s) < b(t)$ in $P$.

    But it is then easy to construct a copy of $\QQ$ within $\im(b)$, contradicting $P$ being scattered.
    We may thus conclude that \Cref{lem:illlt-leaves} holds.
\end{proof}

With \Cref{lem:illlt-leaves} in hand, we may make the following definition.

\begin{definition}
    \label{def:illlt-limit}
    For a $\illlt$-chain $\set{C_\bb \st \bb < \aa}$ of maximal chains of $P$ as in \Cref{def:illlt-forest}, we define the \emph{limit} of this sequence to be the union of all leaves of the corresponding forest $F$, arbitrarily extended to a maximal chain.
\end{definition}

We may now prove \Cref{lem:illlt-ubs}, showing in particular that the limit chain defined above is indeed the upper bound we seek.

\begin{proof}[Proof of \Cref{lem:illlt-ubs}]
    Fix some arbitrary $\bb<\aa$.
    We construct a witness function $f$ to show that $C_\bb \illlt C$, where $C$ is the limit chain defined in \Cref{def:illlt-limit}.
    Indeed, define the function $f_0\from \illpart(C_\bb) \to \cP(\illpart(C))$ (where $\cP(\cdot)$ is the powerset operation) by setting, for all $X \in \illpart(C_\bb)$,
    \begin{equation*}
        f_0(X) = \set{Y \in F \st Y \text{ is a descendant of } X \text { and } Y \text{ is a leaf of } F}.
    \end{equation*}
    Then let $\illwitness{f}{C_\bb}{C}$ be defined by letting $f(X)$ be the minimal interval of $\illpart(C)$ containing all of $f_0(X)$.
    We show that all three conditions of \Cref{def:illfounded-replacement} hold.

    \begin{enumerate}
        \item This is immediate from transitivity of $\illlt$ and the fact that all elements of $f_0(X)$ are in $\illpart(C_\gg)$ for some $\bb < \gg < \aa$.
        \item Assume for contradiction that $f(X) = \set{Y}$ is a singleton with $\ill(X) = \ill(Y)$ but $X\neq Y$.
        There must therefore be some $\gg$ with $\bb < \gg < \aa$ such that $X \in C_\bb$ and $\abs{f_\bb(X)} \geq 2$; say that we have $Y_0,Y_1\in f_\bb(X)$ with $Y_0 < Y_1$.
        But then \Cref{lem:illlt-leaves} tells us that there is a leaf $L_0$ of $F$ below $Y_0$ and a leaf $L_1$ below $Y_1$.
        But then $L_0$ and $L_1$ are both in $f_0(X) \sseq f(X)$, contradicting our assumption that $\abs{f(X)} = 1$, as required.
    \end{enumerate}

    We therefore know that all $\illlt$-chains of chains have upper bounds, as required.
\end{proof}

Finally, we can put everything together to prove the goal of this section, \Cref{prop:quasifounded}.

\begin{proof}[Proof of \Cref{prop:quasifounded}]
    Let $\cQ$ be the set of all maximal chains of $P$ ordered by $\illlt$.
    We know from \Cref{lem:illlt-partial-order} that the set $\cQ$ is a poset, and we know from \Cref{lem:illlt-ubs} that every increasing chain in $\cQ$ has an upper bound.
    Thus, by Zorn's lemma, there is a maximal element $C$ of $\cQ$.
    In other words, $C$ is a chain which admits no non-trivial illfounded replacement: if $D\sseq P$ is maximal with $C \illlt D$, then $C = D$, and so $C$ is an illfounded maximal chain.

    We claim that $C$ has the properties we seek.
    Indeed, let $K$ be the set of illfounded limit points of $C$, and assume for contradiction that there is some initial segment $L\sseq K$ such that the interval $(L, K\setminus L)_P$ is not quasifounded: there are $x, y \in (L, K\setminus L)_P$ such that the interval $[x,y]_P$ contains an illfounded limit point $A$. Let $E\sseq(L, K\setminus L)_P$ be a chain witnessing the illfounded limit $A$.
    Let $X \defined (L, K\setminus L)_C$, and let $D$ be the chain $(C \setminus X) \union E$ arbitrarily extended to a maximal chain.
    We claim that $C \illlt D$, contradicting the illfounded-maximality of $C$.

    Indeed, note that $X \in \illpart(C)$, and let $\illwitness{f}{C}{D}$ be defined by $f(Y) = \set{Y}$ for all $Y \in \illpart(C) \setminus\set{X}$ and $f(X) = \illpart(E)$.
    As $E$ witnesses an illfounded limit point in its interior, we know that $\abs{\illpart(E)} \geq 2$, as required for the illfounded replacement to be non-trivial.
    The first two conditions of \Cref{def:illfounded-replacement} are immediate in this context, as required.

    This contradiction implies that the illfounded maximal chain has the properties that we desire.
\end{proof}

With the previous results of this section in hand, we can now give the result stating how we can use illfounded maximal chains to construct maximal tubes.

\begin{lemma}
	\label{lem:quasifounded-suffices}
	Let $C\sseq P$ be an illfounded maximal chain, and let $K \sseq N(C)$ be the set of illfounded limit points of $C$.
	For all initial segments $L \sseq K$, let $P_L \defined (L, K\setminus L)_P$, and $C_L \defined C\inter P_L$, and assume that $T_L \sseq P_L$ is a maximal tube satisfying the following property.
	If $X \defined \sup C_L \in H(C_L)$ is an illfounded limit, then there is a chain $D_L \sseq T_L$ with $Y \defined \sup D_L \in H(D_L)$ an illfounded limit with $X\bicomp Y$, and similarly for $\inf C_L$.
	Then $\bigcup_L T_L$ is a maximal tube in $P$.
\end{lemma}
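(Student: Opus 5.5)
The plan is to treat the pieces $P_L$ exactly as the sets $A_r$ were treated in the proof of \Cref{cor:scattered-reduction}. First I will check that the pieces are mutually ordered, which gives the tube property. Then I will check maximality separately for points inside some $P_L$ and for points outside every $P_L$. Write $T\defined\bigcup_L T_L$.

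\textbf{Step 1: ordering of the pieces, and the tube property.} Let $L\subsetneq L'$ be initial segments of $K$, and fix $k\in L'\setminus L$. Take $x\in P_L$ and $z\in P_{L'}$. By definition $x<k<z$ in the domination order of $H(P)$. Unpacking \Cref{def:domination}, $x$ lies below a final (or initial) segment of a chain witnessing $k$, and $z$ lies above one. Choosing a common element $c$ of these two segments gives $x<c<z$, so $x<z$. Hence the sets $P_L$ are pairwise disjoint and linearly ordered by inclusion of $L$. Now take $x\in T_L$. It is comparable to every element of $T_{L'}$ for $L'\neq L$, and it is incomparable to only finitely many elements of $T_L$, because $T_L$ is a tube. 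So $T$ is a tube.

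\textbf{Step 2: maximality for points inside a piece.} If $y\in P_L\setminus T_L$ for some $L$, then $y$ is incomparable to infinitely many elements of $T_L\sseq T$, by maximality of $T_L$ in $P_L$.

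\textbf{Step 3: maximality for points outside every piece.} Suppose $y\notin P_L$ for all $L$. Set $L_y\defined\set{k\in K\st k<y}$; this is an initial segment of $K$. Since $y\notin P_{L_y}$, there is some $k\in K\setminus L_y$ with $y\not<k$. As also $y\not>k$, we get $y\incomp k$ in $H(P)$.

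Say $k\in N_+(P)$; the case $k\in N_-(P)$ is dual. Let $L'\defined\set{k'\in K\st k'<k}$. Because $C$ is a maximal chain and $k=\min(K\setminus L')$, I expect $k=\sup C_{L'}\in H(C_{L'})$, and $k$ is illfounded. The hypothesis then supplies a chain $D\defined D_{L'}\sseq T_{L'}$ whose supremum $Y$ is illfounded with $k\bicomp Y$.

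I claim $y$ is incomparable to infinitely many elements of $D$. Suppose instead that $y$ is comparable to all of some final segment $D'$ of $D$ (for an infinite increasing chain, "all but finitely many" yields such a segment). There are two cases.
\begin{itemize}
\item If $y<d$ for some $d\in D'$, then $y$ lies below a final segment of $D$. Bicomparability gives $c\in C_{L'}$ with $c>d$, so $y$ lies below a final segment of $C_{L'}$, that is, $y<k$. This is a contradiction.
\item Otherwise $y>D'$. Bicomparability again gives, for each $c\in C_{L'}$, some $d\in D'$ with $d>c$, so $y>c$ for all $c\in C_{L'}$. Hence $y\geq k$, and indeed $y>k$, which is again a contradiction.
\end{itemize}
So $y$ is incomparable to infinitely many elements of $T_{L'}\sseq T$, and $T$ is a maximal tube.

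\textbf{Main obstacle.} The delicate point is the identification in Step 3 of the element $k$ incomparable to $y$ with $\sup C_{L'}$ or $\inf C_{L'}$ for a suitable cut $L'$. This requires $L'$ to have no maximum and $C_{L'}$ to reach all the way up to $k$, which uses the maximality of $C$ and the fact that points of $K$ are nonprincipal limits of $C$. I would also need to handle the edge cases $k\in\set{\bot,\top}$, or $C_{L'}$ having a maximum, where the incomparability must instead be obtained from the neighbouring cut.
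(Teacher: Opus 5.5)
Your Steps 1 and 2 are correct. The two-case bicomparability argument at the end of Step 3 is also correct, and it is essentially the argument the paper uses for a pair of adjacent pieces. Going straight to an illfounded point $k$ with $y\incomp k$ is a more direct route than the paper's: the paper first shows the point meets at most three pieces incomparably, then treats two adjacent pieces, then places the point in a unique piece.

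However, the step you flag is a genuine gap, and for an arbitrary choice of $k$ the claim is false. Nothing stops $k$ from being a limit of other points of $K$. For instance, $K$ can have order type $\omega+1$, say $k_1<k_2<\dotsb<k_\omega$. This already happens in the vacillating chain $\bigoplus_{n\in\omega}Q_n$ with each $Q_n=\omega\oplus\omega^*\oplus\omega\oplus\omega^*\oplus\dotsb$. If $k=k_\omega$, then $L'=\set{k_i\st i<\omega}$ has no maximum, and every $c\in C$ below $k$ lies below some $k_i$. Hence $C_{L'}=\emptyset$ and the hypothesis gives you no $D$.

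So the condition you need is the reverse of the one you wrote: $L'$ must have a maximum or be empty. The edge cases you list are not the problem. Indeed $k\notin\set{\bot,\top}$, since $y$ is comparable to both. And a nonempty $C_{L'}$ is a final segment of $C\inter\below{k}$, so it has no maximum.

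To close the gap you must choose $k$ with care, and you need two facts. First, $K$ is closed under suprema and infima in $H(C)$, since a limit of illfounded limit points is again illfounded. Second, if $y>k_i$ for an increasing sequence in $K$, then $y>\sup_i k_i$; dually, if $y<k_i$ for a decreasing sequence, then $y<\inf_i k_i$.

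These facts show that $J_y\defined\set{k\in K\st y\incomp k}$ is a closed interval $[m,M]$ of $K$. Here $m=\min(K\setminus L_y)$ either has an immediate predecessor in $K$ or equals $\min K$. Likewise $M$ either has an immediate successor in $K$ or equals $\max K$.

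With this choice there are three cases.
\begin{itemize}
\item If $m\in N_+(C)$, your argument works at $m$ with $L'=L_y$.
\item If $M\in N_-(C)$, the dual argument works at $M$.
\item The remaining case, $m\in N_-(C)$ and $M\in N_+(C)$, is not covered by duality; your phrase ``the case $k\in N_-(P)$ is dual'' hides it.
\end{itemize}
In the third case $m$ may be a limit of $K$ from above. Then $P_{L_y\cup\set{m}}$ is empty, and the piece below $m$ has supremum at the lower side of the cut at $m$, not at $m$. Symmetrically, $M$ may be a limit of $K$ from below. So the hypothesis yields no chain $D$ bicomparable with $m$ or with $M$.

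You then need a point of $K\inter[m,M]$ that either lies in $N_+(C)$ with an immediate predecessor in $K$, or lies in $N_-(C)$ with an immediate successor in $K$. Such a point exists because $K\sseq H(C)$ is scattered (\Cref{lem:h-scattered}). Otherwise every nonempty open interval of $K\inter[m,M]$ would be infinite, and you could build a copy of $\QQ$ inside it. With $k$ chosen this way, your two-case argument finishes the proof.
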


\begin{proof}
	Let $\cS$ be the set of initial segments of $K$ and note that $\cS$ naturally inherits the structure of a linear order from $C$.
	It is immediate that, as $P_L$ and $P_M$ are totally comparable for all distinct $L,M\in\cS$, the set $T$ is a tube in $P$, so it remains to show that it is maximal.
	Indeed, assume for contradiction that there is some $x\in P\setminus T$ incomparable to only finitely many elements of $T$.

	We must have $x\notin P_L$ for all $L\in \cS$, as all $x\in P_L \setminus T_L$ are incomparable with infinitely many elements of $T_L$ by definition of $T_L$.
	Note that $x$ can be incomparable to elements from at most three different sets $P_L$: if $x$ was incomparable to elements from four different sets $P_L$, then there would be $L,M\in\cS$ with $L < M$ such that $x$ was entirely incomparable to both $P_L$ and $P_M$ with $L\covered M$ in $\cS$.
	But either the bottom of $C_M$ or the top of $C_L$ (or both) must be an illfounded limit, and hence at least one of these two sets is infinite.

	Assume that there are $L,M\in \cS$ with $L\covered M$ and $x$ incomparable to a non-zero number of elements from both $T_L$ and $T_M$. 
	Assume without loss of generality that there is an illfounded limit $X$ at the top of $C_L$ (the other case, that there is an illfounded limit at the bottom of $C_M$, is entirely similar).
    Note that by assumption there is thus an illfounded limit $Y$ at the top of $D_L$ with $X\bicomp Y$.

	We cannot have $x < y$ for any $y \in P_L$, as otherwise $x < P_M$ by definition of $P_L$ and $P_M$.
	Take a cofinal sequence $z_1 < z_2 < \dotsb$ in $C_L$; we know from bicomparability that, for all $k\geq 1$, $z_k$ is below a final segment $F$ of $D_L$.
    Thus if $x \incomp z_k$, then $x$ must be incomparable with the entire final segment $F$ of $D_L$, contradicting the assumption that $x$ has only finite incomparability with $T$.
    Therefore $x > z_k$ for all $k$.
	But due to bicomparability of $D_L$ and $C_L$, this thus implies that $x > D_L$.
	As all elements of $T_L$ are incomparable to only finitely many elements of $D_L$, this implies that $x > T_L$, a contradiction.

	Thus there is a unique $L$ such that $x$ is incomparable to some elements of $T_L$.
	But then, due to the bicomparability relations between $C_M$ and $D_M$, we have $x > C_M$ for all $M < L$ and $x < C_M$ for all $M > L$, so $x\in P_L$, a contradiction.
	Thus $T$ is indeed a maximal tube, as required.
\end{proof}

\section{Finding an alternating-maximal chain}
\label{sec:alternating}

The goal of this section is to prove the existence of a particular type of chain in our poset $P$, which we will call \emph{alternating-maximal}.
This will be used as a stepping-stone to get to the more complex requirements of later sections, and we will again use the method of replacements.
While the definition of an alternating-maximal chain might at first seem technical, the idea behind it is simpler, and we explain this idea now.

\begin{definition}
    We say that a poset $P$ is \emph{\owf} if it contains no chain order-isomorphic to $\ww \oplus \ww^*$, i.e.\ there is no decreasing chain above an increasing one.
\end{definition}

It will be important in the sequel that in our alternating-wellfounded chain $C$, if $X$ is an interval of $C$ and $X$ is \owf, then the interval $\Conv(X)$ (as defined in \eqref{eq:def-conv}) of $P$ is also \owf.
To find such a chain, we seek a chain which admits no \emph{alternating replacement}, which can be thought of as removing some \owf\ part of the chain $C$, and replacing it with another chain which is not \owf.
The substance of this section is primarily in formalising the above idea into an actual definition, and then setting up a framework in which we can find such a chain.
Due to the previous section, we may restrict our attention to quasifounded posets (see \Cref{def:quasifounded}).

We first define the equivalence relation $\altequiv_C$ on a chain $C$.
Intuitively, $x\altequiv_C y$ can be thought of as meaning that ``$x$ and $y$ are in the same \owf\ part of $C$''.
However, this intuition does not immediately translate to a definition: consider the poset (in fact, total order) comprising of the following linear sum.
\begin{align}
    \label{eq:altequiv-example}
    Q\defined \ww^* \oplus \ww \oplus \set{a} \oplus \ww^* \oplus \ww.    
\end{align}

This poset certainly consists of two \owf\ sections.
The difficulty, however, is to which of these two \owf\ parts the point $a$ is assigned; it could go in either, and cannot go in both if we want to construct an equivalence relation. 
For this reason, we instead work first with only the nonprincipal elements of the chain.

\begin{definition}
    \label{def:alternating-equivalent}
    Given a chain $C\sseq P$, points $X, Y \in N(C)$ with $X\leq Y$ are said to be \emph{alternating equivalent}, written $X\altequiv_C Y$ if the following property holds for $X,Y$.
	\begin{equation}
		\label{eq:altequiv-def}
		\text{There do not exist } \; U\in N_+(C),\, V\in N_-(C)\; \text{ such that } \; X\leq U \leq V \leq Y.
	\end{equation}

	We can then extend this relation to $H(C)$ by setting $x\altequiv_C x$ for all $x \in H(C)$ and, for all $x,y\in H(C)$ with $x<y$ setting $x\altequiv_C y$ if there are $X,Y \in N(C)$ with $X \leq x \leq y \leq Y$ and $X\altequiv_C Y$.
	
	Finally, we also define $\altequiv_C$ to be symmetric, setting $y\altequiv_C x$ whenever $x\altequiv_C y$.
\end{definition}

Given that we have called our relation an `equivalence', the following lemma should come as no surprise.

\begin{lemma}
    \label{lem:altequiv-equiv}
    For any chain $C\sseq P$, the relation $\altequiv_C$ is an equivalence relation on $H(C)$.
\end{lemma}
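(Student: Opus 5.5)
The plan is to first pin down the relation on $N(C)$, where \eqref{eq:altequiv-def} makes $\altequiv_C$ an interval-type condition in the linear order $H(C)$ (which is a complete chain by \Cref{lem:h-chain-is-chain} and \Cref{lem:h-complete}). Reflexivity and symmetry hold by definition, so transitivity is the only real content.

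For $X,Y,Z\in N(C)$, I reduce to the case $X\le Y\le Z$, using that $H(C)$ is linear. For the other orderings, for instance $Y\le X\le Z$ with $Y\altequiv_C X$ and $Y\altequiv_C Z$, the pair $(X,Z)$ lies inside the interval $[Y,Z]$. Property \eqref{eq:altequiv-def} is clearly inherited by subintervals, since a witness pair $U\le V$ inside $[X,Z]$ is also a witness inside $[Y,Z]$. In the main case $X\le Y\le Z$, suppose there are $U\in N_+(C)$ and $V\in N_-(C)$ with $X\le U\le V\le Z$. If $V\le Y$ then the pair witnesses that $X\not\altequiv_C Y$. If $U\ge Y$ then it witnesses that $Y\not\altequiv_C Z$.

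This leaves the case $U<Y<V$, which I expect to be the main obstacle. Here I split on the type of $Y$. If $Y\in N_-(C)$, then $U\le Y$ with $U\in N_+(C)$ and $Y\in N_-(C)$, so the pair $(U,Y)$ violates $X\altequiv_C Y$. If $Y\in N_+(C)$, then the pair $(Y,V)$ violates $Y\altequiv_C Z$. Since $N(C)=N_+(C)\cup N_-(C)$, with the conventions $\top\in N_+$ and $\bot\in N_-$, these cases are exhaustive. I need to check the convention at endpoints: when $Y=\top$ or $Y=\bot$ the same argument applies, because the definition uses non-strict inequalities.

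For the extension to all of $H(C)$, observe that for $x<y$ we have $x\altequiv_C y$ exactly when some single $\altequiv_C$-class $\mathcal K$ of $N(C)$ has an element below or equal to $x$ and an element above or equal to $y$. Given a pair $X\le Y$ in the definition, $X$ and $Y$ are equivalent, so they lie in one class. Each class of $N(C)$ is convex in $N(C)$, again by the subinterval inheritance. So the relation on $H(C)$ is \[x\altequiv_C y \iff x,y\in \Conv_{H(C)}(\mathcal K)\ \text{for some class } \mathcal K,\] or $x=y$. Transitivity then reduces to the following claim: if $\Conv(\mathcal K)$ and $\Conv(\mathcal K')$ share a point $y$, then $\mathcal K=\mathcal K'$.

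To prove the claim, suppose $\mathcal K<\mathcal K'$, and take $X_1\le y\le X_2$ in $\mathcal K$ and $X_1'\le y\le X_2'$ in $\mathcal K'$. Then $X_1'\le X_2$, and because $\mathcal K<\mathcal K'$ this forces $X_1'\in\mathcal K$ by convexity of $\mathcal K$ inside the interval $[X_1,X_2]$. That contradicts $\mathcal K\ne\mathcal K'$. The one thing to verify carefully is that convexity in $N(C)$ is the right notion here: $X_1'$ lies between $X_1$ and $X_2$ in $H(C)$, both of which are in $\mathcal K$, so $X_1'\altequiv_C X_1$ by subinterval inheritance. Once the claim holds, the relation on $H(C)$ is the equivalence whose classes are the sets $\Conv(\mathcal K)$ together with singletons for the remaining points, and this completes the proof.
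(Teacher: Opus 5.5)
Your proof is correct, and its combinatorial core matches the paper's. A witness pair $U\le V$ straddling the middle point is ruled out by splitting on whether the relevant middle nonprincipal point lies in $N_+(C)$ or $N_-(C)$. The case where the middle point is not between the other two follows because property \eqref{eq:altequiv-def} passes to subintervals.

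What differs is how the argument is organised. The paper works in one pass on $H(C)$. From $x\altequiv_C y\altequiv_C z$ with $x<y<z$, it takes anchors $X\le x$, $Y'\le y\le Y$, $z\le Z$ in $N(C)$ and shows directly that $[X,Z]$ contains no witness pair. There the anchor $Y$ plays the role of your middle point, and some subcases are left as ``handled similarly''.

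You instead prove transitivity on $N(C)$ first. There the middle point is its own anchor, and the case split is short and visibly exhaustive. You then lift to $H(C)$ by showing that the classes of $N(C)$ are convex, and that for $x<y$ we have $x\altequiv_C y$ if and only if $x,y\in\Conv_{H(C)}(\mathcal K)$ for a single class $\mathcal K$. This costs an extra step but avoids juggling overlapping anchor intervals. It also gives an explicit description of the classes on $H(C)$: convex hulls of classes of $N(C)$, together with singletons. That description is what underlies the later treatment of alternation classes as pairwise disjoint intervals.

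One small simplification in your last step: disjoint convex subsets of a linear order are comparable as sets. So $\mathcal K<\mathcal K'$ already gives $X_2<X_1'$, which contradicts $X_1'\le y\le X_2$ directly. The appeal to convexity of $\mathcal K$ there is therefore unnecessary, though not wrong.
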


\begin{proof}
	First note that \Cref{def:alternating-equivalent} gives us symmetry and reflexivity by definition, so it remains to prove that $\altequiv_C$ is transitive.
    Assume that points $x,y,z\in H(C)$ have $x\altequiv_C y\altequiv_C z$; we must show that $x\altequiv_C z$.
    We split into two cases, depending on whether $y$ is between $x$ and $z$ or not.

	\vspace{0.5em}

    In the first case, in which $y$ is between $x$ and $z$, we may assume by symmetry that $x<y<z$.
	There are thus elements $X,Y,Y',Z \in N(C)$ with $X\leq x$, $Y' \leq y \leq Y$ and $z \leq Z$ such that property \eqref{eq:altequiv-def} holds for $X,Y$ and it also holds for $Y',Z$.

	We claim that property \eqref{eq:altequiv-def} also holds for $X,Z$.
	To this end, assume for contradiction that there are $U\in N_+(C)$ and $V\in N_-(C)$ such that $X \leq U \leq V \leq Z$.
	If $V > Y$ and $Y\in N_+(C)$, then we have that $Y' \leq Y \leq V \leq Z$, contradicting property \eqref{eq:altequiv-def} for $Y',Z$, so we must have $V \leq Y$.
	But then $X\leq U \leq V \leq Y$, contradicting property \eqref{eq:altequiv-def} for $X,Y$.
    The other cases are handled similarly. \lhc{Actually deal with these other cases (Def 5.2 in the verification)}
	This implies that $x\altequiv_C z$, as required.

	\vspace{0.5em}

    In the second case, in which $y$ is not between $x$ and $z$, we may assume by symmetry that $x<z<y$.
	But then there are $X,Y \in N(C)$ with $X \leq x < y \leq Y$ satisfying property \eqref{eq:altequiv-def}.
	Then, as this implies that $X \leq x < z < Y$, we see that $x \altequiv_C z$ holds too.

    This completes the proof that $\altequiv_C$ is an equivalence relation on $C$.
\end{proof}

Now that we have proved \Cref{lem:altequiv-equiv}, we can continue with our definitions.

\begin{definition}
    We define the set of \emph{alternation classes of $C$}, $\cR(C)$, to be those $\altequiv_C$-equivalence classes that are infinite, that is
    \begin{align*}
        \cR(C)\defined \set{X\in C\, /\altequiv_C \st X\text{ is infinite}}.
    \end{align*}
    We will write $C\altsimeq D$ if $\cR(C) = \cR(D)$; this is clearly an equivalence relation.
\end{definition}

Note, for example, that if $C$ is finite, then $\cR(C)$ is empty.
For another example, in the case of $Q$ as defined in \eqref{eq:altequiv-example}, there are three $\altequiv_Q$-equivalence classes, one of which consists only of $\set{a}$.
$\cR(Q)$ thus has two elements.

Moreover, elements of $\cR(C)$ are pairwise disjoint intervals of $C$, and so $\cR(C)$ inherits a linear order from $C$, and so is itself a linearly ordered set under $\leq$.
It thus makes sense, for example, to talk about the set $\cI(\cR(C))$ of intervals of $\cR(C)$.


We can now, finally, define the notion of replacement that we will use in this section.
We will primarily consider these replacements taking place on quasifounded posets, apart from when we prove that it suffices to only consider quasifounded posets.

\begin{definition}[Alternating replacements]
    \label{def:alternating-replacement}
    If $P$ is a scattered poset, and $C,C'\sseq P$ are two chains, then $C'$ is an \emph{alternating replacement} of $C$ if there is a function $f\from \cR(C)\to \cI(\cR(C'))$ satisfying the following properties.
    \begin{enumerate}
        \item \label{item:alt-def-important} If $X\in \cR(C)$, $Y\in \cR(C')$ and $z\in H(P)$ have $Y\in f(X)$ and $z > X$, then $z > Y$. Likewise, if $z < X$, then $z < Y$.
        \item \label{item:alt-def-singleton} If we have that $\abs{f(X)} = 1$ for all $X\in \cR(C)$ and moreover $\bigcup_{X\in \cR(C)} f(X) = \cR(C')$, then $f(X) = \set{X}$ for all $X$.
    \end{enumerate}
    If both these conditions hold, then we write $C\altlt C'$, and say that this replacement is \emph{witnessed} by $f$.
    If \cref{item:alt-def-singleton} applies and $\bigcup_{X\in \cR(C)} f(X) = \cR(C')$ and $f(X)=\set{X}$ for all $X$, then we say that the replacement is \emph{trivial}.
    We say that $C$ is an \emph{alternating maximal} chain if we have that $C\altlt D$ implies that $C\altsimeq D$, i.e.\ $C$ admits no non-trivial alternating replacement.
\end{definition}

We remark that \cref{item:alt-def-singleton} of the above definition is a global condition with a slightly strange appearance, saying that if $f$ hits all of $\cR(C')$ and the image of \emph{every} element of $\cR(C)$ under $f$ is a singleton (intuitively, $\cR(C')$ is ``not any bigger'' than $\cR(C)$), then in fact $f$ doesn't move anything at all.

Before proceeding to show that $\altlt$ is indeed an ordering and that we can apply Zorn's lemma, we deduce a further useful property of this relation from the definition.

\begin{lemma}
	\label{lem:alt-order-preserving}
	$f$ is order-preserving. That is, if $X,Y \in \cR(C)$ have $X<Y$, then for any $U\in f(X)$ and $V\in f(Y)$, we have $U<V$. In particular, $f(X) \inter f(Y) = \emptyset$.
\end{lemma}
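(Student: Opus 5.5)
The plan is to mirror the proof of the analogous lemma for illfounded replacements, using only \cref{item:alt-def-important} of \Cref{def:alternating-replacement}. Fix $X,Y\in\cR(C)$ with $X<Y$, and take $U\in f(X)$ and $V\in f(Y)$. The goal is to show that every element of $U$ lies below every element of $V$. The argument transfers comparisons in two stages, first from $X$ to $U$, then from $Y$ to $V$, and each stage is one application of \cref{item:alt-def-important}.

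\textbf{First transfer.} The sets $X$ and $Y$ are disjoint $\altequiv_C$-classes, hence disjoint intervals of $C$ with $X<Y$. So every $y\in Y$ satisfies $y>X$. Regard $y$ as an element of $H(P)$: it is principal, since it lies in $C\sseq P$ (or it is nonprincipal in $H(C)\sseq H(P)$ via \Cref{obs:embedding}). Applying \cref{item:alt-def-important} to $X$, $U\in f(X)$ and $z\defined y$ gives $y>U$. Since $y\in Y$ was arbitrary, we get $Y>U$ pointwise.

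\textbf{Second transfer.} Now fix any $u\in U$. We have just shown $u<Y$. Applying \cref{item:alt-def-important} to $Y$, $V\in f(Y)$ and $z\defined u$ gives $u<V$. Hence $U<V$. Disjointness then follows at once: any common element $W\in f(X)\inter f(Y)$ would satisfy $W<W$, which is impossible because $W$ is a non-empty alternation class.

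\textbf{Main obstacle.} The only delicate point is bookkeeping about where the elements of the classes live. Elements of $\cR(C)$ are $\altequiv_C$-classes in $H(C)$, so they may contain nonprincipal points. We need such points to be legitimate values of $z\in H(P)$, and we need the strict order $X<Y$ in $H(C)$ to persist in $H(P)$. I would handle this using the embedding of \Cref{obs:embedding}, which applies because $C$ is saturated (or maximal). That embedding preserves domination, so comparisons made in $H(C)$ remain valid in $H(P)$. With this in place, the two applications above go through verbatim.
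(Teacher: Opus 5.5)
Your proof is correct and is essentially the paper's own argument. Condition (i) of the definition of alternating replacements, applied with $z = y \in Y$, gives $Y > U$; applying it again with $z = u \in U$ gives $u < V$, and disjointness follows. Your extra bookkeeping, viewing elements of the alternation classes inside $H(P)$ via \Cref{obs:embedding}, is something the paper leaves implicit, and it is harmless.
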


\begin{proof}
	For all $y\in Y$, we know from item \ref{item:alt-def-important} of \Cref{def:alternating-replacement} that $y > U$.
	This implies that, for all $u\in U$ we have $Y > u$.
	But, applying the definition of $\altlt$ again, we find that $V > u$ for all $u\in U$, and thus $V > U$, as required.
	It then immediately follows that $f(X) \inter f(Y) = \emptyset$.
\end{proof}

We now prove, much like in previous sections, that $\altlt$ is indeed a partial order on the set of maximal chains.

\begin{lemma}
	\label{lem:altlt-poset}
	Let $\cU$ be the set of maximal chains of a scattered quasifounded FAC poset $P$. Then $\altlt$ is a preorder on $\cU$ and a partial order on $\cU \,/\altsimeq$.
\end{lemma}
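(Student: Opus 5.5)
We must show three things: $\altlt$ is reflexive, it is transitive, and whenever $C\altlt D\altlt C$ we have $C\altsimeq D$. Together these give a preorder on $\cU$ which descends to a partial order on $\cU\,/\altsimeq$. The argument will mirror the proof of \Cref{lem:illlt-partial-order}, with \Cref{lem:alt-order-preserving} replacing the corresponding order-preservation statement.

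\textbf{Reflexivity.} The identity map $f(X)=\set{X}$ works. Condition \ref{item:alt-def-important} of \Cref{def:alternating-replacement} is trivial. Condition \ref{item:alt-def-singleton} holds because $f$ is already the identity.

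\textbf{Transitivity.} Let $f$ witness $C\altlt D$ and $g$ witness $D\altlt E$. I would define $h(X)\defined\Conv_{\cR(E)}\p{\bigcup_{Y\in f(X)}g(Y)}$. This is an interval of $\cR(E)$, and it is nonempty, since $f(X)$ and $g(Y)$ are nonempty intervals.

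For condition \ref{item:alt-def-important}, take $Z\in h(X)$. Then $Z$ lies between two elements $g(Y_1)\ni Z_1$ and $g(Y_2)\ni Z_2$ with $Y_i\in f(X)$. If $z>X$, then $z>Y_i$ and hence $z>Z_i$. Since $Z\le Z_2$ in the linear order on $\cR(E)$, we get $z>Z$; the case $z<X$ is symmetric.

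For condition \ref{item:alt-def-singleton}, suppose every $h(X)$ is a singleton and the $h(X)$ cover $\cR(E)$. Then each $f(X)$ is a singleton $\set{Y_X}$ with $g(Y_X)=h(X)$ a singleton. The main obstacle is to show that $f$ and $g$ then also satisfy the hypotheses of condition \ref{item:alt-def-singleton}. In particular we need that $f$ covers $\cR(D)$ and that $g$ sends every element of $\cR(D)$, not only those in $\im f$, to a singleton. We cannot then conclude $f(X)=\set{X}$ and $g(Y)=\set{Y}$ directly.

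I would handle this as follows. Suppose some $Y'\in\cR(D)$ is not of the form $Y_X$. By \Cref{lem:alt-order-preserving}, $g$ is strictly order-preserving, so $g(Y')$ is disjoint from every $g(Y_X)=h(X)$. This contradicts surjectivity of $h$, so $f$ is onto $\cR(D)$. Every $g(Y)$ is then a singleton, and $g$ is onto $\cR(E)$. Condition \ref{item:alt-def-singleton} for $g$ gives $g=\mathrm{id}$, so $\cR(D)=\cR(E)$. Condition \ref{item:alt-def-singleton} for $f$ gives $f=\mathrm{id}$. Therefore $h=\mathrm{id}$.

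\textbf{Antisymmetry modulo $\altsimeq$.} Let $f$ witness $C\altlt D$ and $g$ witness $D\altlt C$, and form $h$ as above, so that $h$ witnesses $C\altlt C$. Take $X\in\cR(C)$ and $W\in h(X)$. For any $X'\in\cR(C)$ with $X'\neq X$, say $X'>X$, every $x'\in X'$ satisfies $x'>X$, so condition \ref{item:alt-def-important} propagated through $f$ and $g$ gives $x'>W$; the case $X'<X$ is symmetric. So $W$ is an element of $\cR(C)$ lying strictly between its neighbours on both sides, which forces $W=X$. Hence $h=\mathrm{id}$.

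Now argue as in the transitivity step. Since $h$ is onto $\cR(C)$ and $f$ and $g$ are strictly order-preserving by \Cref{lem:alt-order-preserving}, every $f(X)$ is a singleton, $f$ is onto $\cR(D)$, and likewise for $g$. Condition \ref{item:alt-def-singleton} then gives $f=\mathrm{id}$, that is, $\cR(C)=\cR(D)$, which is $C\altsimeq D$.

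Finally, $\altlt$ is well defined on $\altsimeq$-classes, because the definition of $\altlt$ depends only on $\cR(\cdot)$. This gives the claimed partial order on $\cU\,/\altsimeq$.
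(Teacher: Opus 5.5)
Your proof is correct and follows essentially the same route as the paper. Both use the identity for reflexivity and composition for transitivity. For antisymmetry, both show the composite $C\to D\to C$ is the identity via the self-comparison contradiction from condition~(i), then use disjointness of images (\Cref{lem:alt-order-preserving}) to get surjectivity of $f$ before invoking condition~(ii). Your use of $\Conv_{\cR(E)}$ in defining the composite and your explicit check of condition~(ii) for it fill in details the paper dismisses as straightforward; note that the paper's $g[f(X)]$ need not itself be an interval.
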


\begin{proof}
	We must show that $\altlt$ is reflexive and transitive, and that it is antisymmetric up to $\altsimeq$.

	\textbf{Reflexivity.} This is immediate, as $C\altlt C$ is witnessed by the function sending $X$ to $\set{X}$ for all $X\in \cR(C)$.

	\textbf{Transitivity.} This property is also straightforward to prove: if the replacement $C\altlt D$ is witnessed by $f$, and $D\altlt E$ by $g$, then the function $\altwitness{h}{C}{E}$ given by $h(X)\defined g[f(X)]$ can be seen to satisfy all the required conditions to witness $C\altlt E$.

	\textbf{Antisymmetry.} Assume that $C\altlt D$ is witnessed by $f$, and $D\altlt C$ is witnessed by $g$.
    Define the function $h\from \cR(C)\to\cI(\cR(C))$ by $h(X)\defined g[f(X)]$.

    We first claim that $h$ must send $X$ to $\set{X}$ for every $X\in\cR(C)$.
    Indeed, assume for contradiction that for some $X$ there was $Y\in h(X)$ with $Y>X$ (the case of $Y<X$ is entirely similar).
    It then immediately follows from \cref{item:alt-def-important} of \Cref{def:alternating-replacement} that $Y > Y$, a contradiction.

    We now seek to apply \cref{item:alt-def-singleton} of \Cref{def:alternating-replacement}, and to this end we show that $\bigcup_{X\in\cR(C)} f(X) = \cR(D)$.
    Assume for contradiction that there is some $Y\in \cR(D)$ not in the image of $f$.
    Then we know that $g(Y)$ contains some element $Z\in\cR(C)$, but we also have that $g[f(Z)] = \set{Z}$, and this contradicts the disjointedness of images of elements of $\cR(D)$ under $g$ from \Cref{lem:alt-order-preserving}.
    Thus we may indeed apply \cref{item:alt-def-singleton} of \Cref{def:alternating-replacement} and deduce that $f(X) = \set{X}$ for all $X\in \cR(C)$, and that $f$ hits every element of $\cR(D)$.
    We may therefore conclude that $\cR(C)=\cR(D)$, and thus $\altlt$ is indeed antisymmetric (up to $\altsimeq$).
\end{proof}

We next prove the following lemma, which shows that illfounded limit points cannot be removed from a chain via alternating replacements.
In conjunction with \Cref{lem:quasifounded-suffices}, the following lemma shows that it suffices to consider an alternating maximal chain, provided that we start from the illfounded maximal chain produced in the previous section.

\begin{lemma}
    \label{lem:bicomparable-illfounded-points}
    Let $C\sseq P$ be a chain, and let $X\in N(C)$ be an illfounded limit point. Then, for any chain $D$ with $C\altlt D$, there will be an illfounded limit point $Y\in N(D)$ with $X\bicomp Y$.
\end{lemma}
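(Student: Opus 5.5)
Take $X \in N_+(C)$ an illfounded limit point, say; the case $X\in N_-(C)$ is symmetric. Fix a witnessing final segment $C'$ of $C$ and infinite intervals $I_1<I_2<\dotsb$ of $C'$ with $I_{2n-1}$ wellfounded and $I_{2n}$ co-wellfounded. The plan is to locate each $I_k$ (or a cofinal piece of it) inside a single alternation class of $C$. Then we transport those classes to $D$ via the witness $f\from \cR(C)\to\cI(\cR(D))$, and check that the images again alternate infinitely and are bicomparable with $X$.

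\textbf{Step 1: $X$ is a limit of infinitely many distinct alternation classes.} Between $I_{2n-1}$ and $I_{2n+1}$ there is a co-wellfounded infinite interval $I_{2n}$. Hence $H(C)$ contains a point of $N_-(C)$ at the bottom of $I_{2n}$, namely the infimum of its decreasing part. It also contains a point of $N_+(C)$ at the top of $I_{2n-1}$, namely the supremum of an infinite increasing sequence inside the infinite wellfounded interval. Using \Cref{lem:h-complete} and \Cref{lem:h-chain-is-chain}, after thinning the sequence we get $U_n\in N_+(C)$ and $V_n\in N_-(C)$ with $U_n\le V_n$ and both lying in $\Conv_{H(C)}(I_{2n-1}\union I_{2n})$. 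So no two of the sets $I_{2n-1}$ (for different $n$) are $\altequiv_C$-equivalent. Each infinite $I_{2n-1}$ should contain an infinite $\omega$-type piece whose points are $\altequiv_C$-equivalent, since no $N_-$ point sits above an $N_+$ point inside a wellfounded chain. We therefore obtain classes $R_1<R_2<\dotsb$ in $\cR(C)$, with $R_n$ meeting $I_{2n-1}$ in an infinite set, and these are cofinal below $X$.

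\textbf{Step 2: transport to $D$.} Choose $Y_n\in f(R_n)$. By \Cref{lem:alt-order-preserving} we have $Y_1<Y_2<\dotsb$ in $\cR(D)$, and these are distinct infinite alternation classes of $D$. Every $z\in H(P)$ with $z>R_n$ for all $n$ (in particular every $z\in C$ above $X$, and suitable points of $H(P)$ dominating $X$) satisfies $z>Y_n$ by item \ref{item:alt-def-important} of \Cref{def:alternating-replacement}. Conversely, any point $z\in R_{n+1}$ lies above $R_n$ and hence above $Y_n$. Pick $y_n\in Y_n$. Then every element of $C'$ below $X$ lies below some $y_m$, and every $y_n$ lies below some element of $R_{n+1}\sseq C$, below $X$. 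Thus the chain $\set{y_n}$, extended to a final segment $D'$ of $D$, has supremum $Y\in N_+(D)$ with $Y\bicomp X$.

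\textbf{Step 3: $Y$ is illfounded.} Since $Y_n$ and $Y_{n+1}$ are distinct alternation classes of $D$, the definition of $\altequiv_D$ gives $U\in N_+(D)$ and $V\in N_-(D)$ with $Y_n\le U\le V\le Y_{n+1}$. The point $U$ yields an infinite wellfounded interval of $D$ just below it, and $V$ an infinite co-wellfounded interval just above it. This produces, inside every final segment of $D$ witnessing $Y$, the required alternating sequence of infinite intervals, so $Y$ is illfounded.

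\textbf{Main obstacle.} The delicate point is Step 1, showing each wellfounded block $I_{2n-1}$ really lies (mostly) in one element of $\cR(C)$ and that consecutive blocks are separated. Points of $N(C)$ inside a wellfounded interval may include $N_-$ elements at the bottom of $\zeta$-like pieces, so I would restrict to an $\omega$-type subchain and argue via \Cref{lem:h-chain-is-chain} rather than with all of $I_{2n-1}$. A second concern is that Step 3 needs the intervals produced near $U$ and $V$ to be infinite. Since $U\in N_+(D)$ is nonprincipal, $D$ has an $\omega$-type sequence cofinal in it, which should suffice.
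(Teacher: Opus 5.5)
Your Steps 1 and 2 follow the paper's argument. The paper takes classes $X_1<X_2<\cdots$ in $\cR(C)$ converging to $X$ and picks $Y_n\in f(X_n)$. It gets monotonicity from \Cref{lem:alt-order-preserving} and $X_n<Y_{n+1}<X_{n+2}$ from item (i). It then simply asserts that $\sup_n Y_n$ is an illfounded limit point. (In Step 1 you must also choose the $I_k$ cofinal below $X$, which the ``every final segment'' clause allows; otherwise ``these are cofinal below $X$'' is unjustified.)

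The genuine gap is Step 3, your justification of that assertion. It is false that $U\in N_+(D)$ ``yields an infinite wellfounded interval of $D$ just below it''. If the part of $D$ just below $U$ has type $\bigoplus_{\omega}\omega^*$, then every infinite interval of that segment contains an infinite decreasing sequence, and dually for $V$. Your fallback, a cofinal $\omega$-sequence, is not an interval, whereas the definition of an illfounded limit point asks for wellfounded \emph{intervals}.

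This cannot be patched from the stated hypotheses. Let $P$ be the linear sum over $n\in\omega$ of pieces $S_{n-1}\cup W_n\cup F_n$. Here $S_{n-1}\cong\omega^*$ (omitted for $n=0$), $W_n=\{w_0<w_1<\cdots\}$, and $F_n=\bigoplus_{j\in\omega}B_j$ with each $B_j\cong\omega^*$. Inside a piece put $S_{n-1}<W_n\cup F_n$, and for $b\in B_j$ set $b<w_i$ if $i>j$ and $b\incomp w_i$ otherwise. Below everything put a point $c$ incomparable to a chain $E=E_\omega\oplus E_{\omega^*}$ of type $\omega\oplus\omega^*$. This $P$ is countable, scattered and of width $2$.

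Take the maximal chains $C=\{c\}\cup\bigcup_n(S_{n-1}\cup W_n)$ and $D=E\cup\bigcup_n(S_{n-1}\cup F_n)$. Then $\cR(C)$ consists of $\{c\}\cup W_0$ and the sets $S_{n-1}\cup W_n$. Similarly $\cR(D)$ consists of $E_\omega$, $E_{\omega^*}\cup F_0$ and the sets $S_{n-1}\cup F_n$. The obvious map, sending $\{c\}\cup W_0$ to the first two classes, witnesses $C\altlt D$. Item (i) holds because every point of $F_n$ lies above $S_{n-1}$ and below a tail of $W_n$, and item (ii) is vacuous. The tail of $C$ is $\bigoplus_\omega\zeta$, so $\sup C$ is illfounded. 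Above $E$, however, $D$ is a well-ordered sum of copies of $\omega^*$ and has no infinite wellfounded interval. So no element of $N(D)$ bicomparable with $\sup C$ is illfounded.

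What closes the gap is vacillation, which holds in the paper's eventual application. Induction along \Cref{thm:hausdorff-classification} gives the following. A scattered chain with no infinite wellfounded interval, and no interval $\bigoplus_{\omega}L_k$ with every $L_k$ infinite and co-wellfounded, is co-wellfounded. (In a sum over an infinite well-order, the first $\omega$ summands give either a tail of type $\omega$ or, after grouping, such a sum.) The dual statement holds too. Intervals of a maximal chain are saturated in $P$, so vacillation forbids those sums inside $D$.

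Now choose $y_n\in Y_n$ with $y_n\le U_n\le V_n\le y_{n+1}$. The interval $(y_{n+1},U_{n+1})_D$ has no maximum, so it contains an infinite wellfounded interval. The interval $(V_n,y_{n+1})_D$ has no minimum, so it contains an infinite co-wellfounded interval. These alternate and are cofinal below $Y$. Argue likewise for any other chain witnessing $Y$, via \Cref{lem:sim-equivalence-bijection}.

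The separation $y_n\le U_n\le V_n\le y_{n+1}$ also needs an argument. The relation $y_n\not\altequiv_D y_{n+1}$ only supplies $U,V$ between \emph{nonprincipal} bounds of the two points. Apply the definition to the nearest nonprincipal points above $y_n$ and below $y_{n+1}$, which exist by \Cref{lem:h-complete}. They are $\altequiv_D$-equivalent to $y_n$ and $y_{n+1}$ respectively, since these lie in infinite classes, and hence they are inequivalent to each other.
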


\begin{proof}
    This follows from \cref{item:alt-def-important} of \Cref{def:alternating-replacement}: let $X_1,X_2,\dotsc$ be an increasing sequence in $\cR(C)$ with $\sup\set{X_n\st n<\ww} = X$, the illfounded limit point in question.
    (The case of a decreasing sequence in $N_+(C)$ is similar.)
    Let $f\from\cR(C)\to\cI(\cR(D))$ witness the alternating replacement $C\altlt D$.
    Then take $Y_n\in f(X_n)$ for each $n$; by \Cref{lem:alt-order-preserving}, these points form an increasing sequence in $D$, and their supremum is thus an illfounded limit point $Y$.
    Moreover, \cref{item:alt-def-important} of \Cref{def:alternating-replacement} tells us that $X_n < Y_{n+1} < X_{n+2}$, and so these sequences are bicomparable, as required.
\end{proof}

Due to \Cref{prop:quasifounded}, it suffices to restrict our attention to quasifounded posets.
We now prove the main result of this section, which allows us to find an alternating maximal chain.

\begin{proposition}
    \label{prop:alternating-maximal}
    Let $P$ be a scattered quasifounded FAC poset, and let $D\sseq P$ be an illfounded maximal chain, i.e.\ a maximal chain witnessing the illfounded limit points at the top and bottom of $P$ if such points exist.
    Then $P$ has an alternating maximal chain $C$ satisfying $D \altlt C$.
\end{proposition}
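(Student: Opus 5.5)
Following the pattern of \Cref{sec:structural,sec:illfounded}, I will apply Zorn's lemma to the preorder $\altlt$ on maximal chains. \Cref{lem:altlt-poset} makes it a partial order on $\cU\,/\altsimeq$, so I will restrict to the up-set $\cU_D\defined\set{C\in\cU\st D\altlt C}$ and work modulo $\altsimeq$. A maximal element $[C]$ of $\cU_D\,/\altsimeq$ is then alternating maximal. Indeed, if $C\altlt E$, transitivity gives $D\altlt E$, so $E\in\cU_D$, and maximality gives $C\altsimeq E$. The whole proof therefore reduces to showing that every $\altlt$-chain $(C_\bb\st\bb<\aa)$ in $\cU_D$ has an upper bound in $\cU_D$; such a bound lies in $\cU_D$ automatically, by transitivity through $C_0$.

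To build the upper bound, I will copy \Cref{def:illlt-forest}. Fix witnesses $f_{\bb,\gg}$ and form a rooted forest $F$ whose vertices at stage $\bb$ are elements of $\cR(C_\bb)$. A class $X$ is given children $f_{\bb,\bb+1}(X)$ when this image is not $\set{X}$, new classes not hit by the image become roots, and limit stages are handled by taking unions. The first step is the analogue of \Cref{lem:illlt-leaves}: every vertex has a leaf below it. Otherwise some vertex has no leaf below it, and then we can pick $Z_0<Z_1$ inside images $f_{\gg,\gg+1}(Y)$ repeatedly, using \Cref{lem:alt-order-preserving} and \cref{item:alt-def-important} of \Cref{def:alternating-replacement} to embed $2^{(<\ww)}$ lexicographically, and hence $\QQ$, into $P$, contradicting scatteredness. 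One subtlety is a branch along which each image is a singleton different from its preimage. I will handle this by noting that \cref{item:alt-def-important} traps every later class between the same neighbouring points of $H(P)$; so either such a branch stabilises, or it produces points which lie pairwise between one another in the manner needed for the dyadic construction.

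I then define the limit chain $C$ as the union of the leaves, extended arbitrarily to a maximal chain, and set $f(X)$ to be the convex hull in $\cR(C)$ of the leaves below $X$. \Cref{item:alt-def-important} passes to the limit by transitivity, since each leaf is a class of some $C_\gg$ with $\gg>\bb$. The main obstacle is the global \cref{item:alt-def-singleton}, together with the fact that the leaves must really be alternation classes of $C$. Extending the union of the leaves to a maximal chain could merge two leaves into one $\altequiv_C$-class, or create new infinite classes. To prevent merging, I will show that between any two leaves there is a pair $U\in N_+$, $V\in N_-$, inherited from the chain $C_\gg$ in which both appear; condition \cref{item:alt-def-important} keeps these bounds in $H(P)$ separating the leaves. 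To handle \cref{item:alt-def-singleton}, note that if every $f(X)$ is a singleton and $f$ is surjective, then for each $\gg$ the replacement $C_\bb\altlt C_\gg$ also has singleton images and hits all classes that survive. Applying \cref{item:alt-def-singleton} to those replacements then forces $f(X)=\set{X}$.

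Finally, the quasifounded hypothesis enters through \Cref{lem:increasing-decreasing-cofinal}. It ensures that no increasing sequence of classes can accumulate below a point of $P$ without producing an illfounded limit. This is what guarantees that the limit chain does not form a new alternation class at an accumulation point, which would otherwise break the bijectivity argument above. If this step resists, my fallback is to first treat countable $\altlt$-chains and then rule out uncountable strictly increasing chains, as in \Cref{lem:no-uncountable-etalt-chain}. That argument uses the countability of $P$: each strict step must discard or split some class, and so can be charged to a distinct point of $P$.
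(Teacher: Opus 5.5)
\textbf{There is a genuine gap, at the step you flag yourself.} The analogue of \Cref{lem:illlt-leaves} fails for alternating replacements, and your proposed dichotomy is false. That dichotomy says a branch of singleton-but-different images either stabilises or yields a dyadic configuration.

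The reason is that \cref{item:alt-def-singleton} of \Cref{def:alternating-replacement} is a global condition. So a class can be replaced by a different single class at every step, as long as each step is made non-trivial by growth elsewhere.

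Here is a concrete instance.
\begin{itemize}
\item Let $Q$ be a two-element antichain $\set{u,v}$ placed below a copy $W$ of $\ww$.
\item Let $P=Q\oplus\bigoplus_{k\in\ww}\bigl(Z_k\oplus(\set{r_k}\sqcup Z'_k)\bigr)$. Here $Z_k$ and $Z'_k$ are chains of type $\ww^*+\ww$, and $r_k$ is a single point incomparable to $Z'_k$.
\item This $P$ is countable, scattered and of width $2$. It is also quasifounded, since its only illfounded limit points lie above all of $P$.
\item Let $C_n$ consist of $u$ (for $n$ even) or $v$ (for $n$ odd), together with $W$, every $Z_k$, every $Z'_k$ with $k<n$, and every $r_k$ with $k\geq n$.
\item $C_0$ is an admissible choice of $D$: it is illfounded maximal, and its alternation classes are unbounded above.
\end{itemize}

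Then $C_n\altlt C_{n+1}$ holds non-trivially. It is witnessed by sending the class $\set{u_n}\union W$ to $\set{u_{n+1}}\union W$ (with $u_n\in\set{u,v}$ the point used) and fixing every other class.
\begin{itemize}
\item \Cref{item:alt-def-important} holds because $u,v<W$ and nothing in $H(P)$ lies below $u$ or $v$ except $\bot$.
\item \Cref{item:alt-def-singleton} is vacuous because the new class $Z'_n$ is not hit.
\end{itemize}

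So in your forest the stage-$0$ vertex $\set{u}\union W$ has no leaf below it. Its branch alternates between $\set{u}\union W$ and $\set{v}\union W$ forever, yet $P$ contains no copy of $\QQ$. Consequently $f(\set{u}\union W)$, defined as the convex hull of the leaves below it, is empty, and your limit chain is not an upper bound.

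Your fallback has a similar problem. Charging each strict step to a distinct discarded point of $P$ is unjustified, because there is no analogue of \Cref{lem:etalt-antisymmetric} for $\altlt$: here $u$ and $v$ leave and re-enter the chain infinitely often.

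\textbf{What does hold, and what the paper uses.} The \emph{position} of a class stabilises, even though the class itself need not.
\begin{itemize}
\item The paper works with countable $\altlt$-chains. It builds a forest with an edge for every $Y\in f_n(X)$, singleton changes included.
\item In \Cref{claim:alt-no-infinite-branch} it shows that every vertex has finitely many children and every branch eventually has only single children. This uses quasifoundedness together with the hypothesis that $D$ witnesses the illfounded limit points at the ends of $P$, transported to every $C_n$ by \Cref{lem:bicomparable-illfounded-points}. Your proposal never uses this hypothesis on $D$.
\item The limit chain is then assembled from the stabilised branches. For each such branch you must choose one representative class: in the example, one of $\set{u}\union W$ and $\set{v}\union W$, not their union.
\item Uncountable chains are excluded because each proper interval admits only finitely many non-trivial steps.
\end{itemize}

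Finally, your appeal to \Cref{lem:increasing-decreasing-cofinal} is not available here. That lemma requires $P$ to be vacillating, which this proposition does not assume.
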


Note in particular that we do not yet require that $P$ is vacillating; that assumption will only be used in future sections.

\begin{proof}[Proof of \Cref{prop:alternating-maximal}]
    The proof will follow the same lines as that of \Cref{prop:quasifounded}, and proceed via Zorn's lemma.
    Note that, as $P$ is quasifounded, if $C\sseq P$ is any chain, then $\cR(C)$ is order-isomorphic to a subset of $\zz$.

    We will consider only chains that occur as alternating replacements of some starting chain $C_0$; we require $C_0$ to witness illfounded limit points at the top and bottom of $P$ if they exist, i.e.\ if there exists a chain $C$ of $P$ with $\cR(C)$ unbounded in both directions, then start with $C_0$ being such a chain.
    Otherwise, start with $\cR(C_0)$ infinite if possible, and if this is not possible, then an arbitrary chain will do.
    Let $\cU$ be defined as follows.
    \begin{align*}
        \cU\defined \set{D\sseq P \st D\text{ is a maximal chain, }\; C_0\altlt D}.
    \end{align*}
    
    By \Cref{lem:bicomparable-illfounded-points}, we see that all $D\in\cU$ will also witness all illfounded limit points witnessed by $C_0$, i.e.\ those at the top and bottom of $P$, if they exist.
    
    We first prove that any countable sequence of chains which itself forms a chain under $\altlt$ has an upper bound.
	Then we show that there can be no uncountable such sequence of distinct chains.
    \vspace{3mm}
	
    Assume first that $C_1\altlt C_2\altlt C_3\altlt\cdots$ are chains in $\cU$ and the alternating replacements are witnessed by $f_1,f_2,f_3,\dotsc$ respectively.
    This is the point at which we need to use that $P$ is quasifounded, and we will prove a claim very similar to \Cref{claim:eta-no-infinite-branch}, which will allow us to take the limit of the sequence $C_1,C_2,\dotsc$.

    Indeed, define the forest $F$ on vertex set 
    \begin{align*}
        V\defined\set{(X,n)\st n<\ww, \; X\in \cR(C_n)}    
    \end{align*}
    with edges given by connecting $(X,n)$ to $(Y, n+1)$ whenever $Y\in f_n(X)$.
    It is immediate from definition that $F$ is a forest of rooted trees.
    Note that $F$ might have multiple connected components, as it can be the case that $\altpart(C_{n+1}) \setminus \im f_n$ is non-empty, and thus there are elements of $\cR(C_{n+1})$ which are not connected to any element of $\cR(C_n)$.

    We will consider children of elements of the forest (and their descendants) as \emph{below} their parents.
    The key claim about $F$ is the following.

    \begin{claim}
        \label{claim:alt-no-infinite-branch}
        Any infinite branch $(X_k, k),(X_{k+1}, k+1),(X_{k+2}, k+2),\dotsc$ of the forest $F$ consists eventually of only vertices with exactly one child.
        That is, there is an integer $N$ such that for all $n>N$, the only child of $(X_n, n)$ in $T$ is $(X_{n+1}, n+1)$.
        Furthermore, all degrees of $F$ are finite.
    \end{claim}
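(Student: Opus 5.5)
We prove the two parts of \Cref{claim:alt-no-infinite-branch} separately, using quasifoundedness for both and scatteredness to exclude accumulations. Throughout, let $(X_n,n)_{n\geq k}$ be an infinite branch, so $X_{n+1}\in f_n(X_n)$ for all $n\geq k$. By \Cref{lem:alt-order-preserving}, each $f_n(X_n)$ is an interval of $\cR(C_{n+1})$, and images of distinct classes are disjoint and ordered. Composing the witnesses as in the transitivity part of \Cref{lem:altlt-poset} gives a witness $h_{m,n}=f_{n-1}\circ\dotsb\circ f_m$ for $C_m\altlt C_n$. The branch lies in the tree of descendants of $X_k$, so $X_n\in h_{k,n}(X_k)$ for all $n>k$.

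\textbf{Finite degrees.} Since $P$ is quasifounded, no interval $[x,y]_{H(P)}$ with $x,y\in P$ contains an illfounded limit point. In a maximal chain $C$, an infinite bounded run of alternation classes accumulating at a point of $H(C)$ would produce such a limit point inside a proper interval. Hence, as remarked in the proof of \Cref{prop:alternating-maximal}, every $\cR(C_n)$ embeds in $\zz$ and has no accumulation except at the two ends of $P$.

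If $f_n(X)$ were infinite, it would be an infinite interval of $\cR(C_{n+1})$ confined by \cref{item:alt-def-important} to lie strictly between the neighbours of $X$. It would therefore accumulate at a point of $H(P)$ lying between elements of $C_n$, and that point would be an illfounded limit point inside a proper interval, a contradiction. The only exception is when $X$ is the extreme class of $C_n$ and the accumulation is at the top or bottom of $P$. This case is excluded because $C_0$, and hence by \Cref{lem:bicomparable-illfounded-points} every chain in $\cU$, already witnesses those end limit points. The end classes of $\cR(C_n)$ are then already unbounded, so there is no extreme class $X$ to replace.

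\textbf{Eventually one child.} Suppose infinitely many $n$ have $\abs{f_n(X_n)}\geq2$. For each such $n$, pick a sibling $Y_{n+1}\in f_n(X_n)$ with $Y_{n+1}\neq X_{n+1}$. Passing to a subsequence, we may assume the siblings all lie on the same side, say $Y_{n+1}>X_{n+1}$. By \cref{item:alt-def-important} applied to $h_{n+1,m}$, every later $X_m$ lies below $Y_{n+1}$ but above the elements of $H(P)$ lying below $X_n$. Hence the chosen siblings form an infinite decreasing sequence $Y_{n_1+1}>Y_{n_2+1}>\dotsb$ of alternation classes, all above the fixed class $X_k$'s lower neighbourhood.

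Each such class contains nonprincipal points of both types, so we obtain an infinite decreasing sequence in $N_+(P)$, or dually in $N_-(P)$, bounded below by an element of $P$ (an element of $C_k$ below $X_k$). This contradicts the dual half of \Cref{lem:increasing-decreasing-cofinal}. Alternatively, the sequence directly produces an illfounded limit point inside a proper interval, contradicting quasifoundedness.

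\textbf{Main obstacle.} The hard step is the last one: making rigorous that the siblings $Y_{n+1}$, taken from different chains $C_{n+1}$, are genuinely nested and strictly ordered in $H(P)$. This requires transporting \cref{item:alt-def-important} across the compositions $h_{m,n}$ and working with the domination order rather than within a single chain. I would first try to show, by induction along the branch, that $Y_{n+1}>X_m$ in $H(P)$ for all $m>n$. From this, $Y_{n'+1}<Y_{n+1}$ for $n'>n$ would follow, since $Y_{n'+1}$ lies in the image of the subtree rooted at $X_{n+1}$.
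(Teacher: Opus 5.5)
Your proposal follows the paper's proof essentially step for step. Finite degrees come from an infinite monotone run in $f_n(X)\sseq\cR(C_{n+1})$, trapped by item (i) of \Cref{def:alternating-replacement} beyond the rest of $C_n$, with the end case excluded by the choice of $C_0$ and \Cref{lem:bicomparable-illfounded-points}. Eventual single children come from one-sided siblings forming a monotone, and correctly decreasing, sequence that contradicts quasifoundedness.

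One caveat: \Cref{lem:increasing-decreasing-cofinal} assumes vacillation, which is not available in this section. So your ``alternative'' direct appeal to quasifoundedness is the closing step you should use, and it is the one the paper uses.
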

    
    \begin{proof}
        We first prove that degrees in $F$ are finite. 
        Take some $X\in \cR(C_n)$, so that $(X,n) \in V$, and assume for contradiction that $(X,n)$ has infinitely many children; let $\set{(Y_i, n+1)\st i<\ww}$ be these children.
        The set $\set{Y_i\st i<\ww}$ either contains an infinite increasing or decreasing sequence; assume \wlg\ and by passing to a subset if necessary that $Y_i < Y_{i+1}$ for all $i$.

        We thus see that $\cR(C_{n+1})$ has an infinite increasing sequence, and so, by definition of $C_0$ and of $\cU$, so too does $\cR(C_0)$, and, by applying \Cref{lem:bicomparable-illfounded-points}, $\cR(C_n)$ also has no maximal element.
        Therefore there is some $Z\in \cR(C_n)$ with $Z > X$, and so by \cref{item:alt-def-important} of \Cref{def:alternating-replacement}, $Z > Y_i$ for all $i$.
        But this contradicts the fact that $P$ is quasifounded, and so we see that degrees in $F$ are indeed finite.

        Next, assume for contradiction that there is an infinite branch of $F$ containing infinitely many vertices with at least two children.
        Due to transitivity of $\altlt$, we may restrict to a subsequence of the branch and in fact assume that we have 
        \begin{align*}
            (k,X_k),(k+1,X_{k+1}),(k+2,X_{k+2}),\dotsc    
        \end{align*}
        all with at least two children.
        Indeed, assume that, for all $n\geq k$, $(Z_{n+1}, n+1)$ is a child of $(X_n, n)$ different from $(X_{n+1}, n+1)$.
		By again restricting to a subsequence, we may assume \wlg\ that $Z_{n+1} > X_{n+1}$ for all $n$.
        But then, by \cref{item:alt-def-important} of \Cref{def:alternating-replacement} we know that $Z_{n+1} > Z_n$ for all $n$, and we may deduce a contradiction as we did for the $Y_i$.
        Thus every branch of $F$ eventually has only vertices with exactly one child, completing the proof of \Cref{claim:alt-no-infinite-branch}.
    \end{proof}

    With the above claim in hand, we can now complete the proof of \Cref{prop:alternating-maximal}.
    Firstly, if the sequence $C_1,C_2,C_3,\dotsc$ is eventually constant, then we may simply take $C$ to be this limiting chain.
    Thus we may assume (by passing to a subsequence if necessary) that \cref{item:alt-def-singleton} of \Cref{def:alternating-replacement} never applies.

    We call an element $(X,n)$ of $F$ a \emph{stabilisation point} if it and all its descendants have only one child in $F$.
	We make the following convenient definitions.
	\begin{align*}
		S &\defined \set{X \st \exists n,\; (X,n) \text{ is a stabilisation point}}, \\
		S(X,n) &\defined \set{Y \st \exists m\; (Y,m) \text{ is a stabilisation point below }(X,n)}.
	\end{align*}
    Note that by \Cref{claim:alt-no-infinite-branch}, the set $S(X,n)$ is finite and non-empty for all $n$ and all $X\in\altpart(C_n)$.    

    Note further that if $(X,n)$ and $(Y,m)$ are both stabilisation points, then $X$ and $Y$ are either equal or comparable due to \cref{item:alt-def-important} of \Cref{def:alternating-replacement}.
    We may thus define our chain $C$ as follows.
    \begin{equation*}
        C\defined \bigcup S.
    \end{equation*}
    
    We claim that, for all $n$, $C_n\altlt C$.
    To prove this, we construct a witness function $\altwitness{f}{C_n}{C}$ defined as follows.
    \begin{equation*}
        f(X)\defined S(X,n).
    \end{equation*}

    It is then easy to check that all conditions of \Cref{def:alternating-replacement} hold for this $f$, and so $C_n\altlt C$ for all $n$, as required. \lhc{actually verify the conditions (not just ``easy'') (it's the only prop in the verification file)}
    
    \vspace{3mm}
    It thus remains only to prove that there are no uncountable chains of alternating replacements.
    However, this is immediate from the easy corollary of \Cref{claim:alt-no-infinite-branch} that every proper interval of $C$ admits only finite sequences of alternating replacements; a sequence of alternating replacements in the whole of $P$ is then a countable union of finite sets, which is necessarily countable.
    
    This completes the proof of \Cref{prop:alternating-maximal}.
\end{proof}

Unlike in previous sections, we do not conclude this section with a ``gluing lemma'' allowing us to split $P$ up into a collection of simpler posets, find a maximal tube in each, and then recombine them.
Indeed, the power of finding an alternating maximal chain comes from how it interacts with the operation of consolidation described in the next section.

\begin{lemma}
    \label{lem:owf-convex-hull}
	Let $P$ be a countable FAC poset and let $C\sseq P$ be an alternating maximal chain, and $I\sseq C$ be an \owf\ interval arising as an infinite $\altequiv_C$-equivalence class.
    Then 
	\begin{equation*}
		[H(C) \inter \below{I}, \; H(C)\inter\above{I}]_{H(P)}
	\end{equation*}
	is an \owf\ poset.
\end{lemma}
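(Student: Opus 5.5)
We argue by contradiction, using the alternating maximality of $C$. Write $J \defined [H(C)\inter\below{I},\, H(C)\inter\above{I}]_{H(P)}$. Suppose $J$ is not \owf, so it contains a chain $E$ of order type $\ww\oplus\ww^*$: an increasing sequence $e_1<e_2<\dotsb$ lying below a decreasing sequence $e'_1>e'_2>\dotsb$. Passing to $P$ (elements of $N(P)$ in $J$ can be approximated by elements of $P$ lying in $J$, using the definition of domination), we may assume $E\sseq P$. Saturate $E$ inside $\Conv_P(E)$. The increasing part then determines some $U\in N_+(P)$, and the decreasing part some $V\in N_-(P)$, with $U\le V$. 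This is exactly the configuration forbidden by \eqref{eq:altequiv-def} inside a single alternation class. The plan is to splice $E$ into $C$ in place of $I$ and so obtain a non-trivial alternating replacement.

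First, the construction of the new chain. Because $E\sseq J$, every element of $E$ lies above every element of $H(C)\inter\below{I}$ and below every element of $H(C)\inter\above{I}$. Hence $D' \defined (C\setminus I)\union E'$ is a chain, where $E'$ is the saturation of $E$ inside $J\inter P$; extend it to a maximal chain $D$. The extension adds only points between $C\setminus I$ and $E'$, so we may take it to lie inside $J$. The key point is that in $D$ the region $D\inter J$ contains $U<V$ with $U\in N_+(D)$ and $V\in N_-(D)$. Consequently it splits into at least two infinite $\altequiv_D$-classes, or at least into classes none of which equals $I$. Meanwhile every $X\in\cR(C)$ with $X\ne I$ is untouched. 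The only caveat is that the classes of $C$ adjacent to $I$ might merge or change at the boundary; we handle this using the fact that $I$ is a full equivalence class, so it is bounded by alternation points $N_+/N_-$ of $C$ that persist in $D$.

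Next, define $f\from\cR(C)\to\cI(\cR(D))$ by $f(X)=\set{X}$ for $X\ne I$, and let $f(I)$ be the interval of classes of $\cR(D)$ contained in $J$. Condition \ref{item:alt-def-important} holds for $X\neq I$ trivially. For $X=I$ it follows from the definition of $J$: anything of $H(P)$ above $I$ dominates a final portion of $H(C)\inter\above{I}$ or an element of it, and so lies above $J$. It is at this point that we use that the bounds of $J$ are taken in $H(C)$ rather than in $C$. Condition \ref{item:alt-def-singleton} is a hypothesis on $f$, so $f$ is a witness and $C\altlt D$. Alternating maximality then forces $\cR(C)=\cR(D)$, so $I\in\cR(D)$. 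But $I$ is a single $\altequiv_D$-class, while $D\inter J$ contains the separating pair $U\le V$ inside any class covering $J\inter D$. This is a contradiction, provided $U$ and $V$ genuinely lie within $\Conv(I)$ in the relevant sense.

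The main obstacle is the boundary bookkeeping: ensuring that $D\inter J$ is really distinct from $I$ as a class, and that $f(I)$ is non-empty, i.e.\ that infinite classes of $D$ actually appear inside $J$. Both the $\ww$ part and the $\ww^*$ part of $E$ are infinite, so each should give an infinite $\altequiv_D$-class, one ending at $U$ and one beginning at $V$, unless it merges with a neighbouring class of $C$ outside $J$. I would first try to rule out such merging by noting that $\min$ and $\max$ of the $\altequiv_C$-class $I$ are separated from the neighbouring classes by a pair $N_+(C)\ni U'\le V'\in N_-(C)$. These points remain in $H(D)$ because $D\supseteq C\setminus I$. If the merging cannot be excluded cleanly, the fallback is to let $f(I)$ absorb the merged neighbours as well, and to derive non-triviality from the fact that $|\cR(D)\inter J|\ge2$ while $I$ is one class.
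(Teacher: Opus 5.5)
You have the same overall strategy as the paper. From an $\ww\oplus\ww^*$ inside $J\defined[H(C)\inter\below{I},\,H(C)\inter\above{I}]_{H(P)}$ you get $U\in N_+(P)$ and $V\in N_-(P)$ with $U<V$. You then splice a chain through them into $C$ in place of $I$, and contradict alternating maximality using $f(T)=\set{T}$ for $T\neq I$ and $f(I)=$ the new classes. Your check of condition (i) for $X=I$ is the right one: the bounds of $J$ are $\inf_{H(C)}I$ and $\sup_{H(C)}I$, and these belong to the class of $I$.

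\textbf{The gap.} The boundary step you flag is a genuine gap, and neither of your remedies closes it. Suppose the class of $I$ has a lower neighbour $T\in\cR(C)$. The pair separating them is $U'=\max T\in N_+(C)$ and $V'=\inf_{H(C)}I\in N_-(C)$, and $V'$ is represented only by initial segments of $I$. So once you delete all of $I$, the fact that $D\supseteq C\setminus I$ says nothing about whether $V'\in H(D)$.

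Nor can the maximal extension simply ``be taken inside $J$''. Points lying between $C\inter\below{I}$ and $E'$ need not be above any element of $I$, and they can be forced into $D$. Here is a concrete instance.
\begin{itemize}
\item Let the bottom of $I$ be $\dotsb<i_3<i_2<i_1$.
\item Let $E'$ (a maximal chain of $J\inter P$ through $E$ is allowed by your description) begin with $d_1>d_2>\dotsb$, where $d_n>i_m$ for $m>n$ and $d_n\incomp i_m$ for $m\le n$. These $d_n$ lie in $J$ and together exclude every $i_m$.
\item Let $R$ be an $\ww$-chain sitting directly above $T$, below every $d_n$, and incomparable only to the $i_m$, so $R\not\sseq J$.
\end{itemize}
Every maximal chain containing $(C\setminus I)\union E'$ must then contain $R$. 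In $D$ the class $T$ merges with $R$, so $T\notin\cR(D)$ and $f(T)=\set{T}$ is not a legal value. The merged class cannot serve as $f(T)$ either, since points of $R$ lie above $T$ but inside it.

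Your fallback, absorbing the merged classes into $f(I)$, also violates (i): points of $T$ lie below $I$ yet inside the merged class. It would moreover make $f(T)$ and $f(I)$ overlap, contrary to Lemma~\ref{lem:alt-order-preserving}.

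\textbf{The fix.} The missing idea is the paper's: do not delete all of $I$.
\begin{itemize}
\item Once $E\sseq P$, its $\ww$-part has a least element $e$ and its $\ww^*$-part a greatest element $e'$.
\item Since $e>\inf_{H(C)}I$ and $e'<\sup_{H(C)}I$ in $H(P)$, and these dominations are witnessed by segments of $I$, there are $x,y\in I$ with $x<e$ and $e'<y$.
\item Let $D'$ be a maximal chain of $[x,y]_P$ containing $x$, $E$ and $y$, and set $D\defined\set{c\in C\st c\le x \text{ or } c\ge y}\union D'$. This is exactly the paper's chain ``through $x,U,V,y$ agreeing with $I$ outside $(x,y)$''.
\end{itemize}
Any point comparable to all of $D$ either lies in $(x,y)$, contradicting maximality of $D'$, or is comparable to all of $C$ and so is already in $D$. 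Hence $D$ is maximal and adds nothing outside $[x,y]\sseq J$.

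Since $D$ and $C$ coincide below $x$ and above $y$, the points $\inf_{H(C)}I$ and $\sup_{H(C)}I$ survive in $H(D)$, and so do the alternation pairs bounding the class of $I$. Therefore every class $T\neq I$ is unchanged and $f(T)=\set{T}$ is legitimate. The image $f(I)$, the classes of $D$ inside $J$, contains the infinite classes on either side of the split $U<V$. So condition (ii) holds vacuously; note it is a condition to verify, not a hypothesis on $f$. Condition (i) then holds by your argument.
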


\begin{proof}
    Assume for contradiction that there was such an interval $I$, points $x,y\in H(I)$, and elements $U,V\in N(P)$ such that $x<U<V<y$, $U\in N_+(P)$, and $V\in N_-(P)$.
    Let $Q$ be the poset 
	\begin{equation*}
		Q \defined [H(C) \inter \below{I}, \; H(C)\inter\above{I}] \sseq H(P)
	\end{equation*}
	and let $D'$ be a maximal chain of $Q$ through $x$, $U$, $V$, and $y$ which agrees with $I$ outside of the interval $(x,y)$.
    Define
    \begin{align*}
        D\defined (C\setminus I)\union D'.
    \end{align*}
    We claim that $D$ can be attained from $C$ by an alternating replacement, contradicting the alternating maximality of $C$.

    Indeed, we know that $x\not\altequiv_D y$ due to the existence of $U$ and $V$ between them, and note that $I\in \cR(C)$.
    We may thus define a function $f\from \cR(C)\to\cR(D)$ by sending $T$ to $\set{T}$ for every $T\in \cR(C)\setminus\set{I}$, and sending $I$ to $\cR(D')$, which is an interval of $\cR(D)$.
    This immediately satisfies the definitions required for an alternating replacement, giving rise to a contradiction, as required.
\end{proof}

\section{Chain consolidation}
\label{sec:consolidation}

This is perhaps the most technical section of this paper, and works with the final ``replacement'' operation that we will work with: chain consolidation.
We give a brief outline of this section, which is broken into several subsections, before continuing.

First, in \Cref{subsec:define-consolidation}, we define the consolidation order on maximal chains of a poset $P$.
To do this, we will need to examine in more detail the structure of $N(P)$, in particular defining certain elements of this set to be \emph{atomic}, meaning that they cannot be broken up into smaller parts.
For example, a copy of $\ww$ is always atomic, as it is order-isomorphic to all cofinal subsets of itself.
The consolidation order $\consolidatesto$ can then be defined as an operation which, to a certain extent, preserves the structure of $N(C)$ (among other conditions).

With the definition of $\consolidatesto$ in hand, our next goal is to show that this order is, if the ambient poset is vacillating, a partial order. 
In fact, we first motivate why vacillation might be relevant: in \Cref{subsec:consolidation-non-vacillating}, we give an example of a non-vacillating poset in which the consolidation order is not a partial order, as it fails to be antisymmetric.
Then, in \Cref{subsec:consildation-vacillating}, we prove that, when $P$ is vacillating, $\consolidatesto$ is indeed a partial order.

Finally, in \Cref{subsec:consolidated-chain-exists}, we prove that we can always find a consolidated chain in a vacillating poset containing an alternating maximal chain.


\subsection{Definition of the consolidation order}
\label{subsec:define-consolidation}

Before giving the definition of consolidation, we need to define a particular subset of $H(P)$, consisting of those nonprincipal elements which are in some sense of minimal size.
For example, in the poset $\ww^2$, there are many infinite increasing chains, but the poset can be broken down into copies of $\ww$, the simplest saturated infinite increasing chains.
This can be formalised into the following definition.

\begin{definition}
    \label{def:atomic}
    An element of $N_+(P)$ is said to be an \emph{atomic} increasing chain if it does not admit an order-preserving injection from the poset $\ww+1$.
    We write $\atomicincr(P)$ for the set of atomic increasing chains.
    Similarly, a decreasing chain is \emph{atomic} if it does not admit an order-preserving injection from the poset $(\ww+1)^*$, and we write $\atomicdecr(P)$ for the set of such posets.
\end{definition}

Note that the notion of an atomic chain is well-defined in that, if chains $C,D\sseq P$ have $C\simeq_+ D$ and $C$ contains no copy of $\ww+1$, then there is a final segment $D'$ of $D$ which contains no copy of $\ww + 1$.
Indeed, this follows easily from \Cref{lem:sim-equivalence-bijection}.

Note that in the poset $\ww\lextimes\ww^*$ consisting of an infinite increasing sequence of copies of $\ww^*$, the only saturated infinite increasing sub-chains are cofinal and contain a copy of $\ww\lextimes\ww^*$ -- these are the atomic increasing chains in this case. 
However, we may note that $\ww\lextimes\ww^*$ is not vacillating.
Indeed, in vacillating posets, the only atomic increasing chains are isomorphic to $\ww$.

Now that we know that $\atomicincr(P)$ and $\atomicdecr(P)$ are well-defined, we would like to prove that these objects are non-empty for non-trivial $P$.
To do this, we will perform an induction on scattered chains, by appealing to the following classical result of Hausdorff \cite{hausdorff1908classification}.

\begin{theorem}[\cite{hausdorff1908classification}]
    \label{thm:hausdorff-classification}
    Let $\cB$ be the class of well-orderings and reverse well-orderings. 
    The class of scattered linear orderings is the least class which contains $\cB$, and is closed under lexicographic sums with index set lying in $\cB$.
\end{theorem}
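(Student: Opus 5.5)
The two inclusions will be proved separately. Write $\cS$ for the least class containing $\cB$ and closed under lexicographic sums indexed by members of $\cB$. We will also use the fact that $\cS$ is closed under isomorphism.

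\textbf{Every member of $\cS$ is scattered.} This follows by induction along the construction of $\cS$. Well-orders and reverse well-orders contain no copy of $\QQ$, since $\QQ$ has infinite descending and infinite ascending sequences. Now let $L=\bigoplus_{i\in I}L_i$ with $I\in\cB$ and each $L_i$ scattered, and suppose $L$ contained a copy $Q$ of $\QQ$. Consider the projection $Q\to I$. If some fibre $Q\inter L_i$ has two points, then it contains the open interval of $Q$ between them, which is again a copy of $\QQ$, contradicting that $L_i$ is scattered. Otherwise the projection is injective and order-preserving, so $I$ contains a copy of $\QQ$, which is impossible for $I\in\cB$.

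\textbf{Closure of $\cS$ under suborders.} Before the converse we need the auxiliary fact that every suborder (in particular every convex subset) of a member of $\cS$ lies in $\cS$. This is again an induction on the construction. Suborders of well-orders are well-orders, and similarly for reverse well-orders. A suborder $M$ of $\bigoplus_{i\in I}L_i$ equals $\bigoplus_{i\in J}(M\inter L_i)$, where $J$ is the set of $i\in I$ with $M\inter L_i$ nonempty. Each summand is in $\cS$ by induction, and $J\in\cB$ because $J$ is a suborder of $I$.

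\textbf{Every scattered order lies in $\cS$.} Let $L$ be scattered. For $x\le y$ in $L$, set $x\sim y$ if the closed interval $[x,y]_L$ lies in $\cS$, and extend this symmetrically. We then carry out three steps.

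First, $\sim$ is an equivalence relation whose classes are convex. Convexity follows from closure under convex subsets. For transitivity with $x\le y\le z$, note that $[x,z]$ is the sum of $[x,y]$ and $(y,z]$ indexed by the two-element order. The case where $y$ is not between $x$ and $z$ is handled by passing to a subinterval.

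Second, each class $A$ lies in $\cS$. Fix $a\in A$ and choose a well-ordered cofinal sequence $(a_\aa)_{\aa<\kk}$ in $\above{a}_A$, with $a_0=a$. Then $\above{a}_A$ is the sum over the ordinal $\kk$ of the convex pieces $\set{z\in A\st a_\bb<z\le a_\aa \text{ for all } \bb<\aa}$. Each piece is a convex subset of some $[a,a_{\aa'}]\in\cS$, so each piece lies in $\cS$. The dual argument, using a reverse-well-ordered coinitial sequence, handles $\below{a}_A$. Finally $A=\below{a}_A\oplus\set{a}\oplus\above{a}_A$ is a sum indexed by the three-element order.

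Third, the quotient $L/\!\sim$ is dense. Suppose classes $A<B$ had no class strictly between them, and take $a\in A$, $b\in B$. Then $[a,b]=\p{[a,b]\inter A}\oplus\p{[a,b]\inter B}$, and both pieces lie in $\cS$ by the previous step and closure under convex subsets. Hence $a\sim b$, a contradiction. So if there were two or more classes, the quotient would be a dense order with at least two points and would contain a copy of $\QQ$. Choosing one representative from each class in that copy would embed $\QQ$ into $L$, contradicting scatteredness. Hence there is exactly one class, and $L\in\cS$.

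The main obstacle is the second step, which is the only place where a transfinite rather than finite sum is needed. It is also where closure under convex subsets is essential, because the pieces $(a_\bb,a_\aa]$ at limit stages $\aa$ need not be closed intervals of $L$.
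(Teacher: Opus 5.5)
The paper gives no proof of this statement. It quotes Hausdorff's classical theorem and uses it as a black box, and only the decomposition direction (every scattered chain lies in $\cS$) is ever needed, namely for the induction in \Cref{lem:exists-atomic}. Your argument is a correct, self-contained proof. It is the standard condensation proof, and it proves exactly the form the paper relies on:
\begin{enumerate}
\item the projection argument shows that scattered orders are closed under $\cB$-indexed sums;
\item $\cS$ is closed under suborders;
\item for scattered $L$, the relation $x\sim y \iff [x,y]_L\in\cS$ has classes lying in $\cS$, and $L/\!\sim$ is dense, hence has a single class.
\end{enumerate}
You correctly identify closure under suborders as indispensable, because the limit-stage pieces are not closed intervals. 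The use of choice to pick well-ordered cofinal sequences is harmless, since the paper uses Zorn's lemma throughout.

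Two small slips in your second step need fixing.
\begin{itemize}
\item You set $a_0=a$, but $a\notin\above{a}_A$. Take the well-ordered sequence cofinal in $\set{z\in A\st z\geq a}$ instead, with $a$ as its least element.
\item As written, the piece for $\aa=0$ is $\set{z\in A\st z\le a}$, which is not part of $\above{a}_A$. The sum should be indexed by $1\le\aa<\kk$, which is still a well-order. For $z\in A$ with $z>a$, let $\aa\geq 1$ be the least index with $z\le a_\aa$. Then $z$ lies in exactly the $\aa$-th piece. This gives covering and disjointness, and for $\aa<\aa'$ the $\aa$-th piece lies below the $\aa'$-th, since its elements are $\le a_\aa$ while those of the $\aa'$-th piece are $>a_\aa$.
\end{itemize}
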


We are now ready to prove that non-trivial scattered posets always contain atomic chains.
\begin{lemma}
    \label{lem:exists-atomic}
    If $P$ is a scattered poset containing an infinite increasing sequence, then $\atomicincr(P)$ is non-empty.
\end{lemma}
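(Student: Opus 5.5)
Since $P$ contains an infinite increasing sequence, I would first take a saturated chain $C_0$ of $P$ that contains this sequence and has no maximal element. Concretely, I would take an $\ww$-sequence $x_1<x_2<\cdots$ and extend $\set{x_n}$ to a saturated chain inside $\Conv_P(\set{x_n})$. That chain is scattered, because $P$ is, and it has no maximum, since the $x_n$ are cofinal in it. The goal is then to find a final segment, or more generally a saturated subchain $D$ of $P$ with no maximum, that contains no copy of $\ww+1$. Then $[D]_+\in\atomicincr(P)$. The natural tool is \Cref{thm:hausdorff-classification}. I would prove, by induction along the Hausdorff hierarchy, the following claim: every scattered linear order $L$ with no maximum contains an interval $J$ with no maximum and no copy of $\ww+1$. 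Here ``interval'' means convex in $L$, which for a saturated chain $L\sseq P$ makes $J$ saturated in $P$.

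For the base case, suppose $L$ is a well-order with no maximum. I would take the least limit point, that is, the least ordinal $\aa$ such that $\set{\bb<\aa}$ has no maximum. Choose $\gg<\aa$ with $\gg$ a successor-only region, namely the interval $[\gg,\aa)$ where $\gg$ is the last limit below $\aa$, or $0$. This interval has order type $\ww$, contains no $\ww+1$, and has no maximum. A reverse well-order with no maximum is impossible unless it is empty, and empty orders are excluded, since a nonempty reverse well-order has a maximum.

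For the inductive step, let $L=\bigoplus_{i\in I}L_i$ with $I\in\cB$ and each $L_i$ satisfying the claim. I would split into cases.
\begin{itemize}
\item If $I$ has a maximum $i_0$, then $L_{i_0}$ has no maximum, and induction applies directly to $L_{i_0}$. An interval of $L_{i_0}$ is an interval of $L$.
\item If $I$ has no maximum and is a reverse well-order, this is impossible, as in the base case.
\item If $I$ is a well-order with no maximum, then $I$ contains a final-type segment $[\gg,\aa)$ of order type $\ww$. Consider the sum $\bigoplus_{n\in\ww}L_{i_n}$. If some $L_{i_n}$ has no maximum, induction on that piece finishes. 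Otherwise every piece has a maximum, so in particular it is nonempty. I then want to pass to a cofinal saturated subchain of this $\ww$-sum.
\end{itemize}

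The main obstacle is this last subcase. A sum $\bigoplus_{n\in\ww}L_n$ of nonempty pieces that each have a maximum may still contain $\ww+1$ inside the pieces, so the interval $\bigcup_n L_{i_n}$ need not be atomic. The fix I would try is to refine using the maxima. Each $L_n$ is itself scattered, so I apply induction not to $L_n$ but to the interval $L_n\setminus\set{\max L_n}$ whenever that interval has no maximum. If $L_n\setminus\set{\max L_n}$ has no maximum for some $n$, induction on it gives the required $J$. Otherwise $\max L_n$ has an immediate predecessor, and I iterate this downward. Since $L$ is scattered, the iteration terminates in a finite tail or hits a no-maximum initial piece. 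If every $L_n$ ends in a finite or $\ww^*$-like tail, I would take $J$ to be the union of these tails. That union is a saturated interval whose only limits come from $n\to\infty$, so it is of type $\ww$ or $\bigoplus_\ww(\text{finite or }\ww^*)$. In the latter case I would choose instead a single point from each tail, namely $\max L_n$, and use the fact that any chain equivalent in the sense of $\simeq_+$ only needs a cofinal saturated segment.

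This last step is where I expect genuine difficulty: non-vacillating configurations such as $\ww\lextimes\ww^*$ show that the atomic chain may be forced to be non-$\ww$. So rather than demanding order type $\ww$, I would verify directly that the union of these tails contains no copy of $\ww+1$, because any $\ww$-sequence in it is cofinal in it. That direct verification completes the induction and hence the lemma.
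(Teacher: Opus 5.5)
Your reduction to a saturated chain with no maximum, the base case, and the first two inductive cases are fine. The last subcase, however, has a genuine gap.

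Your split there asks whether $L_n$ has no maximum, or whether what is left after peeling off a run of immediate predecessors from $\max L_n$ has no maximum. This does not detect the relevant property, for three reasons.

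\begin{enumerate}
\item The peeling need not terminate. Scatteredness is irrelevant here: the peeling runs for $\ww$ steps whenever $L_n$ ends in a copy of $\ww^*$. What remains below such a tail can again have a maximum.
\item The set you then take, the union of the tails $T_n$, is in general not convex in $L$. Hence it is not saturated in $P$ and does not represent an element of $N_+(P)$. For $L_n=\ww^*+\ww^*$, the lower copy of $\ww^*$ in $L_{n+1}$ sits between $T_n$ and $T_{n+1}$.
\item For $L_n=\ww+1+\ww^*$, the tail is the $\ww^*$-block and the remainder $\ww+1$ has a maximum, so none of your cases applies. Here $L_0$ contains $\ww$, while the convex set $\bigcup_n L_n$ contains a copy of $\ww+1$. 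So the atomic chain has to be found inside a single summand.
\end{enumerate}

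Your fallback of choosing $\set{\max L_n}$ also fails. That set is not saturated, and its saturation inside its convex hull brings back everything between the maxima. Moreover, $\simeq_+$ concerns mutually finite incomparability of saturated final segments, not mere cofinality.

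The missing idea is to split on whether some summand contains a copy of $\ww$ at all. The paper builds this into the induction hypothesis: every scattered chain either has $\atomicincr\neq\emptyset$ or is co-wellfounded.
\begin{itemize}
\item If some summand contains $\ww$, the hypothesis produces an atomic chain inside it. In your formulation, you would apply your claim to the convex hull, within that summand, of an $\ww$-sequence. That needs the Hausdorff levels to be closed under intervals; the paper's formulation avoids this by only ever applying the hypothesis to the summands themselves.
\item If every summand is co-wellfounded, take the whole union $\bigcup_{n<\ww}L_{i_n}$ of the first $\ww$ summands, not their tails. It is convex with no maximum. Any $\ww$-sequence in it meets infinitely many summands, since a finite union of co-wellfounded chains is co-wellfounded. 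So the sequence is cofinal, and there is no copy of $\ww+1$.
\end{itemize}

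Your last sentence already contains exactly this cofinality observation. It just has to be applied to this set, after this dichotomy.
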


\begin{proof}
    Note first that a scattered poset $P$ contains an infinite increasing sequence if and only if it contains some chain with this property.
    It thus suffices to prove that $\atomicincr(C)\neq \emptyset$ for any scattered chain $C$ which admits an embedding of $\ww$.

    We induct on chains by means of \Cref{thm:hausdorff-classification}; let $\cB$ be as defined therein.
    For the base case, we must prove the result for chains $C$ which are wellfounded or co-wellfounded.
    If $C$ is wellfounded and infinite, then we can simply take the chain consisting of the bottom $\ww$ elements of $C$, which clearly admits no embedding from $\ww + 1$.
    If $C$ is co-wellfounded, then by definition $C$ does not admit an embedding of $\ww$, and so the condition holds vacuously.

    For the inductive step, we take $P = \bigoplus_{i\in I} C_i$, where $I\in \cB$ and each $C_i$ either has $\atomicincr(C_i)\neq\emptyset$ or does not admit an embedding of $\ww$.
    If any $C_i$ has $\atomicincr(C_i)\neq\emptyset$, then we can find these elements in $\atomicincr(P)$, which is thus non-empty.
    We may therefore assume that no $C_i$ admits an embedding of $\ww$, i.e.\ they are all co-wellfounded.

    If $I$ is also co-wellfounded, then so too is $P$, and we are (vacuously) done.
    Thus assume that $I$ is wellfounded and infinite.
    We claim that $X\defined \bigcup_{j\in J} C_j$ is an element of $\atomicincr(P)$, where $J$ consists of the bottom $\ww$ elements of $I$.
    Indeed, this poset admits an embedding of $\ww$: take one element from each $C_j$.
    Furthermore, this poset admits no embedding of $\ww+1$; if it did, then some $C_j$ would have to admit an embedding of $\ww$, a contradiction.
    Thus, in all cases, either $P$ is co-wellfounded or $\atomicincr(P)$ is non-empty, as required.
\end{proof}

We are now ready to give the definition of our new ordering.

\begin{definition}
    \label{def:cofinal-comparability}
    For chains $C,D\sseq P$, we say that $C$ \emph{consolidates} to $D$, or that $D$ is a \emph{consolidation} of $C$, and write $C\consolidatesto D$, if the following holds.
    If $C\in \atomicincr(P)$, then $D\in N_+(P)$ and $C$ is \emph{cofinally above} $D$, that is,
    \begin{align*}
        \forall y\in D \; \exists x\in C \; (x > y).
    \end{align*}
    If $C\in \atomicdecr(P)$, then $D\in N_-(P)$ and $C$ is \emph{coinitially below} $D$, that is,
    \begin{align*}
        \forall y\in D \; \exists x\in C \; (y > x).
    \end{align*}
    Otherwise, there is a function $f\from N(C)\to N(D)$ with all the following properties.
    \begin{itemize}
        \item $f$ is order-preserving,
        \item $f$ is injective,
        \item $f$ is continuous (i.e.\ commutes with $\sup$ and $\inf$),
        \item if $C\in N_+(P)$, i.e.\ $C$ has no maximal element, then $f$ has cofinal image in $N(D)$, and if $C\in N_-(P)$ then $f$ has coinitial image in $N(D)$, and
        \item for all $\eps\in\set{-,+}$ and $X\in \atomiceps(C)$, we have $f(X)\in N_\eps (D)$ and $X\consolidatesto f(X)$.
    \end{itemize}
    We say that $f$ \emph{witnesses} the fact that $C\consolidatesto D$.
\end{definition}

The following remark highlights a potentially unintuitive point about \Cref{def:cofinal-comparability}

\begin{remark}
    Note that, for chains $C,D\sseq P$ with $N_-(C) = \emptyset$ (i.e. $C$ is wellfounded), if $C\consolidatesto D$, then for every element of $D$ there is a smaller element of $C$; in particular, if $C < D$, then $C \consolidatesto D$, as opposed to the case when $N_+(C) = \emptyset$, where $C>D$ implies $C\consolidatesto D$.
    This convention on the ordering allows for more natural statements, and statements which still hold after reversing the order of $P$.
    An example of such a statement is ``if $C\consolidatesto D$ and $D\centernot{\consolidatesto} C$, then there is an $x\in C$ incomparable to infinitely many elements of $D$.''
\end{remark}

We also take note of the following observation, which follows immediately from \Cref{def:cofinal-comparability}.

\begin{observation}
    If $C\consolidatesto D$, and $f\from N(C)\to N(D)$ witnesses this fact, then for every $X\in N(C)$, we have that $X\consolidatesto f(X)$. \lhc{Plausibly might want to justify this for non-atomic $X$, but I think it was at one point obvious.}
\end{observation}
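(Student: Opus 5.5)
The argument is a case analysis on the definition of $\consolidatesto$ (\Cref{def:cofinal-comparability}). Fix $X\in N(C)$ and write $Y\defined f(X)$. We want $X\consolidatesto Y$, where $X$ is read as a chain, namely a saturated subchain of $C$ representing the class. The definition then splits into three cases: $X$ atomic increasing, $X$ atomic decreasing, or $X$ non-atomic. The plan is to treat each case in turn, and in the last one build a witness by restricting $f$.

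If $X\in\atomiceps(C)$ for some $\eps\in\set{-,+}$, there is nothing to prove. The last bullet of \Cref{def:cofinal-comparability} says exactly that $f(X)\in N_\eps(D)$ and $X\consolidatesto f(X)$. Note that an atomic element of $N(C)$ is atomic in $N(P)$, since atomicity is intrinsic to the chain. By symmetry, the only remaining case is $X\in N_+(C)$ non-atomic.

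In that case, choose a representative chain for $X$: an initial segment $X_0\sseq C$ with no maximum. By \Cref{obs:embedding}, $H(X_0)$ sits inside $H(C)$ as an initial part, so $N(X_0)$ is identified with $\set{Z\in N(C)\st Z\leq X}$. Here the top element of $N(X_0)$ corresponds to $X$ itself, but I will take the witness to be $g\defined f|_{N(X_0)}$, landing in $N(Y_0)$ for a representative chain $Y_0$ of $Y$. I then check the bullets in order.
\begin{itemize}
  \item Order-preservation and injectivity of $g$ are inherited from $f$.
  \item Continuity is inherited because suprema and infima computed in $H(X_0)$ agree with those in $H(C)$ for sets bounded by $X$, using \Cref{lem:h-complete}.
  \item The atomic condition on $\atomiceps(X_0)$ is inherited from the last bullet for $f$.
  \item Cofinality of the image in $N(Y_0)$ follows from continuity. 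Since $X$ is non-atomic it is a supremum $X=\sup\set{Z\in N(C)\st Z<X}$: a copy of $\ww+1$ inside $X_0$ yields nonprincipal points below $X$ that are cofinal, by \Cref{lem:h-chain-is-chain}. Hence $Y=f(X)=\sup f[\set{Z<X}]$.
\end{itemize}

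The main obstacle is the step just described: showing that $f[\set{Z<X}]\sseq N(Y_0)$ and that this image is cofinal in $N(Y_0)$. This requires identifying elements of $N(D)$ below $Y$ with elements of $N(Y_0)$. I would handle it with \Cref{lem:h-chain-is-chain}: inside the linear order $H(D)$, every element below $Y$ is dominated by a point of the initial segment $Y_0$, so it lies in $H(Y_0)$. The remaining subtlety is whether $X$ can be approximated from below by nonprincipal points at all. If it cannot, then $X_0$ has a final segment without nonprincipal points below $X$ other than those accumulating, hence a final segment of type $\ww$, so $X$ is atomic and we are back in the first case. This dichotomy is the point I expect to need the most care.
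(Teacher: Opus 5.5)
Your argument is correct, and it is the unpacking the paper intends when it says this observation follows immediately from \Cref{def:cofinal-comparability}: the atomic case is literally the last bullet of that definition, and for non-atomic $X$ the witness is the restriction of $f$ to the initial part $N(X_0)\cong\set{Z\in N(C)\st Z\leq X}$, with every bullet inherited. One refinement: cofinality of the restricted image is automatic, because $X$ itself (the top of $N(X_0)$ in your identification) lies in the domain and is sent to $Y$. The real role of your approximation argument is different. That argument has two parts: $X=\sup\set{Z\in N(C)\st Z<X}$, and if this fails, some final segment of $X_0$ has type $\ww$. Combined with continuity and \Cref{lem:h-complete}, it shows that $Y=f(X)$ lies in $N_+(D)$. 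The definition only guarantees this at atomic points, and you need it in order to take $Y_0$ to be an initial segment of $D$.
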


We will consider the consolidation order in more detail in \Cref{sec:consolidation}, and it will be proved therein that, under certain conditions, $\consolidatesto$ is a partial order.
Here, we prove only that consolidation is a preorder.

We first record a basic property of consolidation, namely that it respects the underlying order even for non-atomic chains.

\begin{lemma}
    \label{lem:consolidation-is-cofinal}
    If $C\in N_+(P)$ and $C\consolidatesto D$, then $D$ is cofinally below $C$, that is,
    \begin{align*}
        \forall y\in D \; \exists x\in C \; (x > y).
    \end{align*}
    Similarly, if $C\in N_-(P)$ and $C\consolidatesto D$, then $D$ is coinitially above $C$, that is,
    \begin{align*}
        \forall y\in D \; \exists x\in C \; (x < y).
    \end{align*}
\end{lemma}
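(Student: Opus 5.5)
Only the $N_+(P)$ case needs an argument; the $N_-(P)$ case follows by applying it to the reverse poset $P^*$. The definition of consolidation is self-symmetric under reversal, with the roles of cofinal and coinitial exchanged.

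I would split according to which clause of \Cref{def:cofinal-comparability} applies.

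\textbf{Atomic case.} If $C\in\atomicincr(P)$, the definition directly says that $C$ is cofinally above $D$. This is exactly the required statement, so nothing needs proving.

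\textbf{Non-atomic case.} Suppose $C\in N_+(P)\setminus\atomicincr(P)$, and let $f\from N(C)\to N(D)$ witness $C\consolidatesto D$. Since $C$ has no maximal element, $f$ has cofinal image in $N(D)$.

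\begin{enumerate}
\item Fix $y\in D$. Since $D\in N_+(P)$ has no maximum, take some $y'\in D$ with $y'>y$. The principal-to-nonprincipal structure of $H(D)$ (\Cref{lem:h-chain-is-chain,lem:h-complete}) gives an element $W\in N(D)$ with $W\geq y'$. One could take the nonprincipal point just above $y'$, or more simply $W=[D]_+$ itself, the top nonprincipal element of $H(D)$. Here I am reading $N(D)$ as the nonprincipal elements of $H(D)$, with $[D]_+$ among them.
\item Use cofinality of $\im f$ to find $X\in N(C)$ with $f(X)\geq W>y$.
\item Use the observation following the remark after \Cref{def:cofinal-comparability}: $X\consolidatesto f(X)$.
\item Take $X$ as small as possible. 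By \Cref{lem:exists-atomic} (when $C$ is scattered), or simply by working inside the chain, locate an atomic element of $N(C)$ with image lying above $y$.
\end{enumerate}

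The main obstacle is closing this last step. The cleanest route I would try is an induction on the Hausdorff rank of $C$ via \Cref{thm:hausdorff-classification}, so that the statement of the lemma is already available for $X$ when $X\neq[C]_+$.

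\begin{enumerate}
\item Since $f$ is order preserving and continuous, $f([C]_+)=\sup f[\ldots]$ is the top element of $N(D)$, namely $[D]_+$.
\item Because $y<[D]_+=\sup$ of the image, there is some $X\in N(C)$ with $X<[C]_+$ and $f(X)>y$.
\item If $X\in N_+(C)$, apply the induction hypothesis to the proper initial segment of $C$ representing $X$. This gives $x\in C$ lying cofinally above the elements of a representative of $f(X)$, and since $f(X)$ dominates $y$, we get $x>y$.
\item If $X\in N_-(C)$, the definition of domination yields elements of a representative of $f(X)$ above $y$. The dual hypothesis then puts them coinitially above elements of $C$. In this case I would instead pick a slightly larger $X'\in N_+(C)$, using continuity and $X<[C]_+$, and reduce to the previous subcase.
\end{enumerate}

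The delicate point is making the induction well-founded. I would handle this by inducting on the order type of $N(C)$, noting that a proper initial segment has strictly smaller rank. Atomic chains form the base case, which is handled by the definition.
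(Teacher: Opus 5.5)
Your atomic case and your overall setup are fine: the cofinal image of $f$, continuity, and reversing $P$ for the $N_-$ case. The argument does not close, however, because the induction you fall back on is not well-founded. It is false that the proper initial segment representing some $X<[C]_+$ has strictly smaller order type, or strictly smaller Hausdorff rank. For instance, take $C=\bigoplus_{n\in\ww^*}(\ww^*\oplus\ww^2)$. This is a scattered (indeed vacillating) chain with no maximum. It is not atomic, since every final segment contains a copy of $\ww+1$. Yet deleting its top summand leaves a proper initial segment with no maximum which is isomorphic to $C$. Nothing in your second list's step 2 prevents it from returning exactly this $X$ (take $y$ in a low summand). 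Step 3 would then invoke the ``induction hypothesis'' on a chain of the same type, which is circular. Step 3 has a second problem: it needs $X\consolidatesto f(X)$ for a \emph{non-atomic} $X$. \Cref{def:cofinal-comparability} only supplies this for atomic $X$, so you would be relying on the observation after the definition, which is not justified there for non-atomic elements.

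The missing idea is the one you gestured at in step 4 of your first list and then abandoned. No induction is needed, because you can pass directly to an \emph{atomic} element by approximating from below rather than by taking $X$ minimal.
\begin{enumerate}
\item Take $X\in N_+(C)$ with $f(X)>y$, moving up from an element of $N_-(C)$ if necessary, as you do.
\item If $X$ is not atomic, every final segment of a representative of $X$ is infinite with no maximum. So \Cref{lem:exists-atomic} (in the scattered setting) places an element of $\atomicincr(C)$ inside it, and that element is strictly below $X$. Hence $X$ is the supremum in $H(C)$ of the atomic elements $Z<X$.
\item By continuity, $f(X)$ is the supremum of the corresponding $f(Z)$. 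Since $y<f(X)$ and $H(D)$ is a chain, some atomic $Z$ has $f(Z)>y$.
\item The last clause of \Cref{def:cofinal-comparability} gives $Z\consolidatesto f(Z)$ with $Z$ atomic. By the atomic clause, $Z$ is therefore cofinally above $f(Z)$.
\item Choose $z\in f(Z)$ with $z>y$, and then $x\in Z\sseq C$ with $x>z>y$.
\end{enumerate}
This is the paper's argument. It uses only the atomic clause of the definition, so it avoids both the rank issue and the non-atomic observation.
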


\begin{proof}
    By reversing the order of $P$, it suffices to prove the first statement, so assume that $C\in N_+(P)$ and $C\consolidatesto D$, and take a point $y\in D$; we must find some $x\in C$ with $x > y$.

    If $C$ is atomic, then $C\in\atomicincr(P)$, and \Cref{def:cofinal-comparability} states directly that $C$ is cofinally above $D$, which provides the required $x$.
    We may therefore assume that $C$ is not atomic, so that there is a function $f\from N(C)\to N(D)$ witnessing $C\consolidatesto D$.

    As $C\in N_+(P)$, the image of $f$ is cofinal in $N(D)$, and so there is some $Y\in N_+(D)$ with $Y > y$ and $Y\in\im(f)$.
    Let $X\in N_+(C)$ have $f(X) = Y$.
    We claim that there is some $Z \in \atomicincr(C)$ with $Z\leq X$ and $f(Z) > y$.
    Firstly, if $X\in\atomicincr(C)$, then we may take $Z = X$, so assume that $X$ is not atomic.
    As $X$ has no maximal element, every final segment of $X$ is infinite, and so \Cref{lem:exists-atomic} tells us that every final segment of $X$ contains an element of $\atomicincr(C)$.
    Let $Z_1 < Z_2 < \dotsb \in \atomicincr(C)$ satisfy $\sup\set{Z_n \st n < \ww} = X$.
    Then, due to continuity of $f$, we know that $\sup\set{f(Z_n) \st n < \ww} = Y$, and so there is some $n < \ww$ with $f(Z_n) > y$; we take $Z = Z_n$.

    As $Z$ is atomic, the relation $Z\consolidatesto f(Z)$ tells us that $Z$ is cofinally above $f(Z)$.
    Since $f(Z) > y$, there is some $z\in f(Z)$ with $z > y$, and hence some $x\in Z$ with $x > z > y$.
    As $x\in Z\sseq C$, this $x$ witnesses that $D$ is cofinally below $C$, as required.
\end{proof}

\begin{lemma}
    \label{lem:cofinal-consolidation-is-transitive}
    The order $\consolidatesto$ is transitive.
\end{lemma}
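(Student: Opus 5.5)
Take chains $C,D,E\sseq P$ with $C\consolidatesto D$ and $D\consolidatesto E$; I will show $C\consolidatesto E$ by splitting into cases according to which clause of \Cref{def:cofinal-comparability} applies to $C$ and to $D$. By reversing the order of $P$, it suffices to treat $C\in N_+(P)$, or $C$ not in $N(P)$ at all (then only the function clause is relevant).

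\emph{Case 1: $C\in\atomicincr(P)$.} Then $D\in N_+(P)$ and $C$ is cofinally above $D$. Since $D\in N_+(P)$ and $D\consolidatesto E$, \Cref{lem:consolidation-is-cofinal} (or the atomic clause directly, if $D$ is atomic) shows that $E\in N_+(P)$ and $D$ is cofinally above $E$. That $E\in N_+(P)$ follows in the atomic case from the definition. In the non-atomic case, the witness $g$ has cofinal image in $N(E)$ and sends elements of $N_+(D)$ to $N_+(E)$; I would check that $E$ then has no maximum. Cofinal-aboveness is transitive: given $z\in E$, choose $y\in D$ with $y>z$, then $x\in C$ with $x>y$. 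Hence $C\consolidatesto E$.

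\emph{Case 2: $C$ is not atomic.} Let $f\from N(C)\to N(D)$ witness $C\consolidatesto D$. If $D$ is not atomic, let $g$ witness $D\consolidatesto E$ and put $h\defined g\circ f$. Order-preservation, injectivity and continuity are each preserved under composition. Cofinality of the image also passes through: $\im f$ is cofinal in $N(D)$, $g$ is order-preserving and continuous, and $\im g$ is cofinal in $N(E)$. For an atomic $X\in\atomiceps(C)$, we have $f(X)\in N_\eps(D)$ and $X\consolidatesto f(X)$, and moreover $f(X)\consolidatesto g(f(X))$ by the Observation following \Cref{def:cofinal-comparability}. Since $X$ is atomic, Case 1 applied to $X\consolidatesto f(X)\consolidatesto g(f(X))$ gives $X\consolidatesto h(X)$ and $h(X)\in N_\eps(E)$.

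If instead $D$ is atomic (say $D\in\atomicincr(P)$) while $C$ is not, then $\im f$ cofinal in $N(D)$, together with $D$ being a copy of $\omega$ up to equivalence, means $N(D)$ consists essentially of $D$ itself. So $f$ maps some cofinal part of $N(C)$ onto $D$, and by injectivity $N(C)$ has a maximum $X$ with $f(X)=D$. Choose $Z\le X$ atomic with $f(Z)=D$, as in the proof of \Cref{lem:consolidation-is-cofinal}. Then define $h(X)\defined E$ on that top element and argue that $N(C)$ is otherwise sent consistently. I expect this degenerate mixed case to be the main obstacle: it requires pinning down $N(D)$ for atomic $D$, where vacillation or the well-definedness of atomicity via \Cref{lem:sim-equivalence-bijection} will be needed. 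I would first try to show that it forces $C$ to be atomic up to equivalence, which would reduce it to Case 1.
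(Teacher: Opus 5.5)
Your Cases 1 and 2 follow the paper's proof. For non-atomic $C$ you compose the witnesses and reduce the condition on atomic $X\in N(C)$ to transitivity for atomic $C$ via the Observation. For atomic $C$ you chain cofinal-aboveness. Applying \Cref{lem:consolidation-is-cofinal} directly to $D\consolidatesto E$ is slightly cleaner than the paper's detour through a $Y$ with $g(Y)>z$.

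One correction in Case 1: the definition constrains the sign of $g$ only on atomic points, so ``$g$ sends $N_+(D)$ to $N_+(E)$'' is not available. That $E$ has no maximum follows instead from cofinality and continuity at the top. Here the top of $N(\cdot)$ is identified with the supremum of the chain, as the proof of \Cref{lem:consolidation-is-cofinal} does. Cofinality forces $g(\sup D)$ to be the top of $N(E)$. But since $D$ is non-atomic, $\sup D$ is a supremum of strictly smaller points of $N(D)$, whose images would all lie below a maximum of $E$. The paper does not check $E\in N_+(P)$ at all.

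The genuine gap is the mixed case ($C$ not atomic, $D$ atomic), which you leave open, and your sketch for it is unsound. The lemma has no vacillation hypothesis. Even for vacillating $P$, only a final segment of an atomic $D$ has type $\ww$, so $N(D)$ is not ``essentially $D$'': for $D=\ww^2\oplus\ww$ it has infinitely many points below $\sup D$. The paper's proof skips this case too, since its first sentence presupposes a witness $g$ for $D\consolidatesto E$.

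Moreover, ``say $D\in\atomicincr(P)$'' is not without loss of generality once $C\in N_+(P)$ is fixed, and read literally the case can fail. Take the countable scattered vacillating chain $P=\ww^*\oplus\ww^2$, let $C$ be the $\ww^2$ part, $D=P$, and $E$ the $\ww^*$ part.
\begin{itemize}
\item $C\consolidatesto D$ via the inclusion $N(C)\sseq N(D)$.
\item $D\in\atomicdecr(P)$ is coinitially below $E\in N_-(P)$, so $D\consolidatesto E$.
\item $C$ is atomic in neither direction, $N(C)$ is infinite and $N(E)$ has at most three points, so $C\centernot{\consolidatesto}E$.
\end{itemize}
The same problem hits Case 1, yours and the paper's, when $D$ is atomic only as a decreasing chain.

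So consolidation must be read between chains of the same kind, increasing with increasing, as when it is treated as an order on $N(P)$ in \Cref{cor:consolidation-is-partial-order}. Then, up to reversal, the only mixed case is $C\in N_+(P)$ non-atomic with $D\in\atomicincr(P)$. Your guess that this case is degenerate is right, but the reason is continuity, not the shape of $N(D)$:
\begin{enumerate}
\item Fix a final segment $D'\sseq D$ containing no copy of $\ww+1$, and some $d'\in D'$.
\item Every point of $N_+(D)$ other than $\sup D$ is the supremum of an interval of $D$ without maximum that is bounded above in $D$. Such an interval misses $D'$, since otherwise it gives a copy of $\ww+1$ in $D'$. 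So the point is at most $d'$.
\item Take atomic $Z_1<Z_2<\dotsb$ cofinal in $N(C)$, as in the proof of \Cref{lem:consolidation-is-cofinal}. Each $f(Z_n)$ lies in $N_+(D)$ and is strictly below $\sup D$, so is at most $d'$.
\item Continuity then gives $f(\sup C)\leq d'$, whereas cofinality of the image forces $f(\sup C)=\sup D$.
\end{enumerate}
So this case never arises, and with that your argument is complete.
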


\begin{proof}
    Take chains $C$, $D$, and $E$ in the poset $P$ such that $C\consolidatesto D$ and $D\consolidatesto E$.
    We will show that $C\consolidatesto E$.
    
    If $C$ is not atomic, then there are functions $f\from N(C)\to N(D)$ and $g\from N(D)\to N(E)$ witnessing that $C\consolidatesto D$ and $D\consolidatesto E$ respectively.
    It is then immediate that the function $g\circ f\from N(C)\to N(E)$ is order-preserving, injective, continuous, and has cofinal/coinitial (as appropriate) image in $N(E)$.
    It remains to prove that if $X\in \atomiceps(C)$, then $g\circ f \in N_\eps(E)$ and $X\consolidatesto g\circ f(X)$.
    Note that this is precisely the statement of transitivity when $C$ is atomic, and so we now consider this case.

    If $C$ is atomic, then assume \wlg\ that $C\in\atomicincr(P)$; we must show that $C$ is cofinally above $E$.
    If $D\in\atomicincr(P)$, then $D$ is cofinally above $E$, and $C$ is cofinally above $D$, and the result is immediate.
    Therefore assume that $D$ is not atomic, and so there is a function $g\from N(D)\to N(E)$ as above.

	Take a point $z\in E$.
    As the image of $g$ is cofinal, there is some $Z\in N_+(E)$ with $Z>z$ and $Z\in \im(g)$.
	Let $Y\in N_+(D)$ have $g(Y)=Z$.
	\Cref{lem:consolidation-is-cofinal} tells us that $Y$ is cofinally above $Z$, and so there is $y\in D$ such that $y > z$.
    Finally, recall that $C$ is cofinally above $D$, so there is $x\in C$ with $x > y$, whence $x > z$, and so $C$ is cofinally above $E$, as required.
\end{proof}

In the next section, we consider situations in which $C\consolidatesto D$ and $D\consolidatesto C$, and thus work towards our eventual goal of proving that $\consolidatesto$ is, in some situations, a partial order.


\subsection{Consolidation is not always a partial order}
\label{subsec:consolidation-non-vacillating}

We now give an example of a poset $P$ for which consolidation is not a partial order.

\begin{proposition}
    \label{ex:no-finite-cut}
    Let $P$ be the poset on $X_2\defined \ZZ\times 2\times\NN$ with ordering $\leq$ defined to be the minimal partial order which contains all of the following for all $z\in\ZZ$, $i\in 2$, and $n\in \NN$.
    \begin{itemize}
        \item $(z,i,n) \leq (z,i,n+1)$.
        \item $(z,i,n) \leq (z+1,i,0)$.
        \item $(z,0,n) \leq (z,1,n) \leq (z+1,0,n)$.
    \end{itemize}
	Then $\consolidatesto$ is not a partial order on the set of maximal chains of $P$.
\end{proposition}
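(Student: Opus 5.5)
The plan is to exhibit two distinct maximal chains $C_0,C_1\sseq P$ with $C_0\consolidatesto C_1$ and $C_1\consolidatesto C_0$. Together with \Cref{lem:cofinal-consolidation-is-transitive}, this shows that $\consolidatesto$ is a preorder that fails antisymmetry. The natural candidates are the two ``sheets''
\begin{equation*}
	C_0\defined\set{(z,0,n)\st z\in\ZZ,\,n\in\NN} \quad\text{and}\quad C_1\defined\set{(z,1,n)\st z\in\ZZ,\,n\in\NN},
\end{equation*}
each of order type $\bigoplus_{z\in\ZZ}\ww$.

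The first step is to pin down the order of $P$. From the generating relations one checks that
\begin{equation*}
	(z,0,m)\leq(z,1,n) \iff m\leq n, \qquad (z,1,n)\leq(z+1,0,m) \iff n\leq m,
\end{equation*}
and that everything in column $z$ lies below everything in column $z+2$. One must also verify that no other comparabilities arise; I expect this bookkeeping to be the main (if routine) obstacle. The check is by induction on the length of a generating sequence of relations: along such a sequence, the pair (third coordinate, column index) changes only in controlled ways.

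From this description, each $C_i$ is a chain. It is maximal because:
\begin{itemize}
	\item every $(z,1,n)$ is incomparable to $(z,0,m)$ for $m>n$, so no element of $C_1$ can be added to $C_0$;
	\item every $(z,0,m)$ is incomparable to $(z,1,n)$ for $n<m$ and to $(z-1,1,n)$ for $n>m$, so no element of $C_0$ can be added to $C_1$.
\end{itemize}

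Next I would describe $N(C_i)$. Its nonprincipal increasing elements are $X^i_z\defined[\set{(z,i,n)\st n\in\NN}]_+$ for $z\in\ZZ$, each an atomic copy of $\ww$, together with $\top$ and the bottom element $\bot$. There are no nonprincipal decreasing elements other than $\bot$: every decreasing sequence in $C_i$ crosses columns, so it is coinitial. Neither $C_i$ is itself atomic, since it contains $\ww+1$. Hence the functional clause of \Cref{def:cofinal-comparability} is the one that applies.

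I would then define $f\from N(C_0)\to N(C_1)$ by $X^0_z\mapsto X^1_{z-1}$, and $g\from N(C_1)\to N(C_0)$ by $X^1_z\mapsto X^0_z$, with both maps fixing $\top$ and $\bot$. Each is an order isomorphism, so it is order-preserving and injective, and its image is all of $N(C_{1-i})$, hence cofinal. Each is continuous, because the only nontrivial suprema and infima of families of the $X_z$ are $\top$ and $\bot$, and these are preserved.

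For the atomic condition we need $X^0_z$ cofinally above $X^1_{z-1}$. This holds since $(z-1,1,n)\leq(z,0,n)<(z,0,n+1)$. We also need $X^1_z$ cofinally above $X^0_z$, which holds since $(z,0,n)\leq(z,1,n)<(z,1,n+1)$. Thus $C_0\consolidatesto C_1\consolidatesto C_0$ while $C_0\neq C_1$, as required.
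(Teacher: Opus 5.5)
Your proposal is correct and is essentially the paper's proof: the same sheets $C_0,C_1$, the same witnesses $X^0_z\mapsto X^1_{z-1}$ and $X^1_z\mapsto X^0_z$, and the same two cofinality checks, with your explicit verification of the order and of maximality filling in what the paper leaves implicit. The one point to tighten is the conclusion: the paper shows $C_0\not\simeq_+C_1$, not merely $C_0\neq C_1$, and this is what is needed because $\consolidatesto$ is meant as an order on $N(P)$ (distinct but $\simeq_+$-equivalent chains can consolidate to one another even in vacillating posets); it follows at once from your own observation that each $(z,1,n)$ is incomparable to the infinitely many $(z,0,m)$ with $m>n$.
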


A Hasse diagram of the poset defined in \Cref{ex:no-finite-cut} is shown in \Cref{fig:example-2}.

\begin{figure}[htbp]
    \centering
    \begin{tikzpicture}[scale=1]
    \pgfmathsetmacro\baseoff{0.5}
    \pgfmathsetmacro\width{0.75}
    \pgfmathsetmacro\top{3}
    \pgfmathsetmacro\yoff{0.1}
    \pgfmathsetmacro\pointtop{3}
    \definecolor{grey}{rgb}{0.7,0.7,0.7}

    \foreach \xbase/\ybase [count = \i from 1] in {0/0, 3/1.5, 0/3, 3/4.5, 0/6} {
        \coordinate (l1\i) at (\xbase, \ybase + \top - \yoff);
        \coordinate (l2\i) at (\xbase - \width, \ybase - \yoff);
        \coordinate (l3\i) at (\xbase + \width, \ybase - \yoff);
        \draw[color=grey,line width = 1mm] (l1\i) -- (l2\i) -- (l3\i) -- (l1\i);

        \foreach \j in {1,...,20} {
            \pgfmathsetmacro\propn{1 - 4/(\j + 3)}
            \node (v\i\j) at (\xbase,\ybase + \propn * \pointtop) {$\bullet$};
        }
    }

    \foreach \j in {1,...,20} {
        \pgfmathsetmacro\propn{1 - 4/(\j + 3)}
        \coordinate (v0\j) at (3, -1.5 + \propn * \pointtop);
        \coordinate (v6\j) at (3, 7.5 + \propn * \pointtop);
        \draw (v0\j) -- (v1\j) -- (v2\j) -- (v3\j) -- (v4\j) -- (v5\j) -- (v6\j);
    }
    \draw (0,-0.5) -- (0,9.5);
    \draw (3,0) -- (3,9);

    \draw [dotted, thick] (0,-1.5) -- (0,10.5);
    \draw [dotted, thick] (3,-1) -- (3,10);

	\node (X0) at (-1.2, 1.5) {$X_{-1}$};
	\node (X1) at (-1.2, 4.5) {$X_{0}$};
	\node (X2) at (-1.2, 7.5) {$X_{1}$}; 
	\node (Y0) at (4.2, 3) {$Y_{-1}$};
	\node (Y1) at (4.2, 6) {$Y_{0}$};
	\node (C0) at (0, 11) {$C_0$};
	\node (C1) at (3, 11) {$C_1$};
\end{tikzpicture}
    \caption{A Hasse diagram of the poset defined in \Cref{ex:no-finite-cut}. The regions in grey triangles are order-isomorphic to $\ww$, and this structure continues infinitely in both directions.
	The structures are labelled to match the notation used in \Cref{ex:no-finite-cut} and its proof.}
    \label{fig:example-2}
\end{figure}

\begin{proof}[Proof of \Cref{ex:no-finite-cut}]
	We prove that, on the poset $P$ as defined in the proposition, the consolidation order $\consolidatesto$ fails to be antisymmetric.

	Define the chains $C_0$ and $C_1$ by $C_i\defined\set{(z,i,n)\st z\in \ZZ, n\in\NN}$, and note that these are both maximal.
	We claim that $C_0 \consolidatesto C_1$ and $C_1 \consolidatesto C_0$, but that $C_0 \not\simeq_+ C_1$, which demonstrates that $\consolidatesto$ is not antisymmetric on $N(P)$.

	Both $N(C_0)$ and $N(C_1)$ have order type $\ZZP = \ZZ\union\set{\bot,\top}$ (recalling that elements $\bot$ and $\top$ are defined to be in $N(C)$); for $z\in \ZZ$, let $X_z\in N(C_0)$ be (represented by) the chain $\set{(z, 0, n) \st n\in \NN}$, and $Y_z\in N(C_1)$ be $\set{(z,1,n) \st n\in \NN}$.
	Write the top and bottom elements of $N(C_i)$ as $\top_i$ and $\bot_i$ respectively.

	Then we may define $f\from N(C_0) \to N(C_1)$ by sending $\top_0\mapsto \top_1$ and $\bot_0 \mapsto \bot_1$ and $X_{z+1}$ to $Y_z$ for all $z\in\ZZ$.
	Similarly, $g\from N(C_1) \to N(C_0)$ maps the corresponding top and bottom elements to each other, and also sends $Y_z$ to $X_z$ for all $z\in \ZZ$.
	We may note that $X_{z+1}$ is cofinally above $Y_z$ for all $z$, and $Y_z$ is cofinally above $X_z$ for all $z\in\ZZ$, and both $f$ and $g$ are order-preserving, injective and continuous.
	Thus $f$ and $g$ do indeed witness $C_0 \consolidatesto C_1$ and $C_1 \consolidatesto C_0$, as required.

	However, every element of $C_0$ has infinite incomparability in $C_1$ and vice-versa, so $C_0 \not\simeq_+ C_1$, and the result is proved.
\end{proof}

\subsection{Consolidation is a partial order on vacillating posets}
\label{subsec:consildation-vacillating}

Now that we know that there are non-vacillating posets on which consolidation is not a partial order, we work towards showing that it is in fact a partial order whenever $P$ is vacillating.
We begin with the following definition, which in order to show that $\consolidatesto$ is antisymmetric, we must show is exactly the relation $\simeq$ on $N(P)$.

\begin{definition}
    We say that the chains are \emph{strongly bicomparable} if $C\consolidatesto D$ and $D\consolidatesto C$.
\end{definition}

Note that strong bicomparability implies bicomparability, as defined in \Cref{def:bicomparable}, but the converse does not hold in general.

\begin{proposition}
    \label{prop:final-strongly-bicomp}
    If $P$ is scattered and chains $C,D\sseq P$ have $C\simeq_+ D$, then there are final segments $C'$ and $D'$ of $C$ and $D$ respectively that are strongly bicomparable.
    Equivalently, if $C\simeq_- D$, then the chains have strongly bicomparable initial segments.
\end{proposition}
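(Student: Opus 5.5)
We prove the $\simeq_+$ statement; the $\simeq_-$ case follows by reversing the order of $P$. The principal case is trivial: if $C$ and $D$ have equal maximal elements, take $C'=D'=\set{x}$. So assume neither has a maximum. Fix final segments $C'\sseq C$ and $D'\sseq D$ with mutually finite incomparability. Let $f\from \quo{C'}\to \quo{D'}$ be the isomorphism from \Cref{lem:sim-equivalence-bijection}, which preserves the existence of maximal and minimal elements of classes. Shrinking further as in the proof of \Cref{lem:equivalence}, we may assume the least classes are infinite or absent, and that the minimal elements are equal or incomparable.

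The plan is to build explicit witnesses $F\from N(C')\to N(D')$ and $G\from N(D')\to N(C')$ for $C'\consolidatesto D'$ and $D'\consolidatesto C'$. Every nonprincipal element $X\in N(C')$ is determined by a cut of $C'$: either the supremum of an initial segment without maximum, or the infimum of a final segment without minimum. By \Cref{lem:sim-equivalence-bijection}, the map $f$ induces a bijection between cuts of $\quo{C'}$ and cuts of $\quo{D'}$. We then refine this to cuts of $C'$ itself, using that each class is finite or of type $\ww$, $\ww^*$, $\zz$, and that corresponding classes $[x]_{C'}$ and $f([x]_{C'})$ are isomorphic with matching endpoint behaviour.

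We define $F(X)$ to be the element of $N(D')$ at the corresponding cut. For example, if $X$ is the supremum of an $\ww$-type class $Z$, then $F(X)$ is the supremum of $f(Z)$. If $X$ is the supremum of a limit of classes, then $F(X)$ is the supremum of the images. Set $\top\mapsto\top$ and $\bot\mapsto\bot$.

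We then check the conditions of \Cref{def:cofinal-comparability}. Order preservation, injectivity and continuity hold because $F$ is induced by an order isomorphism of cut-spaces, which commutes with suprema and infima by \Cref{lem:h-complete}. Cofinality of the image in $N(D')$ follows from surjectivity of $f$ and $D'$ having no maximum.

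The main obstacle is the last condition: for each atomic $X\in \atomicincr(C')$, we need $F(X)\in N_+(D')$ and $X$ cofinally above $F(X)$. The second part means that for every $y$ in a representative of $F(X)$ there is $x\in X$ with $x>y$.

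We argue as in the second part of the proof of \Cref{lem:equivalence}. Take $y$ in the chain representing $F(X)$. By mutually finite incomparability, $y$ is incomparable with only finitely many elements of $C'$. Since $X$ has no maximum, some $x\in X$ lies above the finitely many incomparable points. Then $x\comp y$, and if $x<y$ for all such $x$, then $y$ would be above a cofinal part of $X$. In that case $y$ lies in a strictly later class, or later in the same class, than $F(X)$ permits, contradicting the compatibility of $f$ with incomparability and equality. Hence $x>y$.

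The symmetric argument for $\atomicdecr$ is identical. Scatteredness is used to guarantee that $N(C')$ is organised by cuts of the scattered chains, so the atomic elements behave as described; \Cref{lem:exists-atomic} is available if we need to approximate non-atomic points. Exchanging the roles of $C'$ and $D'$ then gives $G$, establishing strong bicomparability.
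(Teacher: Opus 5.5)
Your proposal is correct and follows the paper's proof: you pass to final segments with mutually finite incomparability, extend the quotient isomorphism of \Cref{lem:sim-equivalence-bijection} to an isomorphism $N(C')\to N(D')$, and check the atomic condition using finite incomparability (the extra shrinking you borrow from \Cref{lem:equivalence} is not needed). Your use of the class correspondence to rule out a point $y$ of $F(X)$ lying above all of $X$ makes explicit a step that the paper's one-line justification passes over.
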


\Cref{prop:final-strongly-bicomp} follows immediately from the definition of equivalence and the following lemma.

\begin{lemma}
    If $P$ is scattered and chains $C,D\sseq P$ have mutually finite incomparability and no maximal elements, then $C$ and $D$ are strongly bicomparable.
\end{lemma}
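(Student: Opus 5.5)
By symmetry between $C$ and $D$, I would only prove $C\consolidatesto D$. The reverse relation $D\consolidatesto C$ then follows in the same way. The natural candidate for the witness is built from the isomorphism of \Cref{lem:sim-equivalence-bijection}: let $\phi\from \quo{C}\to \quo{D}$ be that isomorphism. I would show it induces an order isomorphism $f\from N(C)\to N(D)$ with $X\simeq_+ f(X)$ or $X\simeq_- f(X)$ (as appropriate) for each $X\in N(C)$.

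\textbf{Constructing $f$.} First treat elements of $N(C)$ arising as the top or bottom of an infinite $\finsim_C$-class. If $X$ is an infinite class with no maximum, then the final segment of $X$ is an increasing nonprincipal element. Its image $\phi(X)$ also has no maximum, by the ``moreover'' part of \Cref{lem:sim-equivalence-bijection}. The sub-chains $X$ and $\phi(X)$ inherit mutually finite incomparability; the second half of the proof of \Cref{lem:equivalence} checks exactly this kind of statement. Hence their classes agree in $N_+(P)$, and we set $f([X]_+)=[\phi(X)]_+$. Next, a general $X\in N_+(C)$ is the supremum of an initial segment $L$ of $C$. That segment is a union of $\finsim_C$-classes, possibly together with an infinite final part of one class. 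We send $X$ to the supremum in $H(D)$ of $\bigcup \phi[L/\finsim_C]$, adjusted in the same way. \Cref{lem:h-complete} guarantees that this supremum exists and lies in $N_+(D)$. Decreasing elements are treated dually, and $\top,\bot$ are sent to $\top,\bot$.

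\textbf{Checking the axioms.} Order preservation and injectivity are inherited from $\phi$ being an isomorphism of the quotient chains. Continuity holds because $f$ is defined through suprema and infima of initial and final segments, and these commute with the bijection $\phi$. Cofinality of the image holds since $\phi$ is surjective and $C,D$ have no maximal elements. For the atomic condition, take $X\in\atomiceps(C)$; I expect $X$ and $f(X)$ to be equivalent chains. For $\eps=+$, every $x\in X$ is incomparable to only finitely many points of the final segment of $f(X)$, and that final segment is infinite. Hence some element of $f(X)$ lies above $x$, and conversely. This gives ``cofinally above'' in both directions, and with it $X\consolidatesto f(X)$.

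\textbf{The main obstacle.} The hard part is the case where $X\in N(C)$ is not the end of a single $\finsim_C$-class. Such an $X$ is a limit of infinitely many classes, for instance the supremum of a sequence of finite classes. There I must check that $X$ and $f(X)$ are genuinely comparable in the required sense, that is, that the images of the classes below $X$ do not overshoot $X$. This is where scatteredness enters. I would use \Cref{fact:covers} and \Cref{lem:h-scattered} to control the structure of $H(C)$ and $H(D)$. The key step is to show that any point $y\in D$ with $y>X$ and $y$ below $f(X)$ would produce a non-atomic gap. Iterating this would embed a copy of $\QQ$, using the dyadic construction from \Cref{lem:h-scattered}. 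I expect this step to consume most of the work.
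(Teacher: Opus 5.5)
Your outline matches the paper's: extend the isomorphism $\phi\from\quo{C}\to\quo{D}$ of \Cref{lem:sim-equivalence-bijection} to a map $N(C)\to N(D)$, note that order-preservation, injectivity, continuity and cofinality come for free, and check only the atomic condition. The gap is the step you yourself call the crux: for an atomic $X=[L]_+$ that is a limit of infinitely many $\finsim_C$-classes, you must show that no point of the corresponding segment $L'=\bigcup\phi[L/\finsim_C]$ of $D$ overshoots $X$. You do not prove this. The route you sketch (scatteredness, \Cref{fact:covers}, a dyadic copy of $\QQ$) is also not what makes it true. Scatteredness is never needed, and you give no reason why one overshooting point would generate the next level of a dyadic tree.

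The missing idea is a cross-comparability property that follows directly from \Cref{lem:sim-equivalence-bijection}: if $Z<Z'$ are $\finsim_C$-classes, then $\phi(Z)<Z'$ and $Z<\phi(Z')$. For the first (the second is symmetric), let $w\in\phi(Z)$ and $x\in Z'$.
\begin{itemize}
\item If $w\incomp x$ or $w=x$, the lemma gives $\phi(Z')=[w]_D=\phi(Z)$, which is false.
\item If $w>x$, pick $y\in D$ with $y=x$ or $y\incomp x$. This is possible because $D$ is maximal in $\Conv(C\union D)$. Then $y\in\phi(Z')$, so $y>w>x$, which is impossible in either case.
\end{itemize}
With this, the limit case takes one line. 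Each $y\in L'$ lies in some $\phi(Z)$ with $Z\sseq L$. Since $L/\finsim_C$ has no last class, there is a class $Z'\sseq L$ above $Z$, and then $y<Z'$. Pushing this further, cross-comparability together with maximality of $C$ and $D$ in $\Conv(C\union D)$ shows that $L$ and $L'$ themselves have mutually finite incomparability. Hence $[L]_+=[L']_+$, and your $f$ is just the identity on $N(C)=N(D)\sseq N(P)$.

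Your single-class check has a smaller hole of the same kind. Suppose $y$ is incomparable to only finitely many points of an infinite chain $Z$ with no maximum. It does not follow that some point of $Z$ lies above $y$: $y$ could lie above a final segment of $Z$, and hence above all of $Z$. You must exclude this using maximality. Take $y\in\phi(Z)$ with $y>Z$, where $Z$ is the last class of $L$.
\begin{itemize}
\item By cross-comparability, $y$ is comparable to every element of $C$.
\item Hence $y\in C$, by maximality of $C$ in $\Conv(C\union D)$.
\item Then $[y]_C=\phi^{-1}([y]_D)=Z$ by \Cref{lem:sim-equivalence-bijection}, contradicting $y>Z$.
\end{itemize}
The paper's one-line justification of the atomic condition is equally terse at exactly this point; this maximality argument is what makes it go through.
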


\begin{proof}
    Due to symmetry, it suffices to prove that $C\consolidatesto D$.
    Applying \Cref{lem:sim-equivalence-bijection}, we find an order-preserving bijection, i.e.\ order-isomorphism $f\from C/\finsim_C \to D/\finsim_D$.
    This may then be extended to an isomorphism, which we also call $f$, from $N(C)$ to $N(D)$, which we claim witnesses $C\consolidatesto D$.
    
    The only non-trivial condition that we need to check is that, if $X\in \atomicincr(C)$, then $X\consolidatesto f(X)$.
    Due to mutually finite incomparability, for all $y\in f(X)$, there must be some $x\in X$ with $x>y$, as otherwise $y$ would have $\incompset{y}\inter X$ infinite, a contradiction.
    This immediately shows that $X\consolidatesto f(X)$, and proves the lemma.
\end{proof}

\begin{lemma}
    \label{lem:strong-bicomp-sometimes-implies-equiv}
    If $P$ is scattered and vacillating, and chains $C,D\sseq P$ are increasing and strongly bicomparable, then $C\simeq_+ D$.
    Equivalently, if the chains are decreasing and strongly bicomparable, then $C\simeq_- D$.
\end{lemma}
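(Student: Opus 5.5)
Throughout, I treat the increasing case; the decreasing case follows by reversing the order of $P$. Let $C,D\in N_+(P)$ be strongly bicomparable, with $f\from N(C)\to N(D)$ witnessing $C\consolidatesto D$ and $g\from N(D)\to N(C)$ witnessing $D\consolidatesto C$. The aim is to produce final segments $C'\sseq C$ and $D'\sseq D$ with mutually finite incomparability.

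The first step is to show that $h\defined g\circ f$ is the identity on a final segment of $N(C)$. By the transitivity lemma (\Cref{lem:cofinal-consolidation-is-transitive}), $h$ witnesses $C\consolidatesto C$. It is order-preserving, injective, continuous and cofinal, and it sends each atomic $X$ to some $h(X)\in N_\eps(C)$ with $X\consolidatesto h(X)$. Now $N(C)\sseq H(C)$ is a complete scattered chain by \Cref{lem:h-complete} and \Cref{lem:h-scattered}. If $h(X)<X$ for some atomic increasing $X$, then $X$ is cofinally above $h(X)$, which is consistent. The danger is the reverse: a strictly moving injective map such as $X_{z+1}\mapsto X_z$, as in \Cref{ex:no-finite-cut}.

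So suppose $h$ moves points arbitrarily high, say $h(X)<X$. Iterating gives $X>h(X)>h^2(X)>\cdots$. Taking the orbit upward instead, from continuity and cofinality I expect to extract an $\omega$-sequence $X<h^{-1}$-preimages $<\cdots$, and hence a saturated subchain of $C$ of the form $\bigoplus_{i\in\ww}C_i$, where $C_i$ is the part of $C$ between consecutive orbit points. Each $C_i$ is infinite, because each step of the orbit passes a nonprincipal point. The key point to prove is that the $C_i$ can be chosen co-wellfounded. The atomic condition forces each relevant atomic increasing $Y$ to satisfy that $Y$ is cofinally above $h(Y)$. I expect to use this to show that if some $C_i$ contained an atomic increasing chain (a copy of $\ww$ by the remark on vacillating posets), the shift would have to move it strictly, and one can then pass to the pieces between increasing limits. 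The vacillation hypothesis forbids exactly such a sum, so this yields a contradiction. I therefore expect $h$ to fix every element on a final segment of $N(C)$, and symmetrically $f\circ g$ to fix every element on a final segment of $N(D)$. Hence $f$ restricted to suitable final segments is an isomorphism onto a final segment of $N(D)$, with $X\consolidatesto f(X)$ and $f(X)\consolidatesto X$ for every atomic $X$. For atomic $X$ this is precisely bicomparability of $X$ and $f(X)$.

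The second step is to pass to final segments $C',D'$ and verify mutually finite incomparability. Given $x\in C'$, I will locate it between consecutive nonprincipal points of $N(C')$ and compare with the corresponding interval of $D'$ via $f$. Pointwise bicomparability of atomic pieces, together with the fact that in a vacillating scattered poset the atomic increasing chains are copies of $\ww$, should give that $x$ lies below all but finitely many elements of the matched $\ww$-block of $D'$. It should also lie above everything in earlier blocks, since any element of $D'$ below $f(X)$ is below some element of $X$. This leaves only finitely many incomparabilities. Maximality of $C'$ and $D'$ in $\Conv(C'\union D')$ then follows by the same squeeze argument.

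I expect the first step to be the main obstacle: turning "$h$ is a nontrivial self-consolidation" into an explicit saturated non-vacillating subchain. The plan is to choose orbit points $h^{-n}$-style and prove co-wellfoundedness of the gaps using the atomic-consolidation condition.
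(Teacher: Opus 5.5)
Your first step has a genuine gap, at the point you yourself flag. You run the orbit of $h$ in the wrong direction and aim for the wrong property of the blocks.

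You have $h(X)\le X$ for $X\in N_+(C)$. The missing ingredient is the dual fact: $h(Y)\ge Y$ for every $Y\in N_-(C)$, which comes from \Cref{lem:consolidation-is-cofinal} for decreasing chains together with comparability in $H(C)$. Combined with monotonicity, this shows that if $h(X)<X$ then no $Y\in N_-(C)$ lies strictly between $h(X)$ and $X$. Otherwise $Y<X$ while $h(Y)\ge Y>h(X)$.

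By injectivity, the descending orbit $X>h(X)>h^2(X)>\cdots$ is strict; this is the orbit you wrote down and then abandoned. Applying the observation to each consecutive pair shows that $\Conv\set{h^n(X)\st n<\ww}$ meets no point of $N_-(C)$. Hence each block of $C$ between $h^{n+1}(X)$ and $h^n(X)$ is infinite and wellfounded. These blocks form a saturated chain whose reverse is $\bigoplus_{n\in\ww}$ of infinite co-wellfounded chains, which is exactly what vacillation forbids. So $h=\mathrm{id}$ on all of $N(C)$, not merely on a final segment, and symmetrically $f\circ g=\mathrm{id}$.

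The upward plan cannot be repaired. Take an upward orbit $X=X_0<X_1<\cdots$ with $h(X_{k+1})=X_k$; you have not justified that it exists, since $X$ need not lie in the image of $h$. Each block $(X_k,X_{k+1})_C$ has the nonprincipal increasing point $X_{k+1}$ as its supremum. So it contains an increasing $\ww$-sequence and is never co-wellfounded, which kills your key claim.

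Moreover, by the observation above, the whole region swept out by this orbit contains no point of $N_-(C)$, so it is a wellfounded chain. A wellfounded chain contains no saturated subchain $E$ such that $E$ or $E^*$ equals $\bigoplus_{i\in\ww}E_i$ with each $E_i$ infinite and co-wellfounded. So no contradiction with vacillation can come from that side. In \Cref{ex:no-finite-cut}, where exactly this shift occurs, the upward region is $\ww^2$ and the forbidden structure sits below $X$.

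Your second step is also only sketched. Not every point of $C'$ lies in an $\ww$-block just below an atomic increasing point: there are $\ww^*$-regions governed by $N_-(C)$, and points sitting just above non-atomic limits. Maximality of $C',D'$ in $\Conv(C'\union D')$ also needs a real argument. But the decisive missing idea is the $N_-(C)$ inequality combined with the descending orbit.
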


\begin{proof}
    First assume that $C$ is not atomic. 
    Thus $D$ is also not atomic and there are functions $f\from N(C)\to N(D)$ and $g\from N(D)\to N(C)$ witnessing $C\consolidatesto D$ and $D\consolidatesto C$ respectively.
    Consider the function 
    \begin{equation*}
        h\defined g\circ f\from N(C)\to N(C).
    \end{equation*}
    Our goal is to prove that $h$ is the identity function.
    
    Take some $X\in N_+(C)$.
    Then \Cref{lem:consolidation-is-cofinal} tells us that $h(X)\in N_+(C)$ is cofinally below $X$, and so, as they are comparable, $h(X)\leq X$.
    Similarly, if $Y\in N_-(C)$, then $h(Y)\geq Y$.

    Assume for contradiction that there is some $X\in N_+(C)$ such that $h(X)\neq X$ (the case for $N_-(C)$ is entirely similar).
    We thus know that $h(X) < X$.
    We now claim that the interval $(h(X),X)\inter H(C)$ is disjoint from $N_-(C)$.
    Indeed, if $Y\in N_-(C)$ had $h(X) < Y < X$, then $h(Y) \geq Y > h(X)$.
	But we thus find that $Y < X$ and $h(Y) > h(X)$, contradicting the fact that $h$ is order preserving.

    Moreover, $h$ is injective, and so $h(h(X))\neq h(X)$, and thus $h(h(X)) < h(X)$.
    Repeating the above process, and letting $X_0\defined X$ and $X_{n+1}\defined h(X_n)$ for all $n\geq 0$, we find that $X_0 > X_1 > X_2 > \cdots$, and
    \begin{align*}
        \Conv\set{X_n \st n<\ww} \inter N_-(C) = \emptyset.
    \end{align*}
    However, this contradicts the vacillation of $P$, and so $h$ is indeed the identity function.
    Similarly, we find that $f\circ g = \text{id}_{N(D)}$, and so $f$ and $g$ are mutually inverse bijections.

    It therefore suffices to prove the result for atomic chains.
    Moreover, we only need to consider chains which are atomic both as increasing and decreasing chains, i.e.\ do not admit embeddings of either $\ww + 1$ or $(\ww + 1)^*$.
    Such chains are precisely sub-orders of $\zz$, the order-type of the integers.

    Let $C$ and $D$ be strongly bicomparable and order-isomorphic to subsets of the integers.
    Then $C$ has a maximal element if and only if $D$ does, so assume that neither chain has a maximal element, and take $x\in C$.
    As $D\consolidatesto C$, there is some $y\in D$ with $y > x$, and so it is immediate from symmetry that $C\simeq_+ D$.
    The situation for decreasing chains is entirely similar, and thus the lemma is proved.
\end{proof}

We close this subsection by combining the above results into the following result.

\begin{corollary}
	\label{cor:consolidation-is-partial-order}
	If $P$ is a scattered vacillating poset, then the consolidation order $\consolidatesto$ is a partial order on $N(P)$.
\end{corollary}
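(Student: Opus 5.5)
We must verify reflexivity, transitivity and antisymmetry of $\consolidatesto$ on $N(P)$, where elements of $N(P)$ are equivalence classes under $\simeq_+$ or $\simeq_-$. Antisymmetry is therefore to be read up to equivalence: if $C\consolidatesto D$ and $D\consolidatesto C$ then $C\simeq D$. A preliminary point is that $\consolidatesto$ is well defined on classes, i.e.\ that it does not depend on the representative chain.

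\textbf{Transitivity} is exactly \Cref{lem:cofinal-consolidation-is-transitive}. \textbf{Reflexivity} is direct. For an atomic $C\in\atomicincr(P)$, $C$ is trivially cofinally above itself, since $C$ has no maximum and so every $y\in C$ has a larger $x\in C$; the atomic decreasing case is symmetric. For non-atomic $C$, the identity map on $N(C)$ is order-preserving, injective, continuous and has cofinal (respectively coinitial) image, and by the atomic case it sends each atomic element to something it consolidates to.

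\textbf{Well-definedness on classes.} Suppose $C\simeq_+ C'$. By \Cref{prop:final-strongly-bicomp}, which uses that $P$ is scattered, $C$ and $C'$ have strongly bicomparable final segments. Each chain consolidates to its own final segment, and conversely: in the atomic case this is again a cofinality statement, and in the non-atomic case the natural restriction or inclusion of $N(\cdot)$ serves as the witness. Combining this with transitivity shows that $C\consolidatesto D$ depends only on the classes $[C]$ and $[D]$. The $\simeq_-$ case is symmetric.

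\textbf{Antisymmetry.} Suppose $C\consolidatesto D$ and $D\consolidatesto C$, so the two chains are strongly bicomparable. By \Cref{lem:consolidation-is-cofinal} they have the same type: both are increasing or both are decreasing, because the definition forces $D\in N_+(P)$ whenever $C\in N_+(P)$. \Cref{lem:strong-bicomp-sometimes-implies-equiv}, which uses vacillation, then gives $C\simeq_+ D$ (respectively $C\simeq_- D$), so $[C]=[D]$ in $N(P)$. The elements $\top$ and $\bot$ are handled trivially.

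I expect the only real obstacle to be the bookkeeping in the well-definedness step, namely checking that the inclusion of a final segment yields a valid witness. The substantive antisymmetry argument has already been carried out in \Cref{lem:strong-bicomp-sometimes-implies-equiv}.
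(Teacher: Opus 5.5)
Your route is the paper's own: transitivity from \Cref{lem:cofinal-consolidation-is-transitive}, passage to classes via \Cref{prop:final-strongly-bicomp}, and antisymmetry from \Cref{lem:strong-bicomp-sometimes-implies-equiv}. Your identity-witness check of reflexivity is fine. However, the step you flagged as bookkeeping is false: for non-atomic classes, a chain need not consolidate to its own final segments.

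Take $P=C=\ww^2$ and $C'=[\ww,\ww^2)$. Then $C\simeq_+C'$, and the class is non-atomic. A witness $f\from N(C)\to N(C')$ would have to send the atomic element $X_1\in\atomicincr(C)$ represented by $[0,\ww)$ to some $f(X_1)\in N_+(C')$ with $X_1$ cofinally above $f(X_1)$. But every element of $N_+(C')$ except the top is represented inside $[\ww,\ww^2)$, which lies entirely above $[0,\ww)$. And $f(X_1)$ cannot be the top, since $f$ is injective and order-preserving.

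Hence $C\consolidatesto C$ but $C\centernot{\consolidatesto}C'$. The relation genuinely depends on representatives, so your claim that it depends only on $[C]$ and $[D]$ is false; read as ``for all representatives'', even reflexivity on $N(P)$ fails. Your antisymmetry paragraph assumes both consolidations relate one and the same pair of chains, so it inherits the problem. Only the final-segment-to-whole direction, via the inclusion $N(C')\hookrightarrow N(C)$, is valid.

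Nor can this be fixed by bookkeeping. The paper's proof concludes only that suitable final segments of representatives consolidate. If the relation on $N(P)$ is read that way, antisymmetry fails. Let $Q$ be the $\NN$-indexed version of the poset of \Cref{ex:no-finite-cut}, with the same three generating relations on $\NN\times 2\times\NN$. It is wellfounded, hence scattered and vacillating, and has width $2$. Take $C_0,C_1,X_z,Y_z$ as in that proof, and let $C_0'\simeq_+C_0$ be $C_0$ with the block $z=0$ deleted.
\begin{itemize}
\item The map $Y_z\mapsto X_z$ witnesses $C_1\consolidatesto C_0$.
\item The map $X_{z+1}\mapsto Y_z$ witnesses $C_0'\consolidatesto C_1$.
\item Yet $C_0\not\simeq_+C_1$, since each $(z,0,n)$ with $z\geq 1$ is incomparable to $(z-1,1,m)$ for every $m>n$.
\end{itemize}
No pair of final segments of $C_0$ and $C_1$ is strongly bicomparable, so \Cref{lem:strong-bicomp-sometimes-implies-equiv} is not contradicted.

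What your argument and that lemma actually give is a chain-level statement: $\consolidatesto$ is a preorder on chains whose symmetric part lies inside $\simeq$. Getting a partial order on $N(P)$ would need a definition of the relation on classes that avoids both failures above, together with a new antisymmetry argument.
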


\begin{proof}
	We know from \Cref{lem:cofinal-consolidation-is-transitive} that $\consolidatesto$ is transitive, so it remains to show that it is antisymmetric and reflexive.

	For reflexivity, we must show that if $C\simeq_+ C'$, $D\simeq_+ D'$, and $C\consolidatesto D$, then $C'\consolidatesto D'$, so that $\consolidatesto$ acts on equivalence classes of $\simeq_+$ (that is, elements of $N_+(P)$).
	Indeed, \Cref{prop:final-strongly-bicomp} tells us that there are final segments of $C$ and $C'$ that are strongly bicomparable, and likewise for $D$ and $D'$.
    Thus, as $\consolidatesto$ is transitive, we know that there are final segments $C''$ of $C'$ and $D''$ of $D'$ with $C''\consolidatesto D''$.

	For antisymmetry, we must show that if $C,D\in N_+(P)$ have $C\consolidatesto D \consolidatesto C$, then $C\simeq_+ D$, as then these are the same element in $N_+(P)$.
	Indeed, this is exactly the statement of \Cref{lem:strong-bicomp-sometimes-implies-equiv}, and so the result is proved.
\end{proof}


\subsection{Producing a consolidated chain from an alternating-maximal chain}
\label{subsec:consolidated-chain-exists}

The goal of this section is to show that, when $P$ is vacillating, scattered, and quasifounded, there are chains that admit no non-trivial consolidation; we will call a consolidation $C\consolidatesto D$ \emph{trivial} if $N(C)=N(D)$, and note that such consolidations are witnessed by the identity function.
Recall from \Cref{lem:cofinal-consolidation-is-transitive} that $\consolidatesto$ is transitive.

We first give two more essential results about the consolidation order on $H(P)$.
The first says that, under some mild assumptions, chains in $N(P)$ occur within $H(P)$-completions of chains in $P$.

\begin{lemma}
    \label{lem:domination}
    Let $P$ be an arbitrary poset, and $C_1,C_2,\dotsc$ be an infinite collection of pairwise domination-incomparable elements of $\atomicincr(P)$ such that, for all $i$, we have $C_i\consolidatesto C_{i+1}$, and $C_i\centernot{\consolidationof} C_{i+1}$.
    Then there is an infinite antichain $A\sseq P$ which intersects every $C_i$ exactly once.    
\end{lemma}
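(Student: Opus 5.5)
We build the antichain $A=\set{a_1,a_2,\dotsc}$ greedily, choosing $a_n\in C_n$ so that it is incomparable to all of $a_1,\dotsc,a_{n-1}$. For this to be possible at every stage, each $a_i$ must be chosen in advance so that it is incomparable to a final segment of every later chain $C_n$. The tools are the two hypotheses $C_i\consolidatesto C_{i+1}$ and $C_i\centernot{\consolidationof} C_{i+1}$, the transitivity of consolidation (\Cref{lem:cofinal-consolidation-is-transitive}), and the pairwise domination-incomparability.

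\textbf{Step 1 (the sets $B_{i,n}$).} For $i<n$, let
\begin{equation*}
	B_{i,n}\defined\set{x\in C_i \st \text{no } y\in C_n \text{ has } y>x}.
\end{equation*}
This is a final segment of $C_i$.
\begin{itemize}
	\item Since $C_{i+1}\centernot{\consolidatesto} C_i$, the set $B_{i,i+1}$ is non-empty.
	\item The key monotonicity claim is $B_{i,n}\sseq B_{i,n+1}$. Suppose $z\in C_{n+1}$ had $z>x$. As $C_n\consolidatesto C_{n+1}$ and $C_n$ is atomic, $C_n$ is cofinally above $C_{n+1}$, so some $y\in C_n$ has $y>z>x$. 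Hence $x\notin B_{i,n}$.
	\item Inductively, $\emptyset\neq B_{i,i+1}\sseq B_{i,n}$ for every $n>i$. This is the step I expect to be the real content: a single choice made at stage $i$ must survive infinitely many later stages. Intersecting the final segments $B_{i,n}$ directly could give the empty set, and the nesting avoids this.
\end{itemize}

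\textbf{Step 2 (each such point misses a final segment of each later chain).} Fix $i<n$ and $x\in B_{i,n}$. No element of $C_n$ lies above $x$, so every element of $C_n$ is either below $x$ or incomparable to it. The elements below $x$ form an initial segment $J$ of $C_n$. Suppose $J=C_n$. Then the non-empty final segment $\above{x}_{C_i}$ (non-empty because $C_i$ has no maximum) lies entirely above all of $C_n$. This means $C_i$ dominates $C_n$, contradicting pairwise domination-incomparability. So $C_n\setminus J$ is a non-empty final segment of $C_n$, all of whose elements are incomparable to $x$.

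\textbf{Step 3 (the construction).} Suppose $a_1,\dotsc,a_{n-1}$ have been chosen with $a_i\in B_{i,i+1}$. By Step 1 each $a_i$ lies in $B_{i,n}$, so by Step 2 each $a_i$ is incomparable to a final segment $F_i$ of $C_n$. The intersection $\bigcap_{i<n}F_i$ is a finite intersection of final segments of $C_n$, hence a final segment, and $B_{n,n+1}$ is another non-empty final segment of $C_n$. Two final segments of a chain always intersect when both are non-empty, so we may pick
\begin{equation*}
	a_n\in B_{n,n+1}\inter\bigcap_{i<n}F_i.
\end{equation*}
Then the $a_n$ are pairwise incomparable, so $A$ is an infinite antichain. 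Each $a_i$ lies in $C_i$. Moreover, for $j\neq i$, the point $a_j$ is incomparable to $a_i\in C_i$, and $C_i$ is a chain, so $a_j\notin C_i$. Thus $A$ meets every $C_i$ exactly once.
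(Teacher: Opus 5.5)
Your proof is correct and follows essentially the same route as the paper. Your set $B_{i,i+1}$ is exactly the set of admissible choices for the paper's point $x_i\in C_i$ (a point not below any element of any later $C_j$), and your Steps 2--3 reproduce the paper's greedy construction with the same domination-incomparability argument. The differences are minor: you propagate non-cofinality step by step using atomicity rather than citing transitivity of $\consolidatesto$, and you explicitly check that $A$ meets each $C_i$ exactly once, which the paper leaves implicit.
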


\begin{proof}
    We firstly claim that, for each $i$, there is an $x_i\in C_i$ such that $x_i$ is not below any element of any $C_j$ for $j > i$.
    Assume for contradiction that there was no such $x$ in some $C_n$; let $y_1,y_2,\dotsc$ be a cofinal increasing sequence in $C_n$.
    By assumption, each $y_i$ is below some $z_i \in C_r$ for some $r > n$.
    But then, as $C_j\consolidatesto C_{j+1}$ for all $j$ and $\consolidatesto$ is transitive due to \Cref{lem:cofinal-consolidation-is-transitive}, we find that $C_{n+1}\consolidatesto C_r$, and so $y_i<z_i<w_i$ for some $w_i\in C_{n+1}$.
    As the chains are atomic, this implies that $C_n\consolidationof C_{n+1}$, a contradiction.
    Thus there are indeed elements $x_i\in C_i$ as claimed.
    Let $D_i\sseq C_i$ be defined as $D_i = (\geq x_i)\inter C_i$ for all $i$.

    We now find our infinite antichain inductively.
    Let $a_1\in D_1$ be arbitrary.
    Assume that we have picked $a_i\in D_i$ for all $i \leq n$.
    No element of $D_{n+1}$ is above any $a_i$ by definition of the sets $D_j$, and so it suffices to find a point of $D_{n+1}$ which is not below any of the $a_i$.
    $D_{n+1}$ is not cofinally below any $a_i$, as $D_i$ does not dominate $D_{n+1}$, and so there is a final segment $S_i\sseq D_{n+1}$ which is incomparable to $a_i$.
    Then $S\defined S_1\inter\dotsc\inter S_n$ is a non-empty final segment of $D_{n+1}$, and so we may pick an arbitrary point $a_{n+1}\in S$ to continue construction of our antichain.
    This concludes the induction, and hence we have a set $A=\set{a_1,a_2,\dotsc}$ such that, for all $n$, $\set{a_1,a_2,\dotsc,a_n}$ is an antichain, i.e.\ $A$ is an infinite antichain, as required.
\end{proof}

\Cref{lem:domination} in particular implies that any infinite consolidation-increasing sequence of elements of $\atomicincr(P)$ in an FAC poset $P$ must have one dominating another.

\begin{lemma}
	\label{lem:quasifdd-consolidation-comparability}
	Let $P$ be a countable vacillating quasifounded FAC poset, and let $X\in \cR(C)$ for some maximal chain $C\sseq P$.
    Then $\sup_{H(C)}H(X) \in N_+(C)$ and $\inf_{H(C)}H(X) \in N_-(C)$.
    Moreover, if $D\sseq P$ is a maximal chain with $C\consolidatesto D$, and this consolidation is witnessed by $f\from N(C) \to N(D)$, then
	\begin{equation*}
		f[N(X)] \sseq [H(C)\inter\below{X}, \; H(C)\inter\above{X}]_{N(P)}.
	\end{equation*}
\end{lemma}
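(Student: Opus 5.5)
We prove the two assertions in turn. For the first, fix $X\in\cR(C)$, an infinite $\altequiv_C$-class. It is an interval of $C$, and by \Cref{lem:h-complete} the supremum $S\defined\sup_{H(C)}H(X)$ exists in the complete chain $H(C)$. Suppose for contradiction that $S\notin N_+(C)$. Then $S$ is either principal or lies in $N_-(C)$, and we treat these cases separately.

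\emph{Case $S\in C$.} Here $S$ is either the maximum of $X$ or covers it from above in $H(C)$. If $S\in X$, then $X$ has a maximum $x$. Look at the element of $H(C)$ immediately above $x$: either it is a principal cover, or, since $C$ is scattered, the initial part of $\above{x}_C$ generates an element of $N_-(C)$ or begins an $\ww$-type piece. The chain $C$ is maximal and $P$ is scattered and quasifounded, so $\cR(C)$ is order-isomorphic to a subset of $\zz$ and the classes are separated exactly by "$N_+$ then $N_-$" patterns. By the definition of $\altequiv_C$ (\Cref{def:alternating-equivalent}), a class ends precisely at an element $U\in N_+(C)$ that is followed by some $V\in N_-(C)$. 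Hence the top of an infinite class, taken in $H(C)$, must be such a $U$. Concretely, take $U\in N_+(C)$ and $V\in N_-(C)$ witnessing the separation of $X$ from the next class. Then every element of $N(C)$ below $U$ and equivalent to elements of $X$ gives $S\le U$. Conversely, if $S<U$, then the interval $(S,U)$ is infinite: it would otherwise be a finite set of principal points, and $U$ would have a maximum. That infinite interval contains nonprincipal points which are still $\altequiv_C$-equivalent to $X$, contradicting $S=\sup X$. So $S=U\in N_+(C)$.

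\emph{Case $S\in N_-(C)$.} Here the same argument, run with \eqref{eq:altequiv-def}, shows that an element of $N_-(C)$ sitting at the top of a class would itself lie in the class, contradicting the choice of $S$. The infimum statement follows by order reversal.

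For the second assertion, let $f\colon N(C)\to N(D)$ witness $C\consolidatesto D$ and take $Y\in N(X)$. We need $H(C)\inter\below{X}<f(Y)<H(C)\inter\above{X}$ in $H(P)$. Put $U\defined\sup_{H(C)}H(X)\in N_+(C)$ and $L\defined\inf_{H(C)}H(X)\in N_-(C)$. Since $f$ is order-preserving, $f(Y)\le f(U)$. By \Cref{lem:consolidation-is-cofinal}, $f(U)$ is cofinally below $U$, so $f(U)\le U$ in the domination order. Symmetrically, $f(Y)\ge f(L)\ge L$. Every element of $H(C)\inter\above{X}$ is $\ge U$; moreover, because $U\in N_+(C)$, no element of $\above{X}$ is equivalent to $U$. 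It remains to upgrade $f(Y)\le U$ to $f(Y)<z$ for each $z\in H(C)\inter\above{X}$.

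If $z>U$, this is immediate. Otherwise $z=U$, but $U\notin H(C)\inter\above{X}$ when $\above{X}$ is read strictly in $H(P)$, so this case does not arise. The lower bound is handled in the same way.

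The main obstacle is the step $f(U)\le U$ in the non-atomic case. In that situation $f(U)$ and $U$ need not a priori be comparable in $H(P)$. I would handle it using the continuity of $f$: write $U=\sup Z_n$ with $Z_n\in\atomicincr(C)$, as in the proof of \Cref{lem:consolidation-is-cofinal}. Each $f(Z_n)$ is dominated by $Z_n$ cofinally, and continuity then gives that every element of a final segment of $f(U)$ lies below some element of $C$ below $U$. This yields domination, and vacillation via \Cref{lem:increasing-decreasing-cofinal} rules out the degenerate limit.
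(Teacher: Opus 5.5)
\textbf{Main gap: the first assertion is not proved.} Your argument is circular at the crucial point. The case $S\in N_-(C)$ cannot occur: by \Cref{lem:h-complete}, the supremum is either a largest element of $X$ or lies in $N_+(C)$. So everything rests on ruling out that $X$ has a largest element $y$.

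You correctly split on whether $\above{y}_C$ has a least element, but you finish neither branch. In the cover branch you need that $x\covers y$ forces $x\altequiv_C y$. In the other branch you simply assert that ``a class ends precisely at an element $U\in N_+(C)$ followed by some $V\in N_-(C)$'', which is the claim itself. Taking $U,V$ ``witnessing the separation of $X$ from the next class'' presupposes that there is a first separating pair above $X$. Nothing justifies that the points of $(S,U)$ are equivalent to $X$.

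The real danger is that separating pairs accumulate onto $y$ from above. Consider the chain $\ww^*\oplus\set{y}\oplus\bigoplus_{i\in\ww^*}(\ww^*\oplus\ww)$. Every $z>y$ is separated from $y$ by a pair $U\le V$ as in \eqref{eq:altequiv-def}, there is no next class, and the infinite class of $y$ has $y$ as its largest point of $C$. Excluding this is exactly where the hypotheses on $P$ are needed, and you never use them. Vacillation does not appear in your first part, and quasifoundedness appears only through the remark that $\cR(C)$ embeds in $\zz$, from which nothing is derived.

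The paper handles this with \Cref{lem:increasing-decreasing-cofinal}: there is no infinite decreasing sequence in $N_+(C)$ bounded below by $y$. Hence, for some $x_n$ in a coinitial sequence of $\above{X}_C$, the interval $(X,x_n)$ contains no element of $N_+(C)$. Then $x_n\altequiv_C y$, contradicting that $X$ is a whole class.

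\textbf{Secondary issue in the second part.} The step ``$f(U)$ is cofinally below $U$, so $f(U)\le U$ in the domination order'' is false in general. Being cofinally below does not give domination, and $f(U)$ may be incomparable to $U$ in $H(P)$ even for vacillating quasifounded $P$. For example, let $U$ be represented by $c_1<c_2<\cdots$ and $f(U)$ by an $\ww^2$-chain whose $n$-th $\ww$-block is incomparable to $c_n$, above $c_{n-1}$ and below $c_{n+1}$. Then $U$ is atomic and cofinally above $f(U)$, yet neither dominates the other. Your continuity repair only re-derives cofinal-belowness, so it does not give domination either.

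Fortunately, comparability with $U$ is not needed. Using the atomic approximants and continuity (as the paper does with its $Y_n$), every element of a representative of $f(U)$ lies below some point of $X$. Any $z\in H(C)$ lying above all of $H(X)$ has a representative segment lying entirely above $X$. Hence $f(U)\le z$ directly from the definition of domination, and then $f(Y)\le f(U)\le z$. The lower bound is symmetric.
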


\begin{proof}
    We prove that $\sup_{H(C)}H(X) \in N_+(C)$; the decreasing case is entirely similar.
    First note that if $X$ is cofinal in $C$, then $\sup_{H(C)}H(X) = \top_C$, the top element of $H(C)$, which is in $N_+(C)$ by definition.
    We may thus assume that $X$ is not cofinal in $C$, and assume for contradiction that $\sup_{H(C)}H(X) \notin N_+(C)$.
    As $\sup_{H(C)}H(X) \notin N_-(C)$, we thus find that $\sup_{H(C)}H(X) \in C$ is a principal element.
    In other words, $X$ has a maximal element.
	Let the sequence $(x_n)_{n\geq 1}$ be coinitial in $C\inter\above{X}$.

	If $(x_n)_{n\geq 1}$ is eventually constant, i.e.\ $C\inter\above{X}$ has a minimal element $x$, then we have $x\covers y$, and so $x\altequiv_C y$, a contradiction.
	Thus assume that, for all $n$, we have $x_n > x_{n+1}$.

	We know from \Cref{lem:increasing-decreasing-cofinal} that there is some $n$ such that the interval $(X,x_n)\inter N_+(C) = \emptyset$, as otherwise we could produce an infinite decreasing sequence in $N_+(C)$ above $X$.
	But then $X\union (X,x_n]$ contains no element of $N_+(C)$, and so we find that $x_n \altequiv_C y$, contradicting $X \in \cR(C)$.
	Thus $\sup_{H(C)}H(X) \in N_+(C)$, as required.

	For the ``moreover'' part of the lemma, define $Y \defined \sup H(X) \in N_+(C)$ and note that \Cref{lem:exists-atomic} tells us that every final segment of $H(X)$ contains an element of $\atomicincr(C)$. 
	If we let $(y_n)_{n\geq 1}$ be an increasing sequence with $y_n \to Y$, we thus find, for all $n$, an element $Y_n \in \atomicincr(C) \inter (y_n, Y]$.
	By the definition of $f$, we know that $f(Y_n)$ is cofinally below $Y$, and so either $f(Y_n) = \sup_{H(C)} X$ or $f(Y_n) < H(C) \inter\above{X}$.
	As this holds for all $n$, we thus find that $f[N(X)] \leq H(C) \inter\above{X}$, as required.
\end{proof}

We will shortly prove that a consolidation of an alternating maximal chain is itself alternating maximal under the assumption that the ambient poset is vacillating and quasifounded.
In light of \Cref{lem:quasifounded-suffices}, we define the following class of chains.

\begin{definition}
    Let $A$ be a maximal, alternating maximal chain in $P$.
	Define the set $\candidate$ of \emph{candidate chains} to be
	\begin{equation*}
		\candidate \defined \set{C \sseq P \st C \text{ is a chain and } A \consolidatesto C}
	\end{equation*}
    A candidate chain $C$ is said to be \emph{consolidated} if there is no non-trivial consolidation of $C$.
\end{definition}

Much like in previous sections, we will prove that a consolidated chain exists by application of Zorn's lemma.
First, we show that the set of candidate chains is closed under consolidation.

\begin{lemma}
    \label{lem:consolidated-alternating-maximal}
	Assume that $P$ is a countable vacillating quasifounded FAC poset, and let $C\in\candidate$ be a candidate chain, and $D\sseq P$ be a maximal chain with $C\consolidatesto D$.
    Then $D$ is also alternating maximal. In particular, all chains in $\candidate$ are alternating maximal.
\end{lemma}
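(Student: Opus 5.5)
We will show that if $C$ is alternating maximal and $C\consolidatesto D$ with $D$ maximal, then $D$ is alternating maximal. The ``in particular'' clause then follows: $A$ is alternating maximal by assumption, and every $C\in\candidate$ satisfies $A\consolidatesto C$. By transitivity (\Cref{lem:cofinal-consolidation-is-transitive}) it therefore suffices to prove the one-step statement with $C=A$, or more generally with $C$ alternating maximal.

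The plan is to argue by contradiction. Suppose $D\altlt E$ is a non-trivial alternating replacement, witnessed by $g\from\cR(D)\to\cI(\cR(E))$. We aim to produce a non-trivial alternating replacement $C\altlt E'$ for some maximal chain $E'$, contradicting the alternating maximality of $C$.

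The first step is to compare $\cR(C)$ with $\cR(D)$ via the consolidation witness $f\from N(C)\to N(D)$. Take $X\in\cR(C)$. By \Cref{lem:quasifdd-consolidation-comparability}, $X$ has nonprincipal endpoints, and $f[N(X)]$ lies in the closed $H(P)$-interval between $H(C)\inter\below{X}$ and $H(C)\inter\above{X}$. By \Cref{lem:owf-convex-hull} that interval is \owf. Since $f$ is order preserving and injective, and sends atomic increasing chains to $N_+$ and atomic decreasing chains to $N_-$, the image of an alternation class of $C$ lies inside a single alternation class of $D$. This gives a map $\phi\from\cR(C)\to\cR(D)$.

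Next we show $\phi$ is injective. Between two distinct classes of $C$ there is a pattern $U\le V$ with $U\in N_+(C)$ and $V\in N_-(C)$; we may take $U,V$ atomic, using \Cref{lem:exists-atomic} inside suitable intervals. The images $f(U)\in N_+(D)$ and $f(V)\in N_-(D)$ then separate the images in $D$. We then show $\phi$ is surjective onto $\cR(D)$, by a cofinality/continuity argument: every alternation class of $D$ lies between images of $N(C)$-points, and the \owf\ hull property rules out an extra class appearing there. Hence $\phi$ is a bijection. Moreover, it respects condition~\ref{item:alt-def-important} of \Cref{def:alternating-replacement}, namely $z>X$ implies $z>\phi(X)$ for $z\in H(P)$; this follows because $\phi(X)$ sits inside the \owf\ hull of $X$.

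The final step defines $h\defined g\circ\phi\from\cR(C)\to\cI(\cR(E))$ and checks that it witnesses $C\altlt E$. Condition~\ref{item:alt-def-important} composes. For condition~\ref{item:alt-def-singleton}: if $h$ were a bijection onto $\cR(E)$ with singleton values, then $g$ would be too, so $g$ would be trivial and $\cR(D)=\cR(E)$, contradicting non-triviality. Alternating maximality of $C$ then forces $\cR(C)=\cR(E)$, so $E\altsimeq C$. Pulling back through $\phi$, this gives $\cR(D)=\cR(E)$, again contradicting non-triviality.

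The main obstacle is the second step, namely that $\phi$ is a well-defined bijection compatible with condition~\ref{item:alt-def-important}. This is where vacillation and quasifoundedness enter, through \Cref{lem:increasing-decreasing-cofinal} and \Cref{lem:quasifdd-consolidation-comparability}, to prevent $f$ from merging or splitting alternation classes. In particular, one must check that finite $\altequiv$-classes of $C$ cannot map onto infinite classes of $D$. I would handle this by the atomic-element argument above, and expect it to require some care.
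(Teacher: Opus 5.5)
There is a genuine gap, and it is at the step you flag as the main obstacle. You claim that your class map $\phi\colon\cR(C)\to\cR(D)$ satisfies condition~\ref{item:alt-def-important} of \Cref{def:alternating-replacement} because ``$\phi(X)$ sits inside the \owf\ hull of $X$''. But \Cref{lem:quasifdd-consolidation-comparability} only puts the image \emph{points} $f[N(X)]$ into $[H(C)\inter\below{X},\,H(C)\inter\above{X}]$. The $\altequiv_D$-class containing those points can be strictly larger. It can absorb elements of $D$ lying beyond $\sup H(X)$ that are incomparable to a point $z\in C$ forming a finite $\altequiv_C$-class just above $X$. Such a $z$ has $z>X$ but $z\not>\phi(X)$, and the composite $g\circ\phi$ can then violate condition~\ref{item:alt-def-important} as well. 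Separately, your sketch does not establish surjectivity of $\phi$, and you do not need it: if the class map misses some $Y\in\cR(D)$, the composite misses $g(Y)$ and is automatically non-trivial. That is how the paper argues.

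This step cannot be repaired, because the statement itself fails. Let $X_1=\set{x_n\st n\in\ZZ}$, $T$ and $X_2$ be chains of type $\zz$, and let $R,R_1$ be chains of type $\ww$. Set $X_1^{\le0}\defined\set{x_n\st n\le0}$. Let $P$ be the poset whose comparabilities are exactly those inside the three chains
\[
C=X_1\oplus\set{a}\oplus X_2,\qquad D=X_1\oplus R\oplus X_2,\qquad E=X_1^{\le 0}\oplus R_1\oplus T\oplus R\oplus X_2 .
\]

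\textbf{The poset.} The union of these three linear orders is transitive, and $C,D,E$ are the only maximal chains. Every chain is a finite sum of pieces of type $1$, $\ww$ or $\ww^*$, so $P$ is countable, of width $2$, scattered, vacillating and quasifounded.

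\textbf{The classes.} One computes $\cR(C)=\set{X_1,X_2}$, $\cR(D)=\set{X_1\union R,\,X_2}$ and $\cR(E)=\set{X_1^{\le0}\union R_1,\,T\union R,\,X_2}$. The inclusion $N(C)\sseq N(D)$ witnesses $C\consolidatesto D$.

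\textbf{$C$ is alternating maximal.} Apply condition~\ref{item:alt-def-important} with $z=a>X_1$. Every class in the image of $X_1$ must then lie below $a$, i.e.\ inside $X_1$. But every class of $D$ or $E$ contains a point of $R\union R_1\union T\union X_2$, none of which is below $a$. If non-maximal chains are admitted, such a class forces the chain to be a cofinal part of $X_1$, finitely many points, and a coinitial part of $X_2$; condition~\ref{item:alt-def-singleton} then gives equal $\cR$'s. So $A=C$ is admissible and $D\in\candidate$.

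\textbf{$D$ is not alternating maximal.} We have $D\altlt E$, witnessed by $X_1\union R\mapsto\set{X_1^{\le0}\union R_1,\,T\union R}$ and $X_2\mapsto\set{X_2}$. Anything above all of $X_1\union R$ is above $X_1^{\le0}\union R$, and via $r_0>R_1\union T$ also above $R_1\union T$. Anything below all of $X_1\union R$ is below $X_1^{\le0}$ and below $x_0<R_1\union T\union R$. Condition~\ref{item:alt-def-singleton} is vacuous here. Since $\cR(D)\ne\cR(E)$, $D$ is not alternating maximal.

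In your notation $\phi(X_1)=X_1\union R$, with $a>X_1$ but $a\incomp R$; this is exactly the failure described above. The paper's proof takes essentially the same route: its $f_{\altpart}(X_1)$ is also $\set{X_1\union R}$, composed with $g$. It asserts condition~\ref{item:alt-def-important} for the composite by citing \Cref{lem:quasifdd-consolidation-comparability}, and that assertion fails on this example for the same reason. So neither argument can be completed as written.
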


\begin{proof}
	Assume for contradiction that $C\in\candidate$ has $C\consolidatesto D$, which is witnessed by a function $f\from N(C) \to N(D)$, and $D$ is not alternating maximal.
	In particular, there is a chain $E\sseq P$ with $D\altlt E$ holding non-trivially, and this is witnessed by a function $\altwitness{g}{D}{E}$.

	Define the function $\altwitness{f_{\altpart}}{C}{D}$ by setting, for all $X\in\altpart(C)$,
	\begin{equation*}
		f_{\altpart}(X) \defined \Conv\p[\big]{\set[\big]{Y\in \altpart(D) \st \exists Z\in N(X), \, f(Z) \in Y}}.
	\end{equation*}
	We claim that $f_{\altpart}$ has the property that, for all $X_1,X_2\in\altpart(C)$ with $X_1 < X_2$, we have $f_{\altpart}(X_1) < f_{\altpart}(X_2)$; these images are in particular disjoint.

	We first show that, if $X,Y \in N(C)$ are such that $f(X) \altequiv_D f(Y)$, then $X \altequiv_C Y$; as $f$ is order-preserving, this shows that $f_{\altpart}$ sends distinct elements of $\altpart(C)$ to disjoint intervals.

	Assume that $X\not\altequiv_C Y$; we thus know from \Cref{def:alternating-equivalent} that there are $U \in N_+(C)$ and $V \in N_-(C)$ such that $X \leq U \leq V \leq Y$.
	As $f$ is order preserving and sends $N_\eps (C)$ to $N_\eps(D)$ for both $\eps\in\set{-,+}$, we have $f(X) \leq f(U) \leq f(V) \leq f(Y)$, which witness \eqref{eq:altequiv-def}, showing that $f(X) \not\altequiv_C f(Y)$.
	Thus $f_{\altpart}$ sends distinct elements of $\altpart(C)$ to disjoint intervals.
	
	We now claim that the function $\altwitness{h}{C}{E}$ given by $h(X) = g[f_{\altpart}(X)]$ witnesses a non-trivial alternating replacement of $C$.

	First, take $X\in\altpart(C)$ and $Y\in \altpart(E)$ with $Y\in h(X)$, and let $z\in H(P)$ have $z > X$.
	We must show that $z > Y$.
	Indeed, \Cref{lem:quasifdd-consolidation-comparability} tells us that $z > Z$ for all $Z \in f_{\altpart}(X)$ and, as $g$ witnesses an alternating replacement, we then have $z > Y$, as required.

	Secondly, if the image of $f_{\altpart}$ does not cover all of $\altpart(D)$, then the image of $h$ does not cover all of $\altpart(E)$, and so the alternating replacement from $C$ to $E$ is non-trivial. 
	Similarly, we may assume that the image of $g$ covers all of $\altpart(E)$. 
	The fact that $D \altlt E$ is non-trivial thus tells us that there is $Z\in\altpart(D)$ with $\abs{g(Z)} \geq 2$, and there is some $X \in \altpart(C)$ with $Z\in f_{\altpart}(X)$.
	Thus $\abs{h(X)} \geq 2$, and so $h$ witnesses a non-trivial alternating replacement $C\altlt E$, contradicting the alternating maximality of $C$, and proving \Cref{lem:consolidated-alternating-maximal}.
\end{proof}

\begin{corollary}
	\label{cor:consolidation-owf-interval}
	Assume that $P$ is a countable vacillating quasifounded FAC poset with candidate chains $C,D\in\candidate$.
	Assume that $C\consolidatesto D$ is witnessed by $f\from N(C) \to N(D)$, and let $X\in N(C)$ have $X \in N(I)$ for some $I \in\altpart(C)$. 
	Then
	\begin{equation*}
		f(X) \in [H(C)\inter\below{I}, \; H(C)\inter\above{I}]_{N(P)},
	\end{equation*}
	which is an \owf\ poset in $H(P)$.
\end{corollary}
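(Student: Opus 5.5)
The statement has two parts: a containment, and an \owf\ claim. I would prove them separately, each by a direct appeal to an earlier result.

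\textbf{Containment.} First note that, by \Cref{def:alternating-equivalent}, $I$ is an infinite $\altequiv_C$-class. So $I$ is (the principal part of) an element of $\cR(C)=\altpart(C)$. Since $X\in N(I)$, we have $X\in N(I)\sseq N(C)$.

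Now apply \Cref{lem:quasifdd-consolidation-comparability} with the maximal chain $C$, the class $I\in\cR(C)$, and the witness $f$ for $C\consolidatesto D$. Its hypotheses are satisfied: $P$ is countable, vacillating, quasifounded and FAC; $C$ is maximal; and $C\consolidatesto D$. The lemma gives
\begin{equation*}
	f[N(I)]\sseq[H(C)\inter\below{I},\; H(C)\inter\above{I}]_{N(P)},
\end{equation*}
and in particular $f(X)$ lies in this interval. One technical point needs care. That lemma uses the convention $\sup_{H(C)}H(I)\in N_+(C)$, so the endpoints of the interval are themselves nonprincipal points of $H(C)$. I would check that the interval written here, bounded by the sets $H(C)\inter\below{I}$ and $H(C)\inter\above{I}$, agrees with the one in the lemma, and reconcile the notation if it does not.

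\textbf{The interval is \owf.} Since $C\in\candidate$, \Cref{lem:consolidated-alternating-maximal} shows that $C$ is alternating maximal. It is maximal as a chain, being a candidate extended maximally, or else we should restrict to maximal candidates.

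Next I check that $I$ is an \owf\ interval of $C$. Suppose $I$ contained a copy of $\ww\oplus\ww^*$. The increasing part would have a supremum $U\in N_+(C)$, and the decreasing part above it an infimum $V\in N_-(C)$, with $U\le V$. Both lie within the hull of $I$, so they would witness the forbidden configuration \eqref{eq:altequiv-def} between two elements of $I$. This contradicts $I$ being a single $\altequiv_C$-class.

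Now apply \Cref{lem:owf-convex-hull} to $C$ and $I$. It gives that $[H(C)\inter\below{I},\, H(C)\inter\above{I}]_{H(P)}$ is \owf. Our set is the restriction of this interval to $N(P)$, and the \owf\ property passes to subsets, so our set is \owf\ as well.

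\textbf{Main obstacle.} The only real difficulty is bookkeeping between elements of $\altpart(C)$ and the intervals of $C$ that they determine. In particular, it must be ensured that the endpoints used in \Cref{lem:quasifdd-consolidation-comparability} and in \Cref{lem:owf-convex-hull} describe the same interval. The substantive content is already contained in those two lemmas.
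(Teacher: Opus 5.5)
Your proposal is correct and matches the paper's proof: \Cref{lem:consolidated-alternating-maximal} shows that $C$ is alternating maximal, \Cref{lem:owf-convex-hull} then shows that the interval is out-wellfounded, and \Cref{lem:quasifdd-consolidation-comparability} places $f(X)$ inside it. Your extra checks, that $I$ is itself out-wellfounded and that the chains involved are maximal, are hypotheses the paper uses without stating them, and your argument for the first is sound.
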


\begin{proof}
	We know from \Cref{lem:consolidated-alternating-maximal} that $C$ is an alternating maximal chain, and so \Cref{lem:owf-convex-hull} tells us that the interval $[H(C)\inter\below{I}, \; H(C)\inter\above{I}]_{H(P)}$ is an \owf\ poset in $H(P)$.
	\Cref{lem:quasifdd-consolidation-comparability} tells us that $f(X)$ is contained in this interval, as required.
\end{proof}

We now work towards our application of Zorn's lemma, by showing that any infinite $\consolidatesto$-decreasing sequence of candidate chains has a lower bound.

\begin{lemma}
    \label{lem:consolidation-stabilises}
    Let $P$ be a countable scattered quasifounded vacillating FAC poset, and let the countable sequence $C_1,C_2,C_3,\dotsc$ consist of candidate chains, with $C_i\consolidatesto C_{i+1}$ for all $i$.
    Then there is a candidate chain $C\in\candidate$ such that $C_n\consolidatesto C$ for all $n$.
\end{lemma}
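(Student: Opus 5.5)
The plan follows the pattern of \Cref{lem:countable-etalt-chain-upper-bound} and \Cref{prop:alternating-maximal}. We track each nonprincipal element through the sequence of witnesses and take a limit, then extend it to a maximal chain.

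Let $f_n\from N(C_n)\to N(C_{n+1})$ witness $C_n\consolidatesto C_{n+1}$, and set $f_{m,n}\defined f_{n-1}\circ\dotsb\circ f_m$. By \Cref{lem:cofinal-consolidation-is-transitive}, $f_{m,n}$ witnesses $C_m\consolidatesto C_n$.

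\textbf{Step 1: every atomic element stabilises.} Fix $X\in\atomiceps(C_m)$ and consider the orbit $X_n\defined f_{m,n}(X)$ for $n\geq m$. This is a $\consolidatesto$-increasing sequence in $\atomiceps(P)$; atomicity is preserved because $P$ is vacillating, so atomic increasing chains are copies of $\ww$.
\begin{itemize}
    \item By \Cref{cor:consolidation-is-partial-order}, consecutive distinct terms are not mutually consolidating.
    \item By \Cref{lem:domination} together with FAC, no infinite subsequence can be pairwise domination-incomparable with strict consolidations. So whenever $X_n\neq X_{n+1}$ infinitely often, Ramsey's theorem gives $i<j$ with $X_j$ dominating $X_i$, or the reverse.
    \item The cofinality condition from \Cref{lem:consolidation-is-cofinal} forces $X_j\le X_i$ in the domination order. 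Hence the orbit contains an infinite strictly domination-decreasing subsequence of elements of $N_+(P)$.
    \item By \Cref{cor:consolidation-owf-interval}, all $X_n$ lie in the out-wellfounded interval $[H(C_m)\inter\below{I}, H(C_m)\inter\above{I}]$ around the alternation class $I\ni X$.
    \item By \Cref{lem:increasing-decreasing-cofinal}, such a decreasing sequence in $N_+(P)$ must be coinitial in $P$. This contradicts it being bounded below by $H(C_m)\inter\below{I}$. Where that lower bound is empty, I would instead use the bottom illfounded limit retained by the candidate chains.
\end{itemize}
Hence each atomic orbit is eventually constant, with limit $X_\infty$. Since $P$ is countable, I would diagonalise so that a single tail works for any prescribed finite set of elements.

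\textbf{Step 2: build the limit chain.} Let $S$ be the set of all stabilised atomic limits $X_\infty$. Because each $f_{m,n}$ is order-preserving and injective, and $H(C_n)$ is a chain, the elements of $S$ are pairwise domination-comparable. Choose representatives of $S$ that form a chain in $P$; I would take them inside a fixed late $C_n$ for each finite portion and use compatibility. Then extend this to a maximal chain $C$ via \Cref{obs:embedding} and Zorn's lemma.

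\textbf{Step 3: define the witness.} Define $f\from N(C_n)\to N(C)$ by sending an atomic $X$ to its limit, and extend to non-atomic elements by continuity. Every non-atomic element is a $\sup$ or $\inf$ of atomic ones, by \Cref{lem:exists-atomic} and Hausdorff's \Cref{thm:hausdorff-classification}, and \Cref{lem:h-complete} supplies the suprema in $H(C)$. We then check the conditions:
\begin{itemize}
    \item order preservation and injectivity pass to the limit from the $f_{m,n}$;
    \item $X\consolidatesto f(X)$ follows from transitivity and eventual constancy;
    \item cofinality of the image follows from the cofinality of each $f_{m,n}$ together with \Cref{lem:increasing-decreasing-cofinal};
    \item $C\in\candidate$ follows from $A\consolidatesto C_1\consolidatesto C$.
\end{itemize}

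\textbf{Expected obstacle.} The hardest step is Step 1, ruling out non-stabilising orbits. I would first try to turn \Cref{lem:domination} plus FAC into a monotone domination sequence, and then invoke quasifoundedness and out-wellfoundedness. Continuity of the limit map at non-atomic points is a secondary difficulty: some uniformity is needed so that suprema commute with the pointwise limits. I would handle this using the finite-degree, forest-style argument from \Cref{claim:alt-no-infinite-branch}.
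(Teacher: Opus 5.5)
\textbf{The orbit of an atomic element need not stay atomic, or even stabilise.} Your Step~1 rests on the claim that the orbit $X_n=f_{m,n}(X)$ of an atomic $X$ stays in $\atomiceps(P)$, and this is false. Vacillation only tells you that atomic increasing chains are copies of $\omega$. It does not prevent a copy of $\omega$ from being cofinally above a copy of $\omega^2$.

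For a concrete case, take a chain $a_0<a_1<\dotsb$ and a chain $B=\{b_{i,j}\}\cong\omega^2$ with $b_{i,j}<a_k$ iff $i<k$, together with a bottom and a top element. The map fixing the end points and sending $[A]_+\mapsto[B]_+$ witnesses a consolidation between the two resulting maximal chains. Its source is atomic and its image is not.

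Orbits of atomic elements also need not be eventually constant. Replace $B$ by an $\omega^2$ of blocks, where block $k$ comes in versions $0,\dotsc,k$. Each version is a copy of $\omega$, cofinally below the previous version, but with infinitely many incomparabilities with it. So each block has finite width, and the poset is FAC, scattered, vacillating and quasifounded, since all its chains are wellfounded. Let $C_1$ be the chain through $A$, and for $n\ge2$ let $C_n$ use version $\min(k,n)$ of block $k$. Here every maximal chain is alternating maximal, so all the $C_n$ are candidate chains. The orbit $[A]_+,X_2,X_3,\dotsc$ then consists of pairwise distinct, pairwise domination-incomparable suprema, even though every atomic block stabilises.

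This breaks your argument in two places. \Cref{lem:domination} cannot be applied to orbits, because it needs atomic chains, for which $\consolidatesto$ just means ``cofinally above''. And ``send an atomic $X$ to its limit'' in Step~3 is undefined.

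\textbf{The missing idea is to follow atomic elements \emph{below} the images, rather than the images themselves.} Take the digraph on pairs $(Y,n)$ with $Y\in\atomicincr(C_n)$, with an edge $(Y,n)\to(Z,n+1)$ whenever $Z\le f_n(Y)$. Consecutive vertices on a path are atomic with $Y\consolidatesto Z$. So your Step~1 argument (\Cref{lem:domination}, Ramsey, cofinality, the out-wellfounded interval) does go through for paths, and shows that each path is eventually constant. For atomic $X$, define $f(X)$ as the supremum of the stabilised atomic elements reachable from $X$, and extend to non-atomic elements by suprema and infima.

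You would also still need to show that these stabilised elements belong to $N(C)$ for your chosen limit chain. The paper takes $C\supseteq\liminf_n C_n$ and checks, by a two-case argument, that a representative of each stabilised atomic element persists in all late $C_n$. Your Step~2 (``choose representatives \dots\ and use compatibility'') leaves this unaddressed.
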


\begin{proof}
    We will let $C$ be an arbitrary candidate chain extending the limit $D\defined \liminf_{n\to\infty} C_n$ (that is, all elements which are present in $C_n$ for all sufficiently large $n$).
    Note that it is not even clear \textit{a priori} that $D$ is non-empty, or that it can be extended to a candidate chain.

	The key idea in this proof is to show that, if $X\in N^0(C_1)$ and the consolidation $C_i\consolidatesto C_{i+1}$ is witnessed by $f_i$, then the sequence 
    \begin{align}
        \label{eq:iterated-consolidations}
        X, f_1(X), f_2[f_1(X)], f_3[f_2[f_1(X)]], \dotsc
    \end{align}
	``stabilises'' in a certain sense, to be made precise in due course.
	Indeed, let $I \in \altpart(C_1)$ have $X \in N(I)$, and define
	\begin{equation*}
		P' \defined [H(C_1)\inter\below{I}, \; H(C_1)\inter\above{I}].
	\end{equation*}
	We know from \Cref{cor:consolidation-owf-interval} that $P'$ is an \owf\ poset, and all consolidations of $X$ lie in $P'$.
	In particular, $N_+(P')$ is a wellfounded poset.
	Assume without loss of generality that $X\in\atomicincr(P')$, and define an acyclic digraph $G$ on $\atomicincr(P')\times\NN$ as follows.
	\begin{itemize}
		\item For all $n\in\NN$ and all $Y\in \atomicincr(C_n)$, the pair $(Y,n)$ is a vertex of $G$.
		\item There is a directed edge from $(Y,n)$ to $(Z,n+1)$ if $Z \leq f_n(Y)$.
	\end{itemize}
	The key claim concerning $G$ is that all infinite paths $(X_1,1),(X_2,2),\dotsc$ in $G$ have $X_n = X_{n+1} = \dotsb$ for all sufficiently large $n$ (where ``sufficiently large'' depends on the path).

	Indeed, assume for contradiction that there is an infinite path of the above form which does not stabilise, and that $X_1 = X$.
	We may restrict to a subsequence so that $X_1,X_2,\dotsc$ are all distinct, and we know that $X_n\consolidatesto X_{n+1}$ for all $n$.
	But then \Cref{lem:domination} tells us that there is an infinite subsequence $n_1,n_2,\dotsc$ such that $X_{n_1} > X_{n_2} > \dotsb$.
	However, this contradicts the fact that $N_+(P')$ is a wellfounded poset.

	Call $(Y,n) \in G$ a \emph{stabilisation point} if there is an infinite path in $G$ connecting $(Y,n),(Y,n+1),(Y,n+2),\dotsc$, and write $(X,m)\to(Y,n)$ if there is a (directed) path in $G$ from $(X,m)$ to $(Y,n)$.
	We may then define the function $f$ on $\atomicincr(C_1)$ by setting
	\begin{align*}
		S(X) &\defined \set[\big]{Y \in \atomicincr(P') \st \exists n, \; (X,1) \to (Y,n) \text{ and } (Y,n) \text{ is a stabilisation point}},\\
		f(X) &\defined \sup (S(X)).
	\end{align*}
	Note that $S(X)$ is a chain: if $(Y,n)$ and $(Z,m)$ are in $S(X)$ with $n \leq m$, then $Y,Z\in \atomicincr(C_m)$, and so $Y$ and $Z$ are comparable.

	We now claim that, if $(Y,m)$ is a stabilisation point with $Y\in \atomicincr(C_m)$, then $Y \in \atomicincr(C)$, where we recall that $C$ contains all elements which occur in $C_n$ for all sufficiently large $n$.
	We split into two cases, depending on whether or not there is $Z\in \atomicincr(C_m)$ with $Z < Y$.

	In the first case, if there is $Z\in \atomicincr(C_m)$ with $Z < Y$, then let $x\in C_m$ have $x < Y$ and $x > Z$ for all $Z \in N_+(C_m)$ with $Z < Y$.
	Then if $Z \consolidatesto W$, then we have $x > W$, and so $x\in C_n$ for all $n\geq m$, and thus $x \in C$, and so $Y \in \atomicincr(C)$, as required.

	In the second case, define, for each $n\geq m$, the element $Z_n\defined\max\atomicdecr(C_n)$, noting that this max exists as $\atomicdecr(C_n)$ is co-wellfounded.
	Then there is an edge from $(Z_n,n)$ to $(Z_{n+1},n+1)$ in the directed graph defined analogously to $G$ but for $N_-(P')$.
	We thus know that the sequence $(Z_n)_{n\geq m}$ must eventually be constant; assume that this sequence is constant for all $n\geq \l$.
	Then $(Z_\l,Y)\inter C_\l$ is contained in $C_n$ for all $n\geq \l$, and so is contained in $C$, and thus $Y\in \atomicincr(C)$, as required.

	We can then define $f$ on $\atomicdecr(C_1)$ in a similar way to on $\atomicincr$, and on the rest of $N(C_1)$ by setting 
    \begin{equation*}
        f(X)\defined\begin{cases}
            \sup\set{f(Y)\st Y\in \atomicincr(C_1), \; Y<X} &\text{ if }X\in N_+(C_1)\setminus\atomicincr(C_1), \\
            \inf\set{f(Y)\st Y\in \atomicdecr(C_1), \; Y>X} &\text{ if }X\in N_-(C_1)\setminus\atomicdecr(C_1).
        \end{cases}
    \end{equation*}
	We claim that $f$ witnesses that $C_1 \consolidatesto C$.
	In particular, we must show that $f$ is injective, order-preserving, and continuous, that it has cofinal image in $N_+(C)$, and that $X \consolidatesto f(X)$ for all $X\in N(C)$.
	We may note immediately that $f$ sends $N_\eps(C)$ to $N_\eps(C)$ for all $\eps\in\set{-,+}$.

	First, $f$ is strictly order-preserving (and note that this implies injectivity).
	Note first that if $X\in N_-(C)$ and $Y\in N_+(C)$ with $X<Y$ then we cannot have $f(X) > f(Y)$, as this would contradict $P'$ being outwellfounded.
	Thus it suffices to consider $X,Y\in N_+(C)$.
	Moreover, it suffices to prove the case of $X,Y$ atomic, as the general case then easily follows.
	But, in the atomic case, $f(X) = \sup S(X)$, and we know that $S(Y) \supseteq S(X)$, hence $f(Y) \geq f(X)$, so $f$ is weakly order-preserving.

	To prove that $f$ is strictly order-preserving, we construct an element of $S(Y)$ which is greater than $f(X)$.
	Let $Y_0 \defined Y$ and $X_0 \defined X$. Then, inductively, for all $n$, define $X_n \defined f_n(X_{n-1})$ (which is not necessarily atomic), and $Y_n$ to be an atomic element with $X_n < Y_n \leq f_n(Y_{n-1})$, which is guaranteed to exist by \Cref{lem:exists-atomic}.
	Then the sequence $(Y_1,1), (Y_2,2), \dotsc$ is an infinite path in $G$, which we know must eventually stabilise: there is some $Y' \in \atomicincr(C)$ and some $n$ such that $Y' = Y_n = Y_{n+1} = \dotsb$.
	But then $Y'\in S(Y)$ and $Y'$ is greater than all of $S(X)$, and so
	\begin{equation*}
		f(Y) = \sup S(Y) > \sup S(X) = f(X).
	\end{equation*}
	Thus $f$ is strictly order-preserving and in particular injective.

	Second, $f$ is continuous: take non-atomic $X \in N_+(C)$ with $X = \sup\set{X_n \st n\in\NN}$.
	We must show that $f(X) = \sup\set{f(X_n) \st n\in\NN}$.
	We claim that we may assume that all $X_n$ are atomic.
	Indeed, define the elements $Y_n \in N_+(C)$ by setting $Y_n = X_n$ if $X_n \in \atomicincr(C)$, and otherwise applying \Cref{lem:exists-atomic} to find atomic $Y_n$ with $X_{n-1} < Y_n < X_n$.
	Then
	\begin{equation*}
		X = \sup\set{X_n \st n\in\NN} = \sup\set{Y_n \st n\in\NN} = \sup\set{Y \st Y\in\atomicincr(C), \; Y<X}.
	\end{equation*}
	As $f$ is order-preserving, it then follows immediately that $f$ is continuous.

	Finally, we must show that $X\consolidatesto f(X)$ for all $X\in N_+(C)$ (again, the case for $N_-(C)$ is similar).
	If $X$ is atomic, then this is immediate from the definition of $f$ and the transitivity of $\consolidatesto$, so assume that $X$ is not atomic.
	But then the definition of $f$ in terms of the atomic elements below $X$ along with a simple induction implies that this condition holds, as required.
	
	Thus $C_1 \consolidatesto C$, as required.
    The case for general $C_n\consolidatesto C$ is entirely similar, completing the proof of \Cref{lem:consolidation-stabilises}.
\end{proof}

\begin{lemma}
	\label{lem:consolidation-no-uncountable}
	There is no uncountable $\consolidatesto$-chain of candidate chains.
\end{lemma}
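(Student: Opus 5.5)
We will argue by contradiction, in the same way as \Cref{lem:no-uncountable-etalt-chain}: we extract from an uncountable $\consolidatesto$-chain an uncountable family of distinct points of $P$, which is impossible because $P$ is countable. So suppose $(C_\aa \st \aa < \ll)$ is a family of pairwise distinct candidate chains with $C_\aa \consolidatesto C_\bb$ for all $\aa < \bb < \ll$, where $\ll$ is uncountable. We may assume $\ll = \ww_1$.

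The first step is to show that each step $C_\aa \consolidatesto C_{\aa+1}$ is non-trivial, meaning $N(C_\aa) \neq N(C_{\aa+1})$. If the two sets were equal, the maximal chains would induce the same nonprincipal elements. We would then use antisymmetry from \Cref{cor:consolidation-is-partial-order}, together with the fact that $C_{\aa+1} \consolidatesto C_\aa$ via the identity, to conclude that the chains are identified. To keep this step honest, we pass to the quotient of chains by equal $N(\cdot)$, which is what distinctness should mean for this lemma.

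The second step produces a witness at each stage. Fix a countable enumeration $P = \set{p_1,p_2,\dotsc}$. For each $\aa$ we pick an element $X_\aa \in N(C_\aa)\setminus N(C_{\aa+1})$, and we may choose it atomic, say $X_\aa \in \atomicincr(C_\aa)$, using \Cref{lem:exists-atomic} and continuity of witnesses. From $X_\aa$ we then choose a point $x_\aa \in C_\aa$ lying on the representative of $X_\aa$ with $x_\aa \notin C_\bb$ for every $\bb > \aa$. The model is \Cref{lem:etalt-antisymmetric}, where a removed point never returns. Here the analogous statement would be as follows. Suppose $x$ is a point of $C_\aa$ lying in an interval $(Z, X_\aa)_{C_\aa}$, with $Z$ chosen so that this interval contains no further nonprincipal elements. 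Suppose also that $x$ is incomparable with infinitely many points of $C_{\aa+1}$, which is the content of the remark after \Cref{def:cofinal-comparability} for a non-trivial consolidation. Then $x$ cannot lie in any later $C_\bb$, since $C_{\aa+1}\consolidatesto C_\bb$ and \Cref{lem:consolidation-is-cofinal} push the later chains cofinally below $X_\aa$ in a way that stays incomparable to $x$.

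Granting the second step, the map $\aa \mapsto x_\aa$ is injective. Indeed, $x_\aa \in C_\aa$ but $x_\aa \notin C_\bb$ for $\bb > \aa$, so $x_\aa = x_\bb$ with $\aa < \bb$ is impossible. This gives $\aleph_1$ distinct points of the countable set $P$, a contradiction.

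The main obstacle is the "never returns" property in the second step. Consolidation is defined only on $N(\cdot)$, so points can in principle reappear. If this property resists direct proof, we would instead pigeonhole. By the stabilisation argument in the proof of \Cref{lem:consolidation-stabilises}, which uses \Cref{lem:domination} and the fact that $N_+$ of the \owf\ intervals of \Cref{cor:consolidation-owf-interval} is wellfounded, each atomic element's consolidation trajectory can strictly decrease in the domination order only countably often, that is, along a well-ordered descending sequence in a countable wellfounded set. Since there are only countably many alternation classes and countably many atomic elements of $H(P)$ realised by chains through prescribed points, an uncountable chain would need some single trajectory to change uncountably often. That is impossible, because a strictly monotone well-ordered sequence in a countable set is countable.
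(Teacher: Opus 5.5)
\textbf{There is a genuine gap here, at exactly the step you flag.} The set-up around that step does not hold either.

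First, non-triviality of $C_\aa\consolidatesto C_{\aa+1}$ gives only $N(C_\aa)\neq N(C_{\aa+1})$. It does not give an element of $N(C_\aa)\setminus N(C_{\aa+1})$. A consolidation can be a pure insertion: in the proof of \Cref{lem:f-finite-incomparability}, a finite $J$ is replaced by an infinite $Y$, the identity is a witness, and $N(C)\subsetneq N((C\setminus J)\union Y)$. At such a stage there is no removed $X_\aa$ to follow.

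Second, and more seriously, the ``never returns'' property does not follow from \Cref{def:cofinal-comparability}, which constrains only $N(\cdot)$ through cofinality. The analogy with \Cref{lem:etalt-antisymmetric} fails because there a removed point is replaced by an $\eta$-interval incomparable to it. Here, suppose $X_\aa$ is sent to some $W$ cofinally below it, while a higher element is sent to some $Y'$ that is cofinally above $X_\aa$ but not equivalent to it. Nothing prevents a later consolidation from sending $Y'$ back onto $X_\aa$, so $X_\aa$ and its points reappear. Moreover, once you quotient by equal $N(\cdot)$, membership of a point in $C_\bb$ is not even well defined.

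Third, your fallback does not repair this. Pigeonholing over trajectories of existing elements misses the stages where the change is the appearance of a new element lying on no earlier trajectory. The countability of the atomic elements of $H(P)$ that you invoke is also not established.

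The missing idea is to follow \emph{new} elements forward until they become permanent, rather than removed ones backward. The paper extends the graph $G$ of \Cref{lem:consolidation-stabilises} to the whole $\ww_1$-sequence. There, by \Cref{lem:domination} and wellfoundedness of $N_+$ in the \owf\ intervals of \Cref{cor:consolidation-owf-interval}, every trajectory is eventually constant. Having chosen countably many stabilisation points with stages below $\dd$, the fact that $C_{\dd+1}\neq C_\dd$ yields an element distinct from all of them, which is followed until it stabilises. This produces $\aleph_1$ distinct stabilisation points. Any two of them lie in a common $N(C_\gg)$, so they form an uncountable chain in $N_+(P)$. Wellfoundedness then lets you assign to each element points of $P$ lying strictly between it and its successor in this chain. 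These sets are disjoint for distinct elements, which contradicts the countability of $P$.
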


\begin{proof}
    Assume for contradiction that $(C_\aa \st \aa<\ll)$ is a chain of distinct candidate chains for an uncountable ordinal $\ll$; in particular, $\aa<\bb$ implies that $C_\aa \consolidatesto C_\bb$.
    Let this consolidation be witnessed by $f_{\aa,\bb}$.
    We construct an uncountable chain in $N(P)$.

	We extend the definition of $G$ from the proof of \Cref{lem:consolidation-stabilises} to this new context: $G$ has vertices $(X,\aa)$ for all $X\in N(C_\aa)$, and for all $\aa < \bb$, there is a directed edge from $(X,\aa)$ to $(Y,\bb)$ if $Y\leq f_{\aa,\bb}(X)$.
	Much like in the proof of \Cref{lem:consolidation-stabilises}, any infinite path $(X_1,\aa_1),(X_2,\aa_2),\dotsc$ has $X_n$ eventually constant.

	Say that $(X,\aa)$ is a \emph{stabilisation point} if $(X,\bb) \in G$ for all $\bb\geq\aa$.
	We now construct an uncountable set of stabilisation points.
	If $\set{(X_\bb,\aa_\bb) \st \bb<\gg}$ have been chosen, then let $\dd \defined\sup\set{\aa_\bb\st\bb< \gg}$ and note that $C_{\dd + 1} \neq C_\dd$.
	Thus there is some $(Y_1,\dd+1)$ in $G$ with $Y_1\neq X_\bb$ for all $\bb< \gg$.
	We can then take, for all integers $k$, $Y_k\in C_{\dd + k}$ with $Y_k \consolidatesto Y_{k+1}$ for all $k$, and $Y_k \neq X_\bb$ for all $\bb< \gg$.
	This must stabilise at some $(Y_k, \dd + k)$, so we may define $X_\bb = Y_k$ and $\aa_\bb = \dd + k$.

	This process may be continued for all $\dd < \ww_1$, the first uncountable ordinal, at which point we will have constructed an uncountable chain $D$ in $N_+(P)$. 
	However, as in the proof of \Cref{lem:consolidation-stabilises}, we can construct a chain $E$ in $P$ witnessing all of these elements of $N_+(P)$.
	As $N_+(P)$ is wellfounded, we can associate to each element $X$ of $D$ the elements of $E$ below the minimal element of $D$ above $X$.
	This implies $E$ would need to be uncountable, contradicting the fact that $P$ is countable.

    Thus \Cref{lem:consolidation-no-uncountable} is proved.
\end{proof}

Finally, we will want to apply \Cref{lem:quasifounded-suffices}, and so we need to show that consolidation interacts well with illfounded limit points.
This is captured by the following lemma.

\begin{lemma}
	\label{lem:consolidation-bicomp}
	If $C\sseq P$ is a chain with $X = \sup C \in H(C)$ an illfounded limit point, and $C \consolidatesto D$ a consolidation, then $Y = \sup D \in H(D)$ is an illfounded limit, and we have $X \bicomp Y$.
\end{lemma}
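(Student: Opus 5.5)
The plan is to mirror the proof of \Cref{lem:bicomparable-illfounded-points}, using the witness function $f\from N(C)\to N(D)$ of the consolidation and \Cref{lem:consolidation-is-cofinal} in place of condition \ref{item:alt-def-important} of \Cref{def:alternating-replacement}. First, $X=\sup C$ is nonprincipal (an illfounded limit point), so $C\in N_+(P)$. Since $X$ is an illfounded limit and not atomic (an illfounded limit point contains infinitely many infinite wellfounded intervals, and hence a copy of $\ww+1$), the consolidation $C\consolidatesto D$ is witnessed by a genuine function $f$. This $f$ is order-preserving, injective and continuous, maps $N_\eps(C)$ into $N_\eps(D)$ on atomic elements, and has cofinal image in $N(D)$. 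In particular $D\in N_+(P)$, so $Y=\sup D$ is nonprincipal.

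For bicomparability, \Cref{lem:consolidation-is-cofinal} gives directly that $D$ is cofinally below $C$: every $y\in D$ has some $x\in C$ with $x>y$. For the converse, fix $x\in C$. Since $X$ is an illfounded limit, there is an element $Z\in\atomicincr(C)$ with $Z>x$; such a $Z$ exists by \Cref{lem:exists-atomic} applied to an infinite wellfounded interval of $C$ above $x$. We then need some $y\in D$ with $y>x$. The relation $Z\consolidatesto f(Z)$ only says that $Z$ is cofinally above $f(Z)$, which points the wrong way. We therefore instead use an element $V\in N_-(C)$ with $x<V$ lying below some co-wellfounded interval, and since $f$ is order-preserving we aim to show $f(V)>x$. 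Here \Cref{lem:consolidation-is-cofinal} applied to atomic decreasing chains says $f(V)$ is coinitially above $V$, so every element of $f(V)$ lies above some element of $V$, and hence above $x$. Since $f(V)\in N(D)$ is represented inside $D$, we obtain $y\in D$ with $y>x$. This proves $X\bicomp Y$.

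To see that $Y$ is illfounded, we pick alternately $U_1<V_1<U_2<V_2<\dotsb$ in $C$, where $U_n\in\atomicincr(C)$ sit at the tops of wellfounded pieces, $V_n\in\atomicdecr(C)$ sit at the bottoms of co-wellfounded pieces, and the sequence tends to $X$. Applying $f$ gives alternating elements $f(U_1)<f(V_1)<f(U_2)<\dotsb$ of $N_+(D)$ and $N_-(D)$, strictly increasing because $f$ is injective. By the previous paragraph these images are cofinal in $D$, since $f(V_n)$ lies above elements of $C$ which tend to $X$. Hence every final segment of $D$ contains infinitely many alternations between an increasing and a decreasing nonprincipal point. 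Between consecutive points $f(U_n)<f(V_n)$ we obtain infinite wellfounded/co-wellfounded intervals of $D$ by taking an initial piece of a representative of $f(V_n)$ and a final piece of a representative of $f(U_{n+1})$. This gives the required disjoint intervals $I_1<I_2<\dotsb$.

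The main obstacle is the lower-bound direction of bicomparability, namely ensuring that $D$ reaches above every element of $C$, together with the check that the alternating images genuinely produce infinite intervals of the right foundedness inside $D$, and not merely nonprincipal points. If using decreasing atomic elements fails to give $f(V)>x$ cleanly, the fallback is to argue via the cofinality of the image of $f$ together with continuity, namely that $f(X)=\sup f[N(C)]=Y$, and to compare with $X$ directly.
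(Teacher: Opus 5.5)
Your proposal is essentially the paper's proof: the same alternating sequence $U_1<V_1<U_2<\dotsb$ of atomic elements, the same use of the atomic decreasing $V_j$ (whose images are coinitially above them) to show that $D$ reaches above every point of $C$, and the same alternation of images to make $Y$ illfounded. Your small deviations are harmless: you get ``$D$ cofinally below $C$'' directly from \Cref{lem:consolidation-is-cofinal} applied to $C$ itself, where the paper uses the $U_i$, and you deduce cofinality of the image sequence from bicomparability. One detail in your last step needs repair, though the paper passes over the same point. The images $f(U_{n+1})$ and $f(V_n)$ need not be atomic, so a final (respectively initial) piece of a representative need not be wellfounded (respectively co-wellfounded). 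Instead, take atomic elements inside those representatives via \Cref{lem:exists-atomic}; in a vacillating poset these have final segments of type $\ww$ (respectively initial segments of type $\ww^*$), and so give the required intervals.
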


\begin{proof}
	We know that, due to \Cref{lem:exists-atomic} and the fact that $X$ is an illfounded limit, there are points $U_i,V_j\in N(C)$ for all $i,j\in\NN$ satisfying $U_i \in N_+^0(C)$ and $V_i \in N_-^0(C)$ and
	\begin{equation}
		\label{eq:cofinal-illfdd-bicomp}
		U_1 < V_1 < U_2 < V_2 < \dotsb < X,
	\end{equation}
	and moreover the above sequence of points is a cofinal sequence in $N(C)$.
	The fact that $C \consolidatesto D$ then tells us that there are $U_i',V_j' \in N(D)$ for all $i,j\in\NN$ such that
	\begin{equation*}
		U_i \consolidatesto U_i' \quad \text{ and } \quad V_j \consolidatesto V_j' \quad\text{ and }\quad U_1' < V_1' < U_2' < V_2' < \dotsb < Y,
	\end{equation*}
	and the above sequence is cofinal in $N(D)$ by definition of $\consolidatesto$.
	To show that $X\bicomp Y$, we show that, for all $x\in X$ there is $y\in Y$ with $y > x$, and similarly in the other direction.
	Indeed, take $x\in X$.
	Then, as the sequence in \eqref{eq:cofinal-illfdd-bicomp} is cofinal, we know that there is some $j$ such that $x < V_j$.
	But we know from \Cref{lem:consolidation-is-cofinal} that for all $z\in V_j$ there is $y\in V_j'$ with $z < y$.
	The fact that, for all $z\in V_j$ we have $x < z$ allows us to conclude that there is indeed $y\in Y$ with $x < y$, as required.

	The other direction, that for all $y\in Y$ there is $x\in X$ with $x > y$ follows similarly using $U_i$ and $U_i'$ for $i$ sufficiently large.
	Thus $X\bicomp Y$.
	Moreover, as the sequence $U_1' < V_1' < U_2' < V_2' < \dotsb < Y$ is cofinal in $Y$ and $U_i'\in N_+(D)$ and $V_j'\in N_-(D)$, we know that $Y$ is also an illfounded limit point, as required.
\end{proof}

\begin{proposition}
    \label{prop:exists-consolidated}
    Let $P$ be a countable quasifounded vacillating FAC poset, and assume that $A\sseq P$ is an alternating maximal chain.
	Then there is a consolidated chain $C\in \candidate$ such that, for every illfounded limit point $X\in N(A)$, there is an illfounded limit point $Y\in N(C)$ with $X \bicomp Y$.
\end{proposition}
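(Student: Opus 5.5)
The plan is to apply Zorn's lemma to the set $\candidate$ of candidate chains, ordered by consolidation $\consolidatesto$. First I would check that this gives a partial order on the relevant objects. By \Cref{lem:cofinal-consolidation-is-transitive}, $\consolidatesto$ is transitive, and $A\consolidatesto A$ via the identity on $N(A)$, so $A\in\candidate$ and the set is non-empty. Since $P$ is vacillating and scattered, \Cref{cor:consolidation-is-partial-order} and \Cref{lem:strong-bicomp-sometimes-implies-equiv} show that mutual consolidation forces $N(C)=N(D)$. So $\consolidatesto$ is antisymmetric once we pass to the quotient by trivial consolidation, i.e.\ by $N(C)=N(D)$. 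I would run Zorn on this quotient, or equivalently treat ``maximal'' as meaning that every consolidation $C\consolidatesto D$ has $N(C)=N(D)$, which is exactly the notion of consolidated. Note that $\candidate$ is closed upward under $\consolidatesto$ by transitivity: if $A\consolidatesto C\consolidatesto D$ then $D\in\candidate$. Hence a maximal element of $\candidate$ is genuinely consolidated in $P$, not just within $\candidate$.

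Next I would show that every $\consolidatesto$-chain in $\candidate$ has an upper bound. I would reduce to well-ordered chains in the usual way, by taking a cofinal well-ordered subsequence. If the chain has a largest element, that element is the bound. Otherwise its cofinality is either $\ww$ or uncountable. In the countable-cofinality case, \Cref{lem:consolidation-stabilises} supplies a candidate chain $C$ with $C_n\consolidatesto C$ for all $n$. By transitivity, every member of the original chain consolidates to $C$. The uncountable case cannot occur, since \Cref{lem:consolidation-no-uncountable} rules out uncountable $\consolidatesto$-chains of distinct candidate chains. Zorn's lemma then yields a consolidated $C\in\candidate$.

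Finally I would verify the illfounded limit point condition. Since $A\consolidatesto C$, fix a witness $f\colon N(A)\to N(C)$ and let $X\in N(A)$ be an illfounded limit point. Choose a saturated chain $A_X\sseq A$ (an initial segment of $A$ below $X$) with $X=\sup_{H(A_X)}A_X$. The restriction of $f$ to $N(A_X)$ shows $A_X\consolidatesto D_X$, where $D_X$ is the initial segment of $C$ below $f(X)$: order preservation, injectivity and continuity restrict directly, and cofinality of the image holds because $f$ is continuous and $f(X)=\sup f[N(A_X)]$. Applying \Cref{lem:consolidation-bicomp} gives that $Y\defined\sup D_X=f(X)$ is an illfounded limit point of $C$ with $X\bicomp Y$. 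The decreasing case is symmetric.

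I expect the main obstacle to be this restriction step: checking carefully that consolidation localises to initial segments, in particular cofinality when $f(X)$ is nonprincipal in $N(C)$. The Zorn argument itself is essentially bookkeeping over lemmas already proved.
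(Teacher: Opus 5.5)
Your proposal is correct and is essentially the paper's proof: Zorn's lemma on $\candidate$ ordered by $\consolidatesto$ (modulo trivial consolidation), with upper bounds supplied by \Cref{lem:consolidation-stabilises,lem:consolidation-no-uncountable} and the bicomparability condition supplied by \Cref{lem:consolidation-bicomp}. The localisation to initial segments that you flag as the main obstacle is valid but unnecessary: quasifoundedness of $P$ forbids an illfounded limit point of $N(A)$ from lying strictly between two elements of $A$, so any such point is $\sup A$ or $\inf A$, and \Cref{lem:consolidation-bicomp} then applies directly to $A\consolidatesto C$.
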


\begin{proof}
    We know from \Cref{lem:consolidated-alternating-maximal} that $\consolidatesto$ is a partial order on the set $\candidate$ of candidate chains, all of which are alternating maximal, and from \Cref{lem:consolidation-stabilises,lem:consolidation-no-uncountable} that any increasing chain of consolidations has an upper bound.
    Thus we may apply Zorn's lemma to $\consolidatesto$, and produce a candidate chain maximal with respect to $\consolidatesto$, i.e.\ a consolidated chain.
	\Cref{lem:consolidation-bicomp} then immediately gives us the required bicomparability condition, completing the proof of the proposition.
\end{proof}

\section{Finding a maximal tube}
\label{sec:finding-tubes}

We now prove the final piece of our proof of \Cref{thm:main}, which is to deduce the existence of a maximal tube from the existence of a consolidated chain.

Given a chain $C$ in a poset $P$, one may attempt to convert $C$ to a maximal tube of $P$ by starting with $T = C$ and then adding to $T$ all elements of $P\setminus C$ which are incomparable with only finitely many elements of $C$.
However, this need not create a maximal tube as in constructing $T$ there may now be elements of $T$ which are incomparable to infinitely many other elements of $T$.
Despite this, it turns out that if $C$ is a consolidated chain -- that is, it admits no non-trivial consolidation -- then $T$ as defined above is in fact a maximal tube.
We now formalise and prove this fact.

\begin{lemma}
    \label{lem:f-finite-incomparability}
    Let $C$ be a consolidated chain in a vacillating scattered FAC poset $P$, and let $T$ be defined as
    \begin{equation}
		\label{eq:define-tube}
		T \defined \set{x\in P \st \incompset{x}\inter C \text{ is finite}}.
	\end{equation}
    Then $C\sseq T$ and $T$ is a maximal tube.
\end{lemma}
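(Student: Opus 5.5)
The inclusion $C\sseq T$ is immediate, since every element of a chain is comparable to all other elements of that chain. The real content is twofold. First, $T$ must be a tube: each $x\in T$ is incomparable to only finitely many elements of $T$. Second, $T$ must be maximal: each $y\in P\setminus T$ is incomparable to infinitely many elements of $T$.

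Maximality is the easier half. If $y\notin T$, then $\incompset{y}\inter C$ is infinite by the definition \eqref{eq:define-tube}, and $C\sseq T$, so $y$ is incomparable to infinitely many elements of $T$. Hence no element outside $T$ can be added, and the whole difficulty is proving that $T$ is a tube.

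For the tube property I will argue by contradiction. Suppose some $x\in T$ is incomparable to infinitely many elements $t_1,t_2,\dotsc$ of $T$. By \Cref{fact:infinite-chain}, applied together with the FAC property, I can pass to a subsequence forming a monotone chain, say $t_1<t_2<\cdots$; the decreasing case is symmetric by reversing $P$.

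Each $t_n$ sees only finitely many incomparable points of $C$, and $C$ is a maximal chain. So each $t_n$ has a well-defined ``position'' in $H(C)$: the finite incomparable set is a finite interval of $C$, with everything of $C$ below it below $t_n$ and everything above it above $t_n$. The same holds for $x$.

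The increasing sequence $(t_n)$ then determines a sequence of positions in $C$ which must converge to some $Z\defined\sup$ in $H(C)$, and \Cref{lem:h-complete} places $Z$ in $N_+(C)$. Since $x$ also has a position with only finitely many incomparable points, and $x\incomp t_n$ for all $n$, I expect to show that $x$ lies near $Z$. More precisely, eventually $x$ is above the positions of the $t_n$ but not above the $t_n$ themselves.

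The chain $\set{t_n}$, extended to a saturated chain inside $\Conv$, then gives an increasing chain $D$ that is cofinally below the final segment of $C$ approaching $Z$. The aim is to use $D$, or a modification of $C$ in which the final segment of $C$ approaching $Z$ is replaced by $D$ or something built from it, to produce a consolidation $C\consolidatesto C'$ that is non-trivial. This would contradict $C$ being consolidated.

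For this I would pass to an atomic element: by \Cref{lem:exists-atomic}, there is an atomic $W\in\atomicincr(C)$ near $Z$ with $W\consolidatesto [D]_+$. I would then define $f$ as the identity on $N(C)\setminus\set{W}$ and send $W$ to $[D]_+$. Non-triviality should follow because $x$, or the $t_n$, witness that $D\not\simeq_+ W$.

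The main obstacle is exactly this step. I need to verify that the modified chain $C'=(C\setminus I)\union D$, suitably completed to a maximal chain, satisfies all the conditions of \Cref{def:cofinal-comparability}: order-preservation, injectivity, continuity and cofinality of $f$. I also need the consolidation to be genuinely non-trivial. In the vacillating case, \Cref{lem:strong-bicomp-sometimes-implies-equiv} and \Cref{lem:domination} should rule out the degenerate case where $D$ is merely equivalent to $W$. The point is that infinitely many $t_n$ being incomparable to the single point $x$, while each is incomparable to only finitely many points of $C$, forces $D$ to be strictly consolidation-below $W$ and not $\simeq_+$ to it.
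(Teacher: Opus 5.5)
Your overall strategy matches the paper's: you turn an increasing sequence of elements of $T$ incomparable to $x$ into a non-trivial consolidation of $C$, and your treatment of $C\sseq T$ and of maximality is fine. But the decisive step is exactly the one you leave open as ``the main obstacle'', and the picture guiding it is wrong.

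\textbf{Where $x$ actually sits.} Put $I\defined\incompset{x}\inter C$ and $J_n\defined\incompset{t_n}\inter C$. No $z\in C$ satisfies $I<z<J_n$, since this would give $x<z<t_n$. Likewise no $z\in C$ satisfies $J_n<z<I$. So every $J_n$ abuts or overlaps $I$. When $\bigcup_n J_n$ is infinite, the lower ends of the $J_n$ stay next to $I$ while only their upper ends climb towards $Z$. Hence $x$ is not ``near $Z$'', and it is not eventually above the positions of the $t_n$. In fact the least element $z_n$ of $C$ above $J_n$ is eventually above $x$.

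\textbf{Unhandled cases.} Your appeal to \Cref{lem:h-complete} presupposes that the positions have no maximum. The case where $\bigcup_n J_n$ is finite needs a separate argument: replacing that finite piece of $C$ by the infinite chain strictly enlarges $N(C)$. Also, taking $f$ to be the identity off a single atomic $W$ ``near $Z$'' only makes sense if the removed piece of $C$ has order type $\omega$ with supremum $W$. Otherwise the points of $N(C)$ inside the removed segment, including $Z$ itself, have no image.

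\textbf{What closes the gap.} The paper uses a careful choice of the sequence plus a direct incomparability count. Apply \Cref{lem:exists-atomic} to a saturated chain in $Q\defined\incompset{x}\inter T$, and use vacillation to get $y_1\covered_Q y_2\covered_Q\cdots$.
\begin{itemize}
\item The inserted chain $Y$ then has type $\omega$.
\item $\min J_i$ and $\max J_i$ are non-decreasing, and no point of $C$ lies strictly between $J_i$ and $J_{i+1}$. So $J\defined\bigcup_i J_i$ is an interval of $C$ of type at most $\omega$, and swapping $J$ for $Y$ changes only the single point $[J]_+$.
\item $J\consolidatesto Y$ holds because $y_i<z_i\in J$.
\end{itemize}

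Non-triviality does not come from \Cref{lem:strong-bicomp-sometimes-implies-equiv} or \Cref{lem:domination}. It comes directly from the observation above: every $z\in J$ with $z>I$ (all but finitely many elements of $J$) lies in $J_n$ for all large $n$. So $z$ is incomparable to infinitely many $y_n$. Hence no final segments of $J$ and $Y$ have mutually finite incomparability, i.e.\ $J\not\simeq_+ Y$.
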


\begin{proof}
	First note that it is immediate that $C\sseq T$, and that if $T$ as in \eqref{eq:define-tube} is a tube, then it must be maximal, as all $x\notin T$ have $\incompset{x}\inter C$ infinite. 
	It thus suffices to prove that $T$ is a tube in $P$.

    Take an arbitrary $x \in T$, and assume for contradiction that $x$ is incomparable with an infinite set $Q$ of elements of $T$.
	Note moreover that $Q$ is convex as a subset of $T$ (i.e.\ if $y,z\in Q$ and $w\in T$ have $y<w<z$, then $w\in Q$).
    As $P$ is an FAC poset, \Cref{fact:infinite-chain} tells us that there is an infinite chain $D$ in $Q$.
    Assume that $Q$ contains an infinite increasing sequence (the decreasing case is similar), and thus let $D$ be an infinite saturated chain in $Q$ such that $N_+(D)$ is non-empty.
	
	It therefore follows from \Cref{lem:exists-atomic} that there is some $X\in \atomicincr(D)$, and as $P$ is vacillating we thus know that $X$ has a final segment of order type $\ww$.
	We may thus find elements $y_1 < y_2 < y_3 < \cdots$ of $T$ all incomparable to $x$ satisfying $y_i \covered_Q y_{i+1}$ for all $i$, where by $\covered_Q$ we mean that the relation is a cover when restricted to $Q$.
    Each $y_i$ is incomparable to only finitely many elements of $C$ by definition; we thus define
    \begin{align*}
        J_i\defined \incompset{y_i}\inter C \quad \text{ and } \quad I\defined \incompset{x} \inter C.
    \end{align*}

	We now enumerate several properties of these sets.
	\begin{enumerate}
		\item \label{item:j-max-increase} $\max J_{i+1} \geq \max J_i$, as otherwise we would have $\max J_i > \max J_{i+1}$, which would imply that $\max J_i > y_{i+1}$, which would contradict the facts that $y_{i+1} > y_i$ and $y_i\incomp \max J_i$.
		\item \label{item:j-min-increase} $\min J_{i+1} \geq \min J_i$, as otherwise we would have $\min J_{i+1} < \min J_i$, which would imply that $\min J_{i+1} < y_i$, which would contradict the facts that $y_i < y_{i+1}$ and $y_{i+1} \incomp \min J_{i+1}$.
		\item \label{item:j-no-gaps} There is no $z\in C$ with $J_i < z < J_{i+1}$, as otherwise $y_i < z < y_{i+1}$, contradicting $y_i \covered_Q y_{i+1}$.
		\item \label{item:i-no-gap} For all $i$, there is no $z\in C$ with $I < z < J_i$, as otherwise $x < z < y_i$, contradicting $x\incomp y_i$.
	\end{enumerate}
    Let $J\defined\bigcup_{i<\ww} J_i$ and $Y'\defined \set{y_i \st i < \ww}$, noting that \cref{item:j-no-gaps} implies that $J$ is a saturated chain in $C$. 
    Note that the above \cref{item:j-max-increase,item:j-min-increase,item:j-no-gaps} together tell us that $J$ is order-isomorphic to a subset of $\ww$.
    Let $Y\sseq \Conv_P Y'$ be an arbitrary saturated chain extending $Y'$.

	If $J$ is finite, then we can perform a non-trivial consolidation by removing $J$ from $C$ and inserting $Y$, as $J$ is finite and $Y$ is infinite, contradicting our assumption that $C$ is consolidated.
	(We can witness this consolidation with the identity function, but $N((C\setminus J) \union Y)$ is strictly larger than $N(C)$, so the consolidation is non-trivial.)
    Thus we may assume that $J$ is order-isomorphic to $\ww$.

    Define for each $i$ the point $z_i\in J$ to be the minimal point above $J_i$, so $y_i < z_i$; note that such a point must exist as $J_i$ is finite.
    As $I$ is finite, there exists $k$ such that $z_i > \max(I)$ for all $i>k$, and so for $i > k$ we have $z_i > x$.
    Then for all $i,j > k$, we have $y_i \not> z_j$ as otherwise we would have $y_i > z_j > x$, contradicting $y_i\incomp x$.
	But now we see that $J \consolidatesto Y$, so it suffices to prove that this consolidation is non-trivial.

	Indeed, as $I$ is finite but $J$ is infinite, there is $z\in J$ with $z > I$.
	Then \cref{item:i-no-gap} tells us that $z\in J_n$ for all sufficiently large $n$, which implies that $z\incomp y_n$.
	Thus $J$ and $Y$ do not have mutually finite incomparability, and so $Y\not\simeq_+ J$, and the consolidation $J\consolidatesto Y$ is non-trivial.
	Therefore $C$ admits a non-trivial consolidation, contradicting our assumption that $C$ is consolidated.
	This proves the lemma.
\end{proof}





We can now prove the following result which, together with \Cref{prop:maximal-tube-suffices}, immediately implies \Cref{thm:main} that all countable vacillating FAC posets have spines.

\begin{proposition}
	\label{prop:exists-maximal-tube}
	Let $P$ be a countable vacillating FAC poset. Then $P$ contains a maximal tube.
\end{proposition}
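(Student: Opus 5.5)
The proof will assemble the chain of reductions built up in the preceding sections. By \Cref{cor:scattered-reduction}, it is enough to find a maximal tube in every countable vacillating scattered FAC poset. So fix such a $P$.

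\textbf{Reducing to quasifounded pieces.} Apply \Cref{prop:quasifounded} to obtain an illfounded maximal chain $C\sseq P$. Let $K$ be its set of illfounded limit points. For each initial segment $L\sseq K$, the poset
\begin{equation*}
	P_L\defined(L,K\setminus L)_P
\end{equation*}
is quasifounded, by the same proposition. Each $P_L$ is also countable, scattered, vacillating and FAC, since all four properties pass to convex subsets. By \Cref{lem:quasifounded-suffices}, it then suffices to produce, for each $L$, a maximal tube $T_L\sseq P_L$ satisfying one extra condition. If $\sup C_L$ (respectively $\inf C_L$) is an illfounded limit, then $T_L$ must contain a chain whose supremum (respectively infimum) is an illfounded limit bicomparable with it. Here $C_L\defined C\inter P_L$.

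\textbf{Treating a single piece.} Fix $L$. The chain $C_L$ is a maximal chain of $P_L$, because $C$ is maximal and $P_L$ is convex. It also witnesses the illfounded limits at the top and bottom of $P_L$, if these exist. Apply \Cref{prop:alternating-maximal} with $D=C_L$ to get an alternating maximal chain $A$ with $C_L\altlt A$. By \Cref{lem:bicomparable-illfounded-points}, $A$ still witnesses the end illfounded limits up to bicomparability. Next apply \Cref{prop:exists-consolidated} to $A$ inside $P_L$. This gives a consolidated chain $C'\in\candidate$ whose illfounded limit points are bicomparable with those of $A$. Bicomparability is transitive, so they are also bicomparable with those of $C_L$. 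Finally, \Cref{lem:f-finite-incomparability} applied to $C'$ in $P_L$ shows that
\begin{equation*}
	T_L\defined\set{x\in P_L\st \incompset{x}\inter C' \text{ is finite}}
\end{equation*}
is a maximal tube of $P_L$ containing $C'$. Taking $D_L\defined C'$ (or its appropriate final or initial segment) supplies the chain required in \Cref{lem:quasifounded-suffices}.

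\textbf{Expected obstacles.} I expect the main difficulty to be bookkeeping rather than any new idea. First, each earlier result must be invoked inside $P_L$ and not inside $P$. This requires that $H(P_L)$ embeds in $H(P)$ via \Cref{obs:embedding}, that saturation of chains in the convex set $P_L$ agrees with saturation in $P$, and that $P_L$ is genuinely quasifounded as a poset in its own right. Second, the bicomparability must survive both the alternating replacement and the consolidation, which rests on \Cref{lem:bicomparable-illfounded-points} and \Cref{lem:consolidation-bicomp}. A minor degenerate case also needs handling: if $P_L$ is finite or has no infinite chain, take $T_L=P_L$, which is trivially a maximal tube. With these checks in place, \Cref{lem:quasifounded-suffices} glues the $T_L$ into a maximal tube of $P$, which proves the statement.
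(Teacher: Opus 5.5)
Your proposal is correct and takes essentially the same route as the paper. It reduces to the scattered case via \Cref{cor:scattered-reduction}, takes an illfounded maximal chain from \Cref{prop:quasifounded}, passes through \Cref{prop:alternating-maximal} and \Cref{prop:exists-consolidated} while tracking bicomparability of illfounded limit points, builds the tube with \Cref{lem:f-finite-incomparability}, and glues with \Cref{lem:quasifounded-suffices}; the only difference is that you carry out the construction explicitly inside each quasifounded piece $P_L$, which the paper leaves implicit in its phrase ``it suffices to consider the case wherein $P$ is also quasifounded''.
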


\begin{proof}
	First, by \Cref{cor:scattered-reduction}, it suffices to consider the case when $P$ is also scattered.
	We know from \Cref{prop:quasifounded} that $P$ contains an illfounded maximal chain $C_1$.
	Then, \Cref{lem:quasifounded-suffices} tells us that it suffices to consider the case wherein $P$ is also quasifounded, so long as the maximal tube we find is bicomparable with any illfounded limit points of $C_1$.

	Indeed, \Cref{prop:alternating-maximal} tells us that there is an alternating maximal chain $C_2$ in $P$ with $C_1 \altlt C_2$, and thus, by \Cref{lem:bicomparable-illfounded-points}, $C_2$ has the necessary bicomparability conditions with $C_1$.

	Next, \Cref{prop:exists-consolidated} applied to the alternating maximal chain $C_2$ tells us that there is a consolidated chain $C_3$ which, by transitivity of bicomparability, also satisfies the bicomparability conditions required by \Cref{lem:quasifounded-suffices}.

	Finally, \Cref{lem:f-finite-incomparability} tells us that we can construct a maximal tube $T \sseq P$ with $C_3 \sseq T$.
	The chain $C_3$ satisfies the bicomparability conditions of \Cref{lem:quasifounded-suffices}, and so our original poset $P$ contains a maximal tube, as required.
\end{proof}

The proof of \Cref{thm:main} is thus immediate:

\begin{proof}[Proof of \Cref{thm:main}]
	\Cref{prop:exists-maximal-tube} tells us that $P$ contains a maximal tube $T$, and then \Cref{prop:maximal-tube-suffices} tells us that there is a chain $C\sseq T$ which is a spine of $P$.
	Thus $P$ has a spine, as required.
\end{proof}

\section{Concluding remarks and open problems}
\label{sec:conclusion}

In this paper, we have proved a sequence of structural results concerning FAC posets, which has culminated in a proof that the Aharoni--Korman conjecture holds for a very large class of posets. 
Together with the negative result in \cite{hollom2024resolution}, this conjecture is now much better understood than it was before.
However, despite all that is known, there are still many questions left open, and several new questions raised by the investigations carried out here.

First, as \Cref{conj:ak} fails in general, it is natural to ask what weaker statements might be true; we have shown here that that the statement is true with an additional condition on the poset $P$.
However, one could also consider weakening the condition of being a spine, as follows. 
Given a family $\cF$ of sets, an element $A\in \cF$ is said to be \emph{strongly maximal} if every $B\in\cF$ has $\abs{B\setminus A} \leq \abs{A\setminus B}$.
Intuitively, one cannot take $A$, remove a few elements, add back in more elements than were removed, and remain in the family $\cF$.

A subset $C\sseq P$ is thus a \emph{strongly maximal chain} if it is a chain of $P$ and, for all chains $D\sseq P$, we have $\abs{D\setminus C} \leq \abs{C \setminus D}$. 
Note in particular that strongly maximal chains are necessarily maximal chains.
One may define \emph{strongly maximal antichains} similarly.
We make the following simple observation.

\begin{observation}
	If the chain $C\sseq P$ is a spine of $P$, then it is also a strongly maximal chain of $P$.
\end{observation}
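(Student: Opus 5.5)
Fix a spine $C$ of $P$ together with a partition $P=\bigcup_{i\in I}A_i$ into antichains, each of which meets $C$. Since $C$ is a chain and each $A_i$ is an antichain, $\abs{C\inter A_i}\leq 1$. As $C$ meets every $A_i$, in fact $\abs{C\inter A_i}=1$ for all $i$. So the map sending $i$ to the unique element of $C\inter A_i$ is a bijection from $I$ onto $C$. The plan is to compare an arbitrary chain $D\sseq P$ with $C$ by counting how many antichains of the partition each set meets.

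Let $D\sseq P$ be any chain; we show $\abs{D\setminus C}\leq\abs{C\setminus D}$. Every element of $D\setminus C$ lies in some $A_i$, and $D$ meets each $A_i$ in at most one point. Hence the map $\phi\from D\setminus C\to I$ sending $y$ to the index $i$ with $y\in A_i$ is injective.

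For $y\in D\setminus C$ with $\phi(y)=i$, let $c_i$ be the unique element of $C\inter A_i$. Then $c_i\neq y$, since $y\notin C$. Both $c_i$ and $y$ lie in the antichain $A_i$, so $c_i\incomp y$. Since $y\in D$ and $D$ is a chain, $c_i\notin D$, so $c_i\in C\setminus D$. Therefore $y\mapsto c_{\phi(y)}$ is an injection from $D\setminus C$ into $C\setminus D$, as a composition of injections. This gives $\abs{D\setminus C}\leq\abs{C\setminus D}$ as cardinals, with no countability assumption needed. It also covers the case $D\setminus C$ infinite.

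The only point needing care is the step showing that $c_{\phi(y)}\notin D$. If it were in $D$, then $D$ would contain two distinct elements of the antichain $A_i$, namely $y$ and $c_i$, which is impossible in a chain. I do not expect any genuine obstacle here: the observation follows directly from the definitions of spine and strongly maximal chain.
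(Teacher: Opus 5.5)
Your proof is correct. The injection $y\mapsto c_{\phi(y)}$ from $D\setminus C$ into $C\setminus D$, which sends each point of $D\setminus C$ to the unique point of $C$ in its antichain, is exactly the routine verification behind this observation; the paper states it without proof as a simple consequence of the definitions, so there is no different route to compare.
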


Our result \Cref{thm:main-on-first-page} thus implies that every countable vacillating FAC poset has a strongly maximal chain.
As was noted and discussed by Aharoni \cite{aharoni1991infinite}, it is a consequence of \Cref{thm:infinite-greene-kleitman} that every poset with no infinite chain must contain a strongly maximal antichain.
This result was then extended by Aharoni and Berger \cite{aharoni2011strongly} to a more general class of posets in the countable case.
Strongly maximal chains, however, seem to have received little attention, and the following question is natural, serving as both a relaxation of \Cref{conj:ak} and the dual of the above result of Aharoni.

\begin{question}
	\label{q:smc}
	Let $P$ be a poset with no infinite antichain. Must $P$ have a strongly maximal chain?
\end{question}

It seems entirely possible (and indeed quite likely) that \Cref{q:smc} would be more approachable if $P$ is also assumed to be countable, and this case is already interesting. 
We may note that the counterexample of \cite{hollom2024resolution} to \Cref{conj:ak} does have a strongly maximal chain, and so this question is, to the best of the author's knowledge, still open.

Following on from this, we have on many occasions made use of the fact that we worked with a countable poset $P$.
For higher cardinalities, even the structure of linear orders becomes much more complex, and so it would seem to be very far from trivial to generalise the arguments presented here, if they even generalise at all.

\begin{question}
    \label{q:higher-cardinalities}
    Let $P$ be a vacillating FAC poset of cardinality $\kk$.
    For which $\kk$ must $P$ have a spine?
\end{question}

Indeed, an immediate obstruction would be the lack of a structural result similar to \Cref{thm:structural} for higher cardinalities, and such a result could well require various set-theoretic assumptions.

Finally, it is also natural to consider whether the techniques presented here are enough to prove \Cref{conj:ak-general} in the case of vacillating posets.

\begin{question}
    \label{q:ak-general-vacillating}
    Let $P$ be a countable vacillating poset, and let let $k\geq 2$ be an integer. 
    Must there be disjoint chains $C_1,\dotsc,C_k \sseq P$ and a partition $P = \set{A_i \st i\in I}$ into disjoint antichains such that each $A_i$ meets $\min\set{\abs{A_i}, k}$ of the chains $C_j$?
\end{question}

While the structural techniques developed in the proof of \Cref{thm:main} could well be of assistance in resolving \Cref{q:ak-general-vacillating} if it is true, it seems as though there would also be the need for some genuine new ideas as well.

\section{Acknowledgements}

The author would like to thank B\'{e}la Bollob\'{a}s for his many valuable comments.
Thanks are also due to Nikolai Beluhov and George Bergman for each pointing out several inaccuracies in the manuscript, and for further suggestions which significantly improved the presentation of the paper.
Much of the work presented in this paper was carried out while the author was funded by the internal graduate studentship of Trinity College, Cambridge.

The author used Claude (Anthropic) AI systems in the preparation of this paper in a purely ``red team'' capacity (proofreading and testing correctness of human-generated material); all ideas and arguments presented here were produced by the author.






\bibliographystyle{acm}  
\renewcommand{\bibname}{Bibliography}
\bibliography{main}

\begin{thebibliography}{10}

\bibitem{abraham1987note}
{\sc Abraham, U.}
\newblock A note on {D}ilworth's theorem in the infinite case.
\newblock {\em Order 4\/} (1987), 107--125.

\bibitem{aharoni1984konig}
{\sc Aharoni, R.}
\newblock {K}{\"o}nig's duality theorem for infinite bipartite graphs.
\newblock {\em Journal of the London Mathematical Society 2}, 1 (1984), 1--12.

\bibitem{aharoni1987menger}
{\sc Aharoni, R.}
\newblock Menger's theorem for countable graphs.
\newblock {\em Journal of Combinatorial Theory, Series B 43}, 3 (1987), 303--313.

\bibitem{aharoni1991infinite}
{\sc Aharoni, R.}
\newblock Infinite matching theory.
\newblock {\em Discrete Mathematics 95}, 1-3 (1991), 5--22.

\bibitem{aharoni2022strongly}
{\sc Aharoni, R.}
\newblock Strongly maximal matchings and strongly minimal covers.
\newblock {\em arXiv preprint arXiv:2206.02576\/} (2022).
\newblock 3 pages.

\bibitem{aharoni2009menger}
{\sc Aharoni, R., and Berger, E.}
\newblock Menger’s theorem for infinite graphs.
\newblock {\em Inventiones mathematicae 176}, 1 (2009), 1--62.

\bibitem{aharoni2011strongly}
{\sc Aharoni, R., and Berger, E.}
\newblock Strongly maximal antichains in posets.
\newblock {\em Discrete mathematics 311}, 15 (2011), 1518--1522.

\bibitem{aharoni1994menger}
{\sc Aharoni, R., and Diestel, R.}
\newblock Menger's theorem for a countable source set.
\newblock {\em Combinatorics, Probability and Computing 3}, 2 (1994), 145--156.

\bibitem{aharoni1992greene}
{\sc Aharoni, R., and Korman, V.}
\newblock Greene-{K}leitman's theorem for infinite posets.
\newblock {\em Order 9\/} (1992), 245--253.

\bibitem{aharoni1995strongly}
{\sc Aharoni, R., and Loebl, M.}
\newblock Strongly perfect infinite graphs.
\newblock {\em Israel Journal of Mathematics 90}, 1 (1995), 81--91.

\bibitem{diestel2003countable}
{\sc Diestel, R.}
\newblock The countable {E}rd{\H{o}}s--{M}enger conjecture with ends.
\newblock {\em Journal of Combinatorial Theory, Series B 87}, 1 (2003), 145--161.

\bibitem{dilworth1950decomposition}
{\sc Dilworth, R.}
\newblock A decomposition theorem for partially ordered sets.
\newblock {\em Annals of Mathematics 51}, 1 (1950), 161--166.

\bibitem{duffus2002intervals}
{\sc Duffus, D., and Goddard, T.}
\newblock Some progress on the {A}haroni--{K}orman conjecture.
\newblock {\em Discrete mathematics 250}, 1-3 (2002), 79--91.

\bibitem{duffus1981complete}
{\sc Duffus, D., Pouzet, M., and Rival, I.}
\newblock Complete ordered sets with no infinite antichains.
\newblock {\em Discrete Mathematics 35}, 1-3 (1981), 39--52.

\bibitem{goddard1996ordered}
{\sc Goddard, T.}
\newblock {\em Ordered sets: Colorings and complexity}.
\newblock {PhD Thesis}, Emory University, June 1996.

\bibitem{greene1976structure}
{\sc Greene, C., and Kleitman, D.~J.}
\newblock The structure of {S}perner k-families.
\newblock {\em Journal of Combinatorial Theory, Series A 20}, 1 (1976), 41--68.

\bibitem{hausdorff1908classification}
{\sc Hausdorff, F.}
\newblock Grundz{\"u}ge einer {T}heorie der geordneten {M}engen.
\newblock {\em Mathematische Annalen 65}, 4 (1908), 435--505.

\bibitem{hollom2024resolution}
{\sc Hollom, L.}
\newblock The {A}haroni--{K}orman conjecture is false.
\newblock {\em Israel Journal of Mathematics, to appear\/} (2026).

\bibitem{HR26}
{\sc Hollom, L., and Randall~Shaw, B.}
\newblock Counterexamples to conjectures on strong maximality and minimality.
\newblock {\em The Electronic Journal of Combinatorics 33}, 2 (2026), P2.11.

\bibitem{lawson1987ordered}
{\sc Lawson, J.~D., Mislove, M., and Priestley, H.}
\newblock Ordered sets with no infinite antichains.
\newblock {\em Discrete Mathematics 63}, 2-3 (1987), 225.

\bibitem{mehta2025formal}
{\sc Mehta, B.}
\newblock {Disproof of the Aharoni-Korman conjecture}.
\newblock \url{https://github.com/b-mehta/AharoniKorman}, 2025.
\newblock A formal verification of the counterexample to the {Aharoni--Korman} conjecture. Accessed 2026-06-17.

\bibitem{perles1963dilworth}
{\sc Perles, M.~A.}
\newblock On {D}ilworth’s theorem in the infinite case.
\newblock {\em Israel Journal of Mathematics 1\/} (1963), 108--109.

\bibitem{stanley2011enumerative}
{\sc Stanley, R.~P.}
\newblock {\em {Enumerative Combinatorics, Volume 1, Second Edition}}.
\newblock Cambridge studies in advanced mathematics, 2011.

\bibitem{van2022counterexample}
{\sc van~der Zypen, D.}
\newblock Counterexample to a conjecture of {A}haroni and {K}orman.
\newblock {\em arXiv preprint arXiv:2205.02296\/} (2022).
\newblock 2 pages.

\bibitem{zaguia2024progress}
{\sc Zaguia, I.}
\newblock {Some progress on the Aharoni--Korman conjecture}.
\newblock {\em Discrete Mathematics 347}, 10 (2024).

\end{thebibliography}






\end{document}